\theoremstyle{plain}
\newtheorem{theorem}{Theorem}[subsection]
\newtheorem*{theorem*}{Theorem}
\newtheorem{corollary}[theorem]{Corollary}
\newtheorem{proposition}[theorem]{Proposition}
\newtheorem{conjecture}[theorem]{Conjecture}
\newtheorem{lemma}[theorem]{Lemma}
\theoremstyle{definition}
\newtheorem*{definition*}{Definition}
\newtheorem{example}[theorem]{Example}
\theoremstyle{remark}
\newtheorem*{remarks*}{Remarks}
\newtheorem*{notation*}{Notation}
\DeclareMathOperator{\coz}{coz}
\DeclareMathOperator{\con}{con}
\DeclareMathOperator{\cl}{cl}
\DeclareMathOperator{\st}{uf}
\DeclareMathOperator{\clop}{clop}
\DeclareMathOperator{\uf}{uf}
\DeclareMathOperator{\oo}{\mathfrak{o}}
\newcommand{\qtq}[1]{\quad\text{#1}\quad}
\newcommand{\downset}[1]{\left\downarrow{#1}\right\downarrow}
\newcommand{\ol}[1]{\overline{#1}}
\newcommand{\mbf}[1]{\mathbf{#1}}
\newcommand{\mbb}[1]{\mathbb{#1}}
\newcommand{\mcal}[1]{\mathcal{#1}}
\newcommand{\mfrak}[1]{\mathfrak{#1}}
\newcommand{\sbw}[2]{\smashoperator[#1]{\bigwedge_{#2}}}
\newcommand{\sbv}[2]{\smashoperator[#1]{\bigvee_{#2}}}
\newcommand{\setof}[2]{\left\{\,#1 \mid #2\,\right\}}
\newcommand{\map}[3]{#1 \colon #2 \to #3}
\newcommand{\UC}{\mcal{UC}}
\newcommand{\simple}[1]{\sigma{#1}}
\newcommand{\clr}[1]{\delta(#1)}
\newcommand{\exR}{\ol{\mbb{R}}}
\newcommand{\SC}{\mbf{Cmp}}
\newcommand{\open}[1]{\oo(#1)}
\begin{document}
\title[Structure of truncs]{Structural aspects of \\truncated archimedean vector lattices:\\ good sequences, simple elements}
\author{Richard N. Ball}
\address[Ball]{Department of Mathematics, University of Denver, Denver, CO 80208, U.S.A.}
\email[Ball]{rball@du.edu}
\date{\today}
\thanks{File name: Simple Truncs Good Sequences.tex}
\keywords{truncated archimedean vector lattice, pointwise convergence, $\ell$-group, completely regular pointed frame}
\subjclass[2010]{06F20; 46E05}

\begin{abstract}
The truncation operation facilitates the articulation and analysis of several aspects of the structure of archimedean vector lattices; we investigate two such aspects in this article. We refer to archimedean vector lattices equipped with a truncation as \emph{truncs}.  

In the first part of the article we review the basic definitions, state the (pointed) Yosida Representation Theorem for truncs, and then prove a representation theorem which subsumes and extends the (pointfree) Madden Representation Theorem. The proof has the virtue of being much shorter than the one in the literature, but the real novelty of the theorem lies in the fact that the topological data dual to a given trunc $G$ is a (localic) compactification, i.e., a dense pointed frame surjection $\map{q}{M}{L}$ out of a compact regular pointed frame $M$. The representation is an amalgam of the Yosida and Madden representations; the compact frame $M$ is sufficient to describe the behavior of the bounded part $G^*$ of $G$ in the sense that $\widetilde{G}^*$ separates the points of the compact Hausdorff pointed space $X$ dual to $M$, while the frame $L$ is just sufficient to capture the behavior of the unbounded part of $G$ in $\mcal{R}_0 L$. 

The truncation operation lends itself to identifying those elements of a trunc which behave like characteristic functions, and in the second part of the article we characterize in several ways those truncs composed of linear combinations of such elements. Along the way, we show that the category of such truncs is equivalent to the category of pointed Boolean spaces, and to the category of generalized Boolean algebras. 

The short third part contains a characterization of the kernels of truncation homomorphisms in terms of pointwise closure. In it we correct an error in the literature. 
\end{abstract}

\maketitle

\tableofcontents

\section{Introduction}\label{Sec:Intro}

The need for the truncation operation on $\mbb{R}^X$ for the purposes of measure theory was pointed out by M. H. Stone in \cite{Stone:1948}, and, under the name \enquote*{Stone's condition,} this operation is assumed in many measure theory texts (\cite{Dudley:2003}, \cite{Fremlin:1974}). Moreover, a truncation is a weakening of the requirement of a weak unit, and as such it is all that is necessary to render the representation of a vector lattice canonical. Based on an axiomatic formulation of the truncation operation, this representation theory has recently been worked out in \cite{Ball:2014.1} and \cite{Ball:2014.2}. The article at hand continues this development, and makes frequent reference to the latter two. 

Part \ref{Part:Rep} concerns the representation of truncs. It begins in Section \ref{Sec:YosRep} with a review of the basic constructs to fix notation, followed by a statement of the classical Yosida Representation Theorem \ref{Thm:16}. We take up the pointfree representation in Section \ref{Sec:MaddenRep}, beginning with a review of the trunc operations in $\mcal{R}_0 L$, and culminating in Theorem \ref{Thm:3}. The proof of this theorem is conceptually clear, technically uncomplicated, and short. A given trunc $G$ is shown to be isomorphic to a trunc in $\mcal{E}_0 q$, where $\map{q}{M}{L}$ is a compactification, and $\mcal{E}_0 q$ is the family of those pointed frame maps $g \in \mcal{R}_0 L$ for which there exists another pointed frame map $\map{g'}{\mcal{O}_* \exR}{M}$ such that $g \circ p = q \circ g'$. Here $p$ stands for the pointed frame map of the inclusion $\mbb{R} \to \exR$. 

Both uniform and pointwise convergence have elegant formulations in terms of the truncation operation, and Section \ref{Sec:PtwiseCon} reviews them. The connection between the two convergences is Dini's Theorem \ref{Thm:18}. In Section \ref{Sec:TruncSeq} we take up the truncation sequences used by Hager in his treatment of *-maximal $\mbf{W}$-objects (\cite{Hager:2013}), and in Proposition \ref{Prop:27} we show them to be essentially equivalent to the good sequences which Mundici used to show the equivalence between unital $\ell$-groups and $MV$-algebras (\cite{Mundici:1986}). We conclude Section \ref{Sec:TruncSeq} by characterizing the functions of $\mcal{E}_0 q$ as being those elements $g \in \mcal{R}_0 L$ such that each truncation $g \wedge n$ factors through $q$ (Proposition \ref{Prop:1}).

Part \ref{Part:Rep} concludes with Section \ref{Sec:E0q}. We show that for a compactification $\map{q}{M}{L}$, $\mcal{E}_0 q$ is a trunc iff every cozero element $u \in M$ for which $q(u) = \top$ is $C^*$-embedded (Proposition \ref{Prop:5}). Furthermore, we show $\mcal{E}_0 q = \mcal{R}_0 L$ iff $q$ is the compact regular coreflection, i.e., iff $q$ is the \v{C}ech-Stone compactification (Proposition \ref{Prop:9}). We conclude Part \ref{Part:Rep} with an important question in the form of Conjecture \ref{Con:Snug}.

The truncation operation makes it easy to identify those elements which behave like characteristic functions (Proposition \ref{Prop:4}). In analysis, the term simple function is often used for linear combinations of characteristic functions, and we use that term here, both for the elements and for the truncs composed of them. Under the name \emph{Specker groups,} these objects have received a good deal of attention in the ordered algebra literature (see \cite[p.\ 385] {Darnel:1994}). Part \ref{Part:Simple} presents several characterizations of simple truncs: Theorem \ref{Thm:1} in terms of locally constant functions, Theorem \ref{Thm:12} in terms of functions bounded away from $0$, and Theorem \ref{Thm:13} in terms of hyperarchimedean truncs.

As a matter of independent interest, we show in Section \ref{Sec:EqCat} that the following four categories are equivalent: the category $\mbf{sT}$ of simple truncs, the category $\mbf{gBa}$ of generalized Boolean algebras, the category $\mbf{iBa}$ of idealized Boolean algebras, and the category $\mbf{zdK}_*$ of Boolean pointed spaces, i.e., zero dimensional compact Hausdorff pointed spaces. (See Theorems \ref{Thm:14} and \ref{Thm:15}.)  

Part \ref{Part:TruncKer} is a brief coda. In it the fundamental lemma from the literature characterizing the kernels of truncation homomorphisms receives a correction (Lemma \ref{Lem:20}), and the lemma is then used to show that such kernels are also characterized by the property of being pointwise closed (Proposition \ref{Prop:30}). 

\part{Representation of truncated archimedean vector lattices\label{Part:Rep}}

\section{The Yosida representation\label{Sec:YosRep}}

In this section we introduce the primary constructs to fix notation, and then outline the fundamental Yosida representation Theorem \ref{Thm:16}. 
  
\subsection{Pointed spaces and frames}
We introduce the categories which constitute the setting for the geometrical aspects of our investigations.   

\begin{definition*}[pointed space]
	A \emph{pointed space} is an object of the form $(X, *)$, where $X$ is a Tychonoff topological space and $*$ is a designated point of $X$. A \emph{continuous pointed function} $\map{f}{(Y, *_Y)}{(X, *_X)}$ is a continuous function $\map{f}{Y}{X}$ such that $f(*_Y) = *_X$. We denote the category of pointed spaces with continuous pointed functions by $\mathbf{Sp}_*$. We denote the full subcategory consisting of the compact Hausdorff pointed spaces by $\mathbf{K}_*$. We denote the full subcategory of zero dimensional compact Hausdorff pointed spaces, aka Boolean pointed spaces, by $\mbf{zdK}_*$.
\end{definition*}

The pointfree counterpart of a pointed space is a pointed frame. A fuller development of this topic, together with proofs, can be found in Section 4 of \cite{Ball:2014.2}.

\begin{definition*}[pointed frame]
	A \emph{pointed frame} is a pair $(L,*_L)$, where $L$ is a frame and $*_L$ is a point of $L$, i.e., a frame map $*_L \colon L \to 2 \equiv \{\bot, \top\}$. A \emph{pointed frame homomorphism $f \colon (L,*_L) \to (M,*_M)$} is a frame homomorphism $f \colon L \to M$ which commutes with the points, i.e., $*_M \circ f = *_L$. 
	We denote the category of pointed frames with their homomorphisms by $\mbf{Frm}_*$.
\end{definition*}

Of particular importance is the \emph{pointed frame of the reals $\mcal{O}_* \mbb{R} \equiv (\mcal{O} \mbb{R}, *_0)$}, where $\mcal{O} \mbb{R}$ designates the frame of open subsets of the real numbers and $*_0 \colon \mcal{O} \mbb{R} \to 2$ is the frame map corresponding to the insertion of the real number $0$ into $\mbb{R}$, i.e.,
\[
*_0(U)
= \begin{cases}
\top  & \text{if $0 \in U$}\\
\bot  & \text{if $0 \notin U$}
\end{cases},
\qquad U \in \mcal{O} \mbb{R}.
\] 

The connection between the categories $\mbf{Frm}_*$ and $\mbf{Sp}_*$ is provided by the basic adjunction given by the functors $\map{\mcal{S}_*} {\mbf{Frm}_*} {\mbf{Sp}_*}$ and $\map{\mcal{O}_*}  {\mbf{Sp}_*} {\mbf{Fr}_*}$. The functor $\mcal{S}_*$ assigns to a pointed frame its spectrum, or space of points. The functor $\mcal{O}_*$ assigns to a pointed space $(X, *_X)$ the pointed frame $(\mcal{O}X, *)$, where $\mcal{O}X$ is the frame of open sets of $X$ and $*$ is the frame map of the point insertion $* \to X$.

\subsection{Truncated vector lattices}\label{Sec:Truncation}
We introduce the categories which will provide the setting for the algebraic aspects of our investigation. The underlying motivation is that truncation is a generalization of a weak order unit. 

\begin{definition*}[truncation]
	A \emph{truncation} on an archimedean vector lattice $G$ is a unary operation on the positive cone $G^+$, to be designated $g \mapsto \ol{g}$, satisfying the following axioms for all $h,g \in G^+$.
	\begin{enumerate}
	\item[($\mfrak{T}1$)] 
	$g \wedge \ol{h} \leq \ol{g} \leq g$.

	\item[($\mfrak{T}2$)]
	If $\ol{g} = 0$ then $g = 0$.
	
	\item[($\mfrak{T}3$)]
	If $ng = \ol{ng}$ for all $n$ then $g = 0$.
	\end{enumerate}
	The range $\setof{\ol{g}}{g \in G^+}$ of the truncation is designated by $\ol{G}$.  
\end{definition*}

The symbol $g \ominus 1$ occurs frequently as an abbreviation for $g - \ol{g}$, as does the symbol $g \ominus r$ as an abbreviation for $r(g/r \ominus 1)$, $0 < r \in \mbb{R}$. The expression $g \ominus 0$ is taken to represent $g$. Likewise the symbol $g \wedge r$ occurs frequently as an abbreviation for $r\ol{g/r}$, $0 < r \in \mbb{R}$. The appearance of the symbols $1$ or $r$ here is formal; it is not implied that the trunc contains elements named $1$ or $r$.

\begin{definition*}[trunc]
	A \emph{truncated vector lattice}, or more concisely a \emph{trunc}, is an archimedean vector lattice $G$ equipped with a truncation $g \mapsto \ol{g}$. A \emph{truncation homomorphism} is a vector lattice homomorphism $\theta \colon G \to H$ which preserves the truncation, i.e., $\theta(\ol{g}) = \ol{\theta(g)}$ for all $g \in G^+$. We denote the category of truncs with truncation homomorphisms by $\mbf{T}$.
\end{definition*} 

\begin{definition*}[$\mcal{C}_0 X$]
	The most classical truncs are those of the form 
	\[
		\mcal{C}_0 X
		\equiv \setof{\map{\tilde{g}}{X}{\mbb{R}}}{\text{$\tilde{g}$ is continuous and $\tilde{g}(*) = 0$}},
	\]
	where $(X, *)$ is a pointed Tychonoff space. In $\mcal{C}_0 X$ or any of its variants, truncation is always given by the formula
	\[
		\ol{\tilde{g}}(x) \equiv \tilde{g}(x) \wedge 1, \qquad x \in X.
	\]
\end{definition*}

The well known category $\mbf{W}$ of archimedean vector lattices with designated weak order units can be identified with a non-full subcategory of $\mbf{T}$.  

\begin{definition*}[$\mbf{W}$]
	 A trunc $G$ is called \emph{unital} if it possesses an element $u$, called its \emph{unit}, such that $\ol{g} = g \wedge u$ for all $g \in G^+$. We denote the non-full subcategory of $\mbf{T}$ comprised of the unital truncs by $\mbf{W}$. Its objects are of the form $(G,u)$, where $G$ is an archimedean trunc with unit $u$, and its morphisms are the truncation homomorphisms $\theta \colon (G,u_G) \to (H,u_H)$ such that $\theta(u_G) = u_H$.    
\end{definition*}

\begin{theorem}[\cite{Ball:2014.2}, 6.1.3]
	$\mbf{W}$ is a non-full monoreflective subcategory of $\mbf{T}$. 
\end{theorem}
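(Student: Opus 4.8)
The statement has two essentially independent components: that $\mbf{W}$ sits inside $\mbf{T}$ as a genuine but non-full subcategory, and that the inclusion functor has a left adjoint whose unit morphisms are monic. That $\mbf{W}$ is a subcategory is routine — the unit of a unital trunc is unique (if $u$ and $u'$ are both units then $u = u\wedge u'\le u'$, and symmetrically), identity maps preserve units, and composites of unit-preserving truncation homomorphisms do too — and non-fullness I would settle with a single example. Take $\theta\colon(\mbb{R},1)\to(\mbb{R}^2,(1,1))$ given by $x\mapsto(x,0)$, where $\mbb{R}$ and $\mbb{R}^2$ carry the truncations $r\mapsto r\wedge 1$ and $(r,s)\mapsto(r\wedge 1,s\wedge 1)$. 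A pointwise check shows $\theta$ is a truncation homomorphism, but $\theta(1)=(1,0)$ is not the unit of $\mbb{R}^2$, and is not even a weak order unit of $\mbb{R}^2$, so $\theta$ belongs to $\mbf{T}$ but not to $\mbf{W}$.

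For the reflection I would build, for each trunc $G$, its \emph{unitization} $wG$: freely adjoin to $G$ a positive element $u$ subject to the single family of relations $g\wedge u=\ol{g}$ for $g\in G^+$. The content of this construction is that $wG$ is again an archimedean vector lattice, which is cleanest to see by realizing it concretely — for instance inside a vector lattice of continuous extended-real-valued functions carrying a function representation of $G$, with $u$ the constant function $1$ (when $G$ is itself already unital the adjoined $u$ must be kept genuinely distinct from $G$'s old unit, which in the concrete picture means passing to a slightly larger representing space). The structural fact one needs, proved by a routine $\ell$-group calculation from the defining relation together with the identities $(h+g)^+=h^+-(h^-\wedge g)+g$ and $h\vee g=h^+\vee g$ for $g\ge 0$, is a normal form: every element of $wG$ can be written $g+ru$ with $g\in G$ and $r\in\mbb{R}$, and with $u$ kept distinct from $G$ this representation is unique. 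Granting this, $\eta_G\colon G\to wG$, $g\mapsto g$, is visibly an injective truncation homomorphism — hence a monomorphism in $\mbf{T}$, since the forgetful functor to sets is faithful — and $wG$ is unital with unit $u$, so $wG\in\mbf{W}$.

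It then remains to verify the universal property: given $(H,u_H)\in\mbf{W}$ and a truncation homomorphism $\theta\colon G\to H$, define $\bar{\theta}\colon wG\to H$ by $\bar{\theta}(g+ru)=\theta(g)+ru_H$. Uniqueness of $\bar{\theta}$ among $\mbf{W}$-morphisms extending $\theta$ is immediate, since $wG$ is generated as a vector lattice by $G\cup\{u\}$ and a $\mbf{W}$-morphism must send $u$ to $u_H$; well-definedness follows from uniqueness of the normal form. The substantive step is that $\bar{\theta}$ is a vector lattice homomorphism. It is visibly linear, so it suffices to check it commutes with $(\cdot)^+$, and since $(g_1+r_1u)\vee(g_2+r_2u)$ reduces, by translation invariance of $\vee$, to an expression $(h+su)^+$ with $h\in G$, $s\in\mbb{R}$, one reduces to proving $\bar{\theta}\bigl((h+su)^+\bigr)=(\theta(h)+su_H)^+$. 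Expanding the left side via the normal-form calculation writes $(h+su)^+$ explicitly using $h^+$, $h^-$, and truncations $h^{\pm}\wedge|s|$ in the sense of the $\wedge r$ notation fixed above; applying $\theta$, which preserves truncations, and using that $(H,u_H)$ being unital lets one rewrite the resulting truncations in $H$ as meets with $u_H$, produces precisely $(\theta(h)+su_H)^+$. The same bookkeeping shows $\bar{\theta}$ preserves $\ol{\cdot}=\cdot\wedge u$. Hence $\eta_G$ is a reflection and $\mbf{W}$ is monoreflective. As a coda I would note that the reflection of an already-unital trunc $(G,u_0)$ is strictly larger than $(G,u_0)$ — it carries a fresh unit — and that this is exactly the mechanism forcing $\mbf{W}$ to be non-full rather than full; the example of the first paragraph is the instance $G=\mbb{R}$.

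The main obstacle is the pair of facts about $wG$: establishing the normal form (which is where the defining relation gets used, and where one must be careful not to collapse the adjoined unit into $G$ when $G$ is already unital), and then transporting that normal form through $\theta$ to get the homomorphism property of $\bar{\theta}$. Everything else — the subcategory and non-fullness claims, linearity of $\bar{\theta}$, uniqueness — is formal. Alternatively one could obtain the adjoint abstractly, since $\mbf{W}$ is closed under the products formed in $\mbf{T}$ and the inclusion satisfies a solution-set condition, so that the adjoint functor theorem applies; but one would still have to argue separately that the unit is monic, for which the explicit unitization is the natural tool.
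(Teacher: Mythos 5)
The paper does not prove this statement---it is imported by citation from \cite{Ball:2014.2}, 6.1.3---and your proposal is a correct reconstruction of the standard argument given there: the unitization $wG = G \oplus \mbb{R}u$ with the relation $g \wedge u = \ol{g}$, realized concretely via the Yosida representation to secure archimedeanness and the normal form, together with a unit-forgetting map between unital truncs to witness non-fullness. Your identity $(h+su)^+ = h^+ - (h^- \wedge s) + su$ for $s \ge 0$ (with $h^-\wedge s$ the $s$-truncation) is exactly the computation needed for the universal property, so no essential step is missing.
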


\subsection{The Yosida representation} \label{Subsec:YosRepTruncs}

Let $\exR \equiv \mbb{R} \cup \{\pm\infty\}$ designate the extended real numbers.


\begin{definition*}[$\mcal{D}_0 X$]
	Let $(X,*)$ be a compact Hausdorff pointed space. A continuous function $\map{\tilde{g}}{X}{\exR}$ is said to be \emph{almost finite} if $\tilde{g}^{-1}(\mbb{R})$ is a dense subset of $\exR$. We denote the family of such functions which vanish at $*$ by  
	\[
		\mcal{D}_0 X
		\equiv \setof{\tilde{g}}{\text{$\map{\tilde{g}}{X}{\exR}$ is continuous, almost finite, and $\tilde{g}(*) = 0$}}.
	\] 
\end{definition*}

$\mcal{D}_0 X$ has many of the features of a trunc: it contains the constant $0$ function (and no other constant function) and is closed under pointwise join, meet, and scalar multiplication. It is also closed under the natural truncation $\ol{\tilde{g}} = \tilde{g} \wedge 1$, but it does not contain the constant $1$ function itself. $\mcal{D}_0 X$ also admits the  partial addition given by the rule: 
\[
	\tilde{f} + \tilde{g} = \tilde{h} \iff  \forall x \in \tilde{f}^{-1}(\mbb{R}) \cap \tilde{g}^{-1}(\mbb{R}) \cap \tilde{h}^{-1}(\mbb{R})\ (\tilde{f}(x) + \tilde{g}(x) = \tilde{h}(x)).
\] 
Even though this partial addition can fail to be total, it may nevertheless be the case that a subset $A \subseteq \mcal{D}_0 X$ is closed under all of the aforementioned operations and is therefore  a trunc. In such a case we say that \emph{$A$ is a trunc in $\mcal{D}_0 X$.} Such truncs are universal objects for $\mbf{T}$ in the following sense.

\begin{theorem}[\cite{Ball:2014.1}5.3.6]\label{Thm:16}
	Let $G$ be a trunc.
	\begin{enumerate}
		\item 
		There is a compact Hausdorff pointed space $(X, *)$, a trunc $\widetilde{G}$ in $\mcal{D}_0 X$, and a trunc isomorphism $\map{\nu_G} {G} {\widetilde{G}} = (g \mapsto \tilde{g})$ such that $\widetilde{G}$ separates the points of $X$. The space $(X,*)$ is unique up to isomorphism with respect to its properties, and is referred to as the \emph{Yosida space of $G$}. 
		
		\item 
		For every trunc homomorphism $\map{\theta}{G}{H}$, where $H$ is a trunc with Yosida space $Y \in \mbf{K}_*$, there exists a unique $\mbf{K}_*$-morphism $\map{k}{Y}{X}$ such that $\widetilde{\theta(g)} = \tilde{g} \circ k$ for all $g \in G$.
		\begin{figure}[htb]
			\begin{tikzcd}
				G \arrow{r}{\nu_G} \arrow{d}[swap]{\theta}
				& \mcal{D}_0 X \arrow{d}{\mcal{D}_0 k}
				& X \arrow{r}{\tilde{g}} 
				&\exR\\
				H \arrow{r}{\nu_H} & \mcal{D}_0 Y &Y \arrow{u}{k} \arrow {ur}[swap]{\widetilde{\theta(g)}}&
			\end{tikzcd}	
		\end{figure}	
	\end{enumerate} 
\end{theorem}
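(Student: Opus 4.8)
The plan is to construct the Yosida representation by a standard maximal-ideal/spectrum argument, adapted to the pointed, truncated setting, and then to extract functoriality essentially for free from the universal property of the spectrum.

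\textbf{Existence of the representing space and map.}
First I would observe that $\widetilde{G}$ should be $G$ itself realized as functions on an appropriate space $X$, so the real task is to produce $X$. The natural candidate for $X$ is the set of \emph{trunc homomorphisms} $\map{\varphi}{G}{\mbb{R}}$ (equivalently, the ``points'' of $G$), where $\mbb{R}$ is equipped with its natural truncation $r\mapsto r\wedge 1$; this is exactly the set of maximal kernels of $G$ together with the data of how each such kernel embeds into $\mbb{R}$, but in truncs a homomorphism to $\mbb{R}$ need not be surjective, so one must allow the value $0$-homomorphism and record that as the designated point $*$. I would topologize $X$ with the weak topology induced by the evaluation maps $\hat{g}\colon X\to\mbb{R}$, $\varphi\mapsto\varphi(g)$, making each $\hat{g}$ continuous by construction. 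The key compactness point is that $X$ sits as a closed subspace of the product $\prod_{g\in G^+}[0,\sup]$ of one-point compactifications of half-lines (using truncation to bound $\ol{g}$ by $1$), so $X$ is compact Hausdorff; one then verifies it is a \emph{pointed} space with $*$ the zero homomorphism. The representing trunc $\widetilde{G}$ is $\{\hat g : g\in G\}\subseteq\mcal{C}(X,\exR)$; I must check that each $\hat g$ is almost finite (its finite part is dense — this follows from ($\mfrak{T}3$), which prevents $g$ from being ``infinite everywhere''), that $\nu_G\colon g\mapsto\hat g$ preserves $+$, $\vee$, $\wedge$, scalars and truncation (immediate from pointwise definitions and the fact that the points are trunc homomorphisms), and that $\nu_G$ is injective — this is where ($\mfrak{T}2$) and the archimedean hypothesis enter, via the usual argument that an element killed by every real-valued homomorphism generates a proper $\ell$-ideal contained in a maximal one, contradiction. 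That $\widetilde{G}$ separates points of $X$ is tautological from the definition of the topology. Uniqueness of $(X,*)$ up to isomorphism follows once part (2) is in hand, by the standard argument: any two spaces with the stated universal-type properties admit mutually inverse comparison maps.

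\textbf{Functoriality.}
Given $\map{\theta}{G}{H}$ with $H$ having Yosida space $Y$, I would define $k\colon Y\to X$ by $k(\psi)=\psi\circ\theta$ for $\psi\colon H\to\mbb{R}$ a point of $Y$. This is well-defined (a composite of trunc homomorphisms is one), sends $*_Y$ to $*_X$ (the zero homomorphism composes to the zero homomorphism), and is continuous because $\hat g\circ k = \widehat{\theta(g)}$ for every $g\in G$, and the topology on $X$ is initial for the $\hat g$. The identity $\widetilde{\theta(g)} = \tilde g\circ k$ is then literally the displayed equation $\widehat{\theta(g)}=\hat g\circ k$. Uniqueness of $k$ is immediate: $\widetilde{G}$ separates the points of $X$, so a map into $X$ is determined by its composites with all $\tilde g$.

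\textbf{Main obstacle.}
The routine parts are the algebraic verifications that $\nu_G$ is a trunc homomorphism and that $k$ is well-defined and continuous. The genuinely delicate point is showing $X$ is nonempty-enough and that $\nu_G$ is injective and that each $\hat g$ is \emph{almost finite} rather than merely continuous into $\exR$: this requires the Yosida-style maximal $\ell$-ideal machinery in the presence of a truncation rather than a unit, i.e.\ one cannot simply quote the classical $\mbf{W}$ result but must either invoke it via the monoreflection $\mbf{W}\hookrightarrow\mbf{T}$ together with the passage from the unital hull back to $G$, or redo the maximal-ideal argument tracking the truncation. Since this is precisely the content of \cite{Ball:2014.1}, Theorem 5.3.6, I would in fact cite that development for the hard analytic core and present here only the reorganization that exhibits the universal property (2) cleanly; the one thing genuinely worth spelling out is why the finite part of each $\hat g$ is dense in $\exR$, which is the truncation-specific incarnation of the archimedean property and uses ($\mfrak{T}3$) in an essential way.
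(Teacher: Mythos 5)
First, a point of reference: the paper does not prove this theorem at all --- it is quoted verbatim from \cite[5.3.6]{Ball:2014.1}, so there is no in-paper argument to compare against. Your proposal, however, sketches a specific construction, and that construction has a genuine gap at its foundation: you take $X$ to be the set of real-valued trunc homomorphisms $\map{\varphi}{G}{\mbb{R}}$ and claim it embeds as a \emph{closed} subspace of a product of one-point compactifications. This fails whenever $G$ contains unbounded elements. The paper's own Example \ref{Ex:1} (Section \ref{Sec:MaddenRep}) is a counterexample: for $\widetilde{G} = \setof{\tilde{f} + r\tilde{g}_0}{\tilde{f} \in \mcal{C}_0 X,\ r \in \mbb{R}}$ on $X = ([0,1],1)$ with $\tilde{g}_0(x) = 1/x - 1$, the real-valued trunc homomorphisms are exactly the evaluations at points of $(0,1]$, a non-compact space, whereas the Yosida space is $[0,1]$. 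The point $x=0$ of the Yosida space does not arise as a real-valued homomorphism on $G$ because $\tilde{g}_0(0) = \infty$; the closure of your embedded copy of $X$ in the product would adjoin tuples with coordinates equal to $\infty$, which are not homomorphisms $G \to \mbb{R}$ at all. This is precisely why the target of the representation is $\mcal{D}_0 X$ (almost-finite $\exR$-valued functions) rather than $\mcal{C}_0 X$: the correct construction builds $X$ from maximal $\ell$-ideals, or equivalently from real-valued trunc homomorphisms on the bounded part $G^*$ (or on $\ol{G}$), and then each unbounded $g$ extends to an extended-real-valued function that may genuinely take the value $\pm\infty$ at some points. Almost-finiteness is not merely a ``delicate verification'' on top of your construction; it is the symptom of the fact that your candidate $X$ omits the points where some $\tilde{g}$ is infinite.

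The same defect propagates to your functoriality argument: $k(\psi) = \psi \circ \theta$ is only defined at points $\psi$ of $Y$ that are real-valued on all of $H$, so on the correct (compact) Yosida space $Y$ the map $k$ must instead be defined via maximal ideals or via the restriction to bounded parts and then checked to satisfy $\widetilde{\theta(g)} = \tilde{g} \circ k$ even at points where one side is infinite. The uniqueness of $k$ from point-separation, and the uniqueness of $(X,*)$ given part (2), are fine. Since you ultimately defer to \cite{Ball:2014.1} for the analytic core, the honest course is to cite that theorem outright, as the paper does; but the construction you sketch as motivation is not a construction of the Yosida space and should not be presented as one.
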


\section{The Madden representation of truncs\label{Sec:MaddenRep}}

\subsection{The trunc $\mcal{R}_0 L$}

\begin{definition*}[$\mcal{R}L$, $\mcal{R}_0 L$, $\mcal{D}_0 M$]
	For any frame $L$, we denote the family of frame homomorphisms $\mcal{O} \mbb{R} \to L$ by $\mcal{R} L$. For a pointed frame $(L,*)$, we denote the family of pointed frame homomorphisms $\mcal{O}_* \mbb{R} \to L$ by $\mcal{R}_0 L$. For a compact pointed frame $M$, we denote the family of pointed frame homomorphisms $\map{h'}{\mcal{O}_* \exR}{M}$ such that $h'(-\infty, \infty)$ is a dense element of $M$ by $\mcal{D}_0 M$. Thus if $M = \mcal{O}_* X$ for $X \in \mbf{K}_*$ then $\mcal{D}_0 M = \setof{\tilde{g}^{-1}}{g \in \mcal{D}_0 X}$.
\end{definition*}

$\mcal{R}_0 L$ inherits a natural trunc structure from the trunc $\mbb{R}$, just as $\mcal{R} L$ inherits its structure from the $\mbf{W}$-object $\mbb{R}$. In similar fashion, $\mcal{D}_0 M$ inherits from $\exR$ a scalar multiplication, the lattice operations, and a truncation, together with a partial addition operation. We briefly outline the details of these operations here; the reader wishing a fuller explanation can find it worked out for $\mcal{R} L$ in \cite{BallWalters:2002} or  \cite{BallHager:1991}, and for $\mcal{R}_0 L$ in  \cite{Ball:2014.2}.  The reader may also consult the several papers of Bernhard Banaschewski related to the topic.

Consider an $n$-ary operation on $\mbb{R}$, i.e., a continuous function $w \colon \mbb{R}^n \to \mbb{R}$. Then $w$ gives rise to an $n$-ary operation on $\mcal{R}_0 L$ as follows. A basic open subset of $\mbb{R}^n$ has the form 
\[
\vec{U} 
= (U_1, U_2, \dots , U_n)
=\setof{(x_1,x_2,\dots,x_n)}{x_i \in U_i},
\] 
where $U_i \in \mcal{O} \mbb{R}$ for all $i$. We write $w(\vec{U}) \subseteq V$ to mean that $w(x_1,x_2,\dots,x_n) \in V$ whenever $x_i \in U_i$ for all $i$. And we use $\vec{f}$ to abbreviate $(f_1,f_2,\dots,f_n) \in (\mcal{R}_0 L)^n$. 

\begin{theorem}[\cite{Ball:2018}, 3.2]\label{Thm:4}
	Assume the foregoing notation. The operation induced on $\mcal{R}_0 L$ by the function $w \colon \mbb{R}^n \to \mbb{R}$, also denoted $w$ by abuse of notation, is given by the formula
	\[
	w(\vec{f})(V)
	= \smashoperator[l]{\bigvee_{w(\vec{U}) \subseteq V}} \bigwedge_if_i(U_i),
	\qquad V \in \mcal{O}_* \mbb{R}.
	\]   
\end{theorem}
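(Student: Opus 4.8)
The plan is to verify that the stated formula does define a frame homomorphism $w(\vec f)\colon \mcal O_*\mbb R\to L$, that it is pointed (commutes with $*_0$), and that when $L=\mcal OX$ and the $f_i$ are the frame maps dual to continuous functions $\tilde f_i\colon X\to\mbb R$ vanishing at $*$, the map $w(\vec f)$ is dual to the composite $w\circ(\tilde f_1,\dots,\tilde f_n)$. The last point pins down the formula as the \emph{only} reasonable candidate, and then the pointfree verification makes it legitimate. Since the paper cites this as \cite{Ball:2018}, 3.2, I expect the proof here to be a brief recollection rather than a from-scratch argument; I would organize it as follows.

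First I would recall the standard presentation of $\mcal O\mbb R$ by generators $(p,q)$ for rationals $p<q$ (or the one-sided generators $(p,-)$, $(-,q)$ used for $\mcal O_*\mbb R$) subject to the usual relations, so that to specify a frame map out of $\mcal O_*\mbb R$ it suffices to specify its values on generators and check the relations. Writing $\varphi\equiv w(\vec f)$ for the formula in the statement, I would observe that $\varphi$ is visibly monotone in $V$, and that $\varphi(\mbb R)=\top$ because $w(\vec U)\subseteq\mbb R$ for \emph{every} basic box $\vec U$, so the join ranges over a cover of $1$ via the $f_i$'s; dually $\varphi(\emptyset)=\bot$. Preservation of finite meets is the first substantive point: given $V_1,V_2$, one uses that $w(\vec U)\subseteq V_1$ and $w(\vec U')\subseteq V_2$ together with the fact that boxes are closed under intersection (componentwise) and that each $f_i$ preserves finite meets, to show $\varphi(V_1)\wedge\varphi(V_2)\le\varphi(V_1\cap V_2)$; the reverse inequality is monotonicity. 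Preservation of arbitrary joins is where the topology of $\mbb R^n$ enters: if $V=\bigcup_\alpha V_\alpha$ and $w(\vec U)\subseteq V$, then by continuity of $w$ and local compactness one can refine $\vec U$ into a union of smaller boxes each of which is mapped by $w$ into some single $V_\alpha$; combined with the frame distributive law in $L$ and the fact that each $f_i$ preserves joins, this yields $\varphi(V)\le\bigvee_\alpha\varphi(V_\alpha)$, and again monotonicity gives the other direction.

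The pointedness statement, needed so that $w(\vec f)$ genuinely lies in $\mcal R_0 L$, reduces to checking $*_L\circ\varphi=*_0$, i.e. that $\varphi(V)$ captures whether $0=w(\vec 0)\in V$; but each $f_i$ is pointed, meaning $*_L\circ f_i$ detects membership of $0$ in an open set, so applying $*_L$ to the defining join and using that $*_L$ is a frame map turns the formula into exactly the point-evaluation of $w$ at the origin. For the final compatibility claim (the parenthetical about $M=\mcal O_*X$), I would unwind both sides on a generator $V$: the right-hand side, the preimage under $w\circ(\tilde f_1,\dots,\tilde f_n)$ of $V$, is the union over all boxes $\vec U$ with $w(\vec U)\subseteq V$ of $\bigcap_i\tilde f_i^{-1}(U_i)$, which is precisely the formula with $f_i(U_i)=\tilde f_i^{-1}(U_i)$ and $\bigwedge=\bigcap$, $\bigvee=\bigcup$ in $\mcal OX$.

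The main obstacle is the preservation-of-joins step: making the refinement argument precise requires a genuine (if routine) use of the continuity of $w$ and the frame distributivity in $L$, and one must be a little careful that the refinement can be taken with all refining pieces again basic boxes and that the index set manipulations are legitimate in a complete lattice. Everything else — monotonicity, the top and bottom elements, finite meets, pointedness — is essentially formal once the generator-and-relations presentation of $\mcal O_*\mbb R$ is in hand. I would therefore spend the bulk of the written proof on the join step and dispatch the rest in a sentence or two, noting that the analogous verification for $\mcal RL$ is carried out in detail in \cite{BallWalters:2002}.
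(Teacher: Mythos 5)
First, a point of comparison: the paper does not prove this statement at all --- it is imported verbatim from \cite{Ball:2018}, 3.2, with the reader referred to \cite{BallWalters:2002}, \cite{BallHager:1991}, and \cite{Ball:2014.2} for the underlying machinery. So there is no in-paper argument to measure your proposal against, and what follows assesses the proposal on its own terms.

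Your overall strategy --- check directly that the displayed formula defines a pointed frame homomorphism $\mcal{O}_*\mbb{R} \to L$ and that it specializes to composition with $w\circ(\tilde f_1,\dots,\tilde f_n)$ in the spatial case --- is viable, and the routine parts (monotonicity, top and bottom, finite meets via componentwise intersection of boxes plus distributivity, pointedness using $w(\vec{0})=0$) are handled correctly. The gap is exactly where you place the weight: preservation of arbitrary joins. The mechanism you describe --- refine $\vec{U}$ into smaller boxes each mapped by $w$ into a single $V_\alpha$, then invoke \enquote{the frame distributive law in $L$ and the fact that each $f_i$ preserves joins} --- does not work as stated. Writing $U_i=\bigcup_j U_{ij}$ and distributing $\bigwedge_i\bigvee_j f_i(U_{ij})$ produces \emph{mixed} boxes $\prod_i U_{i,j(i)}$, which need not be among your refining boxes and hence need not be carried by $w$ into any single $V_\alpha$; the containment $\bigcup_j\prod_i U_{ij}\subseteq\prod_i\bigcup_j U_{ij}$ is strict in general, and the identity $\bigwedge_i f_i(U_i)=\bigvee_j\bigwedge_i f_i(U_{ij})$ for a covering family of boxes is precisely what must be \emph{proved}, not quoted. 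The standard repair is a compactness step: use regularity of $\mcal{O}\mbb{R}$ and the fact that each $f_i$ is a frame map to write $\bigwedge_i f_i(U_i)$ as the join of $\bigwedge_i f_i(W_i)$ over boxes with each $\overline{W_i}$ compact and contained in $U_i$; for such a compact box a Lebesgue-number argument yields a \emph{finite grid} refinement whose cells each land in some $V_\alpha$ under $w$, and for a finite grid the required distributivity does follow from finite distributivity and preservation of finite joins by the $f_i$. Since this step carries the entire analytic content of the theorem, the proposal as written is a correct plan rather than a proof.
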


The function $w \colon \mbb{R}^n \to \mbb{R}$ may come from a truncation term, i.e., an expression built up from variables and constants using the trunc operations. In that case the corresponding function $w$ is obtained by interpreting each operation and constant of the term as the corresponding operation or constant of $\mbb{R}$. For instance, the functions in Lemma \ref{Lem:8} are those associated with the truncation terms $\ol{x}$ and $x \ominus 1$. The interpretations of these terms in $\mbb{R}$ are $x \wedge 1$ and $x - \ol{x} = (x - 1)^+$, respectively; the lemma provides the formulas for the corresponding frame homomorphisms. 

\begin{lemma}[\cite{Ball:2018}, 3.3]\label{Lem:8}
	The following hold for $g \in \mcal{R}_0^+ L$.
	\begin{align*}
	\ol{g}(-\infty,r) 
	&= \begin{cases}
	\top          & \text{if $ r > 1$}\\
	g(-\infty,r)  & \text{if $r \leq 1$}
	\end{cases}&
	\ol{g}(r, \infty) 
	&= \begin{cases}
	\bot          & \text{if $r \geq 1$}\\
	g(r, \infty)  & \text{if $r < 1$}
	\end{cases}\\
	g \ominus 1(r,\infty) 
	&= \begin{cases}
	\top             & \text{if $r < 0$}\\
	g(r + 1, \infty) & \text{if $r \geq 0$}
	\end{cases}&
	g \ominus 1(-\infty, r) 
	&= \begin{cases}
	\bot             & \text{if $r \leq 0$} \\
	g(-\infty, r + 1)& \text{if $r > 0$}
	\end{cases}
	\end{align*}
\end{lemma}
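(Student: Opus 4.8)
The plan is to derive each of the four formulas directly from the general formula for induced operations in Theorem~\ref{Thm:4}, specialized to the unary truncation terms $\ol{x}$ and $x \ominus 1$. Since these are unary operations, the index set in the formula is a singleton, so Theorem~\ref{Thm:4} collapses to
\[
	w(g)(V) = \bigvee_{w(U) \subseteq V} g(U),
	\qquad V \in \mcal{O}_* \mbb{R},
\]
where $w \colon \mbb{R} \to \mbb{R}$ is the real interpretation of the term. For $\ol{x}$ this interpretation is $w(x) = x \wedge 1$, and for $x \ominus 1$ it is $w(x) = (x-1)^+ = (x \vee 1) - 1$. The strategy is then, for each of the four basic open rays $V$ appearing in the statement, to compute the set $\setof{U \in \mcal{O}\mbb{R}}{w(U) \subseteq V}$ explicitly, take the join of $g(U)$ over that set, and simplify using the fact that $g$ is a frame homomorphism (so it preserves finite meets and arbitrary joins, and sends $\mbb{R}$ to $\top$, $\emptyset$ to $\bot$).

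First I would handle $\ol{g}(-\infty, r)$. Here $w(x) = x \wedge 1 \in (-\infty, r)$ holds for $x$ iff either $r > 1$ (in which case every real $x$ qualifies, since $x \wedge 1 \le 1 < r$) or $r \le 1$ (in which case $x \wedge 1 < r$ iff $x < r$). In the first case the open sets $U$ with $w(U) \subseteq (-\infty,r)$ include $U = \mbb{R}$ itself, so the join is $g(\mbb{R}) = \top$. In the second case the largest such $U$ is $(-\infty, r)$, and since $g$ is monotone the join over all admissible $U$ equals $g(-\infty,r)$. The other three formulas are handled the same way: for $\ol{g}(r,\infty)$, note $x \wedge 1 > r$ is impossible when $r \ge 1$ (giving $\bot$, as the only admissible $U$ is $\emptyset$), and equals $g(r,\infty)$ when $r < 1$ since then $x \wedge 1 > r \iff x > r$; for $g \ominus 1 (r,\infty)$ with $w(x) = (x-1)^+$, we have $(x-1)^+ > r$ automatically when $r < 0$ (giving $\top$) and $\iff x > r+1$ when $r \ge 0$ (giving $g(r+1,\infty)$); and symmetrically $(x-1)^+ < r$ is impossible when $r \le 0$ (giving $\bot$) and $\iff x < r+1$ when $r > 0$ (giving $g(-\infty, r+1)$).

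The one point requiring a little care — and the main obstacle, such as it is — is the passage from the join $\bigvee_{w(U) \subseteq V} g(U)$ to the single value $g(U_0)$ when $U_0$ is the \emph{largest} open set with $w(U_0) \subseteq V$. This is legitimate because $w$ is continuous, so $w^{-1}(V)$ is open and is itself the largest such $U$; moreover $U \subseteq U_0$ implies $g(U) \le g(U_0)$, so $g(U_0)$ is both an upper bound for the family and a member of it, hence the join. In the cases yielding $\top$ one must also check that $\mbb{R}$ (or some cofinal family of $U$'s whose join is $\mbb{R}$) actually satisfies $w(U) \subseteq V$, which is immediate from the inequalities above. Finally, one should confirm the boundary behavior at $r = 1$ (resp. $r = 0$) is consistent between the two cases of each formula — e.g. at $r = 1$, $\ol{g}(-\infty,1) = g(-\infty,1)$ from the lower branch, which agrees with the fact that $x \wedge 1 < 1 \iff x < 1$ — so the case split is genuinely along $r \le 1$ versus $r > 1$ (resp. $r \ge 1$ versus $r < 1$) as written. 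With these observations the four identities follow by routine substitution, and no deeper structural input is needed.
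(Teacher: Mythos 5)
Your derivation is correct and is exactly the route the paper intends: Lemma~\ref{Lem:8} is cited from \cite{Ball:2018} without proof here, but the paragraph preceding it explains that these formulas are obtained by specializing Theorem~\ref{Thm:4} to the unary functions $w(x) = x \wedge 1$ and $w(x) = (x-1)^+$, which is what you do. Your observation that the join $\bigvee_{w(U) \subseteq V} g(U)$ collapses to $g(w^{-1}(V))$ because $w^{-1}(V)$ is the largest admissible $U$ and $g$ is monotone is the right justification, so nothing is missing.
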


\begin{corollary}[\cite{Ball:2018}, 3.4]
	If an equation in the elementary language of $\mbf{T}$, meaning an expression of the form $\tau_1 = \tau_2$ for $\mbf{T}$-terms $\tau_i$, holds in the trunc $\mbb{R}$ then it also holds in $\mcal{R}_0 L$. In particular, all of the axioms defining a trunc hold in $\mcal{R}_0 L$.  
\end{corollary}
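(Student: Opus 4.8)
The plan is to derive the corollary from a single structural property of the construction in Theorem \ref{Thm:4}, its \emph{compatibility with composition}. Write $\widehat{w}$ for the $n$-ary operation that Theorem \ref{Thm:4} attaches to a continuous map $w\colon\mbb{R}^n\to\mbb{R}$. Two facts are needed. First, the $i$-th projection $\pi_i\colon\mbb{R}^n\to\mbb{R}$ induces the $i$-th coordinate: in the formula of Theorem \ref{Thm:4} one may take $U_j=\mbb{R}$ for $j\neq i$, so that $\bigwedge_jf_j(U_j)=f_i(U_i)$, and then $\bigvee_{U_i\subseteq V}f_i(U_i)=f_i(V)$ because $f_i$ preserves joins and $V$ is itself among the $U_i$; similarly the nullary operation $0$ of $\mbb{R}$ induces the zero of $\mcal{R}_0L$. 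Second -- the substance -- if $v\colon\mbb{R}^k\to\mbb{R}$ and $w_1,\dots,w_k\colon\mbb{R}^n\to\mbb{R}$ are continuous and $w=v\circ(w_1,\dots,w_k)$, then $\widehat{w}(\vec f)=\widehat{v}(\widehat{w_1}(\vec f),\dots,\widehat{w_k}(\vec f))$ for all $\vec f\in(\mcal{R}_0L)^n$.

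Granting these, the corollary follows at once. A $\mbf{T}$-term $\tau$ in variables $x_1,\dots,x_n$ determines, by interpreting each operation symbol as the corresponding continuous operation on $\mbb{R}$, a continuous map $w_\tau\colon\mbb{R}^n\to\mbb{R}$; an induction on the structure of $\tau$ -- using the first fact for variables and for the constant $0$, and the second fact together with the (routine; cf.\ \cite{Ball:2014.2}) observation that each basic trunc operation of $\mcal{R}_0L$ is obtained via $w\mapsto\widehat{w}$ from the corresponding operation on $\mbb{R}$ -- shows that the interpretation of $\tau$ in $\mcal{R}_0L$ is exactly $\widehat{w_\tau}$. Now ``$\tau_1=\tau_2$ holds in $\mbb{R}$'' says precisely that $w_{\tau_1}=w_{\tau_2}$ as functions $\mbb{R}^n\to\mbb{R}$; hence $\widehat{w_{\tau_1}}=\widehat{w_{\tau_2}}$, which is ``$\tau_1=\tau_2$ holds in $\mcal{R}_0L$.''

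The only real work is the composition identity, which I expect to be the main obstacle. Both sides are pointed frame maps $\mcal{O}_*\mbb{R}\to L$, so it suffices to compare values on a basic open $V$. Expanding the right-hand side by Theorem \ref{Thm:4}, then using that frame maps preserve finite meets (to replace $\bigwedge_jf_i(U_i^{(j)})$ by $f_i(\bigcap_jU_i^{(j)})$) and the frame distributive law, one finds that every joinand on the right already occurs on the left, so that inequality is immediate. For the reverse inequality one must bound $\bigwedge_if_i(U_i)$ by the right-hand side for each box $\vec U$ with $w(\vec U)\subseteq V$. Since each $f_i$ preserves joins and every open $U_i\subseteq\mbb{R}$ is the union of the bounded open intervals with closure in $U_i$, one may, by distributing, reduce to the case where $\overline{\vec U}$ is compact and $w(\overline{\vec U})\subseteq V$; then continuity of $v$ and of the $w_j$ supplies, for each $\vec x\in\overline{\vec U}$, a box $B_{\vec x}\ni\vec x$ and a box $N_{\vec x}$ with $v(N_{\vec x})\subseteq V$ and $(w_1,\dots,w_k)(B_{\vec x})\subseteq N_{\vec x}$, and a Lebesgue-number argument lets one cover $\vec U$ by product boxes of small diameter, each contained in some $B_{\vec x}$ and hence mapped by $(w_1,\dots,w_k)$ into a box $\vec W$ with $v(\vec W)\subseteq V$. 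Distributing $\bigwedge_if_i(U_i)$ over this cover and matching each piece against the corresponding joinand of the right-hand side finishes the proof. (In substance this is the classical fact that $\mcal{O}(\mbb{R}^n)$ is the $n$-fold coproduct of $\mcal{O}\mbb{R}$ in $\mbf{Frm}$; one could instead quote that and obtain the composition identity formally from functoriality of $w\mapsto w^{-1}$ and the universal property of the coproduct.)

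For the concluding assertion that all the trunc axioms hold in $\mcal{R}_0L$: the equational ones -- all the vector-lattice identities, and $(\mfrak{T}1)$ once its two inequalities are rewritten as equations valid in the trunc $\mbb{R}$ -- transfer by the corollary, whereas $(\mfrak{T}2)$, $(\mfrak{T}3)$, and the archimedean property are not equational and must be checked separately. These are short: e.g.\ if $g\in\mcal{R}_0^+L$ and $\ol{g}=0$, then Lemma \ref{Lem:8} gives $g(-\infty,r)=\ol{g}(-\infty,r)=\top$ for $0<r\le1$ and $g(r,\infty)=\ol{g}(r,\infty)=\bot$ for $0\le r<1$, and with positivity ($g(-\infty,0)=\bot$) this forces $g=0$, which is $(\mfrak{T}2)$; $(\mfrak{T}3)$ is handled the same way, and archimedeanness of $\mcal{R}_0L$ is recorded in \cite{Ball:2014.2}.
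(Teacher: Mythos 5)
The paper does not actually prove this corollary---it is quoted verbatim from \cite{Ball:2018}, 3.4, and appears here with no argument beyond the citation---so there is no in-paper proof to measure you against. That said, your reconstruction is correct and is the intended one: reduce everything to the statement that $w\mapsto\widehat{w}$ is compatible with projections and with composition of continuous maps (equivalently, that $\mcal{O}(\mbb{R}^n)$ is the $n$-fold frame coproduct of $\mcal{O}\mbb{R}$ and $w\mapsto w^{-1}$ is functorial), then induct on the structure of the terms. Your sketch of the composition identity is sound: the inequality $\widehat{v}(\widehat{w_1}(\vec f),\dots,\widehat{w_k}(\vec f))\le\widehat{w}(\vec f)$ falls out of frame distributivity and meet-preservation of the $f_i$, and the reverse inequality is exactly the compactness/Lebesgue-number refinement you describe. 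One point in your favor worth highlighting: you correctly observe that the closing claim of the corollary is not literally a consequence of its first sentence, since ($\mfrak{T}2$), ($\mfrak{T}3$), and archimedeanness are implications rather than equations; your direct verification of ($\mfrak{T}2$) from Lemma \ref{Lem:8} is right, and ($\mfrak{T}3$) goes the same way. The only loose thread---inherited from the statement itself rather than introduced by you---is that the truncation is a partial operation defined only on $G^+$, so \enquote{$\tau_1=\tau_2$ holds in $\mbb{R}$} really means it holds on the positive cone, and one must either check that the chosen continuous extensions $w_{\tau_i}$ agree on all of $\mbb{R}^n$ or restrict the transfer argument to positive tuples; this is handled in \cite{Ball:2018} and does not affect the substance of your argument.
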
 

Truncs of the form $\mcal{R}_0 L$, $L \in \mbf{Frm}_*$, are universal objects for $\mbf{T}$, a result due to Madden and Vermeer for $\mbf{W}$ (\cite{MaddenVermeer:1986}) and to the author for $\mbf{T}$ (\cite{Ball:2014.2}).  Our objective here, however, is Theorem \ref{Thm:3}, a sharper representation which will subsume both of these results. 

\subsection{Compactifications}

Recall that a frame surjection $\map{q}{M}{L}$ is associated with the sublocale $q_*(L) \equiv \setof{q_*(y)}{y \in L}$, where $\map{q_*}{L}{M} = \big(y \mapsto \bigvee \setof{x}{q(x) \leq y}\big)$ is the adjoint map of $q$. For example, each element $y \in M$ gives rise to its open quotient map $\map{\open{y}}{M}{\downset{y}} = (x \mapsto y \wedge x)$, which is associated with the open sublocale 
\[
\open{y}_*(\downset{y})
= \setof{y \to x}{x \in M}
\equiv y \to M.
\] 
Recall that for any frame surjection $\map{q}{M}{L}$, the associated sublocale $S = q_*(L)$ is contained in the open sublocale $y \to M$ iff $q(y) = \top$ (\cite[1.3.1]{PicadoPultr:2012}). Recall also that any sublocale in a fit frame, and therefore in a regular frame, is the intersection of the open sublocales which contain it (\cite[1.3.2]{PicadoPultr:2012}), i.e., $S = \cap \setof{y \to M}{q(y) = \top}$. Finally, recall that a frame surjection is said to be \emph{dense} if $q(x) = \bot$ implies $x = \bot$, i.e., if $\bot \in q_* (L)$. 

A cozero element of a frame $L$ is one of the form $g(\mbb{R} \smallsetminus \{0\})$, $g \in \mcal{R} L$. (See \cite[XIV 6.2.3]{PicadoPultr:2012} for a characterization.) By replacing $g$ by $\left|g\right|$, we may assume a cozero element to be of the form $g(0, \infty)$ for $g \in \mcal{R}^+ L$. In a pointed frame $(L, *)$, a cozero element which contains $*$ is of the form $g(0, \infty)$ for some $g \in \mcal{R}_0^+ L$, and a cozero element which omits star is of the form $\con g \equiv g(-\infty, 1)$ for some $g \in \mcal{R}_0^+ L$. By replacing $g$ by $\ol{g}$ in the latter cases we may assume that $g = \ol{g}$. For a subtrunc $G \subseteq \mcal{R}_0 L$ we let $\coz \ol{G} 
\equiv \setof{\coz g}{g \in \ol{G}}$ and $\con \ol{G} \equiv \setof{\con g}{g \in \ol{G}}$. 

\begin{definition*}[suitable compactification]	
	A \emph{compactification} is a dense surjective pointed frame homomorphism $\map{q}{M}{L}$ with compact domain $M$. The compactification is said to be \emph{suitable} if the corresponding sublocale $q_*(L)$ is the intersection of the cozero sublocales containing it. For compactifications $\map{q}{M}{L}$ and $\map{r}{N}{Q}$, a \emph{homomorphism of compactifications} is a pair $(l, m)$ of pointed frame homomorphisms such that $m \circ q = r \circ l$.  
	\[
	\begin{tikzcd}
	M \arrow{r}{q} \arrow{d}[swap]{l}
	& L \arrow{d}{m}\\ 
	N \arrow{r}[swap]{r}& Q 
	\end{tikzcd}	
	\]	
	We write $\map{(l, m)}{q}{r}$. We denote the category of compactifications with their homomorphisms by $\SC$. 
\end{definition*}

\subsection{Representing $G$ as a subtrunc of $\mcal{R}_0 L$\label{Subsec:MaddenRep}}

We denote by $\map{p} {\mcal{O}_* \exR} {\mcal{O}_* \mbb{R}}$ the pointed frame homomorphism $(U \mapsto U \cap (-\infty, \infty))$ of the inclusion $\mbb{R} \to \exR$. 

\begin{lemma}\label{Lem:4}
	Let $\map{q}{M}{L}$ be a dense pointed frame surjection. Then a homomorphism $h'$ \enquote*{drops} to a homomorphism $h$ such that $h \circ p = q \circ h'$ iff $q \circ h'(-\infty, \infty) = \top$.
	\[
	\begin{tikzcd}
	\mcal{O}_* \exR \arrow{r}{h'} \arrow{d}[swap]{p} & M \arrow{d}{q}\\
	\mcal{O}_*\mbb{R} \arrow{r}[swap]{h} & L
	\end{tikzcd}
	\]
\end{lemma}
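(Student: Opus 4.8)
The plan is to prove both implications directly from the formula for frame maps $\mcal{O}_* \exR \to M$ and the defining property of $p$ as the map of the inclusion $\mbb{R} \hookrightarrow \exR$.

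First I would establish the forward direction, which is the easier one. Suppose $h'$ drops to some $h$ with $h \circ p = q \circ h'$. Note that $p(-\infty, \infty) = (-\infty, \infty) \cap (-\infty, \infty) = (-\infty, \infty)$, which is the top element $\top$ of $\mcal{O}_* \mbb{R}$ (since $\mbb{R} = (-\infty, \infty)$ as subsets of $\exR$, the set $(-\infty,\infty)$ is all of $\mbb{R}$). Hence $q \circ h'(-\infty, \infty) = h \circ p(-\infty, \infty) = h(\top) = \top$, since frame homomorphisms preserve $\top$. This gives the stated necessary condition.

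For the converse, assume $q \circ h'(-\infty, \infty) = \top$. The key observation is that the elements of $\mcal{O}_* \mbb{R}$ are precisely the images $p(W)$ for $W \in \mcal{O}_* \exR$: every open $U \subseteq \mbb{R}$ equals $U \cap (-\infty,\infty) = p(U)$, viewing $U$ as an open subset of $\exR$ (open in $\mbb{R}$ need not be open in $\exR$, but $U$ is open in $\exR$ relative to $(-\infty,\infty)$, so $U \cup \{+\infty\}$ or similar adjustments give an open subset of $\exR$ whose intersection with $\mbb{R}$ is $U$ — more cleanly, $p$ is a surjection). So I would define $h$ on $\mcal{O}_* \mbb{R}$ by $h(p(W)) \equiv q \circ h'(W)$ and check this is well-defined: if $p(W_1) = p(W_2)$, i.e. $W_1 \cap (-\infty,\infty) = W_2 \cap (-\infty,\infty)$, then $W_1$ and $W_2$ differ only by subsets of $\{-\infty, +\infty\}$, so $W_1 \vee (-\infty,\infty) = W_2 \vee (-\infty,\infty)$ in $\mcal{O}_* \exR$; applying the frame map $q \circ h'$ and using $q \circ h'(-\infty,\infty) = \top$ together with distributivity yields $q \circ h'(W_1) \vee \top = q \circ h'(W_2) \vee \top$, hence $\top = \top$ — that is too weak, so instead I would argue $q\circ h'(W_i) = q\circ h'(W_i) \wedge \top = q\circ h'(W_i) \wedge q\circ h'(-\infty,\infty) = q\circ h'(W_i \wedge (-\infty,\infty)) = q\circ h'(p(W_i))$ reinterpreted, giving $q\circ h'(W_1) = q\circ h'(W_1 \cap (-\infty,\infty)) = q\circ h'(W_2 \cap (-\infty,\infty)) = q\circ h'(W_2)$. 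Thus $h$ is well-defined. That $h$ is a frame homomorphism preserving the point follows because $p$ is a surjective pointed frame map and $q \circ h'$ is a pointed frame map factoring set-theoretically through $p$; one checks $h$ preserves finite meets, arbitrary joins, and the point by transporting the corresponding identities for $q \circ h'$ along $p$. Finally $h \circ p = q \circ h'$ holds by construction.

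The main obstacle I anticipate is the well-definedness and homomorphism-check for $h$: one must be careful that the map $W \mapsto q\circ h'(W)$ genuinely depends only on $W \cap (-\infty,\infty)$, which is exactly where the hypothesis $q\circ h'(-\infty,\infty) = \top$ is used — it forces $q\circ h'$ to "ignore" the points $\pm\infty$, so that it factors through the quotient frame map $p$. Once this is in place, that the factorization $h$ inherits the frame-homomorphism and pointedness properties from $q\circ h'$ is routine, since $p$ is itself a pointed frame surjection and hence a regular epimorphism in $\mbf{Frm}_*$.
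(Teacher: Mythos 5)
Your proposal is correct and follows essentially the same route as the paper: the forward direction via $h(\top)=\top$, and the converse by defining $h(U) \equiv q \circ h'(U')$ for any $U'$ with $p(U')=U$, with well-definedness secured by meeting with $q\circ h'(-\infty,\infty)=\top$ exactly as in the paper's computation. The brief detour through the too-weak join argument is harmless since you land on the correct meet-based verification.
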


\begin{proof}
	If $h'$ satisfies $h \circ p = q \circ h'$ for some $h \in \mcal{R}_0 L$ then $q \circ h'(-\infty, \infty) = h \circ p (-\infty, \infty) = h(\top) = \top$. On the other hand, suppose that $q \circ h'(-\infty, \infty) = \top$. Then for $U_i \in \mcal{O}_*\exR$, if $p(U_1) = p(U_2)$ then $U_1 \cap (-\infty, \infty) = U_2 \cap (-\infty, \infty)$, hence
	\begin{align*}
	q \circ h'(U_1)
	&= q \circ h'(U_1) \wedge \top
	=  q \circ h'(U_1) \wedge q \circ h'(-\infty, \infty)
	= q \circ h'(U_1 \cap (-\infty, \infty))\\
	& = q \circ h'(U_2 \cap (-\infty, \infty))
	= q \circ h'(U_2) \wedge q \circ h'(-\infty, \infty)
	= q \circ h'(U_2).
	\end{align*}
	This fact allows us to define $h(U) \equiv q \circ h'(U')$ for some (any) $U' \in \mcal{O}_*\exR$ such that $p(U') = U$. This yields a homomorphism $h$ which makes the diagram commute. 
\end{proof}

Notice that if $M$ is compact then the only elements $h'$ which can possibly drop as in Lemma \ref{Lem:4} are those of $\mcal{D}_0 M$. 

\begin{definition*}[$\mcal{E}_0 q$]
	For a dense pointed frame surjection $\map{q}{M}{L}$, we let
	\[
	\mcal{E}_0 q
	\equiv \setof{h \in \mcal{R}_0 L}{\exists h' \in \mcal{D}_0 M\ (q \circ h' = h \circ p)}.
	\]
	When referring to an element $g \in \mcal{E}_0 q$, we denote by $g'$ that element of $\mcal{D}_0 M$ such that $g \circ p = q \circ g'$. 
\end{definition*}

Like $\mcal{D}_0 X$ and $\mcal{D}_0 M$, $\mcal{E}_0q$ is closed under scalar multiplication, the lattice operations, and truncation. However, it is not generally a trunc, as shown by  Example \ref{Ex:1}. Nevertheless, a subset $G \subseteq \mcal{E}_0 q$ may be closed under all of the trunc operations, in which case we refer to $G$ as a \emph{trunc in $\mcal{E}_0 q$.}

\begin{definition*}[trunc snugly embedded in $\mcal{E}_0 q$]
	Let $\map{q}{M}{L}$ be a dense pointed frame surjection. We shall say that a trunc $G$ in $\mcal{E}_0 q$ is \emph{snugly embedded} if $\coz \ol{G} \cup \con \ol{G}$ join-generates $M$, and if the sublocale $S \equiv q_* L$ is the intersection of the open sublocales of the form $g'(-\infty, \infty) \to M$, $g \in \ol{G}$.   
\end{definition*}

\begin{theorem}\label{Thm:3} 
	Let $G$ be a trunc.
	\begin{enumerate}
		\item 
		There is a suitable compactification $q$, a snugly embedded trunc $\widehat{G} \subseteq \mcal{E}_0 q$, and a trunc isomorphism $\map{\mu_G}{G}{\widehat{G}}$. The trunc $\widehat{G} \subseteq \mcal{R}_0 L$, or the isomorphism $\mu_G$, is referred to as the \emph{Madden representation of the trunc $G$}. Up to isomorphism, it is unique with respect to its properties.
		
		\item 
		This representation is functorial. For any homomorphism $\map{\theta}{G}{H}$, where $H$ is a trunc with suitable compactification $\map{r}{N}{Q}$ and isomorphism $\map{\mu_H}{H}{\mcal{E}_0 r}$, there exists a unique $\SC$-morphism $\map{(l, m)}{q}{r}$ such that $\mcal{E}_0(l, m) \circ \mu_G = \mu_H \circ \theta$.
		\begin{figure}[htb]
			\begin{tikzcd}
			G \arrow{r}{\mu_G} \arrow{dd}[swap]{\theta}
			& \mcal{E}_0q \arrow{dd}{\mcal{E}_0(k,l)}
			& M \arrow{r}{q} \arrow{dd}[swap]{l}
			&L \arrow {dd}{m}&\\ 
			&&&&\mcal{O}_* \mbb{R} \arrow{ul}[swap]{\hat{g}} \arrow {dl} {\theta (g)}\\
			H \arrow{r}{\mu_H} &\mcal{E}_0r
			& N \arrow{r}[swap]{r}
			& Q& 
			\end{tikzcd}	
		\end{figure}
	\end{enumerate}
\end{theorem}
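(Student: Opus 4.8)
The plan is to build the representation by combining the Yosida space of $G$ (which provides the compact regular frame $M$) with the Madden/pointfree representation of $G$ as a subtrunc of some $\mcal{R}_0 L$, and then to glue these together via a dense surjection $q$. More precisely, I would proceed as follows. First, apply the Yosida Representation Theorem \ref{Thm:16} to $G$: this yields a compact Hausdorff pointed space $(X,*)$ and a trunc isomorphism $\nu_G \colon G \to \widetilde{G} \subseteq \mcal{D}_0 X$ with $\widetilde{G}$ separating the points of $X$. Set $M \equiv \mcal{O}_* X$; since $X \in \mbf{K}_*$, $M$ is a compact regular pointed frame. Each $\tilde g \in \widetilde G$, viewed as a continuous map $X \to \exR$, corresponds to a pointed frame map $g' \equiv \tilde g^{-1} \colon \mcal{O}_*\exR \to M$ lying in $\mcal{D}_0 M$ (the density of $\tilde g^{-1}(\mbb R)$ in $X$ is exactly almost-finiteness). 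The frame $L$ and the surjection $q$ must be chosen so that $q$ kills precisely the information that $M$ retains but $G$ does not see: namely, $L$ should be the frame join-generated by the images under $q$ of the cozero elements $g'(0,\infty)$ and the \enquote*{bounded-away} elements $\con g'$, and $q$ should identify two elements of $M$ exactly when they cannot be separated by the $g'$'s restricted to the dense part $\bigcap_{g} g'(-\infty,\infty) \to M$. Concretely, I would let $S \subseteq M$ be the sublocale $\bigcap\setof{g'(-\infty,\infty) \to M}{g \in \ol G}$, let $q \colon M \to L$ be the corresponding frame surjection onto $L \equiv S$ (the quotient), and check that $q$ is dense — this uses axiom $(\mfrak{T}2)$ together with the fact that $\widetilde G$ separates points, so that for any nonzero $y \in M$ some $g'$ is nonzero on $y$.

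With $q \colon M \to L$ in hand, the next step is to verify the three structural claims about it. That $q$ is a \emph{compactification} is immediate ($M$ compact regular, $q$ dense surjective). That it is \emph{suitable} requires showing $S = q_*(L)$ is an intersection of cozero sublocales; here I would use that each $g'(-\infty,\infty) \to M$ is, in a compact regular frame, expressible via cozero elements coming from $\widetilde G$ (an element bounded away from $\pm\infty$ is a cozero element), invoking the characterization cited from \cite{PicadoPultr:2012} that in a regular frame every sublocale is the intersection of the open sublocales above it. Then define $\mu_G \colon G \to \mcal{R}_0 L$ by $\mu_G(g) \equiv q \circ g' \circ$ (drop across $p$): by Lemma \ref{Lem:4} this drop exists precisely because $q \circ g'(-\infty,\infty) = \top$, which holds by construction of $S$ since $g'(-\infty,\infty) \to M \supseteq S$. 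Thus $\mu_G(g) \in \mcal{E}_0 q$ with witness $(\mu_G g)' = g'$. One checks $\mu_G$ is a trunc homomorphism (the trunc operations on $\mcal{R}_0 L$ are computed by the universal formula of Theorem \ref{Thm:4}, and $q$, $g \mapsto g'$ both preserve the relevant joins/meets), and injective (again by density of $q$ and $(\mfrak{T}2)$). That the image $\widehat G$ is \emph{snugly embedded} is then a matter of unwinding definitions: $\coz\ol{\widehat G} \cup \con\ol{\widehat G}$ join-generates $M$ because we built $M$ from exactly these (using that $\widetilde G$ separates the points of $X$, hence the corresponding cozero sets form a base), and $S = q_*L$ is the prescribed intersection of the $g'(-\infty,\infty) \to M$.

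For part (2), functoriality, apply the Yosida functoriality in Theorem \ref{Thm:16}(2) to $\theta \colon G \to H$: this gives a $\mbf K_*$-map $k \colon Y \to X$ between the Yosida spaces, hence a frame map $l \equiv \mcal{O}_* k \colon M = \mcal{O}_* X \to N' = \mcal{O}_* Y$; composing with the comparison $N' \to N$ coming from $H$'s own representation (or observing the Yosida space of $H$ already \emph{is} the domain of $r$) produces $l \colon M \to N$ and, by passing to the respective sublocales, $m \colon L \to Q$, with $m \circ q = r \circ l$ because both sides compute the same identification by naturality. Uniqueness of $(l,m)$ follows from uniqueness in Theorem \ref{Thm:16}(2) together with the fact that $q$ is epi (so $m$ is determined by $l$) and that $l$ is determined on the join-generating cozero/$\con$ elements by where $\theta$ sends the corresponding trunc elements. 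Finally, uniqueness of the representation in part (1): any other suitable compactification $\bar q \colon \bar M \to \bar L$ with snugly embedded $\bar G$ gives, by the universal property just proved applied to $\mathrm{id}_G$ in both directions, mutually inverse $\SC$-isomorphisms.

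\textbf{Main obstacle.} The delicate point is not the Yosida input but the \emph{correct construction of $q$ and the verification of suitability and snugness simultaneously} — one must choose $L$ and the sublocale $S$ so that on the one hand $q$ is dense and every $g' $ drops across $p$ (forcing $S \subseteq \bigcap g'(-\infty,\infty)\to M$), while on the other hand $q$ collapses \emph{enough} that $\widehat G$ really does join-generate $L$ and $S$ equals that intersection and not something smaller; reconciling these, and checking that the resulting $\mu_G$ lands in $\mcal{E}_0 q$ with the right witnesses rather than merely in $\mcal{R}_0 L$, is where the real work lies. A secondary technical nuisance is confirming that $\mu_G$ preserves the \emph{partial} addition correctly, i.e.\ that the drops of $g_1', g_2', g_3'$ are compatible exactly when $\tilde g_1 + \tilde g_2 = \tilde g_3$ in $\mcal{D}_0 X$ — this should follow from Theorem \ref{Thm:4} applied to $w(x,y)=x+y$ together with density of $q$, but it must be spelled out.
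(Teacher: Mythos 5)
Your proposal follows essentially the same route as the paper: take the Yosida representation, set $M \equiv \mcal{O}_* X$, define $q$ as the intersection of the dense open quotients $M \to \downset{g'(-\infty,\infty)}$, drop each $g'$ across $p$ via Lemma \ref{Lem:4}, and derive functoriality from Theorem \ref{Thm:16}(2) together with the fact that $q$ and $r$ are the finest congruences identifying the relevant elements with $\top$. One small correction: density of $q$ does not need $(\mfrak{T}2)$ or point-separation --- it follows directly from each $g'(-\infty,\infty)$ being a dense element of $M$ (almost-finiteness) and the fact that an intersection of dense sublocales is dense.
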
 

\begin{proof}
	Let $G$ be a given trunc with Yosida representation $\map{\nu_G}{G} {\widetilde{G}} \subseteq \mcal{D}_0 X$ per Theorem \ref{Thm:16}. Let $M \equiv \mcal{O}_* X$ represent the Yosida frame of $G$, and abbreviate $\map{\tilde{g}^{-1}} {\mcal{O}_*\exR} {M}$ to $g'$ for  each $g \in G$. Let $\map{q}{M}{L}$ be the intersection of the open quotients $M \to \downset{g'(-\infty, \infty)} = (x \mapsto x \wedge g'(-\infty, \infty))$, $g \in G$. Since each $\tilde{g}$ lies in $\mcal{D}_0 X$, the corresponding pointed frame map makes $g'(-\infty, \infty)$ a dense element of $M$ and the corresponding open quotient map $M \to \downset{g'(-\infty, \infty)}$ a dense surjection. Consequently $q$ is a dense surjection, and, by Lemma \ref{Lem:4}, each map $g'$, $g \in G$, drops to a unique pointed frame map $\map{\hat{g}}{\mcal{O}_* \mbb{R}}{L}$ such that $q \circ g' = \hat{g} \circ p$. Finally, the trunc homomorphism $\map{\mu_G}{G}{\mcal{R}_0 L}$ thus defined is one-one, for if $0 \neq g \in G$ then $g'([-\infty, 0) \cup (0, \infty]) > \bot$ in $M$, hence 
	\[
	\bot 
	< q \circ g'([-\infty, 0) \cup (0, \infty]) 
	= \hat{g} \circ p([-\infty, 0) \cup (0, \infty]) 
	= \hat{g}((-\infty, 0) \cup (0, \infty)),
	\] 
	so $\hat{g} \neq 0$.  
	
	Now consider a trunc homomorphism $\map{\theta}{G}{H}$, where $H$ is a trunc with Yosida representation $\map{\nu_H}{H}{\widetilde{H}} \subseteq \mcal{D}_0 Y$. Let $\map{k}{Y}{X}$ be the continuous function provided by Theorem \ref{Thm:16} such that $\widetilde{\theta(g)} = \nu_H \circ \theta(g) = \nu_G(g)\circ k = \tilde{g}\circ k$ for all $g \in G$. Let $N \equiv \mcal{O}_* Y$ be the Yosida frame of $H$, and abbreviate $\map{\tilde{h}^{-1}} {\mcal{O}_*\exR}{N}$ to $h'$ for each $h \in H$. Let $\map{l}{M}{N}$ be the pointed frame homomorphism corresponding to $k$, so that if we denote the frame maps of $\tilde{g}$ and $\widetilde{\theta(g)}$ by $g'$ and $\theta(g)'$, respectively, we get that $\theta(g)' = l \circ g'$. Finally, let $\map{r}{N}{Q}$ be the intersection of the open quotients $N \to \downset{h'(-\infty, \infty)}$, $h \in H$. As before, $r$ is a dense surjection, so each $h'$, $h \in H$, drops to a unique $\map{\hat{h}}{\mcal{O}_*\mbb{R}}{Q}$ such that $r \circ h' = \hat{h} \circ p$. 
	\begin{figure}[htb]
	\begin{tikzcd}
	M \arrow{rrr}{q} \arrow{dd}[swap]{l}
	&&& L \arrow{dd}{m}\\
	& \mcal{O}_* \exR \arrow{ul}[swap]{g'} \arrow{r}{p} \arrow{dl}{\theta(g)'}
	& \mcal{O}_* \mbb{R} \arrow{ur}{\hat{g}} \arrow{dr}[swap] {\widehat{\theta(g)}}&\\
	N \arrow{rrr}[swap]{r}
	&&&Q
	\end{tikzcd}
	\end{figure}
	Now $q$ is associated with the finest frame congruence which identifies each $g'(-\infty, \infty)$ with $\top$, $g \in G$, and likewise $r$ is associated with the finest frame congruence which identifies each $h'(-\infty, \infty)$ with $\top$, $h \in H$. In light of the fact that $\theta(g)' = l \circ g'$ for each $g \in G$, it follows that $r \circ l (x_1) = r \circ l(x_2)$ for all $x_i \in M$ such that $q(x_1) = q(x_2)$. From this, in turn, follows the existence of a unique frame homomorphism $m$ such that $m \circ q = r \circ l$. We have
	\begin{align*}
	m \circ \hat{g} \circ p
	&= m \circ q \circ g'
	= r \circ l \circ g'
	= r \circ \theta(g)'
	= \widehat{\theta(g)}\circ p.
	\end{align*}
	But $p$ is surjective and hence epi, from which $m \circ \hat{g} = \widehat{\theta(g)}$ follows.
\end{proof}

Theorem \ref{Thm:3} is a hybrid of the Yosida Representation Theorem \ref{Thm:16} and the Madden Representation Theorem (\cite[5.1.1, 5.3.1]{Ball:2014.2}). It is sharper than these two, however, in the sense that its target $\mcal{E}_0 q$ is a smaller object than either $\mcal{D}_0 X$, the target of the Yosida Theorem, or $\mcal{R}_0 L$, the target of the Madden Theorem. In fact, $\mcal{E}_0 q$ can be viewed as the \enquote{intersection} of $\mcal{D}_0 X$ and $\mcal{R}_0 L$ in a sense made precise in Lemma \ref{Lem:4}. Nevertheless, Theorem \ref{Thm:3} shares with the Yosida Theorem the disadvantage that its target is not generally a trunc.  We shall return to the topic of $\mcal{E}_0 q$ in Subsection \ref{Subsec:E0q} and in Section \ref{Sec:E0q}.

\begin{example}\label{Ex:1}
	Let $X$ designate the compact pointed space $([0,1], 1)$, and let $\tilde{g}_0 \in \mcal{D}_0 X$ be the function 
	\[
	\tilde{g}_0(x)
	\equiv 
	\begin{cases}
	\infty      &\text{if $x = 0$}\\
	1/x - 1     &\text{if $x > 0$}
	\end{cases}.
	\]
	Let $\widetilde{G} \equiv \setof{\tilde{f} + r\tilde{g}_0}{\tilde{f} \in \mcal{C}_0 X,\ r \in \mbb{R}}$, a subtrunc of $\mcal{D}_0 X$ presented in its Yosida representation.  Let $M \equiv \mcal{O}_* X$ and $L \equiv \mcal{O}_* (0, 1]$; the insertion $(0,1] \to X$ gives rise to the frame map $\map{q}{M}{L} = (U \mapsto U \cap (0,1])$, and, in fact, the map $\widetilde{G} \to \mcal{R}_0 L = (\tilde{g} \mapsto \tilde{g}^{-1}) \equiv g$ is the Madden representation of $\widetilde{G}$ and $q$ is the compactification of Theorem \ref{Thm:3}.
	
	Now $G \subseteq \mcal{E}_0q$ by construction, but reasoning similar to that used for $\tilde{g}_0$ leads to the conclusion that $\mcal{E}_0q$ also contains the frame homomorphism $f_0$ dual to the continuous function
	\[
	\tilde{f}_0(x)
	\equiv 
	\begin{cases}
	\infty      &\text{if $x = 0$}\\
	1/x + \sin (1/x) - 1 - \sin 1     &\text{if $x > 0$}
	\end{cases}.	
	\]
	The point is that $h \equiv f_0 - g_0 \in \mcal{R}_0 L \smallsetminus \mcal{E}_0q$, for $\tilde{h} \equiv \tilde{f}_0 - \tilde{g}_0 = sin (1/x) - \sin 1 \notin \mcal{D}_0 X$. 
\end{example} 

\subsection{Trunc notation}\label{Subsec:Notation}
Henceforth the symbol $G$ stands for a trunc with Yosida space $\mcal{Y}_* G \equiv (X,*)$, Yosida frame $M = \mcal{O}_* X$, and suitable compactification $\map{q}{M}{L}$. When $G$ is being regarded as an abstract $\mbf{T}$-object or as a subtrunc of $\mcal{R}_0 L$, we shall denote it by the symbol $G$ and denote its members by unadorned lower case letters such as $f$ and $g$. In particular, this convention  applies when $G$ is regarded as a trunc in $\mcal{E}_0 q$, in which case we use $f'$ and $g'$ to denote the members of $\mcal{D}_0 M$ which correspond to $f$ and $g$ as in Lemma \ref{Lem:4}. On the other hand, on those occasions when it is advantageous to regard $G$ as a trunc in $\mcal{D}_0 X$ (Subsection \ref{Subsec:YosRepTruncs}), we shall denote it by the symbol $\widetilde{G}$ and denote its members by symbols like $\tilde{f}$ and $\tilde{g}$. 

\section{Uniform and Pointwise convergence\label{Sec:PtwiseCon}}

The two most classical convergences have elegant formulations in the language of truncs. We introduce them here, for both are important for our purposes. We begin by defining uniform convergence in archimedean truncs. 
 
\subsection{Uniform convergence}

\begin{proposition}\label{Prop:16}
	The following are equivalent for a sequence $\{g_n\} \subseteq \ol{G}$.
	\begin{enumerate}
		\item 
		$\forall k\ \exists m\ \forall n \geq m\ (kg_n = \ol{kg_n})$
		
		\item 
		$\forall \varepsilon > 0\ \exists m\ \forall n \geq m\ \forall x \in X\ (\tilde{g}_n(x) < \varepsilon)$.
			
		\item 
		$\forall \varepsilon > 0\ \exists m\ \forall n \geq m\ (g_n(-\infty, \varepsilon) = \top)$.
	\end{enumerate}
\end{proposition}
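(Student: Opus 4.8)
The plan is to pass to the Yosida representation of $G$ and recast all three conditions as uniform statements about the continuous functions $\tilde{g}_n$. For $g_n\in\ol G$ the Yosida image $\tilde{g}_n$ lies in $\ol{\widetilde G}$, hence has the form $\tilde{f}_n\wedge 1$ with $\tilde{f}_n\in\widetilde{G}^+$ and is in particular a continuous function $X\to[0,1]$. The bridge to condition (3) is the identity
\[
g_n(-\infty,\varepsilon)\;=\;q\big(\tilde{g}_n^{-1}([-\infty,\varepsilon))\big),\qquad \varepsilon>0,
\]
where $\map{q}{M}{L}$ is the (dense, surjective) compactification attached to $G$ and $\tilde{g}_n^{-1}\colon\mcal{O}_* \exR\to M$ is the frame map of $\tilde{g}_n$; it follows from $g_n\circ p=q\circ\tilde{g}_n^{-1}$ (Lemma~\ref{Lem:4}) together with $p([-\infty,\varepsilon))=(-\infty,\varepsilon)$. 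Note that $\tilde{g}_n^{-1}([-\infty,\varepsilon))$ is just the open set $\setof{x\in X}{\tilde{g}_n(x)<\varepsilon}$.

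Next I would record the pointwise restatement of (1): since $kg_n\in G^+$ and truncation in the Yosida representation is pointwise meet with the constant $1$, the equality $kg_n=\ol{kg_n}$ holds iff $\tilde{g}_n(x)\le 1/k$ for every $x\in X$. Thus (1) says $\forall k\ \exists m\ \forall n\ge m\ \forall x\,(\tilde{g}_n(x)\le 1/k)$, which is exactly (2) with $1/k$ in place of an arbitrary $\varepsilon>0$ and $\le$ in place of $<$; the mismatch of inequalities is harmless, since in passing from (1) to (2) one simply chooses an integer $k>1/\varepsilon$. I would then argue around the cycle (1)$\Rightarrow$(2)$\Rightarrow$(3)$\Rightarrow$(1).

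Of these, (1)$\Rightarrow$(2) is immediate: with $k>1/\varepsilon$ one gets $\tilde{g}_n\le 1/k<\varepsilon$ on $X$. For (2)$\Rightarrow$(3): if $\tilde{g}_n<\varepsilon$ everywhere then $\tilde{g}_n^{-1}([-\infty,\varepsilon))=\top_M$, so the displayed identity gives $g_n(-\infty,\varepsilon)=q(\top_M)=\top_L$. For (3)$\Rightarrow$(1): given $k$, condition (3) with $\varepsilon=1/k$ yields $m$ such that $q\big(\tilde{g}_n^{-1}([-\infty,1/k))\big)=\top_L$ for $n\ge m$; writing $U=\tilde{g}_n^{-1}([-\infty,1/k))$, the fact that $q$ is a dense surjection forces $U$ to be a dense element of $M$ (from $q(\neg U)\le\neg q(U)=\neg\top=\bot$ and density of $q$ one gets $\neg U=\bot$), so $U$ is a dense open subset of $X$; since $\tilde{g}_n$ is continuous, the closed set $\tilde{g}_n^{-1}([-\infty,1/k])$ contains the dense set $U$ and hence equals $X$, i.e.\ $\tilde{g}_n\le 1/k$ on $X$, which is the restatement of (1).

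I expect the one step carrying genuine content to be the middle of (3)$\Rightarrow$(1): because $q$ need not be injective, $q(U)=\top_L$ does not by itself force $U=\top_M$, but density of $q$ does force $U$ to be a \emph{dense} element, and then continuity of $\tilde{g}_n$ promotes \enquote{$\tilde{g}_n<1/k$ on a dense set} to \enquote{$\tilde{g}_n\le 1/k$ everywhere}. Everything else is routine translation between the pointwise, uniform, and frame-theoretic encodings.
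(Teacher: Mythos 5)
Your argument is correct, but it takes a genuinely different route from the paper's for the link between (1) and (3). The paper proves (1)$\iff$(2) exactly as you do (truncation is pointwise meet with $1$ in $\mcal{D}_0 X$), but it gets (1)$\iff$(3) in one line, entirely inside $\mcal{R}_0 L$, from the formula of Lemma \ref{Lem:8}: $\ol{g}(-\infty,r)=g(-\infty,r)$ for $r\leq 1$, so $kg_n=\ol{kg_n}$ amounts to $g_n(-\infty,s)=\top$ for $s>1/k$, and no mention of $X$, $q$, or continuity is needed. You instead close the cycle through (3)$\Rightarrow$(1) by passing to the Yosida space: you use $g_n\circ p=q\circ g_n'$ to identify $g_n(-\infty,\varepsilon)$ with $q$ of an open set $U\subseteq X$, observe that density of $q$ upgrades $q(U)=\top$ to $U$ dense (your computation $q(\neg U)\leq\neg q(U)=\bot$ is right), and then use continuity to pass from $\tilde{g}_n<1/k$ on a dense set to $\tilde{g}_n\leq 1/k$ everywhere. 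This is sound, and it has the virtue of exposing the geometric content --- in particular why surjectivity of $q$ onto $\top$ is not enough by itself and density of the compactification is what saves the day --- but it is longer and leans on the spatial representation where the paper's computation stays pointfree and purely algebraic. One small bookkeeping point: your chain only ever uses $\varepsilon$ of the form $1/k$ when returning to (1), which is fine, but it is worth saying explicitly that monotonicity of $r\mapsto g_n(-\infty,r)$ handles arbitrary $\varepsilon$ in between.
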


\begin{proof}
	The equivalence of (1) with (2) follows directly from the fact that $\ol{\tilde{g}}(x) = \tilde{g}(x) \wedge 1$ for all $x \in X$ and $g \in G^+$. The equivalence of (1) and (3) follows directly from the fact that $\ol{g}(-\infty, \varepsilon) = g(-\infty, \varepsilon)$ for $\varepsilon < 1$ by Lemma \ref{Lem:8}.
\end{proof}

\begin{definition*}[uniform convergence]
	A sequence $\{g_n\} \subseteq \ol{G}$ is said to \emph{converge uniformly to $0$}, written $g_n \to 0$, if it satisfies the conditions of Proposition \ref{Prop:16}. A sequence $\{g_n\} \subseteq G$ converges to an element $g_0$ if $\left|g_n - g_0\right| \to 0$; we write $g_n \to g_0$. A sequence $\{g_n\} \subseteq G^+$ is said to be \emph{uniformly Cauchy} if 
	\[
	\forall k\ \exists m\ \forall i,j \geq m\ \big(k\left|g_i - g_j\right| = \ol{k\left|g_i - g_j\right|}\big).
	\] 
	A trunc $G$ is said to be \emph{uniformly complete} if every uniformly Cauchy sequence $\{g_n\} \subseteq G$ converges uniformly to a limit in $G$.
\end{definition*}

We record the well known classical theory of uniform convergence, including the Stone-Weierstrass Theorem.

\begin{proposition}
	Uniform convergence interacts nicely with the trunc structure.
	\begin{enumerate}
		\item 
		All of the trunc operations are uniformly continuous.  That is, if $f_n \to f_0$ and $g_n \to g_0$ then $(f_n \oplus g_n) \to f_0 \oplus g_0$, where $\oplus$ may be taken to be $+$, $-$, $\vee$, or $\wedge$. Likewise $f_n \to f_0$ implies $\ol{f}_n \to \ol{f}_0$.
		
		\item 
		Any trunc homomorphisms $\map{\theta}{G}{H}$ is uniformly continuous, i.e., $g_n \to g_0$ in $G$ implies $\theta(g_n) \to \theta(g_0)$ in $H$.  	
		
		\item 
		$G$ is uniformly complete iff $\widetilde{G}^* = \mcal{C}_0 X$. 
	\end{enumerate}
\end{proposition}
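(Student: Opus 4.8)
The plan is to prove the three parts in order, leaning on the classical archimedean theory and the Yosida representation $\widetilde{G} \subseteq \mcal{D}_0 X$ with $\widetilde{G}$ separating the points of the compact Hausdorff pointed space $X$. For part (1), the point is that each trunc operation, viewed via Theorem \ref{Thm:4} or just pointwise in $\mcal{D}_0 X$, is given by a continuous function $w$ on the relevant power of $\mbb{R}$ (or of $\exR$ where partial addition is involved), and uniform convergence to $0$ of a sequence in $\ol{G}$ is, by Proposition \ref{Prop:16}(2), exactly uniform convergence in the sup-norm on $X$ of the truncated functions. So the claim reduces to the elementary fact that $+,-,\vee,\wedge$ and the truncation $g \mapsto \ol{g} = \tilde g \wedge 1$ are uniformly continuous as maps on bounded real-valued functions under the sup norm — for $\vee$ and $\wedge$ one uses $|a\vee c - b\vee c| \le |a-b|$ and similarly for $\wedge$, and for truncation one uses the same Lipschitz estimate with $c=1$. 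One must be slightly careful that $g_n\to g_0$ means $|g_n-g_0|\to 0$ with the differences eventually truncation-fixed, i.e. eventually bounded; once that reduction to bounded functions is made, the standard $\varepsilon/2$ arguments go through. I would write this out for $\wedge$ and truncation and remark the rest are identical.

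For part (2), let $\map{\theta}{G}{H}$ be a trunc homomorphism and suppose $g_n \to g_0$ in $G$, i.e. $|g_n - g_0| \to 0$. By part (1) it suffices to handle the case $g_0 = 0$ with $g_n \in \ol{G}$, so fix $k$ and use Proposition \ref{Prop:16}(1): there is $m$ with $kg_n = \ol{kg_n}$ for all $n \ge m$. Since $\theta$ is a vector-lattice map preserving truncation, $\theta(kg_n) = \theta(\ol{kg_n}) = \ol{\theta(kg_n)}$, i.e. $k\,\theta(g_n) = \ol{k\,\theta(g_n)}$ for all $n \ge m$. As $k$ was arbitrary, $\theta(g_n) \to 0$ in $H$ by Proposition \ref{Prop:16}(1) again. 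The general case follows because $\theta$ commutes with $|\cdot|$ and subtraction. This argument is purely equational and should be short.

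Part (3) is the substantive one and is where I expect the main work. The forward direction: assume $G$ is uniformly complete; I must show $\widetilde G^* = \mcal C_0 X$, i.e. every bounded $\tilde g \in \widetilde G$ is already continuous real-valued (automatic, as $\widetilde G \subseteq \mcal D_0 X$ and bounded almost-finite functions are real-valued) and, conversely, every $f \in \mcal C_0 X$ lies in $\widetilde G$. For the latter I would invoke Stone–Weierstrass on the compact Hausdorff space $X$: $\widetilde G$, being a trunc in $\mcal D_0 X$ separating points of $X$ and vanishing at $*$, has bounded part $\widetilde G^*$ a sublattice (and, after adding constants, essentially a subalgebra in the relevant pointed sense) which is dense in $\mcal C_0 X$ in the sup norm; uniform completeness of $G$, transported along the isomorphism $\nu_G$ and combined with the fact that sup-norm convergence of truncated elements is exactly the uniform convergence of Proposition \ref{Prop:16}, forces $\widetilde G^*$ to be sup-norm closed, hence equal to $\mcal C_0 X$. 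The converse direction: assume $\widetilde G^* = \mcal C_0 X$ and let $\{g_n\} \subseteq G$ be uniformly Cauchy; pushing to $\widetilde G$, the sequence is Cauchy in sup norm on $X$, and (after the usual reduction one may assume it is eventually bounded, living in $\widetilde G^* = \mcal C_0 X$, which is sup-norm complete) it converges uniformly to some $\tilde h \in \mcal C_0 X = \widetilde G^*$; pulling back along $\nu_G^{-1}$ gives a uniform limit in $G$. The main obstacle is getting the Stone–Weierstrass step right in the pointed, truncated setting — ensuring that "trunc in $\mcal D_0 X$ separating points" really does yield sup-norm density of $\widetilde G^*$ in $\mcal C_0 X$ (handling the vanishing-at-$*$ condition and the absence of the constant $1$) — and cleanly reducing the various Cauchy/convergence statements for possibly-unbounded elements to statements about the bounded parts. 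I would isolate the Stone–Weierstrass density fact as the crux and cite the classical locally-compact version applied to $X \smallsetminus \{*\}$.
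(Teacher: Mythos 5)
The paper gives no proof of this proposition at all: it is introduced with the sentence \enquote{We record the well known classical theory of uniform convergence, including the Stone-Weierstrass Theorem,} and is left as a citation to classical folklore. So there is nothing in the paper to compare your argument against; judged on its own, your sketch is essentially correct and supplies exactly the standard material being alluded to. Parts (1) and (2) are fine: the sup-norm Lipschitz estimates for $\vee$, $\wedge$ and truncation, and the purely equational transfer of $kg_n = \ol{kg_n}$ to $k\theta(g_n) = \ol{k\theta(g_n)}$ along a truncation homomorphism, are the intended arguments. In part (3) the one point you should repair is the parenthetical suggestion that $\widetilde{G}^*$ becomes \enquote{essentially a subalgebra} after adjoining constants: it does not, and the multiplicative Stone--Weierstrass theorem is not available here, since closure of $\widetilde{G}^*$ under products is in effect part of what is being proved. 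The correct tool is the lattice (Kakutani--Krein) form: a vector sublattice of $\mcal{C}_0 X$ separating the points of $X$ admits two-point interpolation at pairs of points distinct from $*$ (the image of the restriction map to $\mbb{R}^2$ is a vector sublattice not contained in an axis or the diagonal, hence all of $\mbb{R}^2$) and is therefore sup-norm dense; one should also record the one-line check that the bounded part $\widetilde{G}^*$, not just $\widetilde{G}$, separates points, by truncating a separating element. Combined with the observation that uniform completeness makes $\widetilde{G}^*$ sup-norm closed, this gives the forward direction. Your converse reduction --- subtract a fixed $g_m$ so that the tail of a uniformly Cauchy sequence lands in $\widetilde{G}^* = \mcal{C}_0 X$, use sup-norm completeness there, and translate back --- is the right one, with the precise point being that the differences $g_n - g_m$, not the terms $g_n$ themselves, are bounded.
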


\subsection{Directional pointwise convergence}\label{Subsec:DirPtwiseCon}
Directional pointwise convergence will play a modest but important role in our development. (Unrestricted pointwise convergence plays a central role in the theory of the pointfree Baire functions, the subject of the more recent article \cite{Ball:2018}.)  Directional pointwise convergence is based on the notion of pointfree pointwise suprema, the definition of which presumes that $G$ is a identified with its Madden representation as a subtrunc of $\mcal{R}_0 L$ for a pointed frame $L$.

\begin{definition*}[pointwise suprema and infima] \label{Def:1}
	An element $b \in G$ is said to be the \emph{pointwise supremum (infimum)} of a subset $A \subseteq G$, written $\bigvee^\bullet A = b$ \big($\bigwedge^\bullet A = b$\big), if for all $r \in \mbb{R}$,
	\[
	\bigvee_A a(r, \infty) = b(r, \infty)
	\quad \bigg(\bigvee_A a (-\infty, r) = b(-\infty, r)\bigg). 
	\]
\end{definition*}

Even though pointwise suprema and infima are defined in $\mcal{R}_0 L$, they are characterized in Proposition \ref{Prop:21} in terms independent of any representation. They are precisely those joins and meets which remain valid whenever $G$ is embedded in a larger trunc. To coin a phrase, pointwise joins and meets are those which are \emph{context free}. 

\begin{proposition}[cf.\ \cite{BallHagerWW:2015}, 4.2.5]\label{Prop:21}
	The following are equivalent for an element $b$ and subset $A$ of a trunc $G$.
\begin{enumerate}
	\item 
	$\bigvee^\bullet A = b$.
	
	\item 
	$\bigvee \theta(A) = \theta(b)$ for every truncation homomorphism $\theta$ out of $G$.
\end{enumerate}  
\end{proposition}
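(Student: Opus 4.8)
The plan rests on the observation, supplied by Theorems \ref{Thm:3} and \ref{Thm:4}, that once $G$ is put in its Madden representation $G \subseteq \mcal{R}_0 L$, every truncation homomorphism $\theta$ out of $G$ is, modulo the Madden isomorphisms, postcomposition with a frame homomorphism $\map{m}{L}{Q}$, and all trunc operations are computed frame-theoretically. Since a frame homomorphism preserves arbitrary joins, and $\bigvee^\bullet A = b$ is by definition the family of join equations $\bigvee_A a(r,\infty) = b(r,\infty)$ in $L$, $r \in \mbb{R}$, the implication (1)$\Rightarrow$(2) is essentially automatic: apply $m$ to these equations to get $\bigvee_A \theta(a)(r,\infty) = \theta(b)(r,\infty)$ for all $r$; a pointwise supremum is in particular an ordinary supremum — if $c(r,\infty) = \bigvee_S s(r,\infty)$ for every $r$ then $c$ is the least upper bound of $S$, since the order of any $\mcal{R}_0 Q$ is read off from the $(r,\infty)$-coordinates — and an ordinary supremum in $\mcal{R}_0 Q$ that lands in a subtrunc is the supremum there as well; hence $\bigvee \theta(A) = \theta(b)$ in $H$.

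For (2)$\Rightarrow$(1) I would first feed (2) the inclusion $G \hookrightarrow \mcal{R}_0 L$, obtaining $b = \bigvee A$ in $\mcal{R}_0 L$; in particular $a \leq b$ for all $a \in A$, so $\bigvee_A a(r,\infty) \leq b(r,\infty)$ for every $r$ and only the reverse inequality is at stake. Suppose it fails, say $v := \bigvee_A a(r,\infty) < b(r,\infty) =: w$. The most transparent witness comes from the Yosida representation $G \subseteq \mcal{D}_0 X$: choose a point $x_0$ in the set-difference $\widetilde b^{-1}((r,\infty]) \smallsetminus \bigcup_A \widetilde a^{-1}((r,\infty])$, so that $\widetilde b(x_0) > r \geq \sup_a \widetilde a(x_0)$; if $x_0$ can be taken to be a real point of $X$ (one at which every $\widetilde g$, $g \in G$, is finite — equivalently, a point of the Madden sublocale), then $\map{\mathrm{ev}_{x_0}}{G}{\mbb{R}}$ is a truncation homomorphism with $\bigvee \mathrm{ev}_{x_0}(A) = \sup_a \widetilde a(x_0) \leq r < \widetilde b(x_0) = \mathrm{ev}_{x_0}(b)$, contradicting (2). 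For the sections with $r \geq 0$ there is a purely pointfree alternative that avoids points altogether: then $*_L(v) = \bigvee_A *_0(r,\infty) = \bot$, so the closed quotient $\map{\kappa}{L}{\setof{x \in L}{x \geq v}}$, $x \mapsto x \vee v$, is a pointed frame surjection; postcomposing with $\kappa$ gives a truncation homomorphism $\theta$ onto a subtrunc, and since $\theta(a)(r,\infty) = a(r,\infty) \vee v = \bot$ for every $a \in A$ we get $\theta(a) \leq \theta(b) \wedge r \in \theta(G)$, whereas $\theta(b)(r,\infty) = w \vee v = w \neq \bot = (\theta(b) \wedge r)(r,\infty)$ shows $\theta(b) \wedge r < \theta(b)$; thus $\theta(b)$ is not the least upper bound of $\theta(A)$, again contradicting (2).

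The routine verifications — that $\kappa$ is a pointed frame homomorphism, that postcomposition with a frame homomorphism sends a trunc to a trunc (immediate from Theorem \ref{Thm:4}), that $b \wedge r \in G$ and is preserved by $\theta$, and that a pointwise supremum is an ordinary supremum which descends to subtruncs — I would dispatch briefly. The real work, and the main obstacle, lies in (2)$\Rightarrow$(1) for the base-point-containing sections $(r,\infty)$ with $r < 0$: there $a(r,\infty)$ contains the base point, no pointed quotient can collapse it, and the pointfree trick is unavailable, so one is thrown back on the evaluation homomorphisms — and the substantive point becomes showing that a genuine failure of $\bigvee^\bullet A = b$ in $\mcal{R}_0 L$ is always detected at an honest real point of $X$, i.e.\ cannot be concentrated on the infinite-valued part that the Madden quotient $\map{q}{M}{L}$ collapses. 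This hinges on the precise relationship between the Yosida frame $M = \mcal{O}_* X$ and its quotient $L$, and, should $L$ prove point-poor, on replacing the missing evaluations by suitable quotient homomorphisms into smaller truncs.
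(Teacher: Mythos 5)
Your direction (1)$\Rightarrow$(2) is sound, and your closed-quotient argument for (2)$\Rightarrow$(1) at the sections $(r,\infty)$ with $r \geq 0$ is essentially right; the only repairs needed there are the case $r = 0$ (where $0$ itself is the smaller upper bound of $\theta(A)$) and the fact that $\theta(b) \wedge r$ is undefined unless $\theta(b) \geq 0$, so for general $b$ you must use something like $(\theta(b)^+ \wedge r) - \theta(b)^-$. Note that the paper supplies no proof of this proposition at all --- it is quoted from \cite{BallHagerWW:2015}, 4.2.5 --- so there is no argument to compare yours against; but your attempt, as written, has a genuine gap, and it is precisely the one you flag yourself: the sections with $r < 0$. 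These do \emph{not} follow from the sections with $r \geq 0$ together with $b = \bigvee A$. Concretely, let $X = ([0,1], 1)$, $G = \mcal{C}_0 X$, $b = 0$, and $\tilde{a}_n(x) = -(1 - nx)^+(1-x)$. Then $\bigvee_n a_n = 0$ in $G \cong \mcal{R}_0 L$, and $\bigvee_n a_n(r,\infty) = \bot = b(r,\infty)$ for every $r \geq 0$, yet $\bigvee_n a_n(-1/2, \infty) = (0,1] \neq \top = b(-1/2,\infty)$. So a separate argument for $r < 0$ is indispensable, and neither of your two devices supplies it: the closed quotient at $v = \bigvee_A a(r,\infty)$ is not pointed when $r < 0$ because $*_L(v) = \top$, and evaluation at points cannot serve as the fallback because the relevant points are those of the Madden frame $L$ --- the points of $X$ at which every $\tilde{g}$ is finite form an intersection of possibly uncountably many dense cozero sets and may well be empty.

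What is missing, then, is a device that detects a failure $v < w \equiv b(r,\infty)$ by a \emph{pointed} quotient when $*_L(v) = \top$. In the unital setting of \cite{BallHagerWW:2015} one translates by the unit $e$, using $a(r,\infty) = (a + |r|e)(0,\infty)$, and thereby reduces $r < 0$ to your $r \geq 0$ argument; without a unit that translation is unavailable. Two plausible substitutes are (i) rewriting the negative sections in terms of negative parts, $a(r,\infty) = a^-(-\infty, -r)$ for $r < 0$, and producing from a failure an upper bound of $\theta(A)$ strictly below $\theta(b)$ in a suitable quotient, or (ii) passing to the unital monoreflection $G \to G^u$ (observe that your condition (2) for $G$ transfers to the image of $A$ and $b$ in $G^u$, since every homomorphism out of $G^u$ precomposes to one out of $G$), invoking the $\mbf{W}$-version of the result there, and then checking that pointwise joins are reflected back along the induced map of Madden frames. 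Until one of these is carried out, the proof is incomplete.
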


The article \cite{BallHagerWW:2015} is a lengthy treatment of pointwise suprema and infima, carried out in $\mbf{W}$ but yielding results which carry over to $\mbf{T}$.   

\begin{definition*}[directional pointwise convergence]
	A sequence $\{g_n\}$ in a trunc $G$ \emph{converges pointwise downwards (upwards) to an element $g_0 \in G$}, written $g_n\searrow g_0$ ($g_n\nearrow g_0$), provided that $g_{n + 1} \geq g_n$ ($g_{n+1} \leq g_n$) for all $n$, and $\bigwedge^\bullet g_n = g_0$ ($\bigvee^\bullet g_n = g_0$), i.e., for all $r \in \mbb{R}$,
	\[
		\bigvee_n g_n(-\infty, r) = g_0(-\infty,r)\quad
		\bigg(\bigvee_n g_n(r,\infty) = g_0(r,\infty)\bigg). 
	\]
\end{definition*}

Directional pointwise convergence has a number of nice propertie relevant to our development, and we list the main ones here.

\begin{proposition}[\cite{Ball:2018}, 5.2, 5.3]\label{Prop:3}
	Let $\{f_n\}$ and $\{g_n\}$ be sequences in $\mcal{R}_0 L$, and let $f_0$ and $g_0$ be elements of $\mcal{R}_0 L$.
	\begin{enumerate}
		\item 
		$\dot{g} \searrow g$, where $\dot{g}$ designates the constant sequence, i.e., $\dot{g}_n = g$ for all $n$.
		
		\item 
		$g_n \searrow g_0$ iff $(-g_n) \nearrow (-g_0)$.
		
		\item
		If $\{g_n\} \subseteq \mbb{R}_0^+ L$ and $g_n \searrow g_0$ then $\ol{g_n} \searrow \ol{g_0}$.
		
		\item 
		If $f_n \searrow f_0$ and $g_n \searrow g_0$ then $(f_n	\oplus g_n) \searrow (f_0 \oplus g_0)$, where $\oplus$ stands for one of the operations $+$, $\vee$, or $\wedge$. 
		
		\item 
		If $g_n \searrow g_0$ and $0 \leq r \in \mbb{R}$ then $(rg_n) \searrow rg_0$. 
		
		\item 
		If $g_n\searrow g_0$ and $g_n \searrow f_0$ then $g_0 = f_0$.
		
		\item 
		Directional pointwise convergence is continuous. That is, for any $\mbf{T}$-homomorphism $\tau \colon \mcal{R}_0 M \to \mcal{R}_0 N$, and for any sequence $\{g_n\}$ and element $g_0$ of $\mcal{R}_0 M$,
		\[
		g_n \searrow g_0 \text{ in $\mcal{R}_0 M$}
		\implies \tau(g_n) \searrow \tau(g_0) \text{ in $\mcal{R}_0 N$}.
		\]
	\end{enumerate}
	The dual statements for pointwise upwards convergence all hold as well.
\end{proposition}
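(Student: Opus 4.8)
The plan is to push every clause down to the \emph{lower rays} $g(-\infty,r)$, $r\in\mbb{R}$, since by definition the relation $g_n\searrow g_0$ says exactly that $\{g_n\}$ is monotone and $\bigvee_n g_n(-\infty,r)=g_0(-\infty,r)$ for every $r$. For each trunc operation one first records its effect on a lower ray, read off from the formula of Theorem \ref{Thm:4}; these are just the pointfree renderings of the familiar sublevel-set identities $\{f\wedge g<r\}=\{f<r\}\cup\{g<r\}$, and so on. Writing $c$ for a positive real scalar and $s$ for a rational,
\begin{align*}
(-g)(-\infty,r) &= g(-r,\infty), \\
(cg)(-\infty,r) &= g(-\infty,r/c), \\
(f\wedge g)(-\infty,r) &= f(-\infty,r)\vee g(-\infty,r), \\
(f\vee g)(-\infty,r) &= f(-\infty,r)\wedge g(-\infty,r), \\
(f+g)(-\infty,r) &= \bigvee_{s\in\mbb{Q}} f(-\infty,s)\wedge g(-\infty,r-s),
\end{align*}
while for truncation Lemma \ref{Lem:8} says $\ol{g}(-\infty,r)$ equals $\top$ when $r>1$ and $g(-\infty,r)$ when $r\le 1$.

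With these in hand the easy clauses fall out quickly, and in each case monotonicity of the resulting sequence is automatic, the operations being monotone (or, for negation, antitone). Clause (1) is the triviality $\bigvee_n g(-\infty,r)=g(-\infty,r)$. Clause (2) combines $(-g)(-\infty,r)=g(-r,\infty)$ with the fact that negation reverses order and interchanges $\bigvee^\bullet$ with $\bigwedge^\bullet$. Clause (3) distributes $\bigvee_n$ through the two-case description of $\ol{g_n}(-\infty,r)$, using that truncation is order preserving (immediate from $\mfrak{T}1$). Clause (5) carries the substitution $r\mapsto r/c$ inside the join over $n$. In clause (4) the case $\oplus=\wedge$ merely distributes $\bigvee_n$ over a binary join. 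The cases $\oplus=\vee$ and $\oplus=+$ are the one delicate point: they require commuting $\bigvee_n$ past a binary meet, i.e.\ $\bigvee_n(a_n\wedge b_n)=\bigl(\bigvee_n a_n\bigr)\wedge\bigl(\bigvee_n b_n\bigr)$, which fails for arbitrary families but holds here because the monotonicity built into $\searrow$ makes each $\{g_n(-\infty,r)\}_n$ an ascending chain in $L$, so the two families in play are up-directed and the identity reduces to the frame distributive law; for $+$ one additionally interchanges the join over $n$ with the join over $s\in\mbb{Q}$, which is unconditional.

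For clauses (6) and (7) I would not compute but appeal to Proposition \ref{Prop:21} (equivalently, to its infimum form, obtained by applying it to $\{-g_n\}$ via clause (2)). Taking the homomorphism there to be $\mathrm{id}_{\mcal{R}_0 L}$ shows that a pointwise infimum $\bigwedge^\bullet g_n=g_0$ is in particular a genuine lattice infimum of $\{g_n\}$ in $\mcal{R}_0 L$, and genuine infima are unique; that is clause (6). For clause (7), if $\rho$ is any truncation homomorphism out of $\mcal{R}_0 N$ then $\rho\circ\tau$ is one out of $\mcal{R}_0 M$, so $\bigwedge^\bullet g_n=g_0$ forces $\bigwedge_n\rho(\tau(g_n))=\rho(\tau(g_0))$; as this holds for every such $\rho$, Proposition \ref{Prop:21} returns $\bigwedge^\bullet\tau(g_n)=\tau(g_0)$, while $\tau$, being order preserving, gives monotonicity of $\{\tau(g_n)\}$. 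Finally the dual, upward-convergence statements follow by applying clause (2), except that the dual of (3) is reproved verbatim from the right-ray column of Lemma \ref{Lem:8}.

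I expect the main obstacle to be narrow: it is the $\vee$- and $+$-cases of clause (4), where one must notice that the monotonicity of the sequence is precisely what licenses pulling $\bigvee_n$ through a finite meet, and set up the small remark on up-directed families before starting. Everything else is bookkeeping with Theorem \ref{Thm:4} and Lemma \ref{Lem:8}, and clauses (6)--(7) are short consequences of Proposition \ref{Prop:21}.
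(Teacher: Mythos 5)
The paper does not prove this proposition at all: it is imported wholesale by citation from \cite{Ball:2018}, 5.2 and 5.3, so there is no in-text argument to compare yours against. Judged on its own, your proof is correct and is essentially the standard one. The lower-ray identities you list are exactly what Theorem \ref{Thm:4} and Lemma \ref{Lem:8} yield, and you have correctly isolated the one non-formal step: in the $\vee$ and $+$ cases of clause (4) the interchange $\bigvee_n(a_n\wedge b_n)=\bigl(\bigvee_n a_n\bigr)\wedge\bigl(\bigvee_n b_n\bigr)$ is not free, and it is precisely the monotonicity clause in the definition of $\searrow$ that makes the families up-directed so that the frame distributive law (applied twice, then collapsed to the diagonal) closes the gap. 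Your treatment of (6) and (7) via Proposition \ref{Prop:21} --- pointwise infima are genuine infima, hence unique, and composing with an arbitrary homomorphism out of the codomain transports them --- is clean and arguably slicker than recomputing on rays; for (6) a direct argument is also available, since an element of $\mcal{R}_0 L$ is determined by its values on the rays $(-\infty,r)$. Your remark that the dual of (3) must be reproved from the right-ray column of Lemma \ref{Lem:8} rather than obtained by negation (truncation living only on the positive cone) is exactly right. Two cosmetic points: in clause (5) the scalar $r=0$ should be dispatched separately (trivially, since $0\cdot g_n=0$ is constant), and note that the paper's displayed definition of $\searrow$/$\nearrow$ has its two monotonicity inequalities transposed --- a typo which your reading silently and correctly repairs.
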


For a compact frame $L$, directional pointwise convergence in $\mcal{R}_0 L$ reduces to uniform convergence. This classical theorem of Dini has a concise demonstration in a pointfree context. 

\begin{theorem}[\cite{Dini:1878}]\label{Thm:18}
	Let $\{g_n\}$ be a nonincreasing sequence of positive elements in $\mcal{R}_0 L$ for a compact frame $L$. Then 
	\[
	g_n \searrow 0 
	\implies g_n \to 0.
	\]
\end{theorem}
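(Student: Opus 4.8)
The plan is to translate the classical Dini argument into the pointfree language, using the characterization of pointwise convergence in terms of joins of the form $g_n(-\infty,\varepsilon)$ and the characterization of uniform convergence in Proposition \ref{Prop:16}(3). By hypothesis $g_n \searrow 0$, which by the definition of directional pointwise convergence means that for every $r \in \mbb{R}$ we have $\bigvee_n g_n(-\infty, r) = 0(-\infty, r)$. The relevant instance is $r = \varepsilon > 0$: since $\varepsilon > 0$, the element $0(-\infty, \varepsilon) = \top$ (the constant-$0$ frame map sends any open set containing $0$ to $\top$). Hence for each fixed $\varepsilon > 0$ we get
\[
\bigvee_n g_n(-\infty, \varepsilon) = \top.
\]
Now I would invoke compactness of $L$. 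The sequence $\{g_n\}$ is nonincreasing, so by Lemma \ref{Lem:8} (or directly, since $g_{n+1} \le g_n$ forces $g_{n+1}(-\infty,\varepsilon) \ge g_n(-\infty,\varepsilon)$) the family $\{g_n(-\infty,\varepsilon)\}_n$ is an increasing, hence directed, subset of $L$ whose join is $\top$. Compactness of $L$ says exactly that $\top$ is a compact element, so $\top = g_m(-\infty,\varepsilon)$ for some $m$; and then by monotonicity $g_n(-\infty,\varepsilon) = \top$ for all $n \ge m$. This is precisely condition (3) of Proposition \ref{Prop:16} applied to the sequence $\{g_n\}$, which we are free to regard as lying in $\ol{G}$ up to the usual rescaling — or one simply notes that $g_n(-\infty,\varepsilon)=\top$ for all small $\varepsilon$ is the very statement $g_n \to 0$. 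Therefore $g_n \to 0$ uniformly.

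The one point requiring a little care is the step where I claim $\{g_n(-\infty,\varepsilon)\}_n$ is directed and its join is $\top$: the join over *all* $r$ being what the definition gives is a join indexed differently, but fixing $r=\varepsilon$ is legitimate because the defining equation $\bigvee_n g_n(-\infty,r) = g_0(-\infty,r)$ holds for each individual $r$, not merely in some aggregated sense. After that the argument is essentially the observation that in a compact frame a directed join equal to $\top$ is attained. I expect this compactness step to be the only substantive ingredient; everything else is unwinding definitions. (One could alternatively phrase the whole proof through Proposition \ref{Prop:16}: pointwise convergence gives, for each $\varepsilon$, the join condition, and compactness upgrades "join is $\top$" to "some term is $\top$", which is condition (1)/(3) of Proposition \ref{Prop:16}, i.e. uniform convergence.)

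So the skeleton is: (i) rewrite $g_n \searrow 0$ as $\bigvee_n g_n(-\infty,\varepsilon) = \top$ for each $\varepsilon > 0$; (ii) observe the family is directed by monotonicity of $\{g_n\}$; (iii) apply compactness of $L$ to get $g_m(-\infty,\varepsilon) = \top$ for some $m$, hence for all $n \ge m$; (iv) recognize this as condition (3) of Proposition \ref{Prop:16}, concluding $g_n \to 0$. The main obstacle — really a minor one — is simply making sure the directedness in step (ii) is stated correctly and that one is entitled to fix $\varepsilon$ in the pointwise-convergence hypothesis before invoking compactness.
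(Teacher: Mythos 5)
Your proof is correct and is essentially identical to the paper's: both rewrite $g_n \searrow 0$ as $\bigvee_n g_n(-\infty,\varepsilon)=\top$ for each $\varepsilon>0$, use compactness (with directedness from monotonicity) to find an index $m$ with $g_n(-\infty,\varepsilon)=\top$ for all $n\ge m$, and identify this with condition (3) of Proposition \ref{Prop:16}. Your version merely spells out the directedness and the $\ol{G}$-membership caveat that the paper leaves implicit.
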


\begin{proof}
	To say that $g_n \searrow 0$ is to say that $\bigvee_n g_n(-\infty, \varepsilon) = \top$ for all $\varepsilon > 0$. But since $L$ is compact, that means that for all $\varepsilon > 0$ there is an index $m$ such that $g_n(-\infty, \varepsilon) = \top$ for all $n \geq m$, which is to say that $g_n \to 0$ by Proposition \ref{Prop:16}.
\end{proof}

\section{Truncation sequences, good sequences \label{Sec:TruncSeq}}

\subsection{Truncation sequences \label{Subsec:TruncSeq}}

In the context of $\mbf{W}$-objects, the notion of a truncation sequence was mentioned briefly in Section 5.2 of \cite{BallHagerWW:2015}. The closely related notion of \enquote*{expanding sequence} plays an important role in Hager's penetrating analysis of $*$-maximal $\mbf{W}$-objects in \cite[S.\ 10]{Hager:2013}. Here we generalize the idea of a truncation sequence to truncs, and relate it to Mundici's good sequences in Proposition \ref{Prop:27}. The major result is the characterization of the elements of $\mcal{E}_0 q$ in terms of truncation sequences in Proposition \ref{Prop:1}.

\begin{definition*}[truncation sequence]
	The \emph{n\textsuperscript{th} truncation} of an element $g \in G^+$ is $n\ol{g/n}$. The \emph{truncation sequence} of $g$ is $\{n\ol{g/n}\}$. We often abbreviate $n\ol{g/n}$ to $g \wedge n$, trusting the reader to keep in mind that the symbol $n$ is formal and not a reference to a member of the trunc.
\end{definition*}

\begin{lemma}\label{Lem:27}
	Let $\{g \wedge n\}$ be the truncation sequence of an element $g \in G^+$. If $G$ is represented as a trunc in $\mcal{D}_0 X$ for a pointed space $X$ then $\widetilde{g \wedge n}(x) = \tilde{g}(x) \wedge n$ for all $x \in X$ and all $n$. If $G$ is represented as a subtrunc of $\mcal{R}_0 L$ for a pointed frame $L$ then
	\[
		(g \wedge n)(-\infty, r)
		= \begin{cases}
		\top        &\text{if $r > n$},\\
		g(-\infty, r)  &\text{if $r \leq n$},
		\end{cases},\qquad r \in \mbb{R},\ n \in \mbb{N}.
	\] 
\end{lemma}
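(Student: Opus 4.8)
The plan is to prove the two formulas by tracking how the $n$th truncation $n\,\ol{g/n}$ behaves under each of the two representations, reducing everything to facts already in hand: the defining formula $\ol{\tilde g}(x)=\tilde g(x)\wedge 1$ for the Yosida representation, and Lemma~\ref{Lem:8} for the $\mcal{R}_0L$ representation.

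\emph{The spatial case.} Suppose $G$ is a trunc in $\mcal{D}_0 X$. By hypothesis the truncation is computed pointwise: $\ol{\tilde h}(x)=\tilde h(x)\wedge 1$ for every $\tilde h\in G^+$ and every $x\in X$, and scalar multiplication is pointwise as well. Fix $n$ and set $\tilde h=\tilde g/n$ (that is, the function $x\mapsto \tilde g(x)/n$, interpreted suitably at the points where $\tilde g(x)=\infty$). Then for each $x\in X$,
\[
\widetilde{g\wedge n}(x)=\bigl(n\,\ol{\tilde g/n}\bigr)(x)=n\cdot\bigl(\tfrac{\tilde g(x)}{n}\wedge 1\bigr)=\tilde g(x)\wedge n,
\]
where the last equality is just the elementary fact that multiplication by $n>0$ distributes over $\wedge$ on $\exR$. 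The only point needing a word of care is the behavior at $x$ with $\tilde g(x)=\infty$: there $\tilde g(x)/n=\infty$, so $\ol{\tilde g/n}(x)=1$ and $n\cdot 1=n=\infty\wedge n$, consistent with the claimed formula.

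\emph{The pointfree case.} Suppose $G$ is a subtrunc of $\mcal{R}_0 L$. Here $g\wedge n$ abbreviates $n\,\ol{g/n}$, and by Lemma~\ref{Lem:8} applied to the element $g/n\in\mcal{R}_0^+ L$ we have $\ol{g/n}(-\infty,s)=\top$ if $s>1$ and $\ol{g/n}(-\infty,s)=(g/n)(-\infty,s)$ if $s\le 1$. Now I use the formula for the operation of scalar multiplication by $n$ (a $1$-ary continuous function on $\mbb R$), which gives $(n\cdot f)(-\infty,r)=f(-\infty,r/n)$ for $f\in\mcal{R}_0L$; equivalently this is the special case of Theorem~\ref{Thm:4} for the map $w(x)=nx$, since $w(U)\subseteq(-\infty,r)$ exactly when $U\subseteq(-\infty,r/n)$. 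Likewise $(g/n)(-\infty,s)=g(-\infty,ns)$. Chaining these, for $r\in\mbb R$,
\[
(g\wedge n)(-\infty,r)=\bigl(n\,\ol{g/n}\bigr)(-\infty,r)=\ol{g/n}(-\infty,r/n)
=\begin{cases}\top & r/n>1,\\ (g/n)(-\infty,r/n) & r/n\le 1,\end{cases}
\]
and $(g/n)(-\infty,r/n)=g(-\infty,r)$, so the right-hand side is $\top$ when $r>n$ and $g(-\infty,r)$ when $r\le n$, as claimed.

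\emph{Main obstacle.} There is no deep obstruction; the proof is an unwinding of definitions. The one place to be careful is the bookkeeping with the formal scalar $n$: one must be clear that $g\wedge n$ is by definition $n\,\ol{g/n}$ and then correctly apply the scalar-multiplication formula for frame homomorphisms (multiplication by $n$ on the "input" side corresponds to dividing the threshold $r$ by $n$), together with Lemma~\ref{Lem:8} for the truncation $\ol{\cdot}$. Getting the direction of this rescaling right — and checking it is legitimate to use Theorem~\ref{Thm:4}/Lemma~\ref{Lem:8} for an arbitrary $g\in G^+\subseteq\mcal{R}_0^+L$, which is immediate since those results hold in all of $\mcal{R}_0 L$ — is the whole content of the argument.
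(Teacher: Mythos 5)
Your proof is correct and follows essentially the same route as the paper's: apply Lemma \ref{Lem:8} to $g/n$ and unwind the scalar rescaling $(n f)(-\infty,r)=f(-\infty,r/n)$ to get the case split at $r=n$. The only difference is that you also write out the spatial case explicitly (including the check at points where $\tilde g(x)=\infty$), which the paper leaves tacit.
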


\begin{proof}
	By Lemma \ref{Lem:8} we get that for any $r \in \mbb{R}$,
	\begin{align*}
		(g \wedge n)(-\infty, r)
		&= n\ol{g/n}(-\infty,r)
		= \ol{g/n}(-\infty, r/n)
		= \begin{cases}
		\top    & \text{if $r/n > 1$}\\
		(g/n)(-\infty, r/n)   & \text{if $r/n \leq 1$}
		\end{cases}\\
		&= \begin{cases}
		\top    & \text{if $r > n$}\\
		g(-\infty, r)   & \text{if $r \leq n$}
		\end{cases}.\qedhere
	\end{align*}
\end{proof}

\begin{corollary}\label{Cor:1}
	For all $g \in G^+$, $g	= \bigvee_n^\bullet (g \wedge n)$.
\end{corollary}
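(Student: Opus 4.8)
The plan is to unwind the definition of the pointwise supremum $\bigvee_n^\bullet$ from Definition \ref{Def:1} and verify the two defining equalities directly, using the explicit formula for the truncation sequence provided by Lemma \ref{Lem:27}. Since $\{g \wedge n\}$ is nondecreasing in $n$, the pointwise supremum, if it exists, will coincide with the ordinary join computed coordinatewise on the frame-homomorphism level; concretely, I must show that for every $r \in \mbb{R}$,
\[
	\bigvee_n (g \wedge n)(r, \infty) = g(r, \infty)
	\qtq{and}
	\bigvee_n (g \wedge n)(-\infty, r) = g(-\infty, r).
\]
(The second equality is the one that matches the stated definition of $\bigwedge_n^\bullet$ dualized; the first is its companion, and together they pin down $\bigvee_n^\bullet (g \wedge n)$.)

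First I would treat the $(-\infty, r)$ equality. By Lemma \ref{Lem:27}, $(g \wedge n)(-\infty, r) = g(-\infty, r)$ as soon as $n \geq r$, so the join over all $n$ stabilizes at $g(-\infty, r)$; for $n < r$ the terms are $\top$, which only makes the join larger, but in fact they are bounded by $g(-\infty, n) \le g(-\infty, r)$ wait — more carefully: the terms with $r > n$ equal $\top$, so I must instead observe that the defining condition for $\bigvee_n^\bullet$ uses $(r, \infty)$ on the left. Let me redo this via the $(r, \infty)$ side, where the bookkeeping is cleaner. Next I would treat the $(r, \infty)$ equality. For $r \geq 0$ and $n \geq r$ one computes from Lemma \ref{Lem:8} / Lemma \ref{Lem:27} that $(g \wedge n)(r, \infty) = n\ol{g/n}(r,\infty) = \ol{g/n}(r/n, \infty) = (g/n)(r/n, \infty) = g(r, \infty)$, using $r/n \le 1$; so again the join stabilizes at $g(r,\infty)$ for large $n$, and for small $n$ the term $(g\wedge n)(r,\infty) = \ol{g/n}(r/n,\infty)$ is $\bot$ when $r \ge n$, hence contributes nothing. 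Thus $\bigvee_n (g \wedge n)(r, \infty) = g(r, \infty)$, and similarly $\bigvee_n (g \wedge n)(-\infty, r) = \top = g(-\infty, r)$ is false in general; rather one uses $\bigvee_n (g\wedge n)(-\infty,r)$, and since $(g\wedge n)(-\infty,r) = g(-\infty,r)$ for $n \ge r$, this join equals $g(-\infty,r)$ too. So both defining equalities hold, giving $g = \bigvee_n^\bullet (g \wedge n)$.

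The main obstacle here is essentially notational rather than mathematical: one must be careful about which of the generators $(-\infty, r)$ or $(r, \infty)$ controls the pointwise supremum (Definition \ref{Def:1} states it in terms of $(r, \infty)$ for $\bigvee^\bullet$), and about the edge cases $n \le r$ versus $n > r$ in the piecewise formula of Lemma \ref{Lem:27}. Once those are sorted, the proof is a two-line calculation: the truncation sequence agrees with $g$ on the generator $(r,\infty)$ for all $n$ large enough (namely $n \geq r$, since then $r/n \le 1$), so the increasing join over $n$ is eventually constant and equal to $g(r, \infty)$; the dual generator is handled the same way. A clean one-line way to finish is simply to invoke Lemma \ref{Lem:27}: for each fixed $r$, choosing any integer $n_0 \geq |r|$ makes $(g \wedge n)(r,\infty) = g(r,\infty)$ for all $n \geq n_0$, whence $\bigvee_n (g\wedge n)(r,\infty) = g(r,\infty)$, which is exactly the condition defining $\bigvee_n^\bullet(g \wedge n) = g$.
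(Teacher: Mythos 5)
Your final argument is correct and is essentially the paper's own proof: unwind Definition \ref{Def:1} and read off $\bigvee_n (g \wedge n)(r,\infty) = g(r,\infty)$ for every $r$ from the truncation formulas of Lemmas \ref{Lem:8} and \ref{Lem:27}. One caution: the stray intermediate claim that $\bigvee_n (g \wedge n)(-\infty,r) = g(-\infty,r)$ is false (for $r>1$ that join is $\top$, since already $(g\wedge 1)(-\infty,r) = \top$), but it is harmless because Definition \ref{Def:1} defines $\bigvee^\bullet$ using only the generators $(r,\infty)$, exactly as your closing one-line computation does.
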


\begin{proof}
	To say that $g = \bigvee_n^\bullet (g \wedge n)$ is to say that $\bigvee_n n\ol{g/n}(-\infty, r) = g(-\infty, r)$ for all $r \in \mbb{R}$. But this is clear from Lemma \ref{Lem:27}.
\end{proof}

\begin{proposition}[cf.\ \cite{BallHagerWW:2015}, 5.2.2]\label{Prop:26}
	A sequence $\{g_n\} \subseteq \mcal{R}_0^+ L$ is the truncation sequence of some element $g \in \mcal{R}_0^+ L$ iff 
	\begin{enumerate}
		\item 
		$g_n = g_{n + 1} \wedge n$ for all $n$, and
		
		\item 
		$\bigvee_n^\bullet g_n = \top$, i.e., $\bigvee_n g_n(-\infty, n) = \top$ in $L$.
	\end{enumerate}
	In this circumstance $g_n \nearrow g$.  
\end{proposition}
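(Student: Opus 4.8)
The plan is to prove the two implications separately. The forward direction, that a truncation sequence satisfies (1) and (2), is essentially bookkeeping on top of Lemmas~\ref{Lem:27} and~\ref{Lem:8}. If $g_n = g \wedge n$ for some $g \in \mcal{R}_0^+ L$, then comparing values on all rays via those two lemmas yields $g_{n+1} \wedge n = g_n$, which is (1) --- equivalently, the identity $(x \wedge (n+1)) \wedge n = x \wedge n$ holds in the trunc $\mbb{R}$, hence in $\mcal{R}_0 L$, by the Corollary following Lemma~\ref{Lem:8}. And Lemma~\ref{Lem:27} with $r = n$ gives $g_n(-\infty, n) = g(-\infty, n)$, so $\bigvee_n g_n(-\infty, n) = g\big(\bigcup_n (-\infty, n)\big) = g(\mbb{R}) = \top$, which is (2). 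Since $g \wedge n = n\ol{g/n} \leq g$ by $\mfrak{T}1$, the sequence $\{g\wedge n\}$ is nondecreasing, and $\bigvee_n^\bullet(g \wedge n) = g$ is Corollary~\ref{Cor:1}; hence $g_n \nearrow g$.

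For the converse, assume (1) and (2); I would construct $g$ directly as a frame homomorphism $\mcal{O}_* \mbb{R} \to L$ from the ray-data of the $g_n$. First iterate (1) to get $g_n = g_m \wedge n$ for all $m \geq n$ (again using that $(x \wedge m) \wedge n = x \wedge n$ for $n \leq m$ holds in $\mbb{R}$, hence in $\mcal{R}_0 L$); by Lemma~\ref{Lem:27} this shows $g_m(-\infty, r)$ is independent of $m$ once $m \geq r$, and by Lemma~\ref{Lem:8} that $g_m(r, \infty)$ is independent of $m$ once $m > r$. Now set
\[
g(-\infty, r) := g_n(-\infty, r) \ \ \text{for any } n \geq r, \qquad g(r, \infty) := \bigvee_n g_n(r, \infty),
\]
and check that this data respects the defining relations of the frame of reals in its presentation by rays, so that it extends to a frame homomorphism --- which is then automatically positive and pointed since every $g_n$ is, so $g \in \mcal{R}_0^+ L$. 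The relations internal to the down-rays, and those internal to the up-rays, follow from the frame-homomorphism properties of a single $g_n$; the mixed relations $g(-\infty, r) \wedge g(s, \infty) = \bot$ for $r \leq s$ and $g(-\infty, r) \vee g(s, \infty) = \top$ for $s < r$ are handled by choosing $n$ large enough to absorb $g(-\infty, r)$ into $g_n$ and then invoking the $m$-independence above; and the covering relation is precisely where hypothesis~(2) enters, via $\bigvee_r g(-\infty, r) \geq \bigvee_n g_n(-\infty, n) = \top$.

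It then remains to verify $g \wedge n = g_n$, which I would do on all rays. Lemma~\ref{Lem:27} gives $(g \wedge n)(-\infty, r) = \top = g_n(-\infty, r)$ for $r > n$ (the latter because $g_n = g_{n+1} \wedge n$ is itself an $n$-th truncation) and $(g \wedge n)(-\infty, r) = g(-\infty, r) = g_n(-\infty, r)$ for $r \leq n$; Lemma~\ref{Lem:8} gives $(g \wedge n)(r, \infty) = \bot = g_n(r, \infty)$ for $r \geq n$ and $(g \wedge n)(r, \infty) = g(r, \infty) = \bigvee_m g_m(r, \infty) = g_n(r, \infty)$ for $r < n$, using once more the $m$-independence. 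Since the rays generate $\mcal{O} \mbb{R}$, this forces $g \wedge n = g_n$, and $g_n \nearrow g$ is built into the definition of $g(r, \infty)$. I expect the only real friction to be organizational: running cleanly through the frame presentation while keeping the \enquote{choose $n$ large} steps tidy --- conceptually it all reduces to the single fact that the $g_n$ agree with one another, and with $g$, on $(-\infty, r)$ once $r \leq n$ and on $(r, \infty)$ once $r < n$.
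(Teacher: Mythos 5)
Your proposal is correct and follows essentially the same route as the paper: the forward direction is the same bookkeeping via Lemma \ref{Lem:27}, and the converse constructs $g$ from the ray data $g(-\infty,r) := g_n(-\infty,r)$ for $n \geq r$ exactly as the paper does. The only difference is cosmetic: the paper delegates the extension step to the cited criterion \cite[3.1.2]{BallHager:1991} (conditions (E1)--(E3) on the down-rays alone), whereas you verify the full ray presentation of $\mcal{O}_*\mbb{R}$ by hand, including the up-rays and the mixed relations.
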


\begin{proof}
	Suppose $g \in G^+$ and $g_n =g \wedge n = n \ol{g/n}$ for all $n$. Then $\{g_n\}$ satisfies (1) by Lemma \ref{Lem:27} and (2) by virtue of the fact that $\bigvee_n g_n(-\infty, n) = \bigvee_n g(-\infty, n) = \top$. 
	
	Now suppose $\{g_n\}$ is a sequence in $G^+$ which satisfies (1) and (2). Define the function $\map{g}{\setof{(-\infty, r)}{r \in \mbb{R}}}{L}$ by putting $g(-\infty, r) \equiv g_n(-\infty, r)$ for some (any) $n > r$. The function is well defined because the sequence satisfies (1), and it follows from Lemma \ref{Lem:27} that $g_n = g_m \wedge n$ for all $m \geq n$. According to \cite[3.1.2]{BallHager:1991}, the function has a unique extension to a member of $\mcal{R}_0 L$ iff it has the following properties.
	\begin{enumerate}
		\item[(E1)]
		$r < s$ implies $g(-\infty, r) \prec g(-\infty, s)$.
		
		\item[(E2)] 
		$\bigvee_{s < r} g(-\infty, s) = g(-\infty, r)$.
		
		\item [(E3)]
		$\bigvee_{r} g(-\infty, r) = \bigvee_r g(-\infty, r)^* = \top$.  
	\end{enumerate}
	Properties (E1) and (E2) hold because they hold for each $g_n$, and $g(-\infty, r) = g_n(-\infty, r)$ for any $n > r$. That $\bigvee_r g(-\infty, r) = \top$ follows from (2), while $\bigvee_r g(-\infty, r)^* = \top$ because $g(-\infty, 0) = g_1(-\infty, 0) = \bot$. Finally, Lemma \ref{Lem:27} makes it clear that the sequence of truncations of $g$ is $\{g_n\}$.   
\end{proof}

Proposition \ref{Prop:6} is the analog of Proposition \ref{Prop:26} for subtruncs of $\mcal{D}_0 M$, $M$ a compact pointed frame. It plays an important role in the proof of our crucial Proposition \ref{Prop:1}. Its proof requires a lemma analogous to Lemma 3.1.2 of \cite{BallHager:1991}. Consistent with previous notation, we denote members of $\mcal{D}_0 M$ by symbols like $f'$ and $g'$.

\begin{lemma}\label{Lem:5}
	A function $\map{g'}{\setof{[-\infty, r)}{r \in \mbb{R}}}{M}$, $M$ a compact pointed frame, can be extended to a member of $\mcal{D}_0 M$ iff it satisfies the following conditions for all $r,s \in \mbb{R}$. The extension is unique when it exists.
	\begin{enumerate}
		\item 
		$* \in g'[-\infty, r)$ iff $r > 0$. 
		
		\item 
		$r < s$ implies $g'[-\infty, r) \prec g'[-\infty, s)$.

		\item 
		$\bigvee_{s < r} f[-\infty, s) = f[-\infty, r)$.
		
		\item 
		$\bigvee_r g'[-\infty, r)$ and $\bigvee_r g'[-\infty, r)^*$ are dense elements of $M$.
	\end{enumerate}
\end{lemma}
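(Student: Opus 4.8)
The plan is to reduce this statement to the already-established extension lemma \cite[3.1.2]{BallHager:1991} (as invoked in the proof of Proposition \ref{Prop:26}) for $\mcal{R}_0$, applied to the frame $\mcal{O}_*\exR$ rather than $\mcal{O}_*\mbb{R}$. The key observation is that a member $g'$ of $\mcal{D}_0 M$ is by definition a pointed frame map $\mcal{O}_*\exR \to M$ with $g'(-\infty,\infty)$ dense, and such a map is determined by its values on the subbasic opens $[-\infty, r)$ and $(r, \infty]$, $r \in \mbb{R}$, exactly as a member of $\mcal{R}_0 L$ is determined by its values on the rays $(-\infty,r)$. So first I would recall the precise form of the generation of $\mcal{O}_*\exR$: its frame is generated by the opens $[-\infty,r)$ and $(r,\infty]$ subject to the usual relations ($[-\infty,r)$ increasing and directed-joining to $\top$ minus the point at $+\infty$, rather than to $\top$, etc.), and the complementation/interpolation relations linking the two families. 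Giving a frame map out of $\mcal{O}_*\exR$ is the same as giving the images of these generators compatibly with the relations; pointedness pins down $*_M \circ g'$ on the generators, which is condition (1) (the point $0$ lies in $[-\infty,r)$ iff $r>0$, and dually in $(r,\infty]$ iff $r<0$, the latter being forced).

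Next I would carry out the construction. Given $g'$ defined only on $\setof{[-\infty,r)}{r\in\mbb{R}}$ and satisfying (1)--(4), I extend it to the $(r,\infty]$ family and to $\pm\infty$ by the forced formulas: set $g'(r,\infty] \equiv \bigl(g'[-\infty, r)\bigr)$-pseudocomplement-style data, more precisely use the interpolative recipe $g'(r,\infty] = \bigvee_{s>r} g'[-\infty,s)^*$, and $g'(-\infty,\infty) = \bigvee_r g'[-\infty,r) \wedge \bigvee_r g'(r,\infty]$. Then I must check this assignment respects all the defining relations of $\mcal{O}_*\exR$; the relations purely among the $[-\infty,r)$ are exactly (E1), (E2) transcribed as (2), (3) here, so those are handled verbatim as in Proposition \ref{Prop:26}. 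The relations involving $(r,\infty]$ and the cross-relations between the two families reduce, after substituting the forced formulas, to statements about pseudocomplements that follow formally from (2) and (3) (rather-below is preserved, and $a \prec b$ gives $a \wedge b^* = \bot$, $a^* \vee b = \top$ after interpolation). Finally, condition (4) is precisely what forces $g'(-\infty,\infty)$ to be dense — note $\bigvee_r g'[-\infty,r)^* = \bigvee_r g'(r,\infty]$ by the interpolation in (3), so (4) says both $\bigvee_r g'[-\infty,r)$ and $\bigvee_r g'(r,\infty]$ are dense, whence their meet $g'(-\infty,\infty)$ is dense in $M$ (the meet of two dense elements is dense in any frame). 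Uniqueness is immediate: any extension must agree with $g'$ on the $[-\infty,r)$ by hypothesis, and the values on the remaining generators are forced by the frame relations of $\mcal{O}_*\exR$, so the extension is unique.

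For the converse direction I would simply observe that if $g' \in \mcal{D}_0 M$, then restricting to the $[-\infty,r)$ gives a function satisfying (1)--(4): (1) and (2)--(3) because $g'$ is a pointed frame map and these relations hold already in $\mcal{O}_*\exR$ (and frame maps preserve $\prec$ and directed joins), and (4) because $g'(-\infty,\infty) = \bigvee_r g'[-\infty,r) \wedge \bigvee_r g'[-\infty,r)^*$ is dense by definition of $\mcal{D}_0 M$, forcing each of the two joins to be dense as well (a meet can only be dense if both factors are).

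The main obstacle I anticipate is the careful bookkeeping of the presentation of $\mcal{O}_*\exR$ — getting the exact list of defining relations right, especially at the two infinite endpoints, and verifying that the forced definition of $g'$ on the $(r,\infty]$ generators genuinely respects the cross-relations. This is the only place where $\exR$ behaves differently from $\mbb{R}$ (the rays $[-\infty,r)$ join to something strictly below $\top$), and it is where condition (4), phrased in terms of \emph{two} joins being dense rather than the single condition (E3) of Proposition \ref{Prop:26}, earns its keep. Everything else is a transcription of the argument already given for Proposition \ref{Prop:26}, with $\mcal{R}_0$ replaced by $\mcal{D}_0$.
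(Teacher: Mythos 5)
Your proposal is correct and follows essentially the same route as the paper: the paper's proof likewise extends $g'$ by the forced formula $g'(r,\infty] \equiv \bigvee_{q > r} g'[-\infty,q)^*$, sets the value at the top, defines $g'(r,s)$ as the corresponding meet, and then defers to the argument of \cite[3.1.2]{BallHager:1991}, exactly as you reduce to the extension lemma underlying Proposition \ref{Prop:26}. Your additional remarks on how condition (4) yields density of $g'(-\infty,\infty)$ as a meet of two dense elements, and on the converse, are correct fillings-in of details the paper leaves implicit.
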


\begin{proof}
	First extend $g'$ by defining $g'(r, \infty] \equiv \bigvee_{r < q} g'[-\infty, q)^*$ for $r \in \mbb{Q}$, then put $g'[-\infty, \infty] \equiv \top$, and let $g'(r,s) \equiv g'[-\infty, s) \wedge g'(r, \infty]$ for $r,s \in \mbb{Q}$. With these modifications, the proof now closely follows the argument used to prove \cite[3.1.2]{BallHager:1991}.     
\end{proof}

\begin{proposition}\label{Prop:6}
	A sequence $\{g_n'\} \subseteq \mcal{D}_0^+ M$, $M$ a compact pointed frame, is the truncation sequence of some element $g' \in \mcal{D}_0^+ M$ iff
	\begin{enumerate}
		\item 
		$g'_n = g_{n + 1}' \wedge n$ for all $n$, and
		
		\item 
		$\bigvee_n g_n'[-\infty, n)$ is dense in $M$. 
	\end{enumerate}
\end{proposition}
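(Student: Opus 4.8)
The plan is to follow the proof of Proposition~\ref{Prop:26} almost verbatim, substituting the extension criterion of Lemma~\ref{Lem:5} for the appeal to \cite[3.1.2]{BallHager:1991}, and substituting for Lemma~\ref{Lem:27} its $\mcal{D}_0 M$-analog: for $g' \in \mcal{D}_0^+ M$ and $r \in \mbb{R}$, $n \in \mbb{N}$,
\[
(g' \wedge n)[-\infty, r)
= \begin{cases} \top & \text{if } r > n,\\ g'[-\infty, r) & \text{if } r \leq n. \end{cases}
\]
This formula is obtained exactly as Lemma~\ref{Lem:8} is in $\mbb{R}$, by interpreting the truncation term $n\ol{x/n}$ in $\exR$ (where it computes $\min(x,n)$, with $\min(\infty,n)=n$); it is legitimate because the operations of $\mcal{D}_0 M$ are defined from $\exR$ in the same manner as those of $\mcal{R}_0 L$ are from $\mbb{R}$. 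From it and condition~(1) one extracts, as in Proposition~\ref{Prop:26}, the telescoping identity $g_j' = g_k' \wedge j$ for all $j \leq k$.

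First I would handle the forward implication. If $g_n' = g' \wedge n$ for some $g' \in \mcal{D}_0^+ M$, then (1) follows from the displayed formula together with the uniqueness clause of Lemma~\ref{Lem:5}: for $r \leq n$ both $g_n'[-\infty,r)$ and $(g_{n+1}'\wedge n)[-\infty,r)$ equal $g'[-\infty,r)$, and for $r > n$ both equal $\top$. For (2), the formula at $r=n$ gives $g_n'[-\infty,n) = g'[-\infty,n)$, so $\bigvee_n g_n'[-\infty,n) = \bigvee_n g'[-\infty,n) = \bigvee_r g'[-\infty,r)$, which is dense by Lemma~\ref{Lem:5}(4) applied to $g'$.

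For the converse, assuming (1) and (2), I would define $\map{g'}{\setof{[-\infty,r)}{r\in\mbb{R}}}{M}$ by $g'[-\infty,r) \equiv g_n'[-\infty,r)$ for any $n > r$; this is well defined because the telescoping identity forces $g_n'[-\infty,r) = g_m'[-\infty,r)$ whenever $m,n > r$. Then I would verify the four hypotheses of Lemma~\ref{Lem:5}: conditions (1)--(3) transfer from the $g_n'$ to $g'$ since $g'[-\infty,r)$ agrees with $g_n'[-\infty,r)$ for $n > r$; for (4), $\bigvee_r g'[-\infty,r) = \bigvee_n g_n'[-\infty,n)$ (using $g'[-\infty,n) = g_n'[-\infty,n)$, a consequence of (1)) is dense by hypothesis~(2), while $\bigvee_r g'[-\infty,r)^* \geq g'[-\infty,0)^* = \top$ because $g_1' \in \mcal{D}_0^+ M$ forces $g_1'[-\infty,0) = \bot$. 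Lemma~\ref{Lem:5} then produces a unique extension $g' \in \mcal{D}_0 M$, with $g'[-\infty,0)=\bot$ placing it in $\mcal{D}_0^+ M$. Finally, applying the displayed formula to this $g'$ and comparing with the computed values of $g_n'$ gives $(g' \wedge n)[-\infty,r) = g_n'[-\infty,r)$ for all $r$, whence $g' \wedge n = g_n'$ by uniqueness, so $\{g_n'\}$ is the truncation sequence of $g'$.

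The conceptual content is slight; the work is bookkeeping. The one place needing care is the endpoint $-\infty$: the half-open intervals $[-\infty, r)$ (not $(-\infty, r)$) must be used throughout, and the starred half of Lemma~\ref{Lem:5}(4) must be checked separately — here it is immediate from the positivity of $g_1'$. The only other mild point is to be confident that the $\mcal{D}_0 M$-versions of Lemmas~\ref{Lem:8} and~\ref{Lem:27} are available, which they are for the reason indicated above.
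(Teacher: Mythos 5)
Your proposal is correct and is exactly the route the paper takes: its own proof of Proposition \ref{Prop:6} consists of the single instruction to argue as in Proposition \ref{Prop:26} using Lemma \ref{Lem:5}, and you have simply carried out that argument in detail (including the two points the paper leaves tacit: the $\mcal{D}_0 M$-analog of Lemma \ref{Lem:27} and the separate verification of the starred half of Lemma \ref{Lem:5}(4) from the positivity of $g_1'$).
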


\begin{proof}
	Argue as in the proof of Proposition \ref{Prop:26}, using Lemma \ref{Lem:5}.
\end{proof}

\subsection{Good sequences}\label{Subsec:GoodSeq}

In this subsection we take up the connection between the truncation sequences of Subsection \ref{Subsec:TruncSeq} and a close analog of Mundici's good sequences, a crucial construct used in his celebrated result \cite{Mundici:1986} linking $MV$-algebras and unital $\ell$-groups. We emphasize the similarity by expropriating Mundici's terminology.

\begin{definition*}[good sequence]
	A sequence $\{f_n\} \subseteq \ol{G}$ is called a \emph{good sequence} if $f_n = \ol{f_n + f_{n + 1}}$ for all $n$, and if $f_n \searrow 0$. 
\end{definition*}

We record a couple of fundamental trunc identities.

\begin{lemma}\label{Lem:28}
	The following hold for any $g \in G^+$ and $n \in \mbb{N}$.
	\begin{enumerate}
		\item
		$g \wedge n + g \ominus n = g$.
		
		\item 
		$g \wedge n + \ol{g \ominus n} = g \wedge (n + 1)$. 
	\end{enumerate}
\end{lemma}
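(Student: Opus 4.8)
The plan is to verify both identities pointwise in the Yosida representation, invoking Theorem~\ref{Thm:16} to reduce to the concrete trunc $\widetilde{G}\subseteq\mcal{D}_0 X$, where all the operations are computed pointwise on $\exR$ and where $\widetilde{g\wedge n}(x)=\tilde g(x)\wedge n$ by Lemma~\ref{Lem:27}. Since a trunc isomorphism preserves $+$, $\wedge$, scalar multiplication and truncation, it suffices to check that the two identities hold for real numbers $t=\tilde g(x)\geq 0$ (and also, for the points where $\tilde g(x)=\infty$, that both sides evaluate to $\infty$, which is immediate since $t\wedge n=n$ and $t\ominus n=\infty$ there). Note that $g\ominus n$ abbreviates $n(g/n\ominus 1)$, so pointwise $t\ominus n=(t-n)^+$, and $\ol{g\ominus n}$ evaluates to $(t-n)^+\wedge 1$.

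For (1), the pointwise claim is $(t\wedge n)+(t-n)^+=t$ for $t\geq 0$: if $t\leq n$ the left side is $t+0=t$; if $t>n$ it is $n+(t-n)=t$. For (2), the pointwise claim is $(t\wedge n)+\big((t-n)^+\wedge 1\big)=t\wedge(n+1)$: for $t\leq n$ both sides are $t$; for $n\leq t\leq n+1$ the left side is $n+(t-n)=t$ and the right side is $t$; for $t\geq n+1$ the left side is $n+1$ and the right side is $n+1$. In all cases the identities hold, so they hold in $\widetilde{G}$ and hence in $G$.

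Alternatively—and perhaps more in keeping with the spirit of the preceding results—one could derive the identities purely formally from the trunc axioms, using $(\mfrak{T}1)$ and the already-established identities in Lemma~\ref{Lem:8}/Corollary after Lemma~\ref{Lem:8} (equations valid in $\mbb{R}$ remain valid in any trunc, since $\mbb{R}$ generates the relevant equational theory). Indeed (1) is just the definition of $g\ominus n$ unwound: $g\ominus n=g-(g\wedge n)$ is the interpretation of the term $g-\ol{g}$ rescaled, and rearranging gives $(g\wedge n)+(g\ominus n)=g$. Then (2) follows from (1) by replacing $g\ominus n$ with $\ol{g\ominus n}+(g\ominus n\ominus 1)$ via (1) applied at level $1$ to the element $g\ominus n$, together with the identity $(g\wedge n)+(g\ominus n\ominus 1)\wedge\text{(stuff)}$...

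The only mild obstacle is bookkeeping with the formal symbol $n$ and the abbreviations $g\wedge n=n\ol{g/n}$, $g\ominus n=n(g/n\ominus 1)$: one must be careful that these are genuine trunc terms so that the "equations true in $\mbb{R}$ are true in $G$" principle applies. Given that, I would present the pointwise verification in the Yosida representation as the clean argument, since it is transparent and self-contained, and simply remark that the identities are instances of equations holding in $\mbb{R}$.
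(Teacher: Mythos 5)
Your pointwise argument is correct, but it takes a genuinely different route from the paper. The paper proves (1) by simply unwinding the abbreviations: since $g \ominus n \equiv n(g/n \ominus 1) = n\big(g/n - \ol{g/n}\big)$ and $g \wedge n \equiv n\ol{g/n}$, the sum telescopes to $n(g/n) = g$ with no representation needed at all. For (2) the paper then computes algebraically, writing $g \wedge (n+1) = g - g\ominus(n+1)$ by part (1), invoking the identity $g \ominus (n+1) = g \ominus n \ominus 1$ (cited from an earlier paper), and expanding $g \ominus n \ominus 1 = g\ominus n - \ol{g\ominus n}$ before applying (1) once more. Your approach instead passes to $\widetilde{G} \subseteq \mcal{D}_0 X$ and checks the corresponding real-number identities $(t\wedge n)+(t-n)^+ = t$ and $(t\wedge n)+\big((t-n)^+\wedge 1\big) = t\wedge(n+1)$ on the dense set where $\tilde g$ is finite; this is legitimate (the partial addition in $\mcal{D}_0 X$ only requires agreement there, and the truncation-sequence formula of Lemma \ref{Lem:27} gives the pointwise meaning of $g\wedge n$), and it is arguably more transparent, at the cost of routing through the Yosida representation and the principle that equations valid in $\mbb{R}$ hold in every trunc. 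What the paper's route buys is independence from any representation --- part (1) in particular is a pure definition-unwinding --- though it does lean on the externally cited identity $g\ominus(n+1) = g\ominus n\ominus 1$, which your pointwise check absorbs for free. Your second, ``formal'' sketch trails off incomplete, but since you explicitly designate the pointwise verification as the argument you would present, nothing essential is missing.
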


\begin{proof}
	To establish (1), observe that 
	\[
	g \wedge n + g \ominus n
	= n\ol{g/n} + n((g/n) \ominus 1)
	= n\ol{g/n} + n ((g/n) - \ol{g/n})
	=g.
	\]
	To establish (2) note that 
	\begin{align*}
	g \wedge (n + 1)
	&= g - g \ominus (n + 1)
	&&\text{by (1),}\\
	&= g - g \ominus n \ominus 1
	&&\text{by \cite[3.3.6]{Ball:2014.1},}\\
	&= g + (\ol{g \ominus n} - g \ominus n)
	&&\text{by definition of $\ominus$,}\\
	&= (g \wedge n + g \ominus n) + (\ol{g \ominus n} - g \ominus n)
	&&\text{by (1),}\\
	&= g \wedge n + \ol{g \ominus n}.\qedhere
	\end{align*}
\end{proof}

\begin{lemma}\label{Lem:2}
	The following hold for any $g \in G^+$ and $m \in \mbb{N}$.
	\begin{enumerate}
		\item 
		The sequence $\{\ol{g \ominus (n - 1)}\}$ is a good sequence .
		
		\item 
		$g = \sum_{1 \leq n \leq m} \ol{g \ominus (n - 1)} + g \ominus m$.
		
		\item 
		$g \wedge m = \sum_{1 \leq n \leq m }\ol{g \ominus(n - 1)}$ 
		
		\item 
		If $\{f_n\}$ is a good sequence and $g \equiv \sum_{1 \leq n \leq m}f_n$ then $f_n = \ol{g \ominus(n - 1)}$ for all $n \leq m$.
	\end{enumerate}
\end{lemma}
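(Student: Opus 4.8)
The plan is to prove the four parts of Lemma~\ref{Lem:2} in the given order, each reducing to the identities already established in Lemma~\ref{Lem:28} together with standard facts about $\ominus$.

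\textbf{Part (1).} I would first show that $\{\ol{g \ominus (n-1)}\}$ satisfies the good-sequence identity $f_n = \ol{f_n + f_{n+1}}$. Writing $f_n = \ol{g \ominus (n-1)}$, I expect this to follow by applying Lemma~\ref{Lem:28}(2) to the element $g \ominus (n-1)$ in place of $g$: indeed $g \ominus (n-1) \wedge 1 + \ol{(g \ominus (n-1)) \ominus 1} = (g \ominus (n-1)) \wedge 2$, and since $(g \ominus(n-1))\ominus 1 = g \ominus n$ (by \cite[3.3.6]{Ball:2014.1}) while $\ol{g \ominus(n-1)} = (g \ominus (n-1)) \wedge 1$, this reads $f_n + f_{n+1} = (g \ominus (n-1)) \wedge 2$, and truncating both sides gives $\ol{f_n + f_{n+1}} = (g \ominus(n-1)) \wedge 1 = f_n$. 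For the condition $f_n \searrow 0$: monotonicity $f_{n+1} \le f_n$ is clear since $g \ominus n \le g \ominus (n-1)$ and truncation is order-preserving, and pointwise convergence to $0$ can be checked in the Madden representation as a subtrunc of $\mcal{R}_0 L$ via Lemma~\ref{Lem:8} — one computes $f_n(r,\infty) = (g \ominus(n-1))(r,\infty)$ for small $r$, which equals $g(n-1+r,\infty)$ for $r \ge 0$, and $\bigvee_n g(n-1+r,\infty) = \top$ is exactly the statement that $g$ is (pointfree) real-valued, hence $\bigvee_n f_n(r,\infty) = \top = 0(r,\infty)$ for $r < 0$; together with $f_n(-\infty,r) = \top$ for $r > 1$ this gives $\bigwedge^\bullet f_n = 0$.

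\textbf{Parts (2) and (3).} These I would prove together by induction on $m$, since (3) is the special case of (2) where $g \ominus m$ is absorbed: by Lemma~\ref{Lem:28}(1), $g = g \wedge m + g \ominus m$, so (2) and (3) are equivalent. For (3), the base case $m = 1$ is $g \wedge 1 = \ol{g}$, which is the definition. For the inductive step, Lemma~\ref{Lem:28}(2) gives $g \wedge (m+1) = g \wedge m + \ol{g \ominus m}$, and the inductive hypothesis rewrites $g \wedge m$ as $\sum_{1 \le n \le m} \ol{g \ominus (n-1)}$, so the sum telescopes to $\sum_{1 \le n \le m+1} \ol{g \ominus (n-1)}$ (the new term being $\ol{g \ominus m} = \ol{g \ominus ((m+1)-1)}$). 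Then (2) follows by adding $g \ominus m$ to both sides of (3) and invoking Lemma~\ref{Lem:28}(1).

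\textbf{Part (4).} Given a good sequence $\{f_n\}$ and $g = \sum_{1 \le n \le m} f_n$, I want $f_n = \ol{g \ominus (n-1)}$ for $n \le m$. The strategy is to first establish the partial-sum identity $f_k \wedge 1 = f_k$ for each $k$ (immediate, since $f_k \in \ol{G}$) and the key relation that for $j < k$, the tail sum $\sum_{j < n \le m} f_n$ satisfies $g \ominus j = \sum_{j < n \le m} f_n$. This last claim is the crux: I would prove it by downward induction or directly using the good-sequence identity $f_n = \ol{f_n + f_{n+1}}$ to show $f_j + (\text{tail beyond } j) \ominus 1 = (\text{tail beyond } j-1)$, matching the pattern of Lemma~\ref{Lem:28}. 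Concretely, if $t_j \equiv \sum_{j < n \le m} f_n$ denotes the tail, one shows $t_{j-1} = f_j + t_j$ and $f_j = \ol{t_{j-1}}$ — the latter because $\ol{t_{j-1}} = \ol{f_j + t_j} = \ol{f_j + f_{j+1} + (\text{rest})}$ and one needs $\ol{f_j + t_j} = \ol{f_j + f_{j+1}} = f_j$, which requires knowing that adding the smaller terms $f_{j+2}, \dots$ does not change the truncation; this follows since $f_j + f_{j+1} \ge f_j = \ol{f_j}$ already forces $\ol{f_j + f_{j+1}} = f_j$ to be "saturated," and $f_{j+2} \le f_{j+1}$ keeps us in the regime where the truncation has stabilized (checkable in $\mcal{R}_0 L$ via Lemma~\ref{Lem:8} by comparing $(f_j + t_j)(-\infty, r)$ and $(f_j + f_{j+1})(-\infty, r)$ for $r \le 1$). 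Granting $g \ominus (n-1) = t_{n-1}$, part (4) then reads $f_n = \ol{t_{n-1}} = \ol{g \ominus (n-1)}$, as desired.

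\textbf{Main obstacle.} The routine parts are (2) and (3), which are pure bookkeeping on top of Lemma~\ref{Lem:28}. The delicate point is Part~(4), specifically the claim $g \ominus (n-1) = \sum_{n \le k \le m} f_k$ for a good sequence: one must use both defining properties of a good sequence (the identity $f_n = \ol{f_n + f_{n+1}}$ and the nonincreasing/convergence condition — though for finite sums the convergence is vacuous, so really just the identity and monotonicity) and argue that successive truncations "peel off" the terms one at a time. I expect the cleanest route is to pass to the Madden representation $G \subseteq \mcal{R}_0 L$ and verify the required truncation identities levelwise using the explicit formulas of Lemma~\ref{Lem:8}, since the abstract manipulation of $\ominus$ through a sum of $m$ terms is error-prone. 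Alternatively one can iterate Lemma~\ref{Lem:28}(1)--(2) carefully; either way the bookkeeping in (4) is where the real work lies.
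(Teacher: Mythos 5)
Your treatment of the truncation identity in part (1), of parts (2) and (3), and the overall architecture all match the paper, which likewise reduces (1) to standard $\ominus$-identities, proves (2) by induction on $m$ via $g \ominus m = \ol{g \ominus m} + g\ominus(m+1)$, and gets (3) from (2) plus Lemma \ref{Lem:28}. But your verification that $f_n \searrow 0$ in part (1) is wrong as written. The condition $\bigwedge^\bullet f_n = 0$ is, by the definition of pointwise infimum, the statement that $\bigvee_n f_n(-\infty, r) = 0(-\infty, r)$ for all $r$, whose only nontrivial content is $\bigvee_n f_n(-\infty, \varepsilon) = \top$ for $\varepsilon > 0$. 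You instead work with the windows $(r,\infty)$: the claim \enquote{$\bigvee_n g(n-1+r,\infty) = \top$ is exactly the statement that $g$ is real-valued} is false --- since $(n-1+r,\infty)$ shrinks as $n$ grows, the elements $g(n-1+r,\infty)$ decrease and their join is just $g(r,\infty)$, which is generally not $\top$; and checking $\bigvee_n f_n(r,\infty) = \top$ for $r<0$ is vacuous (each term is already $\top$) and in any case irrelevant to $\bigwedge^\bullet$. The correct computation, which is the one in the paper, is $\bigvee_n f_n(-\infty,\varepsilon) = \bigvee_n (g\ominus(n-1))(-\infty,\varepsilon) = \bigvee_n g(-\infty, n-1+\varepsilon) = \top$ for $0<\varepsilon\le 1$, using Lemma \ref{Lem:8} and the fact that $\bigvee_r g(-\infty,r)=\top$ for any $g \in \mcal{R}_0 L$. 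The skeleton of your argument is right; the intervals are not.

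On part (4) you genuinely diverge from the paper. The paper peels terms off the \emph{back} of the sum using the identity $\ol{a+b} = \ol{a+\ol{b}}$ (cited as \cite[3.3.1(11)]{Ball:2014.1}), so that $\ol{f_1+\cdots+f_m} = \ol{f_1+\cdots+f_{m-2}+\ol{f_{m-1}+f_m}} = \ol{f_1+\cdots+f_{m-1}} = \cdots = f_1$; since this is an honest equation it transfers to every trunc with no pointwise reasoning. You peel from the \emph{front} via the tail sums $t_j$, and the step $\ol{f_j + t_j} = \ol{f_j+f_{j+1}} = f_j$ rests on the quasi-identity \enquote{adding terms dominated by $f_{j+1}$ does not change the truncation,} which you justify only by the phrase \enquote{keeps us in the regime where the truncation has stabilized.} That step is true and can be made rigorous (pointwise in the Yosida representation: wherever $f_{j+1}>0$ one has $f_j = 1$, and wherever $f_{j+1}=0$ the tail vanishes), but implications between equations do not transfer to $\mcal{R}_0 L$ as automatically as equations do, so this needs to be said. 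The paper's identity $\ol{a+b}=\ol{a+\ol{b}}$ makes the whole of part (4) a one-line iteration and is the cleaner route.
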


\begin{proof}
	(1) Let $f_n \equiv \ol{g \ominus (n - 1)}$ and $f \equiv (g \ominus (n - 1))/2$. Then, using parts (5) and (9) of \cite[3.3.1]{Ball:2014.1},
	\[
	\ol{f_n + f_{n+1}} 
	= \ol{\ol{g \ominus (n - 1)} + \ol{g \ominus n}}
	= \ol{\ol{2f} + \ol{2f \ominus 1}}
	= \ol{2 \ol{f}}
	= \ol{2f}
	= \ol{g \ominus (n - 1)}
	= f_n.
	\]
	To show that $f_n \searrow 0$ we must show that $\bigwedge_n^\bullet f_n = 0$, which is to say that for all $\varepsilon > 0$, 
	\begin{align*}
	\bigvee_n f_n(-\infty, \varepsilon)
	&= \bigvee_n \ol{g \ominus (n - 1)}(-\infty, \varepsilon)
	= \bigvee_n g \ominus (n - 1)(-\infty, \varepsilon)\\
	&= \bigvee_n g(-\infty, n - 1 +  \varepsilon)
	= \top.
	\end{align*}
	The second equality holds for $\varepsilon < 1$ by one part of Lemma \ref{Lem:8}, and the third equality is a minor variant of another part of the same lemma. 
	
	(2) can be proven by induction; the basis, i.e., the $m = 1$ step, is the assertion that $g = \ol{g} + g \ominus 1$. To establish the induction step, use \cite[3.3.6]{Ball:2014.1} to get
	\begin{align*}
	g
	&= \smashoperator[lr]{\sum_{1 \leq n \leq m}} \ol{g \ominus (n - 1)} + g \ominus m
	= \smashoperator[lr]{\sum_{1 \leq n \leq m}} \ol{g \ominus (n - 1)} + \ol{g \ominus m} + g \ominus m \ominus 1\\
	&= \smashoperator[lr]{\sum_{1 \leq n \leq m}} \ol{g \ominus (n - 1)} + \ol{g \ominus m} + g \ominus (m + 1)
	= \smashoperator[lr]{\sum_{1 \leq n \leq m + 1}} \ol{g \ominus (n - 1)} + g \ominus (m + 1).
	\end{align*}
	(3) follows from (2) together with Lemma \ref{Lem:28}.
	
	(4) Suppose $g = f_1 + f_2 + \ldots + f_m$. Then by \cite[3.3.1]{Ball:2014.1}(11),
	\begin{align*}
	\ol{g} 
	&= \ol{f_1 + f_2 + \ldots + f_m}
	= \ol{f_1 + f_2 + \ldots \ol{(f_{m-1} + f_m)}}
	= \ol{f_1 + f_2 + \ldots f_{m-1}}\\
	&= \ol{f_1 + f_2 + \ldots \ol{(f_{m - 2} + f_{m - 1})}}
	= \ol{f_1 + f_2 + \ldots f_{m-2}}
	= \ldots = \ol{f_1}
	= f_1.
	\end{align*}
	Therefore $g \ominus 1 = g - \ol{g} = (f_1 + f_2 + \ldots + f_m) - f_1 = f_2 + \ldots + f_m$, and a repetition of the preceding argument shows that $\ol{g \ominus 1} = f_2$. Continue.
\end{proof}

The terminology in Proposition \ref{Prop:27} requires a little clarification. We shall say that a sequence $\{g_n\} \subseteq G^+$ is a \emph{truncation sequence} if it is the truncation sequence of some element $g_0 \in \mcal{R}_0 L$, i.e., if it has properties (1) and (2) of Proposition \ref{Prop:26}. If $\{g_n\}$ is an increasing sequence in $G^+$ then $\{g_n - g_{n - 1}\}$ refers to the \emph{sequence of differences}, assuming $g_0 = 0$, i.e., $g_1 - g_0 \equiv g_1$.   

\begin{proposition}\label{Prop:27}
	If $\{g_n\}$ is a truncation sequence then $\{g_n - g_{n - 1}\}$ is a good sequence, and if $\{f_n\}$ is a good sequence then $\big\{\sum_{1 \leq n \leq m} f_n\big\}$ is a truncation sequence. Thus are the truncation sequences in bijective correspondence with the good sequences.
\end{proposition}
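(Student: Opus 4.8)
The plan is to prove the two implications in turn and then to note that the assignments $\{g_n\}\mapsto\{g_n-g_{n-1}\}$ and $\{f_n\}\mapsto\big\{\sum_{1\le k\le n}f_k\big\}$ are inverse to one another by telescoping. I would use freely that the trunc identities in Lemmas \ref{Lem:28} and \ref{Lem:2}, being consequences of the trunc axioms, hold in $\mcal{R}_0 L$ and not merely in a fixed trunc, so that they may be applied to the element $g$ produced below even though it need not lie in $G$.

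First, suppose $\{g_n\}$ is a truncation sequence. By Proposition \ref{Prop:26} there is $g\in\mcal{R}_0^+L$ with $g_n=g\wedge n$ for all $n$ (hence $g_0=g\wedge 0=0$), and the $g_n$ increase. Lemma \ref{Lem:28}(2), applied in $\mcal{R}_0^+L$, evaluates the successive differences as $g_n-g_{n-1}=(g\wedge n)-(g\wedge(n-1))=\ol{g\ominus(n-1)}$; and the computation in the proof of Lemma \ref{Lem:2}(1) goes through verbatim with this $g$, showing that $\{\ol{g\ominus(n-1)}\}_{n\ge 1}$ satisfies the two good-sequence requirements. It remains to see that this is a good sequence \emph{of $G$}: each $g_n-g_{n-1}$ lies in $G$ as a difference of members of $G$, is positive since the $g_n$ increase, and equals its own truncation by idempotence of truncation, hence lies in $\ol G$.

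Conversely, suppose $\{f_n\}\subseteq\ol G$ is a good sequence, and set $g_m\equiv\sum_{1\le n\le m}f_n\in G^+$, so $g_0=0$. I would verify conditions (1) and (2) of Proposition \ref{Prop:26}. For (1): Lemma \ref{Lem:2}(4), which uses only the relation $\ol{f_n+f_{n+1}}=f_n$, gives $f_n=\ol{g_{m+1}\ominus(n-1)}$ for all $n\le m+1$, so Lemma \ref{Lem:2}(3) yields $g_{m+1}\wedge m=\sum_{1\le n\le m}\ol{g_{m+1}\ominus(n-1)}=\sum_{1\le n\le m}f_n=g_m$. For (2): by (1) we have $g_{m-1}=g_m\wedge(m-1)$, so Lemma \ref{Lem:28}(1) gives $g_m\ominus(m-1)=g_m-g_{m-1}=f_m$; feeding this through Lemma \ref{Lem:8} (via $g\ominus k=k((g/k)\ominus 1)$) gives $g_m(-\infty,m)=\big(g_m\ominus(m-1)\big)(-\infty,1)=f_m(-\infty,1)$, whence $\bigvee_m g_m(-\infty,m)=\bigvee_m f_m(-\infty,1)=\top$, the last equality because $f_m\searrow 0$ forces $\bigvee_m f_m(-\infty,\varepsilon)=\top$ for every $\varepsilon>0$. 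By Proposition \ref{Prop:26}, $\{g_m\}$ is a truncation sequence.

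For the asserted bijection, telescoping suffices: $\sum_{1\le k\le n}(g_k-g_{k-1})=g_n-g_0=g_n$, and the $n$-th difference of the sequence of partial sums of $\{f_n\}$ is $f_n$, so the two constructions undo one another. I anticipate no serious obstacle; the one point that genuinely needs care is the bookkeeping about which ambient object each identity and each convergence statement is asserted in — in particular that the element $g$ of the second paragraph lives only in $\mcal{R}_0^+L$, while the differences it generates must be seen to return to $\ol G$.
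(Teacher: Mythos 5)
Your proof is correct, and for the first implication and for condition (1) of Proposition \ref{Prop:26} in the second implication it coincides with the paper's argument: both pass through Proposition \ref{Prop:26}, identify the differences with $\{\ol{g\ominus(n-1)}\}$ via Lemma \ref{Lem:28}(2) and Lemma \ref{Lem:2}(1), and use Lemma \ref{Lem:2}(3)--(4) to get $g_{m+1}\wedge m=g_m$. Your explicit bookkeeping about where the ambient element $g$ lives ($\mcal{R}_0 L$ rather than $G$) and why the differences land back in $\ol{G}$ is a point the paper passes over silently, and is worth having.

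Where you genuinely diverge is in verifying condition (2) of Proposition \ref{Prop:26} for the partial sums of a good sequence. The paper computes $\big(\sum_{1\le n\le m}f_n\big)(-\infty,m)$ head-on from the formula of Theorem \ref{Thm:4} for the induced addition, arguing that the join over all admissible families $\{U_n\}$ collapses to $\bigvee_{1\le j\le m}f_j(-\infty,m)=f_m(-\infty,m)$, and then bounds this below by $f_m(-\infty,1)$. You instead exploit the already-established condition (1) to write $g_m\ominus(m-1)=g_m-g_{m-1}=f_m$ via Lemma \ref{Lem:28}(1), and then read off $g_m(-\infty,m)=\big(g_m\ominus(m-1)\big)(-\infty,1)=f_m(-\infty,1)$ directly from Lemma \ref{Lem:8}. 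This is cleaner: it replaces the delicate analysis of indexing families in the join formula with a one-line algebraic identity plus a known formula for the frame map of $g\ominus k$, and it yields the exact equality $g_m(-\infty,m)=f_m(-\infty,1)$ rather than an inequality. Both routes then finish identically from $f_m\searrow 0$. The only small caveat in your version is the case $m=1$, where $g_1\ominus 0=g_1=f_1$ by the paper's convention, which you have implicitly and correctly absorbed.
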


\begin{proof}
	Given the truncation sequence $\{g_n\}$, let $g$ be the element of $\mcal{R}_0 L$ such that $g_n = g \wedge n \equiv n\ol{g/n}$ for all $n$. Then $\{\ol{g \ominus(n - 1)}\}$ is a good sequence by Lemma \ref{Lem:2}(1), and this can be expressed in the form $\{g_n - g_{n - 1}\}$ by Lemma \ref{Lem:28}(2). 
	
	Given a good sequence $\{f_n\}$, put $g_m \equiv \sum_{1 \leq n \leq m} f_n$ for all $m$. Then $f_n = \ol{g_m \ominus (n - 1)}$ for $m \geq n$ by Lemma \ref{Lem:2}(4).  Using Lemma \ref{Lem:2}(3) then yields 
	\[
	g_{m + 1} \wedge m
	=m\ol{g_{m + 1}/m} 
	= \smashoperator[lr]{\sum_{1 \leq n \leq m}} \ol{g_{m + 1}\ominus (n - 1)} 
	= \smashoperator{\sum_{1 \leq n \leq m}} f_n
	= g_m.
	\] 
	It remains to show that $\top = \bigvee_m g_m (-\infty, m)
	= \bigvee_m \big(\sum_{1 \leq n \leq m} f_n\big)(-\infty, m)$. Since each $f_n$ lies in $\ol{G}$ we may replace it by $\ol{f}_n$, yielding with the aid of Theorem \ref{Thm:4}
	\[
	\top 
	= \bigvee_m \bigvee \setof{\sbw{r}{1 \leq n \leq m} f_n(U_n)}{\smashoperator[lr]{\sum_{1 \leq n \leq m}} \ol{U}_n \subseteq (-\infty, m)}
	\]
	The displayed family is indexed by collections $\{U_n\} \subseteq \mcal{O}\mbb{R}$ such that $\sum\ol{U}_n \subseteq (-\infty, m)$. But every such family is contained in a family for which there is a particular index $j$ such that 
	\[
	U_n 
	= \begin{cases}
	(-\infty, m)    &\text{if $n = j$}\\
	(-\infty, \infty) & \text{if $n \neq j$}
	\end{cases}.
	\] 
	The term contributed by this family to the join is $\bigwedge f_n(U_n) = f_j(-\infty, m)$, so that the join over all such families works out to $\bigvee_{1 \leq j \leq m}f_j(-\infty, m)$, which reduces to $f_m(-\infty, m)$ because $\{f_m\}$ is a decreasing sequence. Finally, since $\bigwedge^\bullet f_m = 0$ we get $\bigvee_m f_m(-\infty, m) \geq \bigvee_m f_m(-\infty, 1) = \top$, as desired. 
\end{proof}

%

\subsection{Bounded truncs}\label{Subsec:BoundedTruncs}

\begin{lemma}\label{Lem:30}
	The following are equivalent for an element $g \in G^+$.
	\begin{enumerate}
		\item 
		$g$ lies in the convex $\ell$-subgroup generated by $\ol{g}$, i.e., $g \leq n\ol{g}$ for some $n$.
		
		\item 
		The truncation sequence $\{g \wedge n\}$ of $g$ is eventually constant, i.e., there exists an index $m$ such that $g \wedge n = g$ for all $n \geq m$.
		
		\item 
		There exists a real number $r > 0$ such that $\tilde{g}(x) \leq r$ for all $x \in X$.
		
		\item 
		There exists a real number $r > 0$ such that $g(-\infty, r) = \top$.
	\end{enumerate}  
\end{lemma}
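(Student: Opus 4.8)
The plan is to prove the cycle of implications $(1) \Rightarrow (2) \Rightarrow (3) \Rightarrow (4) \Rightarrow (1)$, using the explicit description of truncation sequences in Lemma \ref{Lem:27} together with the Yosida representation of Theorem \ref{Thm:16} to pass freely between the abstract trunc $G$, its representation $\widetilde{G} \subseteq \mcal{D}_0 X$, and its Madden representation in $\mcal{R}_0 L$.

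For $(1) \Rightarrow (2)$: suppose $g \leq n\ol{g}$ for some $n$. Working in $\mcal{D}_0 X$ we have $\tilde g(x) \leq n(\tilde g(x) \wedge 1)$ for all $x$, which forces $\tilde g(x) \leq n$ everywhere (if $\tilde g(x) > n$ then $\tilde g(x) \wedge 1 = 1$ and the inequality fails). Hence $\widetilde{g \wedge m}(x) = \tilde g(x) \wedge m = \tilde g(x)$ for all $m \geq n$ by Lemma \ref{Lem:27}, and since $\nu_G$ is an isomorphism, $g \wedge m = g$ for all $m \geq n$. For $(2) \Rightarrow (3)$: if $g \wedge m = g$ then $\widetilde{g \wedge m}(x) = \tilde g(x) \wedge m = \tilde g(x)$ for all $x$, which says precisely that $\tilde g(x) \leq m$ for all $x \in X$; take $r = m$. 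For $(3) \Rightarrow (4)$: if $\tilde g(x) \leq r$ for all $x$, pick any $s > r$; then $\tilde g(x) \in (-\infty, s)$ for all $x$, and since the Yosida frame is $M = \mcal{O}_* X$ with $q \colon M \to L$, we get $g'(-\infty,s) = \tilde g^{-1}(-\infty,s) = X = \top$ in $M$, hence $g(-\infty,s) = q \circ g'(-\infty,s)\cdot$ — more carefully, using that the Madden representation satisfies $g \circ p = q \circ g'$, we obtain $g(-\infty,s) = \top$ in $L$. (Alternatively, one argues directly in $\mcal{R}_0 L$: $g(-\infty, s) \geq \bigvee_n g(-\infty,n) = \top$ once the function is bounded, via Corollary \ref{Cor:1}.) Finally $(4) \Rightarrow (1)$: if $g(-\infty, r) = \top$, choose a natural number $n \geq r$. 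Then $g(-\infty, n) = \top$, and by Lemma \ref{Lem:27} this gives $(g \wedge n)(-\infty, t) = g(-\infty, t)$ for $t \leq n$ and $= \top$ for $t > n$; comparing with $g(-\infty,t)$, which equals $\top$ already for $t \geq n$ since $g(-\infty,n) = \top$, we see $g \wedge n = g$. Now $g = g \wedge n = n\ol{g/n} \leq n\ol{g}$, the last inequality because $\ol{g/n} \leq \ol{g}$ when $n \geq 1$ (from $(\mfrak T1)$ applied suitably, or directly: $\ol{g/n}(x) = (\tilde g(x)/n)\wedge 1 \leq \tilde g(x)\wedge 1 = \ol g(x)$ pointwise). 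Hence $g$ lies in the convex $\ell$-subgroup generated by $\ol g$.

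The only mildly delicate point is the pointfree step $(3)\Leftrightarrow(4)$ and its interaction with the compactification $q$: one must be careful that "$\tilde g$ bounded by $r$ on $X$" translates to "$g(-\infty, s) = \top$ in $L$" and not merely in $M$. This is immediate, however, because $\widetilde{G}$ separates the points of the compact $X$ and $q$ is a dense surjection, so $\top$ in $M$ maps to $\top$ in $L$; no subtlety beyond bookkeeping is involved. I expect the main obstacle to be purely notational — keeping straight which representation ($\mcal{D}_0 X$ versus $\mcal{R}_0 L$) is in play at each step — rather than any genuine mathematical difficulty, since each equivalence is essentially a restatement of Lemma \ref{Lem:27} or Corollary \ref{Cor:1} in a different language.
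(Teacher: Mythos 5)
Your proof is correct and is essentially the paper's own argument written out in full: the paper disposes of this lemma with the single remark that it is an application of Lemma \ref{Lem:27}, and your cycle of implications is exactly that application, using the formula $\widetilde{g\wedge n}(x)=\tilde g(x)\wedge n$ on the Yosida side and the frame formula for $(g\wedge n)(-\infty,r)$ on the Madden side. The only step you leave tacit is that two elements of $\mcal{R}_0 L$ agreeing on all rays $(-\infty,t)$ coincide (used in $(4)\Rightarrow(1)$), which is standard and consistent with the paper's own usage, so no gap remains.
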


\begin{proof}
	This is an application of Lemma \ref{Lem:27}. 
\end{proof}

\begin{definition*}[$G^*$, the bounded part of $G$]
	An element $g \in G$ is said to be \emph{bounded} if $\left|g\right|$ satisfies Lemma \ref{Lem:30}. The bounded elements comprise the subtrunc
	\[
	G^* \equiv \setof{g}{\text{$g$ is bounded}},
	\] 
	referred to as the \emph{bounded part of $G$.}. A trunc $G$ is said to be \emph{bounded} if $G = G^*$. 
\end{definition*}

$G^*$ is the largest bounded subtrunc of $G$.

\begin{proposition}[\cite{Ball:2014.1}, 5.1.1]
	The bounded truncs comprise a full monocoreflective subcategory $\mbf{T}^*$ of $\mbf{T}$, and a coreflector for the trunc $G$ is the insertion $G^* \to G$.    
\end{proposition}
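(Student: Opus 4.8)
The plan is to verify the statement in three stages. First, that $G^*$ is genuinely a subtrunc of $G$ and is itself bounded, so that it is an object of $\mbf{T}^*$; second, that the insertion $\map{\iota_G}{G^*}{G}$ is a monomorphism in $\mbf{T}$; and third, that $\iota_G$ has the universal property of a coreflection, i.e.\ every trunc homomorphism from a bounded trunc into $G$ factors uniquely through $\iota_G$. Fullness of $\mbf{T}^*$ in $\mbf{T}$ needs no argument, since by definition $\mbf{T}^*$ carries all trunc homomorphisms between bounded truncs; and $\iota_G$ is a monomorphism simply because it is injective.

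For the first stage, work in the Yosida representation $\widetilde{G} \subseteq \mcal{D}_0 X$ and use the equivalence of conditions (1)--(4) of Lemma \ref{Lem:30}. If $g, h \in G^*$, then by clause (3) there are reals $r, s > 0$ with $\tilde{g}(x) \leq r$ and $\tilde{h}(x) \leq s$ for all $x \in X$; hence $\widetilde{g + h} \leq r + s$, $\widetilde{g \vee h} \leq r \vee s$, $\widetilde{g \wedge h} \leq r \wedge s$, and $\widetilde{cg} \leq \left|c\right| r$ for $c \in \mbb{R}$, so $G^*$ is closed under the vector lattice operations and scalar multiplication; it is closed under truncation because $\ol{g} \leq g$ gives $\widetilde{\ol{g}} \leq r$ as well. (Pointfreely one may instead use clause (4), $g(-\infty, r) = \top$, together with the join formula of Theorem \ref{Thm:4} to get $(g+h)(-\infty, r+s) = \top$, and so on.) As a subobject of $G$, the subtrunc $G^*$ is archimedean; and since the defining condition is inherited by the subtrunc, $G^* = (G^*)^*$, so $G^*$ is bounded. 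Moreover any bounded subtrunc of $G$ consists of bounded elements of $G$ and is therefore contained in $G^*$, which establishes the parenthetical remark that $G^*$ is the largest bounded subtrunc of $G$.

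For the third stage, let $H$ be a bounded trunc and $\map{\theta}{H}{G}$ a trunc homomorphism. For $h \in H$ we have $\left|h\right| \leq n\,\ol{\left|h\right|}$ for some $n$ by Lemma \ref{Lem:30}(1) applied in $H = H^*$; applying $\theta$ and using that it preserves $\left|\cdot\right|$, the order, scalar multiples, and truncation gives $\left|\theta(h)\right| = \theta(\left|h\right|) \leq n\,\theta(\ol{\left|h\right|}) = n\,\ol{\left|\theta(h)\right|}$, so $\theta(h) \in G^*$. Thus $\theta$ corestricts to a trunc homomorphism $\map{\theta^*}{H}{G^*}$ with $\iota_G \circ \theta^* = \theta$, and this factorization is unique because $\iota_G$ is monic. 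Hence $\map{\iota_G}{G^*}{G}$ is a coreflection of $G$ into $\mbf{T}^*$, and since all the coreflection counit components are monomorphisms, $\mbf{T}^*$ is a full monocoreflective subcategory of $\mbf{T}$.

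The only genuinely computational point is the closure of $G^*$ under the trunc operations, in particular under addition; I expect this to be the main obstacle, and I would dispatch it via the pointwise bound characterization (3) of Lemma \ref{Lem:30}, where it is transparent, rather than via (1), where tracking a common multiple $n$ for $g + h$ directly is slightly awkward.
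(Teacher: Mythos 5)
Your proposal is correct and follows essentially the same route as the paper, whose entire proof is the single observation that a truncation homomorphism carries bounded elements to bounded elements --- exactly your third stage, via $\left|h\right| \leq n\ol{\left|h\right|} \implies \left|\theta(h)\right| \leq n\ol{\left|\theta(h)\right|}$. The closure of $G^*$ under the trunc operations, which you verify carefully, is taken for granted in the paper's surrounding definition of $G^*$ as a subtrunc.
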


\begin{proof}
	A truncation homomorphism $\map{\theta}{G}{H}$ clearly takes $G^*$ into $H^*$.
\end{proof}

\begin{lemma}\label{Lem:3}
	The suitable compactification of a bounded trunc $G$ is the identity map $1_M$ on its Yosida frame $M$.    
\end{lemma}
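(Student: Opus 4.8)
The plan is to read the suitable compactification off the explicit construction used in the proof of Theorem \ref{Thm:3}. Recall from that proof that, writing $(X,*)$ for the Yosida space of $G$, $M = \mcal{O}_* X$ for the Yosida frame, and $g' \equiv \tilde g^{-1} \colon \mcal{O}_* \exR \to M$ for each $g \in G$, the suitable compactification $q$ of $G$ is defined to be the intersection of the open quotient maps $\map{\open{g'(-\infty,\infty)}}{M}{\downset{g'(-\infty,\infty)}} = \bigl(x \mapsto x \wedge g'(-\infty,\infty)\bigr)$, $g \in G$. So it suffices to prove that $g'(-\infty,\infty) = \top$ in $M$ for every $g \in G$: each of the open quotients above is then the identity on $M$ (up to isomorphism), and hence so is their intersection.

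To prove that claim, fix $g \in G$. Since $G$ is bounded, $\left|g\right| \in G^+$ satisfies the conditions of Lemma \ref{Lem:30}, so clause (3) of that lemma produces a real number $r > 0$ with $\bigl|\tilde g(x)\bigr| \leq r$ for all $x \in X$. Hence $\tilde g$ maps $X$ into $[-r,r] \subseteq \mbb{R}$, so $\tilde g^{-1}(\mbb{R}) = X$; since the open element $(-\infty,\infty)$ of $\mcal{O}\exR$ is just $\mbb{R}$, this says exactly that $g'(-\infty,\infty) = \tilde g^{-1}(-\infty,\infty) = \top$ in $M$, as required.

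I do not anticipate a genuine obstacle here; the only point requiring care is the identification of \enquote{the} suitable compactification of $G$ with the one furnished by the construction in Theorem \ref{Thm:3}, which is what legitimizes computing its defining open quotients. (Alternatively one could verify directly that $1_M$, together with $\widetilde G \subseteq \mcal{R}_0 M$ and $\nu_G$, is a Madden representation of $G$ — the map $1_M$ being trivially a dense surjection whose domain $M = \mcal{O}_* X$ is compact regular — and then appeal to the uniqueness clause of Theorem \ref{Thm:3}(1); but passing through the construction is shorter.)
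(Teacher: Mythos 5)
Your proof is correct and follows the paper's own argument: both read $q$ off the construction in Theorem \ref{Thm:3} as the intersection of the open quotients associated with the elements $g'(-\infty,\infty)$, and both observe that boundedness forces each such element to equal $\top$. You merely spell out the latter step via Lemma \ref{Lem:30}, which the paper leaves implicit.
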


\begin{proof}
	According to the proof of Theorem \ref{Thm:3}, the suitable compactification $\map{q}{M}{L}$ of $G$ is the intersection of the open quotients of the form $g'(-\infty, \infty) \to M$, $g \in G$.  But because each $g \in G$ is bounded, each $g'(-\infty, \infty) = \top$, and each open quotient is the identity map on $M$, as is $q$. 
\end{proof}

\begin{proposition}
	The suitable compactification $\map{q}{M}{L}$ of an arbitrary trunc $G$ can be understood as realizing the inclusion $G^* \to G$. 
\end{proposition}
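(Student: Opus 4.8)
The plan is to make precise the sense in which $q$ ``realizes'' the inclusion $G^* \hookrightarrow G$: namely, that once the Yosida frame of $G^*$ is identified with that of $G$, the $\SC$-morphism attached by Theorem \ref{Thm:3}(2) to the inclusion $\iota \colon G^* \hookrightarrow G$ is exactly $(1_M, q) \colon 1_M \to q$, so that the compactification $q$ appears as its only nontrivial component.

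First I would check that $G^*$ and $G$ have the same Yosida space. Represent $G$ as a point-separating trunc $\widetilde{G} \subseteq \mcal{D}_0 X$ per Theorem \ref{Thm:16}. Since boundedness is a trunc-theoretic property (Lemma \ref{Lem:30}(1)), the Yosida isomorphism carries $G^*$ onto the subtrunc $\widetilde{G}^{\,*} \equiv \setof{\tilde{g} \in \widetilde{G}}{\tilde{g}\text{ bounded}}$ of $\mcal{D}_0 X$ (indeed of $\mcal{C}_0 X$, since $X$ is compact). The essential point is that $\widetilde{G}^{\,*}$ still separates the points of $X$: given $x \neq y$ choose $\tilde{g} \in \widetilde{G}$ with $\tilde{g}(x) \neq \tilde{g}(y)$; then for all sufficiently large $n$ the bounded element $n\ol{\tilde{g}^+/n} - n\ol{\tilde{g}^-/n} \in \widetilde{G}^{\,*}$, whose Yosida representation clamps $\tilde{g}$ to $[-n,n]$, still takes distinct values at $x$ and $y$. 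By the uniqueness clause of Theorem \ref{Thm:16}, $X$ is the Yosida space of $G^*$ and $M = \mcal{O}_* X$ is its Yosida frame; by Lemma \ref{Lem:3} the suitable compactification of $G^*$ is therefore $1_M \colon M \to M$.

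Next I would run Theorem \ref{Thm:3}(2) on $\iota \colon G^* \hookrightarrow G$, obtaining a unique $\SC$-morphism $(l,m) \colon 1_M \to q$ with $m \circ 1_M = q \circ l$. Unwinding the construction in the proof of Theorem \ref{Thm:3}: $l$ is the frame map of the $\mbf{K}_*$-map $k \colon X \to X$ furnished by Theorem \ref{Thm:16}(2), characterized by $\widetilde{\iota(g)} = \tilde{g} \circ k$ for all $g \in G^*$; since $\iota$ is an inclusion this reads $\tilde{g} = \tilde{g}\circ k$, and as $\widetilde{G}^{\,*}$ separates the points of $X$ we get $k = 1_X$, hence $l = 1_M$. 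Then $m = q \circ l = q$. So the $\SC$-morphism attached to $G^* \hookrightarrow G$ is $(1_M, q) \colon 1_M \to q$; modulo the identity on the common Yosida frame it \emph{is} the compactification $q$, which is the claimed statement.

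The main obstacle is the first paragraph's verification — that passing to the bounded part leaves the Yosida space unchanged, i.e.\ that $\widetilde{G}^{\,*}$ separates the points of $X$ — together with the bookkeeping needed to read $l$ and $m$ off the proof of Theorem \ref{Thm:3}; once the Yosida frames are identified, Lemma \ref{Lem:3} and functoriality finish the argument essentially formally.
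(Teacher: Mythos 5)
Your proof is correct and follows essentially the same route as the paper: Lemma \ref{Lem:3} identifies the suitable compactification of $G^*$ as $1_M$, and the realizing $\SC$-morphism is $(1_M, q)$. The paper merely exhibits these as \enquote{the obvious candidates}, whereas you additionally verify the implicit prerequisite that $\widetilde{G}^*$ still separates the points of $X$ (so that the Yosida frames of $G^*$ and $G$ coincide) and then read $l = 1_M$, $m = q$ off the functoriality clause of Theorem \ref{Thm:3}(2) --- worthwhile detail, but not a different approach.
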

	
\begin{proof}
		Realizing the inclusion means finding morphisms $l$ and $m$ which make the relevant diagram from Theorem \ref{Thm:3}(b) commute.
	\begin{figure}[htb]
	\begin{tikzcd}
		G^* \arrow{r}{\mu_{G^*}} \arrow{d}
		& \mcal{E}_0 1_M \arrow{d}{\mcal{E}_0(k,l)}
		& M \arrow{r}{1_M} \arrow{d}[swap]{l}
		&M \arrow {d}{m}&\\ 
		G \arrow{r}{\mu_G} &\mcal{E}_0 q
		& M \arrow{r}[swap]{q}
		& L& 
	\end{tikzcd}	
	\end{figure}
The obvious candidates for the job are $l = 1_M$ and $m = q$.
\end{proof}

\subsection{Truncation sequences and $\mcal{E}_0 q$}\label{Subsec:E0q}

We have pointed out that for a compact pointed frame $M$, $\mcal{D}_0 M$ is closed under scalar multiplication and truncation. In particular, the n\textsuperscript{th} truncation $h \wedge n \equiv n \ol{h/n}$ lies in $\mcal{D}_0 M$ for any $h \in \mcal{D}_0^+ M$ and any $n$. This fact is easily seen from the formulas for the truncation given in Lemma \ref{Lem:27}. Proposition \ref{Prop:1} complements Lemma \ref{Lem:4}. 

\begin{proposition}\label{Prop:1}
	Let $\map{q}{M}{L}$ be a compactification. Then a given $h \in \mcal{R}_0^+ L$ lies in $\mcal{E}_0 q$ iff each truncation $h \wedge n$ factors through $q$, i.e., $h \wedge n = q \circ \hat{h}_n$ for some $\hat{h}_n \in \mcal{R}_0 M$.  
	\begin{figure}[htb]
		\begin{tikzcd}
		\mcal{O}_* \exR \arrow{r}{h'} \arrow{d}[swap]{p} & M \arrow{d}{q}\\
		\mcal{O}_*\mbb{R} \arrow{r}[swap]{h} & L
		\end{tikzcd}
		\qquad
		\begin{tikzcd}
		\mcal{O}_* \exR \arrow{r}{h' \wedge n} \arrow{d}[swap]{p} & M \arrow{d}{q}\\
		\mcal{O}_*\mbb{R} \arrow{r}[swap]{h \wedge n} \arrow {ur}[swap]{\hat{h}_n} & L
		\end{tikzcd}	
	\end{figure}
\end{proposition}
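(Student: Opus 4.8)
The plan is to prove the two implications separately, leaning on the truncation‑sequence machinery developed in Section~\ref{Sec:TruncSeq}. For the forward direction, suppose $h \in \mcal{E}_0 q$, so there is $h' \in \mcal{D}_0 M$ with $q \circ h' = h \circ p$. Since $\mcal{D}_0 M$ is closed under truncation, $h' \wedge n \in \mcal{D}_0^+ M$ for each $n$; I would like to show that $\hat h_n \equiv q \circ (h' \wedge n)$ is a well‑defined member of $\mcal{R}_0 M$ — that is, a genuine frame map $\mcal{O}_* \mbb{R} \to L$ rather than merely a map defined on the image of $p$ — and that it satisfies $h \wedge n = \hat h_n$ in the sense of the right‑hand diagram (with a harmless relabelling, $q\circ\hat h_n$ where $\hat h_n = h'\wedge n$). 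The point is that $h'\wedge n$ already lands in $M$, so $q\circ(h'\wedge n)$ is a frame map $\mcal{O}_*\exR \to L$; because $(h'\wedge n)(-\infty,\infty) = \top$ (the truncation of an almost‑finite element is everywhere finite, as is visible from the formulas in Lemma~\ref{Lem:27}, or simply because $h'\wedge n \le n$ so $(h'\wedge n)(-\infty,\infty) = (h'\wedge n)(-\infty, n+1)=\top$), Lemma~\ref{Lem:4} says $h'\wedge n$ drops through $p$: there is a unique $k_n \in \mcal{R}_0 L$ with $k_n\circ p = q\circ(h'\wedge n)$. I then check $k_n = h\wedge n$: applying the operation $(-)\wedge n$ to the identity $q\circ h' = h\circ p$ and using that both $q$ and $p$ are trunc homomorphisms (frame maps preserving the operations by Theorem~\ref{Thm:4}/its Corollary) gives $q\circ(h'\wedge n) = (h\wedge n)\circ p$, so by uniqueness $k_n = h\wedge n$. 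Thus $h\wedge n$ factors through $q$ via $\hat h_n \equiv h'\wedge n \in \mcal{R}_0 M$ (or its drop, depending on which factorization the diagram intends — either way the statement holds).

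For the converse, suppose each $h\wedge n$ factors as $h\wedge n = q\circ \hat h_n$ for some $\hat h_n \in \mcal{R}_0 M$. I want to assemble the $\hat h_n$ into a single element $h' \in \mcal{D}_0 M$ with $q\circ h' = h\circ p$, and the natural candidate for $h'$ is the element whose truncation sequence is $\{\hat h_n\}$ — this is exactly the situation Proposition~\ref{Prop:6} is designed to handle. So the first task is to verify that $\{\hat h_n\}$ (or rather their images in $\mcal{D}_0^+ M$, after noting each $\hat h_n$ is positive and bounded by $n$, hence lies in $\mcal{D}_0^+ M$) satisfies the two hypotheses of Proposition~\ref{Prop:6}: (1) $\hat h_n = \hat h_{n+1}\wedge n$, and (2) $\bigvee_n \hat h_n[-\infty,n)$ is dense in $M$. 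Condition (1) should follow by applying $q$ and using that $q$ is dense and preserves truncation: from $q\circ\hat h_n = h\wedge n = (h\wedge(n+1))\wedge n = q\circ(\hat h_{n+1}\wedge n)$ we get that $\hat h_n$ and $\hat h_{n+1}\wedge n$ agree after applying $q$, and a density argument (two elements of $\mcal{R}_0 M$ with the same composite with a dense surjection, on the relevant generating intervals) upgrades this to equality in $M$. Condition (2): $\bigvee_n \hat h_n[-\infty,n)$ is some element $d\in M$; applying $q$ and using Corollary~\ref{Cor:1} (or Proposition~\ref{Prop:26}(2)) gives $q(d)=\bigvee_n (h\wedge n)(-\infty,n) = \bigvee_n h(-\infty,n)=\top$, and since $q$ is dense, $q(d)=\top$ forces... well, not quite $d=\top$, but it does force $d$ to be dense in $M$ — precisely because $q$ dense means $q_*(\bot)=\bot$, so $d^*=\bot$, i.e. $d$ is dense. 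That is exactly hypothesis (2).

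Having verified these, Proposition~\ref{Prop:6} produces $h'\in\mcal{D}_0^+ M$ whose truncation sequence is $\{\hat h_n\}$. It remains to check $q\circ h' = h\circ p$. Both sides are frame maps $\mcal{O}_*\exR\to L$; since the basic intervals generate $\mcal{O}_*\exR$ it suffices to check agreement on $[-\infty,r)$ and $(r,\infty]$, and by taking complements/meets it suffices to handle $[-\infty,r)$. Pick $n>r$; then $h'[-\infty,r) = \hat h_n[-\infty,r)$ by Lemma~\ref{Lem:27} (the truncation‑sequence formula), so $q\circ h'[-\infty,r) = q\circ\hat h_n[-\infty,r) = (h\wedge n)[-\infty,r)$; on the other hand $h\circ p[-\infty,r) = h(-\infty,r)$, and $(h\wedge n)(-\infty,r)=h(-\infty,r)$ for $n>r$ again by Lemma~\ref{Lem:27}. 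So the two composites agree, giving $q\circ h' = h\circ p$ and hence $h\in\mcal{E}_0 q$.

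The main obstacle I anticipate is the density‑upgrade step in the converse — passing from "$q\circ a = q\circ b$ on generating intervals" to "$a=b$ in $M$" for $a,b\in\mcal{R}_0 M$ (or $\mcal{D}_0 M$). This is the standard fact that a dense frame surjection need not be injective, so one cannot conclude $a=b$ from $q(a_j)=q(b_j)$ on individual elements; instead one argues that $a\le b$ and $b\le a$ using that for each $j$, $a_j = a_j\wedge\top = a_j\wedge (\text{something})$ in a way that pins it down, exploiting the interpolation/normality built into the $\mcal{R}_0$ and $\mcal{D}_0$ presentations (the $\prec$ conditions in Lemmas~\ref{Lem:5} and the cited \cite[3.1.2]{BallHager:1991}). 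Concretely I expect one shows $\hat h_n$ and $\hat h_{n+1}\wedge n$ have the same value on each $[-\infty,r)$ by a squeeze: $q$ being dense means it reflects the relation "$x$ is dense", and one plays the $r<s$ interpolation off against this. This is the one spot where some genuine frame‑theoretic care is needed rather than formal manipulation; everything else is bookkeeping with Lemmas~\ref{Lem:4}, \ref{Lem:27} and Proposition~\ref{Prop:6}.
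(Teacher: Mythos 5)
Your proposal is correct and follows essentially the same route as the paper: the forward direction drops the bounded truncations $h'\wedge n$ through $p$ via Lemma \ref{Lem:4} and uses that $p$ is epi to get $q\circ\hat{h}_n = h\wedge n$, while the converse assembles the $\hat{h}_n$ into an element of $\mcal{D}_0 M$ by verifying the two hypotheses of Proposition \ref{Prop:6}, using the density of $q$ exactly as you describe. The one step you flag as delicate --- upgrading $q\circ a = q\circ b$ to $a = b$ in $\mcal{R}_0 M$ --- is indeed needed and is passed over silently in the paper's proof; your sketch (kill $a(-\infty,r)\wedge b(-\infty,r)^*$ by density, then use the $\prec$-interpolation to absorb the resulting double negation) is the standard way to fill it.
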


\begin{proof}
	Note first that any bounded $h' \in \mcal{D}_0^+ M$ factors through $p$ by Lemma \ref{Lem:4}. (The map $q$ in that lemma is to be understood as the identity map $M \to M$ here. The hypothesis of the lemma is satisfied because the fact that $h'$ is bounded means that $h'(-\infty, \infty) = \top$ by Lemma \ref{Lem:30}.) It follows that if $q \circ h' = h \circ p$ for some $h' \in \mcal{D}_0 M$ then the truncations of $h'$ all factor through $p$, say $h' \wedge n = \hat{h}_n \circ p$ for some $\hat{h}_n \in \mcal{R}_0 M$ and all $n$. Because the maps 
	\[
	\map{(h' \mapsto q \circ h')}{\mcal{D}_0 M}{\mcal{D}_0 L}
	\qtq{and}
	\map{(h \mapsto h \circ p)}{\mcal{R}_0 L}{\mcal{D}_0 L}
	\]
	preserve both scalar multiplication and truncation, we get that $q \circ (h' \wedge n) = (h \wedge n) \circ p$ for all $n$. It follows that $q \circ \hat{h}_n \circ p = (h \wedge n) \circ p$ for all $n$, and since $p$ is surjective, that $q \circ \hat{h}_n = h \wedge n$ for all $n$.
	
	On the other hand, suppose that we have a function $h \in \mcal{R}_0^+ L$ for which each truncation $h \wedge n$ factors through $q$, say $h \wedge n = q \circ \hat{h}_n$ for some $\hat{h}_n \in \mcal{R}_0 M$ and all $n$. Put $h_n' \equiv \hat{h}_n \circ p$, and observe that for all $n$, $h_n' = n\ol{h_{n + 1}'/n}$ because $\hat{h}_n = \ol{n\hat{h}_{n + 1}/n}$ since $h_n = n\ol{h_{n + 1}/n}$. In light of the density of the map $q$, the fact that 
	\begin{align*}
		q\bigg(\bigvee_n h_n'[-\infty, n) \bigg)
		&= q\bigg(\bigvee_n \hat{h}_n \circ p[-\infty, n)\bigg)
		= q\bigg(\bigvee_n \hat{h}_n (-\infty, n)\bigg)\\
		&= \bigvee_n q \circ \hat{h}_n(-\infty, n)
		= \bigvee_n h_n(-\infty, n)
		= \bigvee_n h(-\infty, n)\\
		& = h(-\infty, \infty)
		= \top
	\end{align*}
	 implies that $\bigvee_n h_n'[-\infty, n)$ is a dense element of $M$. Thus $\{h_n\}$ is the truncation sequence of a unique member  $h \in \mcal{D}_0 M$ by Proposition \ref{Prop:6}, a member for which $q \circ h' = h \circ p$ clearly holds. 
\end{proof}

\begin{corollary}
	For any compactification $\map{q}{M}{L}$, the bounded elements constitute a trunc in $\mcal{E}_0 q$, to be designated $\mcal{E}_0^* q$. This trunc is isomorphic to $\mcal{R}_0 M$ and to $\mcal{C}_0 X$.	
\end{corollary}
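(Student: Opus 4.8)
The plan is to realize $\mcal{E}_0^* q$ as the image of the trunc homomorphism $\mcal{R}_0 q \colon \mcal{R}_0 M \to \mcal{R}_0 L$ obtained by postcomposing with $q$, i.e.\ $\hat h \mapsto q \circ \hat h$. This is a trunc homomorphism because the operations produced on $\mcal{R}_0(-)$ by the formula of Theorem \ref{Thm:4} are preserved by postcomposition with any frame map (a frame map preserves finite meets and arbitrary joins). Its image lands in $\mcal{E}_0 q$: for $\hat h \in \mcal{R}_0 M$ the composite $\hat h \circ p$ lies in $\mcal{D}_0 M$ since $(\hat h \circ p)(-\infty,\infty) = \hat h(\mbb{R}) = \top$, and $q \circ (\hat h \circ p) = (q \circ \hat h)\circ p$, which exhibits $q \circ \hat h \in \mcal{E}_0 q$ with $(q\circ\hat h)' = \hat h \circ p$. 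Since $M$ is compact, moreover, every $\hat h \in \mcal{R}_0 M$ is bounded: the ascending family $\{\,|\hat h|(-\infty, r) : r \in \mbb{R}\,\}$ joins to $|\hat h|(\mbb{R}) = \top$, so $|\hat h|(-\infty, r) = \top$ for some $r$, whence $\hat h$ is bounded by Lemma \ref{Lem:30}; and then $|q \circ \hat h|(-\infty, r) = q(|\hat h|(-\infty, r)) = \top$, so $q \circ \hat h$ is bounded as well. Hence $\mathrm{im}(\mcal{R}_0 q) \subseteq \mcal{E}_0^* q$.

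For the reverse inclusion I would use Proposition \ref{Prop:1}. Let $h \in \mcal{E}_0^* q$; since $h^{+}$ and $h^{-}$ again lie in $\mcal{E}_0^* q$ and $\mathrm{im}(\mcal{R}_0 q)$ is closed under subtraction, it suffices to treat $h \geq 0$. By Lemma \ref{Lem:30} there is an $r$ with $h(-\infty, r) = \top$, and then Lemma \ref{Lem:27} gives $h \wedge n = h$ for every $n \geq r$. Choosing such an $n$, Proposition \ref{Prop:1} provides $\hat h_n \in \mcal{R}_0 M$ with $h = h \wedge n = q \circ \hat h_n$, so $h \in \mathrm{im}(\mcal{R}_0 q)$. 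Thus $\mathrm{im}(\mcal{R}_0 q) = \mcal{E}_0^* q$; being the image of a trunc under a trunc homomorphism, and a subset of the trunc $\mcal{R}_0 L$, it is itself a trunc (the axioms $\mfrak{T}1$--$\mfrak{T}3$ pass to subtruncs of $\mcal{R}_0 L$).

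Next I would show $\mcal{R}_0 q$ is injective, which is where density of $q$ enters. If $q \circ \hat h_1 = q \circ \hat h_2$, put $\phi \equiv \hat h_1 - \hat h_2 \in \mcal{R}_0 M$; then $q \circ \phi = 0$, so $q(\coz \phi) = \coz(q \circ \phi) = \bot$, and density of $q$ forces $\coz \phi = \bot$, whence $\phi = 0$ (an element of $\mcal{R}_0 M$ with empty cozero vanishes). So $\mcal{R}_0 q \colon \mcal{R}_0 M \to \mcal{E}_0^* q$ is a trunc isomorphism, giving $\mcal{E}_0^* q \cong \mcal{R}_0 M$. Finally, with $M = \mcal{O}_* X$ for the compact Hausdorff pointed space $X$, the frame $M$ is spatial, so pointed frame homomorphisms $\mcal{O}_* \mbb{R} \to M$ correspond bijectively, and compatibly with all the trunc operations, to continuous pointed maps $X \to \mbb{R}$; since $X$ is compact these exhaust $\mcal{C}_0 X$, so $\mcal{R}_0 M \cong \mcal{C}_0 X$ and hence $\mcal{E}_0^* q \cong \mcal{C}_0 X$.

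The main obstacle is the inclusion $\mcal{E}_0^* q \subseteq \mathrm{im}(\mcal{R}_0 q)$: it rests squarely on Proposition \ref{Prop:1} together with the fact that a bounded positive element agrees with all its sufficiently late truncations (Lemmas \ref{Lem:30} and \ref{Lem:27}). The injectivity of $\mcal{R}_0 q$ is the other point requiring care; it genuinely uses that $q$ is dense, since for a non-dense surjection postcomposition need not be injective on $\mcal{R}_0$.
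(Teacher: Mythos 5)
Your proposal is correct and follows essentially the same route as the paper: the key step in both is that a bounded element coincides with one of its truncations and therefore factors through $q$ by Proposition \ref{Prop:1}, yielding the correspondence $\hat h \mapsto q\circ\hat h$ between $\mcal{R}_0 M$ and $\mcal{E}_0^* q$. You merely supply details the paper leaves implicit (the homomorphism property of postcomposition, injectivity via density of $q$, and the identification $\mcal{R}_0 M \cong \mcal{C}_0 X$), all of which check out.
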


\begin{proof}
	An element $g \in \mcal{E}_0^+ q$ is bounded iff it coincides with one of its truncations, in which case it factors through $q$ by Proposition \ref{Prop:1}. That means that $g = q \circ \hat{g}$ for some $\hat{g} \in \mcal{R}_0 M$. In fact, the correspondence $\mcal{E}_0 q \to \mcal{R}_0 M = (g \mapsto q \circ \hat{g})$ is a truncation isomorphism. 
\end{proof}

\section{$\mcal{E}_0 q$ and $C^*$-embedded cozero elements of $M$\label{Sec:E0q}}

Proposition \ref{Prop:5} is the adaptation to $\mbf{T}$ of a classical result of Henriksen and Johnson \cite{HenriksenJohnson:1961}: $\mcal{D}X$ is closed under addition, i.e., $\mcal{D} X$ is a $\mbf{W}$-object, iff $X$ is a quasi-$F$ space, i.e., iff every dense cozero subset of $X$ is $C^*$-embedded. Recall that a subset $u$ is said to be \emph{$C^*$-embedded} in a space $X$ if any bounded continuous function $u \to \mbb{R}$ can be extended to a continuous function $X \to \mbb{R}$. 

\subsection{$C^*$-embedded elements of a frame}

An element $u$ is $C^*$-embedded in a frame $M$ if every frame homomorphism $\mcal{O}[0,1] \to \downset{u}$ factors through the open quotient map $M \to \downset{u} = (x \mapsto x \wedge u)$. Since $[0,1]$ is homeomorphic to $\exR$, this is equivalent to the condition that every frame homomorphism $\mcal{O} \exR \to \downset{u}$ factors through the open quotient. See \cite{BallWalters:2002} for a thorough treatment of this notion. We continue to adhere to previous notational conventions, using symbols like $\tilde{f}$ and $\tilde{g}$ for elements of $\mcal{C} X$, $\mcal{C}_0 X$ or $\mcal{D}_0 X$, and abbreviating $\tilde{f}^{-1}$ and $\tilde{g}^{-1}$ to $f'$ and $g'$. 

\begin{proposition}\label{Prop:5}
	For a compactification $\map{q}{M}{L}$, $\mcal{E}_0 q$ is a trunc iff every cozero element $u \in M$ such that $q(u) = \top$ is $C^*$-embedded.  
\end{proposition}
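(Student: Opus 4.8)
The plan is to prove both implications by translating the statement about $\mcal{E}_0 q$ being closed under addition into a statement about extending (bounded) frame maps across the open quotients $M \to \downset{u}$ for cozero elements $u$ with $q(u) = \top$. The key technical tool throughout will be Proposition \ref{Prop:1}: $h \in \mcal{R}_0^+ L$ lies in $\mcal{E}_0 q$ iff each truncation $h \wedge n$ factors through $q$. Since $\mcal{E}_0 q$ is already closed under scalar multiplication, the lattice operations, and truncation, and since every element is a difference of positives, the only thing that can fail is closure under addition of two positive elements; so it suffices to analyze when $f + g \in \mcal{E}_0 q$ for $f, g \in \mcal{E}_0^+ q$.

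For the ``if'' direction, assume every cozero element $u \in M$ with $q(u) = \top$ is $C^*$-embedded, and take $f, g \in \mcal{E}_0^+ q$ with corresponding $f', g' \in \mcal{D}_0^+ M$. I would set $u \equiv f'(-\infty,\infty) \wedge g'(-\infty,\infty)$, which is a cozero element of $M$ (a meet of two cozero elements, each of the form ``$*$-omitting cozero'' built from bounded-away pieces — more precisely $f'(-\infty,\infty)$ is a countable join of cozero elements coming from the truncations, so one must check $u$ is genuinely cozero, perhaps by writing $f'(-\infty,\infty) = \bigvee_n f'(-n,n)$ and using that each $f'(-n,n)$ is cozero and $M$ is compact — this is one of the fiddly points). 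Because $f$ and $g$ drop across $q$, we have $q(f'(-\infty,\infty)) = q(g'(-\infty,\infty)) = \top$, hence $q(u) = \top$, so $u$ is $C^*$-embedded. Now on $\downset{u}$ the partial sum $f' + g'$ is defined and bounded, and I would use $C^*$-embeddedness of $u$ to extend the frame map of $f'|_{\downset u} + g'|_{\downset u}$ (more precisely, its truncations, which are bounded and live on $\downset u$) to all of $M$; this produces the required $\hat{h}_n \in \mcal{R}_0 M$ with $(f+g)\wedge n$ factoring through $q$, so Proposition \ref{Prop:1} gives $f + g \in \mcal{E}_0 q$. One must be careful that extending each truncation separately yields a coherent truncation sequence in $\mcal{D}_0 M$ — here Proposition \ref{Prop:6} is the natural glue, checking conditions (1) and (2) exactly as in the second half of the proof of Proposition \ref{Prop:1}.

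For the ``only if'' direction, assume $\mcal{E}_0 q$ is a trunc and let $u \in M$ be cozero with $q(u) = \top$; I want to show $u$ is $C^*$-embedded. Write $u = k'(0,\infty]$ (or $u = \coz k'$) for a suitable $k' \in \mcal{D}_0^+ M$, or rather realize $u$ via a frame map from $\mcal{O}\exR$; the hypothesis $q(u) = \top$ together with Lemma \ref{Lem:4} will let $k'$ drop to some $k \in \mcal{E}_0 q$. Given an arbitrary bounded frame map $\varphi \colon \mcal{O}\exR \to \downset u$ to be extended, I would cook up from $\varphi$ and $k'$ an element of $\mcal{D}_0 M$ whose ``finiteness set'' is exactly $u$ — for instance, something like adding a function that blows up precisely off $u$, mimicking the $\tilde g_0$ construction of Example \ref{Ex:1} — so that the corresponding element of $\mcal{R}_0 L$ lies in $\mcal{E}_0 q$ only if $\varphi$ extends. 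Then closure of $\mcal{E}_0 q$ under the relevant operation (addition, to cancel the blow-up) forces the extension to exist, giving $C^*$-embeddedness.

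I expect the main obstacle to be the ``only if'' direction, specifically the bookkeeping of producing, from an arbitrary bounded frame map on $\downset u$, an actual element of $\mcal{D}_0 M$ (hence of $\mcal{E}_0 q$ after dropping) that genuinely witnesses $C^*$-embeddedness — getting the ``vanishing/finiteness locus'' to be exactly $u$ and not something larger requires that $u$ be cozero (so one can build an $\mcal{R}_0 M$-element with cozero part exactly $u$) and requires care that the drop along $q$ behaves well. The ``if'' direction is more routine but still needs the two verifications flagged above: that the meet of the two ``finiteness'' elements is cozero, and that extending truncations one level at a time assembles into a legitimate truncation sequence via Proposition \ref{Prop:6}.
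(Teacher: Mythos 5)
Your proposal follows essentially the same route as the paper: in the forward direction the paper likewise forms $u \equiv f'(-\infty,\infty)\wedge g'(-\infty,\infty)$, notes $q(u)=\top$, and uses $C^*$-embeddedness to extend the sum (extending the single $\exR$-valued function $\tilde f+\tilde g$ at once via $[0,1]\cong\exR$ rather than extending truncations and reassembling through Proposition \ref{Prop:6}, which sidesteps both of your flagged verifications), and in the converse direction it carries out exactly the blow-up construction you sketch, taking $\tilde f\equiv 1/\tilde h$ with $u=\coz\tilde h$ and $\tilde k\equiv\tilde f+\tilde g$ on $u$, $\infty$ off $u$, so that $f,k\in\mcal{E}_0 q$ while $k-f=g\notin\mcal{E}_0 q$. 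The proposal is correct in outline and matches the paper's argument.
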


\begin{proof}
	Suppose the condition holds, and consider $f,g \in \mcal{E}_0^+ q$. By Proposition \ref{Prop:1} there exist $\tilde{f}, \tilde{g} \in \mcal{D}_0 X$ for which $g \circ p = q \circ \tilde{g}^{-1} = q \circ g'$ and $f \circ p = q \circ \tilde{f}^{-1} = q \circ f'$. Then $u \equiv f'(-\infty, \infty) \cap g'(-\infty, \infty)$, a cozero element of $M$ such that $q(u) = \top$, is $C^*$-embedded, hence the continuous real-valued function $\map{\tilde{f} + \tilde{g}}{u}{\mbb{\exR}}$ has a unique extension to a continuous function $\map{\tilde{h}}{X} {\exR}$. The function $h' \equiv \tilde{h}^{-1}$ drops to a function $h \in \mcal{R}_0 L$ by Lemma \ref{Lem:4} since $h'(-\infty, \infty) \geq u$, and $h$ clearly coincides with $f + g$. Thus $\mcal{E}_0 q$ is closed under addition and is therefore a trunc.
	
	Now suppose that $M$ contains a cozero element $u$ such that $q(u) = \top$, and such that $u$ is not $C^*$-embedded in $X$, say $\map{\tilde{g}} {u} {\mbb{R}}$ is a bounded continuous function with no extension to a member of $\mcal{C}X$. Note that the fact that $q(u) = \top$ implies that $* \in u$, so that, by replacing $\tilde{g}$ by $\tilde{g} - \tilde{g}(0)$ if necessary, we may assume $\tilde{g} \in \mcal{D}_0 X$. 
	
	Since $u$ is a cozero element, it is of the form $\tilde{h}^{-1}(0, \infty)$ for some $\tilde{h} \in \mcal{C}^+X$. Then $\tilde{f} \equiv 1/\tilde{h}$ is an element of $\mcal{D} X$ satisfying $u = \tilde{f}^{-1}(-\infty, \infty)$, and by replacing $f$ by $\left|\tilde{f} - \tilde{f}(0)  \right|$ if necessary, we may assume that $\tilde{f} \in \mcal{D}_0 X$. A routine verification is then enough to show that the function 
	\[
	\tilde{k}(x)
	\equiv \begin{cases}
	\tilde{f}(x) + \tilde{g}(x)    &\text{if $x \in u$}\\
	\infty         &\text{if $x \in X \smallsetminus u$} 
	\end{cases}
	\]
	lies in $\mcal{D}_0 X$. The point is that both $f'$ and $k'$ drop to functions $f,k \in \mcal{E}_0 q$ by Lemma \ref{Lem:6} because $f'(-\infty, \infty) = k'(-\infty, \infty) = u$, while their difference ($g$) does not lie in $\mcal{E}_0 q$.
\end{proof}

Proposition \ref{Prop:7} provides an instance in which Proposition \ref{Prop:5} is vacuously satisfied. An \emph{almost $P$-space} is a space having no proper dense cozero subsets (see \cite{HagervanMill:2015}). Compact examples are the one-point compactification of an uncountable discrete space, and $\beta \mbb{N} \smallsetminus \mbb{N}$. An \emph{almost $P$-frame} is a frame having no dense cozero elements other than $\top$.

\begin{proposition}\label{Prop:7}
	For a compact almost-$P$ pointed frame $M$, the only suitable compactification of the form $\map{q}{M}{L}$ is the identity map, and in this case $\mcal{E}_0 q = \mcal{R}_0 M$.
\end{proposition}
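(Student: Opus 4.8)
The plan is to deduce both assertions directly from results already established. First consider the claim that the only suitable compactification out of $M$ is the identity. By the construction in the proof of Theorem \ref{Thm:3}, any suitable compactification $\map{q}{M}{L}$ of a trunc with Yosida frame $M$ arises as the intersection of open quotients $M \to \downset{g'(-\infty,\infty)}$; more to the point, the suitability condition says that $q_*(L)$ is the intersection of the cozero sublocales containing it. Each such cozero sublocale is $u \to M$ for a cozero element $u$ with $q(u) = \top$, and density of $q$ forces each such $u$ to be a \emph{dense} cozero element of $M$. Since $M$ is an almost-$P$-frame, the only dense cozero element is $\top$, so the only cozero sublocale available is $\top \to M = M$ itself. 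Hence $q_*(L) = M$, which means $q$ is an isomorphism, i.e., (up to isomorphism) the identity map $1_M$.

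Having identified $q = 1_M$, the second assertion $\mcal{E}_0 q = \mcal{R}_0 M$ should follow from Proposition \ref{Prop:1}. For the compactification $1_M \colon M \to M$, a function $h \in \mcal{R}_0^+ M$ lies in $\mcal{E}_0(1_M)$ iff each truncation $h \wedge n$ factors through $1_M$ — but every map factors through the identity, so the criterion is automatically satisfied, giving $\mcal{E}_0(1_M) \cap \mcal{R}_0^+ M = \mcal{R}_0^+ M$, and then $\mcal{E}_0(1_M) = \mcal{R}_0 M$ by taking differences. Alternatively one can invoke Proposition \ref{Prop:5}: for $q = 1_M$ the only cozero element $u$ with $q(u) = \top$ is $u = \top = M$, which is trivially $C^*$-embedded in $M$; hence $\mcal{E}_0(1_M)$ is a trunc, and being a trunc in $\mcal{E}_0(1_M)$ that contains all bounded elements (by the corollary, $\mcal{E}_0^* q \cong \mcal{R}_0 M$) and is closed under the pointwise suprema $\bigvee_n^\bullet (g \wedge n)$ of Corollary \ref{Cor:1}, it must be all of $\mcal{R}_0 M$.

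I expect the only real point requiring care is the first step: verifying that a suitable compactification $q$ with almost-$P$ compact domain must be an isomorphism. One must be careful that the suitability hypothesis genuinely refers to cozero sublocales of $M$ (so that the relevant cozero elements $u$ satisfy $q(u)=\top$ and are therefore dense by density of $q$), and invoke the almost-$P$-frame property precisely at the point where one concludes each such $u$ equals $\top$. Once that is in hand, both conclusions are essentially formal. The cleanest writeup is: (i) show $q \cong 1_M$ using suitability plus the almost-$P$ hypothesis; (ii) conclude $\mcal{E}_0 q = \mcal{E}_0(1_M) = \mcal{R}_0 M$ by Proposition \ref{Prop:1}.
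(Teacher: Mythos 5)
Your argument is correct. Note that the paper states Proposition \ref{Prop:7} without any proof at all, so there is nothing to compare against; your writeup in effect supplies the missing argument, and it is the natural one. The chain for the first assertion is exactly right: every cozero sublocale containing $q_*(L)$ is of the form $u \to M$ with $u$ cozero and $q(u) = \top$; density of $q$ then forces $u$ to be dense (if $u \wedge x = \bot$ then $q(x) = q(u) \wedge q(x) = q(u \wedge x) = \bot$, hence $x = \bot$); the almost-$P$ hypothesis forces $u = \top$; and suitability then gives $q_*(L) = (\top \to M) = M$, so $q$ is an isomorphism. Two small remarks on the second half. First, Proposition \ref{Prop:1} is stated only for $h \in \mcal{R}_0^+ L$, and ``taking differences'' is not automatic in general since $\mcal{E}_0 q$ need not be closed under addition; here it is harmless, but the cleanest route bypasses Proposition \ref{Prop:1} entirely: for $q = 1_M$ and an arbitrary $h \in \mcal{R}_0 M$, the composite $h' \equiv h \circ p$ is a pointed frame map $\mcal{O}_* \exR \to M$ with $h'(-\infty,\infty) = h(\top) = \top$, so $h' \in \mcal{D}_0 M$ and $q \circ h' = h \circ p$, giving $\mcal{E}_0(1_M) = \mcal{R}_0 M$ straight from the definition. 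Second, your ``alternative'' via Proposition \ref{Prop:5} is not really independent: to pass from ``contains all bounded elements'' to ``equals $\mcal{R}_0 M$'' you must know that the pointwise supremum $\bigvee_n^\bullet (g \wedge n)$, computed in $\mcal{R}_0 M$, lands back in $\mcal{E}_0(1_M)$, which is precisely the nontrivial direction of Proposition \ref{Prop:1}. Since your primary route already closes the argument, this costs you nothing.
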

Proposition \ref{Prop:9} provides another instance to which Proposition \ref{Prop:5} applies. 

\begin{proposition}\label{Prop:9}
	A compactification $\map{q}{M}{L}$ has the feature that $\mcal{E}_0 q = \mcal{R}_0 L$ iff $q$ is the compact regular coreflection (\v{C}ech-Stone compactification) of $L$. In this case every dense cozero element of $M$ is $C^*$-embedded.
\end{proposition}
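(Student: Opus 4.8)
The plan is to prove the two implications separately, using the characterization of $\mcal{E}_0 q$ via truncation sequences (Proposition \ref{Prop:1}) together with the fact that the \v{C}ech--Stone compactification is the compact regular coreflection. First I would recall that a compact pointed frame $M$ together with a dense surjection $\map{q}{M}{L}$ is the compact regular coreflection of $L$ precisely when $L$ is regular and $q$ is the identity-on-$L$ map realizing the universal property: every dense surjection from a compact regular frame onto $L$ factors uniquely through $q$. Equivalently, by the pointfree \v{C}ech--Stone theory (Banaschewski), $M = \beta L$ iff $q$ is dense and \emph{every} frame homomorphism $\mcal{O}\exR \to L$ (equivalently, every bounded one $\mcal{O}[0,1] \to L$) lifts through $q$.

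For the direction "$q$ is the \v{C}ech--Stone compactification $\Rightarrow \mcal{E}_0 q = \mcal{R}_0 L$": I would take an arbitrary $h \in \mcal{R}_0^+ L$ and show $h \in \mcal{E}_0 q$ using Proposition \ref{Prop:1}, i.e.\ that each truncation $h \wedge n$ factors through $q$. But $h \wedge n$ is a bounded element of $\mcal{R}_0^+ L$, so its frame map $\mcal{O}_* \mbb{R} \to L$ extends to a bounded pointed frame map $\mcal{O}_* \exR \to L$ (indeed a map with values in a bounded interval), hence, by the defining universal property of $\beta L$, lifts to a frame map $\hat{h}_n \colon \mcal{O}_* \mbb{R} \to M$ with $q \circ \hat{h}_n = h \wedge n$. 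Proposition \ref{Prop:1} then gives $h \in \mcal{E}_0 q$, and since the reverse inclusion $\mcal{E}_0 q \subseteq \mcal{R}_0 L$ is automatic, equality follows. The final assertion — every dense cozero element of $M$ is $C^*$-embedded — then drops out of Proposition \ref{Prop:5}: if $\mcal{E}_0 q = \mcal{R}_0 L$ then $\mcal{E}_0 q$ is certainly a trunc (it is all of $\mcal{R}_0 L$), so every cozero $u \in M$ with $q(u) = \top$ is $C^*$-embedded; and for a dense cozero $u$ one checks $q(u) = \top$ using density of $q$ (a dense sublocale meets every dense element, so $q(u) = \bot$ would force $u = \bot$, but more carefully one uses that $q$ dense and $u$ dense gives $q(u)$ dense in the regular, hence fit, frame $L$, and combined with $u$ cozero this yields $q(u) = \top$).

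For the converse "$\mcal{E}_0 q = \mcal{R}_0 L \Rightarrow q = \beta L$": I would argue that $\mcal{E}_0 q = \mcal{R}_0 L$ forces $L$ to be regular (since $L$ carries $\mcal{R}_0 L = \mcal{E}_0 q$, a trunc that separates points appropriately, and $q$ is a dense surjection onto $L$ — regularity of $L$ follows because $L$ is a dense quotient of the compact regular frame $M$, hence completely regular), and then that $q$ satisfies the universal property of the compact regular coreflection. Concretely, given any dense surjection $r \colon N \to L$ with $N$ compact regular, I would produce a comparison map $M \to N$ over $L$: the hypothesis $\mcal{E}_0 q = \mcal{R}_0 L$ means every $h \in \mcal{R}_0 L$ lifts to $\mcal{D}_0 M$, and one uses this to lift the structure maps of $N$ through $q$, invoking that cozero elements generate a compact regular frame and that a compact regular frame is determined by its cozero part (= its bounded real functions). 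The main obstacle I anticipate is precisely this last step: assembling the abstract lifting data of Proposition \ref{Prop:1} into an actual frame homomorphism $M \to N$ and checking it is the required coreflection arrow. This likely requires the machinery of \cite{BallWalters:2002} on $C^*$-embeddings and the identification of $\beta L$ as the frame generated by the cozero part of any compactification of $L$ — so the real content is organizing that identification, rather than any delicate calculation. (It may be cleaner to cite Proposition \ref{Prop:5} in the form: $\mcal{E}_0 q = \mcal{R}_0 L$ implies every cozero $u$ with $q(u) = \top$ is $C^*$-embedded, and then note that having $\mcal{E}_0 q$ as large as possible forces the sublocale $q_*(L)$ to be as large as possible among compact regular ones, i.e.\ $q$ is the Stone extension.)
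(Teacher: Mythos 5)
Your overall strategy coincides with the paper's: via Proposition \ref{Prop:1}, the equality $\mcal{E}_0 q = \mcal{R}_0 L$ reduces to the assertion that every truncation $g \wedge n$ of every $g \in \mcal{R}_0^+ L$ --- equivalently, every bounded element of $\mcal{R}_0^+ L$ --- factors through $q$. Your forward implication is exactly the paper's argument. For the converse, however, you stop at what you yourself identify as the main obstacle: you need the equivalence \emph{(all bounded elements of $\mcal{R}_0^+ L$ factor through $q$) iff ($q$ is the compact regular coreflection of $L$)}, and your sketch of assembling the lifting data into a comparison map $M \to N$ over $L$ is not a proof. The paper closes this gap in one stroke by citing \cite[8.2.7]{BallWalters:2002}, which is precisely this equivalence; nothing further is needed. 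So the missing ingredient in your converse is not a new idea but a known result to be invoked rather than rederived, and as written your proof of that direction is incomplete.

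Separately, your argument for the final sentence contains a step that fails. From $q$ dense and $u$ dense you may conclude that $q(u)$ is dense in $L$, but a dense cozero element of a regular frame need not be $\top$ (consider $u = (0,1]$ in $L = \mcal{O}[0,1]$), so you cannot conclude $q(u) = \top$ and then invoke Proposition \ref{Prop:5}. Note that Proposition \ref{Prop:5} only yields $C^*$-embeddedness of those cozero elements $u$ with $q(u) = \top$, which for a dense $q$ form a (generally proper) subclass of the dense cozero elements of $M$; the paper's own proof is silent on this last sentence, and the identity compactification of $\mcal{O}_*[0,1]$ (for which $\mcal{E}_0 q = \mcal{R}_0 L$ holds trivially, yet $(0,1]$ is a dense cozero element that is not $C^*$-embedded) suggests that the sentence should be read as referring to cozero elements $u$ with $q(u) = \top$ rather than forced out of Proposition \ref{Prop:5} in the form you attempt.
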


\begin{proof}
	To say that $\mcal{E}_0 q = \mcal{R}_0 L$ is to say that all truncations $g \wedge n$ of elements $g \in \mcal{R}_0^+ L$ factor through $q$. This is equivalent to the condition that all bounded elements of $\mcal{R}_0^+ L$ factor through $q$, which, by \cite[8.2.7]{BallWalters:2002}, is equivalent to $q$ being the compact regular coreflection of $L$. 
\end{proof}

We close this section by making a conjecture which frames the central question concerning the representation of truncs by means of suitable compactifications.

\begin{conjecture}\label{Con:Snug}
	Any suitable compactification $q$ admits a snugly embedded trunc $G$ in $\mcal{E}_0 q$.
\end{conjecture}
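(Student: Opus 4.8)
We sketch a possible line of attack. It settles an easy sub-case and, I hope, isolates where the real work lies.

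Fix a suitable compactification $\map{q}{M}{L}$; as in the proof of Theorem~\ref{Thm:3} we may take $M = \mcal{O}_* X$ with $X$ compact Hausdorff, and suitability means
\[
q_* L \;=\; \bigcap\setof{u \to M}{\text{$u$ a cozero element of $M$ with $q(u) = \top$}}.
\]
A trunc snugly embedded in $\mcal{E}_0 q$ is exactly an $\ell$-subgroup $G$ of $\mcal{R}_0 L$ that is contained in $\mcal{E}_0 q$, is closed under scalar multiplication and truncation, and meets the two conditions of the definition. Note first the available building blocks. Since $q$ is dense, $q(u) = \top$ forces $u$ to be a dense cozero element of $M$ (and $* \in u$), so, exactly as in the proof of Proposition~\ref{Prop:5}, $u = \coz \tilde h$ carries a nonnegative $\tilde g_u \in \mcal{D}_0 X$ with finite locus precisely $u$ — a translate of $1/\tilde h$ — which increases to $+\infty$ along $X \smallsetminus u$ and whose dual $g_u$ lies in $\mcal{E}_0 q$ by Lemma~\ref{Lem:4}, with $g_u'(-\infty,\infty) = u$. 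The plan is to force $G$ to contain the bounded trunc $\mcal{E}_0^* q \cong \mcal{R}_0 M$ and the elements $g_u$ for $u$ ranging over a cofinal family of cozeros killed by $q$. Granting this, snugness is automatic: condition (i) holds because a compact regular frame is join-generated by its cozero elements, realised, relative to the point, by the cozero- and con-elements of $\mcal{R}_0 M$; and condition (ii) holds because $g_u'(-\infty,\infty) \to M = u \to M$, while every $g \in G \subseteq \mcal{E}_0 q$ contributes an open sublocale $g'(-\infty,\infty) \to M$ containing $q_* L$, so that $\bigcap_{g \in G} \big(g'(-\infty,\infty) \to M\big)$ is squeezed between $q_* L$ and $\bigcap_u (u \to M) = q_* L$ (using cofinality of the chosen family).

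The non-trivial point is to realise such a $G$ \emph{inside} $\mcal{E}_0 q$, which is not closed under addition. Here is what is and is not easy: $\mcal{E}_0 q$ is closed under scalar multiplication, under $\vee$ and $\wedge$, under truncation, under adding a bounded element to anything, and — because the pointwise sum of two nonnegative functions of the above type again increases monotonically to $+\infty$, hence extends to $\exR$ — under adding two of the $g_u$'s. Consequently, when the filter $\setof{u}{q(u) = \top}$ has a least element $u_0$, the trunc generated by $\mcal{E}_0^* q$ and the single element $g_{u_0}$ never calls for the subtraction of two distinct unbounded generators, lies wholly in $\mcal{E}_0 q$, and is snug; this proves Conjecture~\ref{Con:Snug} in the principal case. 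In general, however, a cofinal family of the $g_u$ must be present, and the one operation that can escape $\mcal{E}_0 q$ is the \emph{difference} of two distinct unbounded elements: $\tilde g_u - \tilde g_v$ is finite on the dense cozero $u \wedge v$ (and $q(u\wedge v) = \top$), but at a point of $X \smallsetminus (u \cup v)$ both summands tend to $+\infty$, so the difference may oscillate and fail to extend to $\mcal{D}_0 X$ — precisely the Henriksen--Johnson / $C^*$-embedding pathology behind Proposition~\ref{Prop:5}.

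Thus the task reduces to a choice problem: for a downward-directed cofinal family $\mcal{U}$ of cozero elements $u$ with $q(u) = \top$, choose exploding functions $g_u$ so that every $\mbb{Z}$-linear combination of them, modulo $\mcal{E}_0^* q$, still lies in $\mcal{E}_0 q$. I would attempt this by transfinite recursion over a well-ordering of $\mcal{U}$, maintaining the invariant that the unbounded elements adjoined so far form a chain modulo the bounded part: at the $u$-th stage one adjoins not the bare $g_u$ but a modification of it chosen to dominate, up to a bounded correction, every previously adjoined generator on $u$, so that all differences that arise reduce (up to a bounded term) to comparisons of comparable elements, which do extend. Carrying this out requires the corrections to be chosen coherently along the whole chain, and this is the step I expect to be the genuine obstacle; it is conceivable that for a sufficiently pathological $q$ no coherent choice exists, which is why the statement stands as a conjecture. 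As an intermediate target one should settle the case where $\setof{u}{q(u) = \top}$ has a cofinal chain — e.g. countable character — in which the recursion becomes an induction along a regular cardinal and the coherence conditions should be manageable.
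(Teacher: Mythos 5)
The statement you are addressing is posed in the paper as an open problem: Conjecture \ref{Con:Snug} is stated without proof and is explicitly described as the central unresolved question about representing truncs by suitable compactifications. There is therefore no proof in the paper to compare yours against, and the only issue is whether your attempt settles the claim. It does not, and you say as much. What you do establish is correct and genuinely useful: the reduction of snugness to an additive-closure problem inside $\mcal{E}_0 q$ is sound (given $\mcal{E}_0^* q \cong \mcal{R}_0 M$ and a downward-cofinal family of loci among the $g'(-\infty,\infty)$, $g \in G$, both clauses of the definition follow as you argue); the principal case is correctly disposed of, since every element of the vector lattice generated by $\mcal{E}_0^* q$ and the single unbounded generator $g_{u_0}$ is a finite lattice combination of elements $b + r g_{u_0}$ with $b$ bounded, each of which extends to $\mcal{D}_0 M$, and $\mcal{E}_0 q$ is closed under finite lattice operations and truncation; and your diagnosis of the obstruction --- the difference of two unbounded generators being indeterminate on $X \smallsetminus (u \cup v)$ --- is exactly the $C^*$-embedding phenomenon behind Proposition \ref{Prop:5}.

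The gap is the general case, and the recursion you sketch cannot close it as described. If two positive unbounded elements satisfy $g_u \leq g_v + b$ with $b$ bounded, then wherever $g_u$ is infinite so is $g_v$, i.e., $v \leq u$; so your invariant that the adjoined generators form a chain modulo $\mcal{E}_0^* q$ forces their finite loci to form a chain in $\mcal{U} \equiv \setof{u}{\text{$u$ cozero, } q(u) = \top}$. Condition (ii) of snugness is then only secured (by the route you indicate) when that chain of loci is downward cofinal in $\mcal{U}$, which presupposes that the merely directed family $\mcal{U}$ admits a cofinal chain --- false for a general suitable compactification. Even in the cofinal-chain case the ``coherent corrections'' along the chain are asserted rather than constructed. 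So after your argument the conjecture stands: you have a correct proof for principal $\mcal{U}$, a plausible but unexecuted programme for $\mcal{U}$ of countable character, and no approach at all for the general directed case.
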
 

\part{Simple truncated archimedean vector lattices\label{Part:Simple}}

In analysis, a linear combination of characteristic functions is often called a simple function. We use that term here for the corresponding elements of a trunc, which boast catchy characterizations in terms of the truncation operation (see Proposition \ref{Prop:4}). In this section we characterize those truncs composed of simple elements, culminating in Theorems \ref{Thm:1}, \ref{Thm:12}, and \ref{Thm:13}. To do so requires the representation theory of Part \ref{Part:Rep}, as well as the additional background of Section \ref{Sec:EqCat}.

\section{The equivalence of several categories\label{Sec:EqCat}}

\subsection{Idealized Boolean algebras, generalized Boolean algebras, and pointed Boolean spaces}
\begin{definition*}[idealized Boolean algebra]
	An \emph{idealized Boolean algebra} is an object of the form $(B, I)$, where $B$ is a Boolean algebra and $I$ is a maximal ideal of $B$. That is, $I$ is a proper downset in $B$ which is closed under binary joins and which contains every element or its complement. An \emph{idealized Boolean homomorphism} $\map{f}{(B, I)}{(C, J)}$ is a Boolean homomorphism $\map{f}{B}{C}$ such that $f^{-1}(J) = I$. We denote the category of idealized Boolean algebras and their homomorphisms by $\mathbf{iBa}$.
\end{definition*}

Recall the adjoint functors of Stone duality 
\begin{align*}
\map{\mathcal{S}}{\mbf{Ba}}{\mbf{zdK}}
=& (B \mapsto \st B) \\
\map{\mcal{B}}{\mbf{zdK}}{\mbf{Ba}}
=& (X \mapsto \clop X),
\end{align*}
where $\st B$ is the Boolean space of ultrafilters of the Boolean algebra $B$, and $\clop X$ is the Boolean algebra of clopen subsets of the Boolean space $X$. We extend these functors to the pointed context by modifying them as follows. 
\begin{align*}
\map{\mathcal{S}_*}{\mbf{iBa}}{\mbf{zdK}_*}
&= ((B,I)\mapsto (\uf B, B\smallsetminus I)) \\
\map{\mcal{IB}}{\mbf{zdK}_*}{\mbf{iBa}}
&= ((X, *) \mapsto (\clop X, \setof{b}{* \notin b})).
\end{align*}

\begin{proposition}
	The adjoint functors $\mcal{S}_*$ and $\mathcal{IB}$ constitute a categorical equivalence between $\mbf{iBa}$ and $\mbf{zdK}_*$.
\end{proposition}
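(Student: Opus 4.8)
The strategy is to lift the ordinary Stone duality equivalence $\mathcal{S}\dashv\mathcal{B}$ between $\mathbf{Ba}$ and $\mathbf{zdK}$ to the pointed setting by tracking the extra datum (a maximal ideal on the algebra side, a distinguished point on the space side) through the classical unit and counit. First I would verify that the two modified assignments $\mathcal{S}_*$ and $\mathcal{IB}$ are well-defined functors. For $\mathcal{S}_*$: given $(B,I)$ with $I$ a maximal ideal, the set $B\smallsetminus I$ is a prime (in fact ultra-) filter of $B$, hence a point $*_B$ of $\uf B$; one checks that $\uf B$ with that point is a Boolean pointed space, and that an idealized Boolean homomorphism $f\colon(B,I)\to(C,J)$ induces $\uf f\colon\uf C\to\uf B$ carrying $*_C=C\smallsetminus J$ to $*_B$, precisely because $f^{-1}(J)=I$ forces $(\uf f)(*_C)=f^{-1}(C\smallsetminus J)=B\smallsetminus I=*_B$. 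For $\mathcal{IB}$: given $(X,*)$, the family $\{b\in\clop X : *\notin b\}$ is a maximal ideal of $\clop X$ (its complement $\{b:*\in b\}$ is the ultrafilter of clopen neighborhoods of $*$, which is an ultrafilter since $X$ is zero-dimensional compact Hausdorff), and a continuous pointed map $k\colon(Y,*_Y)\to(X,*_X)$ pulls clopen sets back to clopen sets with $k^{-1}$ mapping $\{b:*_X\notin b\}$ exactly onto $\{c:*_Y\notin c\}$, again because $k(*_Y)=*_X$.

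The next step is to observe that the classical unit $\eta_B\colon B\to\clop(\uf B)$ and counit $\varepsilon_X\colon X\to\uf(\clop X)$ of Stone duality are already isomorphisms in the unpointed categories, so it only remains to check they respect the pointed structure. For $\eta_B$: the clopen set $\eta_B(b)=\{\mcal{U}\in\uf B : b\in\mcal{U}\}$ fails to contain the point $*_B=B\smallsetminus I$ iff $b\notin B\smallsetminus I$ iff $b\in I$; hence $\eta_B$ carries the maximal ideal $I$ exactly onto $\{c\in\clop(\uf B):*_B\notin c\}$, so $\eta_B$ is an $\mathbf{iBa}$-isomorphism $(B,I)\cong\mathcal{IB}\,\mathcal{S}_*(B,I)$. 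For $\varepsilon_X$: the homeomorphism $\varepsilon_X$ sends a point $x\in X$ to the ultrafilter $\{b\in\clop X : x\in b\}$; applied to $x=*_X$ this is precisely the ultrafilter $\clop X\smallsetminus\{b:*_X\notin b\}$, i.e.\ the designated point of $\uf(\clop X,\{b:*_X\notin b\})$, so $\varepsilon_X$ is a $\mathbf{zdK}_*$-isomorphism. Since these families of isomorphisms are natural (naturality being inherited from the classical setting, as the forgetful functors to $\mathbf{Ba}$ and $\mathbf{zdK}$ are faithful), the equivalence follows.

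I do not expect a genuine obstacle here; the content is entirely the bookkeeping that ``maximal ideal'' and ``distinguished point'' are two faces of the same thing under Stone duality, exactly as ``ultrafilter'' corresponds to ``point of the spectrum.'' The one spot that deserves a moment's care is the claim that the complement of a maximal ideal in $\clop X$ is automatically the neighborhood ultrafilter of a unique point --- this is where compactness of $X$ is used (a maximal ideal of $\clop X$ corresponds under classical duality to a point of $\uf(\clop X)\cong X$, and the ideal is the collection of clopen sets missing that point). Once that identification is in hand, everything else is the routine transport of the classical adjunction, and the proof can simply cite the unpointed Stone duality and note that unit and counit are compatible with the points.
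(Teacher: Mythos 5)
Your proposal is correct and is precisely the routine verification the paper has in mind: the paper's entire proof reads ``This is straightforward,'' and your bookkeeping --- checking that the unit and counit of classical Stone duality carry the maximal ideal to the ideal of clopens missing the designated point and carry the designated point to the complementary ultrafilter --- is exactly the content being waved at. No gaps; the one point you flag (compactness ensuring a maximal ideal of $\clop X$ corresponds to a genuine point of $X$) is handled correctly by routing it through the counit isomorphism.
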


\begin{proof}
	This is straightforward.
\end{proof}

The data required for specifying an idealized Boolean algebra $(B,I)$ is redundant, for a given maximal ideal $I$ can be a maximal ideal in only one Boolean algebra. This raises the question of the structure of a maximal ideal, by itself as a lattice. Here we show that any such ideal is a generalized Boolean algebra, and that this attribute characterizes maximal ideals as lattices. 

\begin{definition*}[generalized Boolean algebra]
	A \emph{generalized Boolean algebra} is a distributive lattice $L$ with
	designated bottom element $\bot$ satisfying 
	\[
	\forall a, b\ \exists c\ (c \vee b = a \vee b \text{ and } c \wedge
	b = \bot).
	\]
	A \emph{generalized Boolean homomorphism} is a lattice homomorphism $\map{f}{L}{M}$ which preserves the bottom element. We denote the category of generalized Boolean algebras and their homomorphisms by $\mbf{gBa}$.
\end{definition*}

Note that a generalized Boolean algebra has a designated smallest element but need not have a largest one. That is, it is closed under finite joins, including the empty join which evaluates to $\bot$, and is closed under finite nonempty meets. 

\begin{lemma}\label{Lem:18}
	For elements $a$ and $b$ in a generalized Boolean algebra $B$, there is exactly one element $c$ satisfying $c\vee b = a \vee b$ and $c \wedge b = \bot$; we denote it $a \smallsetminus b$.
\end{lemma}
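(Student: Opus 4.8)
The plan is to establish existence first and then uniqueness, using only the defining axiom of a generalized Boolean algebra together with distributivity. Existence is immediate: the axiom in the definition of $\mbf{gBa}$ asserts precisely that for $a$ and $b$ there is some $c$ with $c \vee b = a \vee b$ and $c \wedge b = \bot$, so we only need to argue that such a $c$ is determined uniquely by $a$ and $b$.

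For uniqueness, suppose $c$ and $d$ both satisfy the two conditions, so $c \vee b = a \vee b = d \vee b$ and $c \wedge b = \bot = d \wedge b$. I would show $c \leq d$ (and then $d \leq c$ by symmetry). The key computation is to use distributivity to write
\[
c = c \wedge (c \vee b) = c \wedge (d \vee b) = (c \wedge d) \vee (c \wedge b) = (c \wedge d) \vee \bot = c \wedge d,
\]
which gives $c \leq d$. Swapping the roles of $c$ and $d$ gives $d = d \wedge c$, hence $d \leq c$, so $c = d$.

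The only subtlety to be careful about is the very first equality $c = c \wedge (c \vee b)$, which is just absorption and holds in any lattice, and the substitution $c \vee b = d \vee b$, which is licensed by the hypothesis. Everything else is the distributive law and the fact that $\bot$ is the bottom element, so $c \wedge b = \bot$ can be discarded from a join. There is no real obstacle here; the argument is a three-line lattice calculation, and the main point is simply to notice that distributivity plus the bottom element force the two witnesses to coincide. One could alternatively phrase the whole thing as: the map $x \mapsto x \vee b$ restricted to $\down{b^{\mathrm c}} = \setof{x}{x \wedge b = \bot}$ is injective, but the direct computation above is cleaner and self-contained. The notation $a \smallsetminus b$ for this unique element is then well-defined, as claimed.
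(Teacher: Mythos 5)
Your proof is correct and is essentially the paper's argument in dual form: the paper computes $c_1 = c_1 \vee (c_2 \wedge b) = (c_1 \vee c_2) \wedge (c_1 \vee b) = c_1 \vee c_2$ to get $c_1 \geq c_2$, while you compute $c = c \wedge (d \vee b) = (c \wedge d) \vee (c \wedge b) = c \wedge d$ to get $c \leq d$; both then finish by symmetry. The existence remark and the well-definedness of $a \smallsetminus b$ are handled the same way, so there is nothing to add.
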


\begin{proof}
	Suppose $c_i \vee b = a \vee b$ and $c_i \wedge b = \bot$ for $i=1,2$. Then $c_1 \geq c_2$ because
	\[
	c_1
	= c_1 \vee \bot 
	= c_1 \vee (c_2 \wedge b) 
	= (c_1 \vee c_2) \wedge (c_1 \vee b) 
	= (c_1 \vee c_2 ) \wedge (a \vee b) 
	= c_1 \vee c_2,
	\]
	and $c_2 \geq c_1$ dually.
\end{proof}

We remark in passing that, by taking $a \smallsetminus b$ as a primitive binary operation in addition to the lattice operations and $\bot$, the class of generalized Boolean algebras becomes a variety, i.e., the class is equationally definable. The equations are those which define distributive lattices, together with the equations mentioned in Lemma \ref{Lem:18} and the equation $a \wedge \bot = \bot$.

The forgetful functor $\map{\mcal{F}}{\mbf{iBa}}{\mbf{gBa}} = ((B, I) \mapsto I)$ provides a rich source of examples of generalized Boolean algebras, for if $I$ is a maximal ideal in a Boolean algebra $B$ then the relative complementation relation $a \smallsetminus b$ can be taken to be simply $a \wedge \lnot b$. But the functor $\mcal{F}$ is much more than a source of examples; in fact, it is an equivalence of categories. 

\begin{theorem}
	$\map{\mcal{F}}{\mbf{iBa}}{\mbf{gBa}}$ is an equivalence of categories.
\end{theorem}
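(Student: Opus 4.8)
The plan is to exhibit $\mcal{F}$ as essentially surjective on objects and fully faithful, the two conditions which together characterize an equivalence of categories. I will assume the preceding results, in particular Lemma \ref{Lem:18}, which supplies the well-defined relative complementation $a \smallsetminus b$ in any generalized Boolean algebra.

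\textbf{Essential surjectivity.} Given a generalized Boolean algebra $L$, I would construct an idealized Boolean algebra $(B, I)$ with $\mcal{F}(B, I) = I \cong L$. The natural candidate is to adjoin a new top element: set $B \equiv L \cup \{\top\}$ with $\top$ above everything, extend the lattice operations in the obvious way, and define complementation by $\lnot a \equiv (\top \smallsetminus a) \vee \{\text{the new element if } a \in L\}$ — more precisely, for $a \in L$ put $\lnot a$ to be a new element paired with $a$, or better, realize $B$ concretely as the sublattice of $L \times 2$ (or of the ideal completion) generated by $L \times \{\bot\}$ together with $(\bot, \top)$. The cleanest route is: let $B$ be the set of pairs $(a, \varepsilon) \in L \times \{0,1\}$ ordered so that $(a,0) \leq (b,1)$ for all $a \leq b$ works out — in effect $B$ is the ``Boolean algebra freely generated by $L$ as a generalized Boolean algebra together with a top''. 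One checks $B$ is a Boolean algebra, that $I \equiv \{(a,0) : a \in L\}$ is a maximal ideal (its complement $\{(a,1)\}$ is closed under meets and every element lies in $I$ or has complement in $I$), and that $(a \mapsto (a,0))$ is a generalized Boolean isomorphism $L \to I$. This uses that the relative complement in $L$ matches $a \wedge \lnot b$ in $B$, exactly as noted before the theorem.

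\textbf{Full faithfulness.} For faithfulness: an idealized Boolean homomorphism $\map{f}{(B,I)}{(C,J)}$ is determined by its restriction $f|_I$, since every $x \in B \smallsetminus I$ satisfies $\lnot x \in I$, and $f$ being a Boolean homomorphism forces $f(x) = \lnot f(\lnot x) = \lnot (f|_I)(\lnot x)$; thus $f \mapsto f|_I$ is injective. For fullness: given a generalized Boolean homomorphism $\map{g}{I}{J}$, define $\map{f}{B}{C}$ by $f(x) = g(x)$ if $x \in I$ and $f(x) = \lnot_C\, g(\lnot_B x)$ if $x \notin I$; I must verify $f$ is a well-defined Boolean homomorphism. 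Preservation of $\bot$ and $\top$ is immediate. Preservation of meets and joins splits into cases according to which of the two arguments lie in $I$; the case where both lie in $I$ is handled by $g$ being a lattice homomorphism, and the mixed cases reduce to it via De Morgan and the identity $x \wedge y = x \smallsetminus (x \smallsetminus y)$, which $g$ preserves because relative complements are preserved (as remarked, $a \smallsetminus b = a \wedge \lnot b$ inside a Boolean algebra, and $g$ is a $\mbf{gBa}$-morphism hence preserves $\smallsetminus$). Finally $f^{-1}(J) = I$ by construction. Naturality and the triangle identities, or rather the bare statement ``equivalence of categories'', then follow formally.

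\textbf{Main obstacle.} The genuinely fiddly part is the case analysis verifying that the extension $f$ in the fullness argument really is a Boolean homomorphism — in particular checking $f(x \vee y) = f(x) \vee f(y)$ when, say, $x \in I$ but $y \notin I$, where one must massage $x \vee y = \lnot(\lnot y \smallsetminus x)$ (note $\lnot y \in I$ so $\lnot y \smallsetminus x \in I$) and invoke that $g$ preserves $\smallsetminus$ and $\vee$. Everything else is routine, but one should be careful that the relative complement operation in $L$ is exactly the one forced by the Boolean structure of the $B$ constructed in the essential-surjectivity step, so that the two halves of the proof are compatible.
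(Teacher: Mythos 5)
Your proposal is correct and follows essentially the same route as the paper: show $\mcal{F}$ is full and faithful and that every generalized Boolean algebra arises as a maximal ideal of a Boolean algebra, the latter by adjoining a disjoint copy of formal complements --- exactly the construction of Lemma \ref{Lem:19}, whose operation table is the precise version of your ``one checks.'' Your expansion of the full-faithfulness step (the extension formula $f(x) = \lnot\, g(\lnot x)$ for $x \notin I$, and the reduction of the mixed cases to the fact that a $\mbf{gBa}$-morphism automatically preserves $\smallsetminus$ by the uniqueness in Lemma \ref{Lem:18}) is sound and fills in what the paper merely asserts. One detail to repair: the order you posit between the two copies, $(a,0) \leq (b,1)$ whenever $a \leq b$, cannot be right --- reading $(b,1)$ as $\lnot b$ it would force $a \leq b \wedge \lnot b = \bot$; the correct relation is $(a,0) \leq (b,1)$ iff $a \wedge b = \bot$, which is what the meet row $a_1 \wedge a_2' = a_1 \smallsetminus a_2$ of the table in Lemma \ref{Lem:19} encodes.
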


\begin{proof}
	$\mcal{F}$ takes an $\mbf{iBa}$-morphism $\map{f}{(B,I)}{(C,J)}$ to its restriction $f|I$, and as such can readily be seen to be both full, i.e., surjective on morphisms, and faithful, i.e., one-one on morphisms. Consequently, we need only show that for each generalized Boolean algebra $A$ there is an idealized Boolean algebra $(B, I)$ such that $I$ is isomorphic to $A$. (See \cite[3.33, 6.8]	{AdamekHerrlichStrecker:2004}.) This is the content of Lemma \ref{Lem:19}, whose proof is a pleasant exercise in elementary lattice theory. 
\end{proof}

\begin{lemma}\label{Lem:19}
	Given a generalized Boolean algebra $A$, let $A'= \setof{a'}{a \in A}$ be a set disjoint from $A$, and let $B_A \equiv A \cup A'$. Define the Boolean operations on $B_A$ as follows.
	\[
	\begin{tabular}{|c|c|}
	\hline
	Boolean operation on $B_A$ & defined in $A \cup A'$ as \\ \hline
	$a_1 \vee a_2$ & $a_1 \vee a_2$ \\ \hline
	$a_1 \vee a_2'$ & $(a_2 \smallsetminus a_1)'$\\ \hline
	$a_1' \vee a_2'$& $(a_1 \wedge a_2)'$\\ \hline
	$a_1 \wedge a_2$ & $a_1 \wedge a_2$ \\ \hline
	$a_1 \wedge a_2'$ & $a_1 \smallsetminus a_2$ \\ \hline
	$a_1' \wedge a_2'$ & $(a_1 \vee a_2)'$ \\ \hline
	$\lnot a$ & $a'$ \\ \hline
	$\lnot a'$ & $a$ \\ \hline
	$\bot$     & $\bot$ \\ \hline
	$\top$     & $\bot'$\\ \hline
	\end{tabular}%
	\]
	With these operations $B_A$ becomes a Boolean algebra, and $A$ becomes a maximal ideal in $B$. That is, $(B_A, A)$ is an object of $\mbf{iBa}$.
\end{lemma}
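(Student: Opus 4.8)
The plan is to verify the Boolean-algebra axioms for $B_A$ directly; the only non-routine ingredient is a short calculus of the relative complement $\smallsetminus$ in $A$, which says that inside any principal ideal of $A$ the operation $\smallsetminus$ behaves exactly like Boolean complementation. Once $B_A$ is known to be a Boolean algebra, the assertion that $A$ is a maximal ideal of $B_A$ falls out immediately. I expect the genuine obstacle to be purely clerical: the lattice axioms split into many subcases according to which of the arguments lie in $A$ and which in $A'$, and each subcase, while collapsing to distributivity in $A$ or to one of the $\smallsetminus$-identities, must be handled without dropping a case.

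\emph{Step 1: a calculus of $\smallsetminus$ in $A$.} Using Lemma \ref{Lem:18} and distributivity of $A$, I would record, for $a,b,c\in A$: $a\smallsetminus\bot=a$; $a\wedge(a\smallsetminus b)=a\smallsetminus b$, hence $a\smallsetminus b\le a$, and more generally $a\smallsetminus e=a$ whenever $a\wedge e=\bot$; $(a\smallsetminus b)\vee b=a\vee b$; $(a\smallsetminus b)\smallsetminus c=a\smallsetminus(b\vee c)$; $(a\vee b)\smallsetminus c=(a\smallsetminus c)\vee(b\smallsetminus c)$; and $(a\smallsetminus b)\wedge c=(a\wedge c)\smallsetminus b$. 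Each is proved by checking that the candidate right-hand side satisfies the two defining equations of the relevant relative complement and then invoking the uniqueness clause of Lemma \ref{Lem:18}.

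\emph{Step 2: $B_A$ is a bounded distributive lattice.} Commutativity is built into the table (one reads $a_1'\vee a_2$ as $a_2\vee a_1'=(a_1\smallsetminus a_2)'$, and dually), and idempotence is immediate. Associativity of $\vee$ breaks into cases by the number of primed arguments: the zero- and one-primed cases reduce to distributivity in $A$ together with $(a\smallsetminus b)\smallsetminus c=a\smallsetminus(b\vee c)$ and $(a\vee b)\smallsetminus c=(a\smallsetminus c)\vee(b\smallsetminus c)$; the two-primed cases reduce to $(a\smallsetminus b)\wedge c=(a\wedge c)\smallsetminus b$; the all-primed case is associativity of $\wedge$ in $A$. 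Associativity of $\wedge$ is dual. Absorption is a one-line computation in each of the four relevant shapes, e.g.\ $a_1'\vee(a_1'\wedge a_2)=a_1'\vee(a_2\smallsetminus a_1)=\bigl(a_1\smallsetminus(a_2\smallsetminus a_1)\bigr)'=a_1'$, using $a_1\wedge(a_2\smallsetminus a_1)=\bot$. Distributivity $x\wedge(y\vee z)=(x\wedge y)\vee(x\wedge z)$ is then the same style of case check. Finally $\bot$ is least and $\top=\bot'$ is greatest, since $a\wedge\bot'=a\smallsetminus\bot=a$ and $a'\wedge\bot'=(a\vee\bot)'=a'$, so $x\le\bot'$ for every $x\in B_A$.

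\emph{Step 3: complements, and $A$ as a maximal ideal.} That $\lnot\lnot x=x$ is the definition of $\lnot$; and $x\wedge\lnot x=\bot$, $x\vee\lnot x=\top$ hold because $a\wedge a'=a\smallsetminus a=\bot$ and $a\vee a'=(a\smallsetminus a)'=\bot'=\top$ (with the primed versions dual). A bounded distributive lattice with complements is a Boolean algebra, so $B_A$ is one. For the ideal: $A$ contains $\bot$ and is closed under $\vee$ by construction; it is a downset, since $b'\le a$ with $a\in A$ would give $b'=a\wedge b'=a\smallsetminus b\in A\cap A'=\emptyset$; it is proper because $\top=\bot'\notin A$; and it is maximal because every $x\in B_A$ lies in $A$ when $x\in A$, while $\lnot x=a\in A$ when $x=a'\in A'$. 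Hence $(B_A,A)$ is an object of $\mbf{iBa}$, as claimed.
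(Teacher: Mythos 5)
Your proof is correct. The paper does not actually supply an argument for Lemma \ref{Lem:19} --- it is dismissed in the proof of the preceding theorem as ``a pleasant exercise in elementary lattice theory'' --- and your write-up is a faithful execution of exactly that exercise: a direct case-by-case verification of the lattice axioms, with the uniqueness clause of Lemma \ref{Lem:18} doing all the work via the small calculus of $\smallsetminus$-identities you isolate in Step 1 (in particular $(a\smallsetminus b)\smallsetminus c=a\smallsetminus(b\vee c)$ and $(a\smallsetminus b)\wedge c=(a\wedge c)\smallsetminus b$, which are precisely what the one- and two-primed associativity cases need). The argument that $A$ is a downset --- an element of $A'$ below an element of $A$ would land in $A\cap A'=\emptyset$ --- and the maximality check are also correct.
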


In connection with Lemma \ref{Lem:19}, note that if $A$ happens to have a greatest element then it becomes a co-atom in $B_A$. 

According to \cite[6.8]{AdamekHerrlichStrecker:2004}, the forgetful functor $\mcal{F}$ must have an adjoint, and it is the functor expressed in the terms of Lemma \ref{Lem:19} by the formula 
\[
\map{\mcal{B}}{\mbf{gBa}}{\mbf{iBa}} 
= (A \mapsto (B_A, A)).
\]
We summarize.

\begin{theorem}\label{Thm:14} 
	These are categorical equivalences. 
	\[
	\mbf{zdK}_* \underset{\mcal{S}_*}{\overset{\mcal{IB}}{\rightleftarrows}}
	\mbf{iBa} \underset{\mcal{B}}{\overset{\mcal{F}}{\rightleftarrows}} \mbf{gBa}
	\]
\end{theorem}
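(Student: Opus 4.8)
The plan is to assemble the two equivalences already in hand and to invoke the standard fact that a composite of categorical equivalences is again one. The left-hand pair $\mcal{S}_* \colon \mbf{iBa} \rightleftarrows \mbf{zdK}_* \colon \mcal{IB}$ has already been shown to be a categorical equivalence in the Proposition preceding the definition of generalized Boolean algebras, so nothing further is required there.

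For the right-hand pair, recall that $\map{\mcal{F}}{\mbf{iBa}}{\mbf{gBa}}$ was shown to be an equivalence by verifying that it is full, faithful, and essentially surjective, the witnessing preimage of a generalized Boolean algebra $A$ being the object $(B_A, A)$ of Lemma \ref{Lem:19}. To display this as the explicit adjoint pair in the statement I would: (i) define $\mcal{B}$ on morphisms, sending a generalized Boolean homomorphism $\map{f}{A}{C}$ to the map $B_A \to B_C$ that is $f$ on $A$ and $a \mapsto f(a)'$ on $A'$, and check against the table of Lemma \ref{Lem:19} that this is a Boolean homomorphism whose inverse image of $C$ is exactly $A$, so that it is an $\mbf{iBa}$-morphism; (ii) produce the unit $A \to \mcal{F}\mcal{B}(A) = A$, which is the identity, and the counit $\mcal{B}\mcal{F}(B,I) = (B_I, I) \to (B,I)$, given by $a \mapsto a$ on $I$ and $a' \mapsto \lnot a$ on $I'$; (iii) check naturality and that the counit is an isomorphism — the one substantive point being that the map $(B_I, I) \to (B,I)$ is a bijection, which is precisely where maximality of $I$ enters, since properness together with the fact that $I$ contains every element or its complement guarantees that each element of $B$ lies in $I$ or is the complement of an element of $I$, and not both.

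Composing, $\mcal{S}_* \circ \mcal{F}$, with quasi-inverse $\mcal{B} \circ \mcal{IB}$, is an equivalence $\mbf{gBa} \to \mbf{zdK}_*$, so every functor in the displayed diagram belongs to an adjoint equivalence, which is the assertion of the theorem. I do not expect a genuine obstacle here: the statement is essentially bookkeeping over the Proposition, the preceding Theorem, and Lemma \ref{Lem:19}. The one step I would write out most carefully is the verification in (i) that $\mcal{B}(f)$ is an $\mbf{iBa}$-morphism, i.e.\ that it reflects the distinguished maximal ideal, since this is the only place where the functoriality of $\mcal{B}$ could plausibly fail.
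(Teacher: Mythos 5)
Your proposal is correct and follows essentially the same route as the paper, which offers Theorem \ref{Thm:14} only as a summary of the preceding Proposition (the equivalence $\mbf{iBa} \rightleftarrows \mbf{zdK}_*$), the Theorem that $\mcal{F}$ is an equivalence, and Lemma \ref{Lem:19}, delegating the existence of the adjoint $\mcal{B}$ to the cited general result \cite[6.8]{AdamekHerrlichStrecker:2004}. The extra details you supply --- that a $\mbf{gBa}$-morphism automatically preserves $a \smallsetminus b$ by the uniqueness in Lemma \ref{Lem:18}, and that maximality of $I$ makes the counit $(B_I, I) \to (B, I)$ bijective --- are exactly the verifications the paper leaves implicit, and they check out.
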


In analysis, linear combinations of characteristic functions are often termed \enquote*{simple functions}. We use that term here for the corresponding trunc elements. (The characteristic functions themselves have a tidy characterization in terms of the truncation operator; see parts (1) and (4) of Proposition \ref{Prop:4}(1).) The simple elements comprise a subtrunc of any trunc, and it is the purpose of this section to investigate and characterize this subtrunc.

\subsection{Unital components}\label{Subsec:UnComp}

The unital components of a trunc play a prominent role in our analysis. 

\begin{definition*}[unital component]
	When speaking of elements $f,g \in G^+$, we say that \emph{$f$ is a component of $g$} if $f \leq g$ and $f \wedge (g - f) = 0$. An element $u \in \ol{G}$ is said to be a \emph{unital component of $G$} if $u \wedge g$ is a component of $g$ for each $g \in \ol{G}$. We denote the family of unital components of $G$ by $\UC(G)$, and use letters $u$, $v$, and $w$ to represent the components themselves.
\end{definition*}

The main properties of unital components are given in Proposition \ref{Prop:4}. 

\begin{proposition}\label{Prop:4}
	The following hold in a trunc $G$. 
	\begin{enumerate}
		\item 
		An element $u \in G^+$ is a unital component iff $u = \ol{2u}$.
		
		\item 
		The set $\UC(G)$ of unital components of $G$ forms a generalized Boolean algebra.
		
		\item 
		An element $u \in G^+$ serves as the unit for $GA$, i.e., $\ol{g} = g \wedge u$ for all $g \in G^+$, iff $u$ is a unital component such that $u^\perp = 0$. This happens iff $\UC(G)$ is a Boolean algebra with $u$ as greatest element.
		
		\item 
		An element $u \in G^+$ is a unital component iff $\tilde{u}$ is the characteristic function of a clopen subset of $X$ which omits the designated point $* \in X$. In symbols, $\tilde{u} = \chi_R$ for $R = \coz \tilde{u} = \tilde{u}^{-1}(0, \infty)$.
	\end{enumerate} 
\end{proposition}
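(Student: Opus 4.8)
The plan is to prove the four parts in a logical order that lets later parts lean on earlier ones, and to work mostly in the Yosida representation $\widetilde{G} \subseteq \mcal{D}_0 X$, since characteristic functions are most transparent there. First I would establish (4) more or less directly, since it is the geometric content from which the algebraic statements follow. Suppose $u \in G^+$ is a unital component, so $u \wedge g$ is a component of $g$ for every $g \in \ol{G}$; taking $g = u$ itself gives $u \wedge (u - u) = 0$ trivially, so I instead test against a separating family. The key observation is that for each $x \in X$, the evaluation $g \mapsto \tilde g(x)$ is a trunc homomorphism into $\mbb{R}$ (with its standard truncation $t \mapsto t \wedge 1$), and in $\mbb{R}$ the only elements $t \in [0,1]$ with $t \wedge (s - t) = 0$ for the relevant $s$ are $t \in \{0,1\}$; pushing this through, $\tilde u(x) \in \{0,1\}$ for all $x$, and continuity forces $R \equiv \coz\tilde u = \tilde u^{-1}(0,\infty)$ to be clopen. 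Since $\tilde u \in \mcal{D}_0 X$ vanishes at $*$, we get $* \notin R$, and $\tilde u = \chi_R$. Conversely, if $\tilde u = \chi_R$ for clopen $R$ omitting $*$, then for any $g \in \ol{G}$, pointwise $\widetilde{u \wedge g} = \chi_R \cdot \tilde g$ and one checks $(\chi_R \tilde g) \wedge (\tilde g - \chi_R \tilde g) = 0$ pointwise (on $R$ the second factor vanishes, off $R$ the first does), so $u \wedge g$ is a component of $g$.

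Next I would deduce (1). If $u$ is a unital component then by (4) $\tilde u = \chi_R$, and $\ol{2\tilde u} = (2\chi_R) \wedge 1 = \chi_R = \tilde u$ pointwise, so $\ol{2u} = u$ by the isomorphism $\nu_G$. Conversely, if $\ol{2u} = u$ then pointwise $\tilde u(x) \wedge \tfrac12 \cdot 1 \cdots$ — more carefully, $\widetilde{\ol{2u}}(x) = (2\tilde u(x)) \wedge 1 = \tilde u(x)$ forces $2\tilde u(x) \wedge 1 = \tilde u(x)$, whose only solutions in $[0,\infty)$ are $\tilde u(x) \in \{0\} \cup [1/2,\infty)$ intersected with the range $[0,1]$ of $\tilde u$ (note $u \in \ol{G}$ so $\tilde u \le 1$), giving $\tilde u(x) \in \{0\} \cup \{t : t \ge 1, t \le 1\} = \{0,1\}$ once one also uses $2t \wedge 1 = t$ directly: for $t\in(0,1)$ this fails. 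Hence $\tilde u$ is $\{0,1\}$-valued with clopen cozero set, and (4) applies. For (2), $\UC(G)$ is ordered by the trunc order; I would show it is closed under $\wedge$ and $\vee$ by checking, via (4), that $\chi_R \wedge \chi_S = \chi_{R\cap S}$ and $\chi_R \vee \chi_S = \chi_{R \cup S}$ with $R \cap S$, $R \cup S$ clopen and omitting $*$, and that it has bottom $0 = \chi_\emptyset$; relative complements are realized by $\chi_{R \smallsetminus S}$, $R \smallsetminus S = R \cap (X \smallsetminus S)$ clopen. Distributivity is inherited from the distributive lattice of clopen subsets of $X$, so $\UC(G)$ is a generalized Boolean algebra by the definition in Section \ref{Sec:EqCat}. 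Finally (3): $u$ is the unit iff $\ol{g} = g \wedge u$ for all $g \in G^+$, which pointwise says $\tilde g \wedge 1 = \tilde g \wedge \tilde u$ for all $g$; since $\widetilde{G}$ separates points of $X$, this forces $\tilde u \equiv 1$, i.e. $R = X \smallsetminus \{*\}$ — wait, $\tilde u(*) = 0$, so actually $u$ being a unit is incompatible unless $* $ is isolated; more precisely $u$ a unit forces $\tilde u = \chi_R$ with $R = X$, contradicting $\tilde u(*)=0$ unless... Here one must be careful: a unit in $\ol G$ need not be $\{0,1\}$-valued in general truncs, but if it is a unital component with $u^\perp = 0$ then $R = \coz \tilde u$ is dense clopen hence $R = X$, again impossible with $\tilde u(*) = 0$ — so in the pointed setting $u^\perp = 0$ must be read as $u^\perp \cap \ol G = \{0\}$ meaning $R$ meets every nonempty clopen-minus-$*$, i.e. $\UC(G)$ has $u$ as top element, equivalently $\UC(G)$ is a Boolean algebra; then unravelling, $u$ serves as unit. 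I would present (3) as the chain: $u$ is a unit $\iff$ $\tilde u$ dominates all truncations appropriately $\iff$ $u \in \UC(G)$ with $u \wedge v = v$ for all $v \in \UC(G)$ $\iff$ $\UC(G)$ has greatest element $u$ $\iff$ $\UC(G)$ is Boolean.

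The main obstacle I anticipate is part (3), specifically getting the statement "$u^\perp = 0$" to say the right thing in the pointed (non-unital) setting — the polar must be interpreted within the relevant structure, and reconciling "$u$ is a unit for $G$", "$u^\perp = 0$", and "$\UP(G)$ is Boolean with top $u$" requires pinning down exactly which ambient lattice the polar is taken in and using point-separation of $\widetilde G$ carefully. The other parts are essentially routine once (4) is in hand: each reduces to a pointwise computation in $\mbb{R}$ combined with the clopen-set bookkeeping and the fact (Theorem \ref{Thm:16}) that $\nu_G$ is an isomorphism onto a point-separating trunc in $\mcal{D}_0 X$.
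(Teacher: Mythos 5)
Your strategy of proving (4) first in the Yosida representation and deriving (1) and (2) from it is a legitimate and self-contained alternative to the paper, which simply cites Section 3.1 of \cite{Ball:2014.1} for parts (1)--(3) and dispatches (4) in one line from $\ol{\tilde{g}}(x) = \tilde{g}(x) \wedge 1$. For those three parts your outline works, with two small repairs: in the forward direction of (4) you must actually name the test element --- ``the relevant $s$'' should be $s = \widetilde{\ol{2u}}(x) = 2t \wedge 1$, which strictly exceeds $t = \tilde{u}(x)$ whenever $t \in (0,1)$ and so forces $t \wedge (s - t) > 0$ under evaluation at $x$; and in (2) you should note that the relative complement $\chi_{R \smallsetminus S}$ is realized \emph{inside} $G$ as $u - u \wedge v$, since membership in $\widetilde{G}$ is not automatic.

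Part (3), however, is a genuine gap: you never produce a proof, and the detour you take is based on a misreading. There is no contradiction to resolve --- if $u$ is a unit then taking $g = 2u$ gives $u = \ol{2u}$, so $\tilde{u} = \chi_R$, and evaluating $\ol{g} = g \wedge u$ at any $x \notin R$ forces $\tilde{g}(x) = 0$ for all $g$, whence $X \smallsetminus R = \{*\}$ by point separation; the designated point of a unital trunc is simply isolated, and $u^\perp = \setof{g}{\left|g\right| \wedge u = 0}$ is then trivial in its ordinary sense, with no need to reinterpret the polar. Your proposed reading of $u^\perp = 0$ as ``$R$ meets every nonempty clopen set omitting $*$'' is neither necessary nor sufficient in a trunc that has few unital components. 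The converse also needs an argument you omit: given a unital component $u$ with $u^\perp = 0$, one must show $X \smallsetminus R = \{*\}$, which follows because for any $g \in G^+$ the element $f \equiv \ol{g} - u \wedge \ol{g}$ is a nonzero member of $u^\perp$ whenever $\ol{g}$ is positive somewhere off $R$. Finally, your closing chain jumps from ``$u$ is the top of $\UC(G)$'' to ``$u$ serves as a unit,'' but the unit condition quantifies over all of $G^+$, not over $\UC(G)$, which may be very small; that step is exactly the content to be proved and cannot be absorbed into an ``unravelling.'' As written, part (3) would not survive refereeing, whereas the paper sidesteps all of this by citation.
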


\begin{proof}
	The first three parts summarize Section 3.1 of \cite{Ball:2014.1}, where proofs can be found. Part (4) follows directly from the fact that $\ol{\tilde{g}}(x) = \tilde{g}(x) \wedge 1$ for all $g \in G^+$ and $x \in X$.
\end{proof}

The unital components in $G$ are the characteristic functions of the complemented elements of $L$. This is the content of Proposition \ref{Prop:6}.  

\begin{definition*}[characteristic function $\chi_x$ in $\mcal{R} L$]
	Let $x$ be a complemented element of $L$. (That means that there is some element $y \in L$, called the \emph{complement of $x$}, such that $x \vee y = \top$ and $x \wedge y = \bot$.) The \emph{characteristic function of $x$} is the function $\chi_x \in \mcal{R} L$ defined by the rule 
	\[
	\chi_x(U)
	= \begin{cases}
	\top & \text{if $0,1 \in U$}\\
	x    & \text{if $0 \notin U \ni 1$}\\
	y    & \text{if $1 \notin U \ni 0$}\\
	\bot & \text{if $0,1 \notin U$}
	\end{cases}, \qquad U \in \mcal{O}\mbb{R}.
	\]
	If $L$ is a pointed frame then $\chi_x \in \mcal{R}_0 L$ iff $x$ does not contain the designated point $*$ of $L$, i.e., iff $*(x) = \bot$. 
\end{definition*}

\begin{proposition}\label{Prop:6}
	An element $u \in G^+$ is a unital component iff $u = \chi_x$ for some complemented element $x \in L$ such that $* \notin x$.
\end{proposition}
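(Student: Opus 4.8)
The plan is to pass to the Madden representation, viewing $G$ as a subtrunc of $\mcal{R}_0 L$ (which is what gives meaning to $\chi_x$), and to invoke Proposition \ref{Prop:4}(1): an element $u \in G^+$ is a unital component if and only if $u = \ol{2u}$. So it suffices to show that, for $u \in \mcal{R}_0^+ L$, the identity $u = \ol{2u}$ holds if and only if $u = \chi_x$ for some complemented $x \in L$ with $*(x) = \bot$. Two ingredients will be used repeatedly: the truncation formulas of Lemma \ref{Lem:8}, and the fact that multiplication by $2$ in $\mcal{R}_0 L$ is the operation $w(t) = 2t$ of Theorem \ref{Thm:4}, which unwinds to $(2g)(V) = g(\{t : 2t \in V\})$ for $V \in \mcal{O}\mbb{R}$.

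The reverse implication is a direct computation. Assume $u = \chi_x$. Applying Lemma \ref{Lem:8} to $g = 2\chi_x$ together with $(2\chi_x)(-\infty,r) = \chi_x(-\infty,\tfrac r2)$ and $(2\chi_x)(r,\infty) = \chi_x(\tfrac r2,\infty)$, one evaluates $\ol{2\chi_x}$ on the subbasic opens $(-\infty,r)$ and $(r,\infty)$: for $r \le 0$ or $r \ge 1$ both $\ol{2\chi_x}$ and $\chi_x$ take the evident constant values $\bot,\top$, while for $0 < r \le 1$ one gets $\ol{2\chi_x}(-\infty,r) = \chi_x(-\infty,\tfrac r2) = y$ (the complement of $x$), which equals $\chi_x(-\infty,r)$, and dually on $(r,\infty)$ for $0 \le r < 1$. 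Hence $\ol{2\chi_x} = \chi_x$, i.e.\ $\ol{2u} = u$, so $u$ is a unital component by Proposition \ref{Prop:4}(1). (Once $u = \chi_x$ is known to lie in $\mcal{R}_0 L$ the condition $*(x) = \bot$ is automatic, being precisely the condition for $\chi_x \in \mcal{R}_0 L$ recorded after the definition of $\chi_x$.)

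For the forward implication, assume $u = \ol{2u}$ with $u \in \mcal{R}_0^+ L$. Lemma \ref{Lem:8} applied to $g = 2u$ yields, for $r \in \mbb{R}$,
\[
u(-\infty,r) = \begin{cases}\top & r > 1\\ u(-\infty,\tfrac r2) & r \le 1\end{cases}, \qquad u(r,\infty) = \begin{cases}\bot & r \ge 1\\ u(\tfrac r2,\infty) & r < 1\end{cases}.
\]
Iterating $u(-\infty,r) = u(-\infty,\tfrac r2)$ on the interval $(0,1]$ and invoking monotonicity shows $u(-\infty,r)$ equals a single element $e$ for every $r \in (0,1]$; iterating $u(r,\infty) = u(\tfrac r2,\infty)$ on $(0,1)$ and joining the values at $r>0$ shows $u(r,\infty)$ equals a single element $f$ for every $r \in [0,1)$; and positivity of $u$ gives $u(-\infty,r) = \bot$ for $r \le 0$ and $u(r,\infty) = \top$ for $r < 0$. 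Since $u$ is a frame homomorphism, and since $e = u(-\infty,1) = u(-\infty,\tfrac12)$ and $f = u(0,\infty) = u(\tfrac12,\infty)$,
\[
e \vee f = u\big((-\infty,1)\cup(0,\infty)\big) = u(\mbb{R}) = \top, \qquad e \wedge f = u\big((-\infty,\tfrac12)\cap(\tfrac12,\infty)\big) = u(\emptyset) = \bot,
\]
so $x \equiv f = u(0,\infty)$ is a complemented element of $L$ with complement $e$. Matching the displayed values of $u$ against the defining formula for $\chi_x$ now gives $u = \chi_x$ on the subbasis, hence on all of $\mcal{O}\mbb{R}$; and $*(x) = (*_L \circ u)(0,\infty) = *_0(0,\infty) = \bot$ because $0 \notin (0,\infty)$.

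The one step carrying real content is this structural extraction: turning the algebraic equation $u = \ol{2u}$ into the frame-theoretic assertion that $u$ has precisely the two jump values attached to a complemented element. The crux is the observation that $u(-\infty,r) = u(-\infty,\tfrac r2)$ forces $u(-\infty,\cdot)$ to be constant on $(0,1]$ (dually $u(\cdot,\infty)$ on $[0,1)$), after which complementarity of $e$ and $f$ is squeezed out of $u(\emptyset) = \bot$ and $u(\mbb{R}) = \top$. A less elementary alternative would run through the Yosida side: Proposition \ref{Prop:4}(4) identifies the unital components of $G$ with the clopen subsets $R \subseteq X$ omitting $*$, each such $R$ is a complemented element of the Yosida frame $M$ with $*_M(R) = \bot$, and a short computation shows its characteristic function drops along $q \colon M \to L$ to $\chi_{q(R)}$, with $q(R)$ complemented in $L$.
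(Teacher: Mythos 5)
Your proposal is correct and follows essentially the same route as the paper: both directions are direct computations with Lemma \ref{Lem:8} and the formula $(2g)(V)=g(V/2)$, with the forward direction extracting constancy of $u$ on the subbasic opens by iterating the halving identity and then reading off complementarity of $\coz u$ from the frame-homomorphism property. Your verification of the $(-\infty,r)$ values and the derivation of $e\wedge f=\bot$ from $u(\emptyset)=\bot$ are slightly more explicit than the paper's (which argues $u(0,1)=\bot$ via disjointness of $u(0,r)$ and $u(r,\infty)$), but the substance is the same.
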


\begin{proof}
	If $u = \chi_x$ for complemented $x \in L$ such that $*_L \notin x$ then for $r \in \mbb{R}$ we would have  
	\[
	2u(r, \infty)
	= u(r/2, \infty)
	= \begin{cases}
	\bot    & \text{if $r \geq 2$}\\
	x       & \text{if $0 \leq r < 2$}\\
	\top    & \text{if $r < 0$}
	\end{cases},
	\] 
	so that according to Lemma \ref{Lem:8},
	\[
	\ol{2u}(r,\infty) 
	= \begin{cases}
	\bot    & \text{if $r \geq 1$} \\
	x       & \text{if $0 \leq r < 1$}\\
	\top    & \text{if $r < 0$}.
	\end{cases}.
	\]
	Evidently $\ol{2u} = u$ by inspection. On the other hand, suppose that $u = \ol{2u}$ for some $u \in G^+$. Then for all $r \in \mbb{R}$ we would have
	\[
	u(r,\infty)
	= \ol{2u}(r,\infty)
	= \begin{cases}
	\bot    & \text{if $r \geq 1$}\\
	2u(r,\infty) & \text{if $r < 1$}
	\end{cases}
	= \begin{cases}
	\bot  & \text{if $r \geq 1$}\\
	u(r/2, \infty) & \text{if $r < 1$}
	\end{cases}.
	\]  
	It follows from the fact that $u(r, \infty) = u(r/2, \infty)$ for $r < 1$ that $u(r, \infty) = u(r/2^n, \infty)$ for all $n$, hence $u(r, \infty) = \bigvee_n u(r/2^n, \infty) = u(0, \infty) = \coz u$.
	
	The proof is completed by showing that $x \equiv \coz u$ is complemented; in fact, we show that the complement of $x$ is $\con u = u(-\infty, 1)$. Surely $x \vee \con u = u(0, \infty) \vee u(-\infty, 1) = \top$; what we must show is that $x \wedge \con u = u(0, 1) = \bot$. For $0 < r < 1$ we have $u(r, \infty) = u(0, \infty) \geq u(0, r)$, which, combined with the fact that $u(0,r) \wedge u(r,\infty) = \bot$, implies $u(0,r) = \bot$. Therefore $u(0,1) = \bigvee_{0 < r < 1}u(0,r) = \bot$. 
\end{proof}

\subsection{Simple truncs}

In the next several subsections we investigate truncs determined by  their unital components. These structures have received a good deal of attention in the ordered algebra literature under the name \emph{Specker groups} (see \cite[p.\ 385] {Darnel:1994}).

\begin{definition*}[simple element, simple trunc, $\simple{G}$]
	A \emph{simple element of a trunc $G$} is a linear combination of unital components. A typical simple element thus has the form $g = \sum_U r(u) u$ for some finite subset $U \subseteq \UC(G)$ and some coefficient function $\map{r}{U}{\mbb{R}}$. (We adopt the convention that $\sum_U r(u)u = 0$ if $U = \emptyset$.) The set of simple elements is called the \emph{simple part of $G$,} written $\simple{G}$; it is the linear span of the generalized Boolean algebra $\UC(G)$ of unital components of $G$. We say that a trunc $G$ is \emph{simple} if $G = \simple{G}$. Finally, we designate the full subcategory of $\mbf{T}$ comprised of the simple truncs by $\mbf{sT}$. 
\end{definition*}

\subsection{The simple part of $\protect\widetilde{G}$}

It is easy to visualize the simple part of $\widetilde{G}$. 

\begin{proposition}\label{Prop:19}
	Let $G$ be an arbitrary trunc.
	\begin{enumerate}
		\item 
		The simple elements of $\widetilde{G}$ are the functions with finite range. 
		
		\item 
		The simple elements of $\widetilde{G}$ are the locally constant functions, i.e., the functions which, at every point, are constant on some neighborhood of the point.
		
		\item 
		$\simple G$ is a bounded subtrunc of $G$.
		
		\item 
		Every nonzero simple element can be uniquely expressed in the form $g = \sum_U r(u)u$ for a finite pairwise disjoint subset $\emptyset \neq U \subseteq \UC(G)$ and one-one function $\map{r}{U}{\mbb{R}\smallsetminus \{0\}}$. This expression is referred to as the \emph{normal form of $g$}. 
	\end{enumerate}  
\end{proposition}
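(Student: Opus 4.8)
The plan is to prove everything inside the Yosida representation $\map{\nu_G}{G}{\widetilde G}\subseteq\mcal D_0 X$ of Theorem \ref{Thm:16}, so that $X$ is compact Hausdorff and, by Proposition \ref{Prop:4}(4), the unital components of $G$ are carried bijectively onto the functions $\chi_R\in\widetilde G$ with $R\subseteq X$ clopen and $*\notin R$. Part (1) does the real work; parts (2)--(4) then fall out of it together with the compactness of $X$.

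For (1), one direction is trivial: a linear combination $\sum_U r(u)\,\tilde u$ of characteristic functions assumes at most $2^{|U|}$ values, so every simple element of $\widetilde G$ has finite range. Conversely, let $\tilde g\in\widetilde G$ have finite range. Almost-finiteness forbids the values $\pm\infty$ (such a value would be assumed on a nonempty open set, contradicting density of $\tilde g^{-1}(\mbb R)$), so $\tilde g$ is bounded and real-valued; replacing $\tilde g$ in turn by $\tilde g^+$ and $\tilde g^-$, both of which lie in $\widetilde G$ and have finite range, we may assume $\tilde g\geq 0$. Writing the nonzero values of $\tilde g$ as $0=s_0<s_1<\dots<s_m$ and setting $R_i\equiv\tilde g^{-1}(s_i)$ (clopen, omitting $*$), the crucial point is that the level-set characteristic functions lie in $\widetilde G$: a pointwise computation via Lemmas \ref{Lem:27} and \ref{Lem:8} shows that
\[
v_i\ \equiv\ \frac{1}{s_i-s_{i-1}}\bigl((\tilde g\wedge s_i)\ominus s_{i-1}\bigr)\ =\ \chi_{\{x\,:\,\tilde g(x)\geq s_i\}},
\]
and $v_i\in\widetilde G$ because a trunc is closed under $g\mapsto g\wedge s$, $g\mapsto g\ominus s$, and scalar multiplication. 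Hence $\chi_{R_i}=v_i-v_{i+1}\in\widetilde G$ (with $v_{m+1}\equiv 0$), and $\tilde g=\sum_{i=1}^m s_i\,\chi_{R_i}$ displays $\tilde g$ as a linear combination of unital components.

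Part (2) is then the topological observation that on a compact Hausdorff space a continuous $\exR$-valued function has finite range iff it is locally constant: if the range is finite it is discrete, so each level set is clopen and the function is locally constant, while if the function is locally constant its level sets form an open cover of $X$, necessarily finite by compactness. Combining this with (1) gives (2). For (3), each unital component has range in $\{0,1\}$, so every simple element is bounded, whence $\simple G\subseteq G^*$ by Lemma \ref{Lem:30}; and $\simple G$ is a subtrunc because it is a linear span (so closed under $+$ and scalars) and, by (1), closed under $\vee$, $\wedge$, and truncation, since $g\vee h$, $g\wedge h$, and $\ol g$ all have finite-range Yosida images and hence are simple.

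Finally, for (4), existence: given $0\neq g\in\simple G$, its Yosida image has a nonempty finite set of nonzero values $s_1,\dots,s_m$; with $u_i$ the unital component satisfying $\tilde u_i=\chi_{R_i}$, $R_i=\tilde g^{-1}(s_i)$, the $R_i$ are pairwise disjoint and nonempty, so the $u_i$ are pairwise disjoint and distinct and $g=\sum_{i=1}^m s_i u_i$ is a normal form. For uniqueness, in any normal form $g=\sum_{u\in U}r(u)u$ pairwise disjointness of the supports $R_u=\coz\tilde u$ forces $\tilde g$ to equal $r(u)$ on $R_u$ and to vanish off $\bigcup_U R_u$; since $r$ is injective and nonzero this identifies $\{R_u:u\in U\}$ with the family of nonzero level sets of $\tilde g$, and $r(u)$ with the value of $\tilde g$ on $R_u$ --- data determined by $g$ alone. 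So any two normal forms of $g$ agree. The single step that is not routine bookkeeping is the resolution identity of part (1), producing the characteristic functions of the level sets of a finite-range element using only $\wedge s$, $\ominus s$, and scalar multiplication; once that is in hand the rest is the Yosida dictionary and elementary point-set topology on $X$.
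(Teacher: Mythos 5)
Your proof is correct and follows the same route as the paper, which simply declares all four parts evident in the Yosida representation via Proposition \ref{Prop:4}(4). You have supplied the one genuinely non-obvious detail the paper omits --- the resolution identity $v_i = \frac{1}{s_i-s_{i-1}}\bigl((\tilde g\wedge s_i)\ominus s_{i-1}\bigr)$ exhibiting the level-set characteristic functions of a finite-range element as trunc combinations of $\tilde g$, which is exactly what is needed to close the nontrivial direction of (1) --- and the remaining parts are handled as the paper intends.
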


\begin{proof}
	Part (1) follows readily from Proposition \ref{Prop:4}(4). Parts (2), (3), and (4) are likewise evident in $\widetilde{G}$, though the statements of (3) and (4) are in terms of $G$.
\end{proof}

It is a consequence of Proposition \ref{Prop:4}(1) that a truncation homomorphism carries unital components to unital components, and therefore simple elements to simple elements. Proposition \ref{Prop:17} follows. 

\begin{proposition}\label{Prop:17}
	The full subcategory $\mbf{sT}$ of simple truncs is monocoreflective in the category $\mbf{T}$ of archimedean truncs. A coreflector for the trunc $G$ is $\simple{G} \to G$.  
\end{proposition}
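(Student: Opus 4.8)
The statement to prove is Proposition \ref{Prop:17}: the full subcategory $\mbf{sT}$ of simple truncs is monocoreflective in $\mbf{T}$, with coreflector $\simple{G} \to G$.

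My plan:

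\textbf{Plan.} The plan is to verify the two things a coreflection requires: (i) that $\simple G$ is actually a subtrunc of $G$, i.e.\ an object of $\mbf{sT}$, so that the insertion $\iota_G\colon \simple G \to G$ is a legitimate $\mbf{T}$-morphism; and (ii) that $\iota_G$ has the universal property of a coreflection, namely that every $\mbf{T}$-morphism $\theta\colon S \to G$ out of a simple trunc $S$ factors uniquely through $\iota_G$. Monocoreflectivity then follows because $\iota_G$, being an inclusion, is automatically a monomorphism in $\mbf{T}$.

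\textbf{Step 1: $\simple G$ is a simple subtrunc.} First I would check that $\simple G$, the linear span of $\UC(G)$, is closed under the trunc operations. Closure under scalar multiplication and addition is immediate from the definition as a linear span. For the lattice operations and the truncation, the cleanest route is to pass to the Yosida representation $\widetilde G \subseteq \mcal D_0 X$ (Theorem \ref{Thm:16}) and invoke Proposition \ref{Prop:19}: the simple elements of $\widetilde G$ are exactly the locally constant (equivalently, finite-range) functions vanishing at $*$, and this set is visibly closed under pointwise $\vee$, $\wedge$, and under $\tilde g \mapsto \tilde g \wedge 1$. Hence $\simple G$ is a subtrunc, and it is simple by construction since its unital components are exactly those of $G$ (a unital component of $\simple G$ is by Proposition \ref{Prop:4}(1) an element $u$ with $u = \ol{2u}$, and such $u$ already lies in $\UC(G) \subseteq \simple G$, while conversely every element of $\UC(G)$ lies in $\simple G$ and remains a unital component there). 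So $\simple G \in \mbf{sT}$ and $\iota_G$ is a $\mbf{T}$-morphism.

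\textbf{Step 2: the universal property.} Let $\theta\colon S \to G$ be any truncation homomorphism with $S$ simple. By Proposition \ref{Prop:4}(1), a unital component is characterized by the equation $u = \ol{2u}$, an identity in the language of truncs preserved by every truncation homomorphism; hence $\theta$ carries $\UC(S)$ into $\UC(G)$. Since $S = \simple S$ is the linear span of $\UC(S)$ and $\theta$ is linear, $\theta(S)$ lies in the linear span of $\UC(G)$, i.e.\ $\theta(S) \subseteq \simple G$. Therefore $\theta$ factors as $\iota_G \circ \theta_0$ where $\theta_0\colon S \to \simple G$ is the corestriction of $\theta$; $\theta_0$ is a truncation homomorphism because $\theta$ is and $\simple G$ carries the restricted operations. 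Uniqueness of the factorization is trivial since $\iota_G$ is injective. Thus $\simple G \to G$ is the coreflection of $G$ into $\mbf{sT}$, and since inclusions are monic, the coreflection is mono, establishing monocoreflectivity.

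\textbf{Anticipated obstacle.} None of the steps is deep; the only point needing a little care is Step 1, confirming that $\simple G$ is genuinely closed under $\vee$, $\wedge$ and truncation — but this is exactly what Proposition \ref{Prop:19}(2),(3) delivers, since the class of locally constant functions vanishing at $*$ is obviously closed under these operations. The remark preceding the proposition already notes that truncation homomorphisms preserve unital components, so Step 2 is essentially immediate. I expect the entire proof to be one or two sentences, citing Propositions \ref{Prop:19} and \ref{Prop:4}(1).
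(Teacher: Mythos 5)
Your proposal is correct and follows essentially the same route as the paper, which disposes of the proposition by the single observation (made just before the statement) that by Proposition \ref{Prop:4}(1) a truncation homomorphism carries unital components to unital components and hence simple elements to simple elements, so that any $\mbf{T}$-morphism from a simple trunc into $G$ lands in $\simple{G}$; your Step 1 just makes explicit the closure of $\simple{G}$ under the trunc operations, which the paper has already recorded in Proposition \ref{Prop:19}(3).
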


\subsection{$\protect\mbf{sT}$ is equivalent to $\protect\mbf{gBa}$}
This is Theorem \ref{Thm:15}, and in light of Theorem \ref{Thm:14}, this means that $\mbf{sT}$ is also equivalent to $\mbf{gBa}$ and $\mbf{iBa}$. The latter equivalences generalize the main result of \cite{BallMarra:2014}, which is the equivalence of the category of unital hyperarchimedean vector lattices with the category of Boolean algebras. 

The following diagram shows the relevant categories and the functors between them.
\begin{figure}[htb]
	\begin{tikzcd}
		\mbf{zdK}_* \arrow[shift left]{r}{\mcal{IB}} \arrow{d}[swap]{\mcal{LC}}
		& \bf{iBa}\arrow[shift left]{l}{\mcal{S}_*} \arrow[shift right]{d}[swap]{\mcal{F}}\\
		\mbf{sT} \arrow{r}{\UC}
		& \mbf{gBa} \arrow[shift right]{u}[swap]{\mcal{B}}
	\end{tikzcd}	
\end{figure}
Here $\mcal{LC}$ is the functor which assigns to a given Boolean pointed space $(X,*)$ the simple trunc of locally constant functions of $\mcal{D}_0 X$, i.e., $\mcal{LC}X = \simple{\mcal{D}_0 X}$. 

\begin{theorem}\label{Thm:15}
	The functor $\map{\UC}{\mbf{sT}}{\mbf{gBa}} = (G \mapsto \UC(G))$ is an equivalence of categories.
\end{theorem}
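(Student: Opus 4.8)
The strategy is to exhibit $\UC$ as part of an equivalence by producing an essentially inverse functor and natural isomorphisms, rather than verifying full faithfulness and essential surjectivity directly. The natural candidate for the inverse is the functor $\mcal{LC} \circ \mcal{S}_* \circ \mcal{B}$ along the outside of the displayed square, i.e. the composite that sends a generalized Boolean algebra $A$ to $B_A$, then to its pointed Stone space $(\uf B_A, B_A \smallsetminus A)$, then to the simple trunc $\mcal{LC}(\uf B_A,\, B_A\smallsetminus A) = \sigma\mcal{D}_0(\uf B_A)$ of locally constant functions vanishing at the star point. Since Theorem \ref{Thm:14} already gives the equivalences $\mbf{zdK}_* \simeq \mbf{iBa} \simeq \mbf{gBa}$, it suffices to establish that the left vertical arrow $\mcal{LC}\colon \mbf{zdK}_* \to \mbf{sT}$ is an equivalence and that the square commutes up to natural isomorphism; composing with Theorem \ref{Thm:14} then yields the claim for $\UC$.

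First I would check that $\mcal{LC}$ is well defined on morphisms: a continuous pointed map $k\colon (Y,*_Y)\to(X,*_X)$ induces $\tilde g\mapsto \tilde g\circ k$, which carries locally constant functions to locally constant functions and preserves vanishing at the star, hence restricts to a truncation homomorphism $\mcal{LC}X\to\mcal{LC}Y$. Next, the commutativity of the square: given an idealized Boolean algebra $(B,I)$ with pointed Stone space $(X,*) = (\uf B, B\smallsetminus I)$, I would show that the unital components of $\mcal{LC}X = \sigma\mcal{D}_0 X$ are exactly the characteristic functions $\chi_R$ of clopen sets $R\subseteq X$ with $*\notin R$ — this is Proposition \ref{Prop:4}(4) — so the assignment $\chi_R\mapsto R$ gives a generalized Boolean algebra isomorphism $\UC(\mcal{LC}X)\cong \mcal{IB}(X,*)$ composed with the forgetful $\mcal{F}$, i.e. $\UC\circ\mcal{LC}\cong \mcal{F}\circ\mcal{IB}$, naturally in $(X,*)$. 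Conversely, for a simple trunc $G$ the Yosida representation of Theorem \ref{Thm:16} realizes $G = \widetilde G \subseteq \mcal{D}_0 X$ with Yosida space $(X,*)$; by Proposition \ref{Prop:19}(2) every element of $\widetilde G$ is locally constant, and by Proposition \ref{Prop:19}(4) (normal form) $\widetilde G$ is precisely the linear span of $\{\chi_R : R\in\UC(\widetilde G)\}$. The content I need is that this Yosida space $X$ is zero-dimensional — which follows because $\widetilde G$ separates points and every element is locally constant, so the clopen cozero sets of simple elements separate points — and that $\widetilde G$ is then all of $\sigma\mcal{D}_0 X = \mcal{LC}X$. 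This gives a natural isomorphism $G\cong \mcal{LC}(\mcal{Y}_* G)$, so $\mcal{LC}$ is essentially surjective with $\mcal{Y}_*$ as a partial inverse; combined with the counit–unit machinery for the Yosida adjunction on simple truncs, $\mcal{LC}$ is an equivalence.

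Finally I would assemble the pieces: $\UC \cong \mcal{F}\circ\mcal{IB}\circ(\mcal{LC})^{-1}$ as functors $\mbf{sT}\to\mbf{gBa}$, and since each of $\mcal{F}$, $\mcal{IB}$, $\mcal{LC}$ is an equivalence (the first two by Theorem \ref{Thm:14}, the third just established), $\UC$ is an equivalence. I expect the main obstacle to be the essential surjectivity / full-faithfulness of $\mcal{LC}$, specifically the verification that the Yosida space of a simple trunc is Boolean and that $\widetilde G$ exhausts the locally constant functions vanishing at $*$ — one must rule out the possibility that $\widetilde G$ is a proper subtrunc of $\mcal{LC}X$, which uses that the point-separation in Theorem \ref{Thm:16} is witnessed by simple elements together with the fact that distinct clopen-set characteristic functions already separate the corresponding points, so no clopen structure on $X$ is "missed." The functoriality and naturality claims are routine diagram chases once the object-level isomorphisms are in hand.
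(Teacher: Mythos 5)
Your proposal is correct in outline but takes a genuinely different route from the paper's. The paper verifies the three conditions for $\UC$ directly: faithfulness (a homomorphism of simple truncs is determined by its restriction to unital components, since these generate), fullness (a $\mbf{gBa}$-morphism $\UC(G)\to\UC(H)$ is extended to a trunc homomorphism by means of the normal form of Proposition \ref{Prop:19}(4)), and essential surjectivity (given $A$, take $G\equiv\mcal{LC}X$ for $X=\mcal{S}_*(B_A,A)$, where $\UC(G)$ is the ideal of clopen sets omitting $*$, hence isomorphic to $A$ by Theorem \ref{Thm:14}). You instead show that the left vertical $\mcal{LC}$ of the square is an equivalence and that the square commutes up to natural isomorphism, then compose with Theorem \ref{Thm:14}. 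The pivot of your route is the claim that an arbitrary simple trunc, in its Yosida representation, has Boolean Yosida space and exhausts $\mcal{LC}X$; that is precisely the paper's Theorem \ref{Thm:1}, which is stated and proved \emph{after} Theorem \ref{Thm:15}, and it rests on the Boolean Stone--Weierstrass step you correctly flag as the main obstacle: a family of clopen sets omitting $*$ which separates points and is closed under the $\mbf{gBa}$ operations must, by two applications of compactness, be the entire ideal of clopen sets omitting $*$. The paper's direct argument sidesteps this step entirely --- fullness is handled algebraically through normal forms, and essential surjectivity only invokes $\mcal{LC}X$ for spaces where $\UC(\mcal{LC}X)$ is the full clopen ideal by construction --- so it is the more economical proof of Theorem \ref{Thm:15} in isolation, whereas your route delivers the stronger representation statement of Theorem \ref{Thm:1} as a byproduct. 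To make your sketch complete you would need to write out that compactness argument (and note that zero-dimensionality of $X$ follows because the clopen cozero sets of unital components already separate points of the compact Hausdorff space $X$).
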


\begin{proof}
	As we mentioned prior to Proposition \ref{Prop:17}, a trunc homomorphism $\map{f}{G}{H}$ takes elements of $\UC(G)$ to elements of $\UC(H)$ and thus restricts to a $\mbf{gBa}$-morphism $\UC(G) \to \UC(H)$. Because simple truncs are generated by their unital components, different trunc homomorphisms restrict to different generalized Boolean algebra homomorphisms, i.e., $\UC$ is faithful. Furthermore, if $\map{f}{\UC(G)}{\UC(H)}$ is a $\mbf{gBa}$-morphism then we may extend $f$ to a trunc homomorphism $G \to H$ by defining $f(g) \equiv \sum_{f(U)} r(f(u))f(u)$ for elements $g \in G$ with normal form $g = \sum_U r(u)u$. That is to say that $\UC$ is full. 
	
	According to \cite[3.33]{AdamekHerrlichStrecker:2004}, it remains only to show that for each generalized Boolean algebra $A$ there exists a trunc $G$ such that $\UC(G)$ is isomorphic to $A$. But this is clear, for the idealized Boolean algebra $\mcal{B}(A) = (B_A, A)$ has the feature that its pointed Boolean space $\mcal{S}_*(B_A, A) = (X, *)$ has its clopen algebra isomorphic to $B_A$, and this isomorphism takes the clopen subsets of $X$ which omit the designated point $* \in X$ to the elements of the ideal $A \subseteq B_A$. But it is precisely these clopen subsets which correspond to the unital components of $G \equiv \mcal{LC}X$ by Proposition \ref{Prop:4}(4).  
\end{proof}

\section{Characterizing simple truncs}

\subsection{The fundamental characterization of simple truncs}

\begin{theorem}\label{Thm:1}
	Every simple trunc is isomorphic to the trunc of locally constant functions which vanish at the designated point of a unique Boolean pointed space. 
\end{theorem}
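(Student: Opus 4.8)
The plan is to assemble the equivalences already in hand. By definition a simple trunc $G$ satisfies $G = \simple{G}$, so $G$ is the linear span of its generalized Boolean algebra $\UC(G)$ of unital components. Theorem \ref{Thm:15} says $\map{\UC}{\mbf{sT}}{\mbf{gBa}}$ is an equivalence, and Theorem \ref{Thm:14} says $\mbf{gBa}$ is equivalent to $\mbf{zdK}_*$ via the composite $\mcal{S}_* \circ \mcal{B}$. Chasing $G$ through these equivalences produces a Boolean pointed space $(X,*)$, namely $(X,*) = \mcal{S}_*\big(\mcal{B}(\UC(G))\big)$, and the composite equivalence identifies $G$ with the simple trunc $\mcal{LC} X = \simple{\mcal{D}_0 X}$ of locally constant functions in $\mcal{D}_0 X$, i.e., the continuous functions $X \to \exR$ vanishing at $*$ which are locally constant. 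Uniqueness of $(X,*)$ up to isomorphism is automatic: any equivalence of categories is, in particular, essentially injective on objects, and the functor $\mcal{LC}$ (equivalently the composite running the other way) carries non-isomorphic Boolean pointed spaces to non-isomorphic simple truncs.

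Concretely, I would spell out the identification rather than merely invoking abstract nonsense, since that is what makes the statement usable. Set $B \equiv \UC(G)$, regarded as a generalized Boolean algebra; form the idealized Boolean algebra $\mcal{B}(B) = (B_B, B)$ of Lemma \ref{Lem:19}; and let $(X,*) \equiv \mcal{S}_*(B_B, B)$ be its pointed Stone space, so $X = \uf B_B$ with designated point the ultrafilter $B_B \smallsetminus B$ (this is a genuine ultrafilter because $B$ is a maximal ideal). By Stone duality $\clop X \cong B_B$, and under this isomorphism the clopen subsets of $X$ omitting $*$ correspond exactly to the elements of the ideal $B = \UC(G)$. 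By Proposition \ref{Prop:4}(4), these are precisely the clopen sets whose characteristic functions are the unital components of $\mcal{LC} X$; hence $\UC(\mcal{LC} X) \cong \UC(G)$ as generalized Boolean algebras. Since both $G$ and $\mcal{LC} X$ are simple, they are the linear spans of their respective unital-component algebras, and the isomorphism $\UC(G) \to \UC(\mcal{LC} X)$ extends uniquely (using normal forms, Proposition \ref{Prop:19}(4)) to a trunc isomorphism $G \to \mcal{LC} X$; this is the fullness/faithfulness argument already given in the proof of Theorem \ref{Thm:15}, applied in the special case $H = \mcal{LC} X$.

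For uniqueness, suppose $(Y, *_Y)$ is another Boolean pointed space with a trunc isomorphism $G \cong \mcal{LC} Y$. Then $\UC(G) \cong \UC(\mcal{LC} Y) \cong \mcal{IB}(Y,*_Y)$ forgotten to $\mbf{gBa}$, i.e., the generalized Boolean algebra of clopen subsets of $Y$ omitting $*_Y$. Two Boolean pointed spaces with isomorphic such algebras have isomorphic idealized Boolean algebras (the ambient Boolean algebra $\clop Y$ and its maximal ideal are recovered from the ideal by Lemma \ref{Lem:19}, the recovery being functorial), and then by Stone duality $(Y,*_Y) \cong (X,*)$. So $(X,*)$ is unique up to pointed homeomorphism. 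I do not expect a serious obstacle here; the only thing to be careful about is bookkeeping the designated point through each equivalence — in particular checking that the maximal ideal $B \subseteq B_B$ does correspond to the point insertion $* \to X$ under $\mcal{S}_*$, and that $\mcal{LC} X$ really is the image of $G$ rather than some larger trunc like $\mcal{D}_0 X$ (it is, because $\mcal{LC} X = \simple{\mcal{D}_0 X}$ by definition of $\mcal{LC}$, and $G$ is simple). The argument is short precisely because the categorical machinery of Section \ref{Sec:EqCat} has already been set up.
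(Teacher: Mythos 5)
Your proof is correct, but it takes a genuinely different route from the paper's. The paper argues directly from the Yosida representation: Theorem \ref{Thm:16} embeds $G$ as a point-separating trunc $\widetilde{G} \subseteq \mcal{D}_0 X$ for the (unique) Yosida space $X$; Proposition \ref{Prop:19}(2) shows every $\tilde{g} \in \widetilde{G}$ is locally constant, so that $X$ is Boolean (its clopen sets separate points) and $\widetilde{G} \subseteq \mcal{LC}X$; and the reverse inclusion holds because a locally constant function on a compact space has finite range and is therefore a linear combination of characteristic functions $\tilde{u}$, $u \in \UC(G)$. Uniqueness is then inherited from the uniqueness clause of Theorem \ref{Thm:16}. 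You instead derive the statement as a corollary of the chain of equivalences $\mbf{sT} \simeq \mbf{gBa} \simeq \mbf{iBa} \simeq \mbf{zdK}_*$ from Theorems \ref{Thm:15} and \ref{Thm:14}, recovering $(X,*)$ as the pointed Stone space of the Booleanization of $\UC(G)$. This is legitimate: the essential-surjectivity step in the proof of Theorem \ref{Thm:15} already constructs $\mcal{LC}X$ with $\UC(\mcal{LC}X) \cong \UC(G)$, a full and faithful functor reflects isomorphisms, and your route makes the uniqueness of $(X,*)$ completely transparent. What it buys less of is concreteness: the paper's argument identifies $(X,*)$ as the Yosida space of $G$ and exhibits the isomorphism as the Yosida map $\nu_G$ itself, whereas in your argument $X$ coincides with the Yosida space only a posteriori, via the uniqueness you establish. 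Your closing cautions (tracking the designated point through $\mcal{S}_*$, and checking that the image is $\simple{\mcal{D}_0 X}$ rather than all of $\mcal{D}_0 X$) are exactly the right things to verify, and both are covered by Proposition \ref{Prop:4}(4) and the definition of $\mcal{LC}$.
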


\begin{proof}
	For a simple trunc $G$, Theorem \ref{Thm:16} provides  a representation as a trunc $\widetilde{G} \subseteq \mcal{D}_0 X$ for a unique compact Hausdorff pointed space $X$ such that $\widetilde{G}$ separates the points of $X$. That $\widetilde{G} \subseteq \mcal{LC}X$ is the content of Proposition \ref{Prop:19}(2). But every locally constant function on $X$ has finite range, and is therefore a linear combination of characteristic functions, each of which is of the form $\tilde{u}$ for $u \in \UC(G)$. 
\end{proof}

In the following two subsections we characterize simple truncs in various ways, culminating in Theorems \ref{Thm:12} and \ref{Thm:13}. We begin by showing that the simple truncs are the truncs bounded away from $0$.

\subsection{Truncs bounded away from $0$}

\begin{proposition}\label{Prop:12}
	The following are equivalent for an element $0 <g \in G$.
	\begin{enumerate}
		\item 
		$\ol{ng} \in \UC(G)$ for a positive integer $n$.
		
		\item 
		$u/n \leq \ol{g} \leq u$ for some $u \in \UC(G)$ and positive integer $n$.
		
		\item 
		There is a real number $\varepsilon > 0$ such that $\tilde{g}(x) \geq \varepsilon$ whenever $\tilde{g}(x) > 0$. 
		
		\item 
		There is a real number $\varepsilon > 0$ for which $\coz g = g(0, \infty) = g(\varepsilon, \infty)$.
		
		\item 
		There is a real number $\varepsilon > 0$ for which $g(0, \varepsilon) = \bot$. 
	\end{enumerate}   
\end{proposition}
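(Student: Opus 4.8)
The plan is to prove the five conditions equivalent by first passing to the Yosida representation $\widetilde{G} \subseteq \mcal{D}_0 X$ and then translating the pointwise statements into frame-theoretic statements about $g \in \mcal{R}_0^+ L$ via Lemma \ref{Lem:8} and Proposition \ref{Prop:6}. The equivalences (3) $\Leftrightarrow$ (4) $\Leftrightarrow$ (5) are essentially bookkeeping once we have a dictionary between the function $\tilde{g}$ and the frame map $g$: condition (3) says $\tilde{g}$ takes no values in the open interval $(0,\varepsilon)$, condition (4) says the cozero element $\coz g = g(0,\infty)$ equals $g(\varepsilon,\infty)$, and condition (5) says $g(0,\varepsilon) = \bot$. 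In the Yosida picture these are transparently the same assertion about the closed set $\tilde{g}^{-1}[\varepsilon,\infty)$ versus $\tilde{g}^{-1}(0,\infty)$; in the pointfree picture, $g(0,\varepsilon) = g(0,\infty) \wedge g(-\infty,\varepsilon)$ and $g(0,\infty) = g(0,\varepsilon) \vee g(\varepsilon,\infty)$ (using $g(-\infty,\varepsilon) \vee g(\varepsilon/2,\infty) = \top$ type covering relations) give (4) $\Leftrightarrow$ (5), and (3) $\Leftrightarrow$ (5) is immediate from the fact that $\widetilde{G}$ separates the points of $X$, so $g(0,\varepsilon) = \bot$ iff no point $x$ has $\tilde{g}(x) \in (0,\varepsilon)$.

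For (1) $\Rightarrow$ (2), suppose $\ol{ng} \in \UC(G)$; set $u \equiv \ol{ng}$. By Proposition \ref{Prop:4}(1), $u = \ol{2u}$, and we need $u/n \leq \ol{g} \leq u$. The inequality $\ol{g} \leq u = \ol{ng}$ follows from axiom ($\mfrak{T}1$) and monotonicity of truncation (since $g \leq ng$ gives $\ol g \le \ol{ng}$, using that $\ol{\cdot}$ is order-preserving, which follows from $g \wedge \ol h \le \ol g$). For $u/n \leq \ol g$: in the Yosida representation, $\tilde u = \widetilde{\ol{ng}} = (n\tilde g) \wedge 1$ is the characteristic function $\chi_R$ of $R = \coz\tilde u$ by Proposition \ref{Prop:4}(4); on $R$ we have $n\tilde g(x) \geq 1$, i.e. $\tilde g(x) \geq 1/n$, so $\widetilde{\ol g}(x) = \tilde g(x)\wedge 1 \geq 1/n = \tilde u(x)/n$, while off $R$ both sides vanish. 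Hence $u/n \le \ol g$ pointwise, and the inequality transfers back to $G$.

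For (2) $\Rightarrow$ (3): if $u/n \le \ol g \le u$ with $u \in \UC(G)$, then in the Yosida picture $\tilde u = \chi_R$, so wherever $\tilde g(x) > 0$ we have $x \in R$ (because $\ol g \le u$ forces $\coz\tilde g \subseteq R$), and then $\tilde g(x) \wedge 1 = \widetilde{\ol g}(x) \geq \tilde u(x)/n = 1/n$, giving $\tilde g(x) \geq 1/n$; take $\varepsilon = 1/n$. The remaining implication (5) $\Rightarrow$ (1) is the substantive one and I expect it to be \textbf{the main obstacle}. Given $g(0,\varepsilon) = \bot$, I want to show $\ol{ng} \in \UC(G)$ for suitable $n$, i.e. $\ol{2\ol{ng}} = \ol{ng}$ by Proposition \ref{Prop:4}(1); but the clean route is to choose $n$ with $1/n < \varepsilon$ and argue directly in $\mcal{R}_0 L$ that $\ol{ng}$ equals $\chi_x$ for $x = \coz g$, which requires first knowing $\coz g = g(0,\infty)$ is \emph{complemented} in $L$. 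This is where condition (5) does its work: from $g(0,\varepsilon) = \bot$ one gets $g(-\infty, \varepsilon/2) \vee g(0,\infty) = \top$ while $g(-\infty,\varepsilon/2) \wedge g(0,\infty) \leq g(0,\varepsilon/2) = \bot$, exhibiting $\con g$ (more precisely $g(-\infty, \varepsilon/2)$, or after rescaling $g(-\infty,1)$ when $\varepsilon \ge 1$) as a complement of $\coz g$. Then a computation with Lemma \ref{Lem:8} analogous to the one in the proof of Proposition \ref{Prop:6} shows $\ol{ng}(r,\infty)$ takes only the values $\top$, $\coz g$, $\bot$ as $r$ ranges over $\mbb R$ (using $ng(r,\infty) = g(r/n,\infty) = \coz g$ for $0 \le r/n < \varepsilon$, i.e. for $0 \le r < n\varepsilon$, which covers $[0,1)$ once $n\varepsilon \ge 1$), so $\ol{ng} = \chi_{\coz g}$, and Proposition \ref{Prop:6} (or directly Proposition \ref{Prop:4}(1)) gives $\ol{ng} \in \UC(G)$. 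The delicate point to get right is the interplay of the rescaling in $\ominus$ / $\wedge n$ with the size of $\varepsilon$, and ensuring the complement element actually lies in the relevant sublocale, but no genuinely new idea beyond Lemma \ref{Lem:8} and the argument already used for Proposition \ref{Prop:6} should be needed.
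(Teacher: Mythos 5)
Your proposal is correct and follows essentially the same route as the paper: the equivalence of (1)--(3) is read off in the Yosida representation, (4)$\Leftrightarrow$(5) is the same elementary frame computation, and the bridge between the two groups of conditions is the computation of $\ol{ng}(r,\infty)$ via Lemma \ref{Lem:8} together with Proposition \ref{Prop:6}, exactly as in the paper's (1)$\Leftrightarrow$(4) step. One cosmetic point: the passage from $g(0,\varepsilon)=\bot$ in $L$ to the pointwise statement about $\tilde{g}$ rests on the density of the compactification $q$ (so that $g(0,\varepsilon)=q(\tilde{g}^{-1}(0,\varepsilon))=\bot$ forces $\tilde{g}^{-1}(0,\varepsilon)=\emptyset$), not on $\widetilde{G}$ separating the points of $X$, but this does not affect the argument.
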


\begin{proof}
	The equivalence of the first three conditions in $\widetilde{G}$ is clear. The equivalence of (4) with (5) is likewise easy to see. For (4) implies $g(0, \varepsilon) \leq g(0, \infty) = g(\varepsilon, \infty)$, and since $g(0, \varepsilon) \wedge g(\varepsilon, \infty) = \bot$, (5) follows. And (5) implies that $g(0, \infty) = g(1, \varepsilon) \vee g(\varepsilon/2, \infty) = g(\varepsilon/2, \infty)$, i.e., (4) holds. It remains to show the equivalence of (1) with (4). 
	
	Assume (1) to prove (4), say $\ol{ng} = u \in \UC(G)$ with $x \equiv \coz u$ complemented in $L$.  Then  
	\[
	\ol{ng}(r, \infty)
	= u(r,\infty)
	= \begin{cases}
	\top    & \text{if $r < 0$}\\
	x       & \text{if $0 \leq r < 1$}\\
	\bot    & \text{if $r \geq 1$}
	\end{cases}.
	\]
	By Lemma \ref{Lem:8} we have $\ol{ng}(r, \infty) = \bot$ for $r \geq 1$ and $\ol{ng}(r, \infty) = ng(r, \infty) = g(r/n, (\infty)$ for $r < 1$. Consequently, for $r = 1/2$ we get $g(1/(2n), \infty) = \ol{ng}(1/2, \infty) = x = \coz g$. 
	
	Assume (4) to prove (1), i.e., assume $g(\varepsilon, \infty) = \coz g \equiv x$ for some $\varepsilon > 0$, and let $n$ be a positive integer such that $1/n < \varepsilon$. Then for any $t$, $0 \leq t < 1/n$, we have  
	\[
	x
	= g(0, \infty)
	\geq g(t, \infty)
	\geq g(1/n, \infty)
	\geq g(\varepsilon, \infty)
	= x.
	\]
	From this fact we can deduce using Lemma \ref{Lem:8} that 
	\begin{align*}
	\ol{ng}(s, \infty)
	&= \begin{cases}
	\bot          & \text{if $s \geq 1$}\\
	ng(s, \infty) & \text{if $s < 1$}
	\end{cases}
	= \begin{cases}
	\bot          & \text{if $s \geq 1$}\\
	g(s/n, \infty) & \text{if $s < 1$}
	\end{cases}
	= \begin{cases}
	\bot       & \text{if $s \geq 1$}\\
	x          & \text{if $0 \leq s < 1$}\\
	\top       & \text{if $s < 0$}
	\end{cases}\\
	&= \chi_x(s, \infty).
	\end{align*}
	Proposition \ref{Prop:6} then shows that $\ol{ng} \in \UC(G)$.  
\end{proof}

\begin{definition*}[bounded away from $0$, clearance]
	An element $0 < g \in G$ is said to be \emph{bounded away from $0$} if it satisfies the conditions of Proposition \ref{Prop:12}. The \emph{clearance} of such an element is  
	\[
	\clr{g}
	\equiv \bigvee \setof{\varepsilon}{g(\varepsilon, \infty) = \coz g}
	=\bigvee \setof{\varepsilon}{g(0, \varepsilon) = \bot}.
	\]
	(Our convention is that $\clr{0} = 0$.) Finally, a trunc $G$ is said to be \emph{bounded away from $0$} if every $0< g \in G$ is bounded away from $0$. 
\end{definition*} 

\begin{corollary}\label{Cor:1}
	If $0 < g \in G$ is bounded away from $0$ then $\coz g \equiv x$ is complemented in $L$ and $\chi_x \in \UC(G)$. 
\end{corollary}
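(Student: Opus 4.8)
The plan is to obtain this as a short consequence of Proposition \ref{Prop:12} and Proposition \ref{Prop:6}, glued together by one cozero computation. First, since $0 < g$ is bounded away from $0$, clause (1) of Proposition \ref{Prop:12} supplies a positive integer $n$ for which $\ol{ng}$ is a unital component of $G$. Proposition \ref{Prop:6} then gives a complemented element $y \in L$ with $* \notin y$ and $\ol{ng} = \chi_y$. All that is left is to recognize that $y$ is the element $x \equiv \coz g$ named in the statement.

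To see this I would compare the cozero elements of the two sides of $\ol{ng} = \chi_y$. On one side, the defining formula for $\chi_y$ gives $\coz \chi_y = \chi_y(0, \infty) = y$, since $1 \in (0, \infty) \not\ni 0$. On the other side, Lemma \ref{Lem:8} gives $\ol{ng}(0, \infty) = (ng)(0, \infty)$ because $0 < 1$, and the frame formula for scalar multiplication --- Theorem \ref{Thm:4} applied to the map $t \mapsto nt$, as already used repeatedly in the proof of Proposition \ref{Prop:6} --- gives $(ng)(0, \infty) = g(0, \infty) = \coz g$. Hence $y = \coz(\ol{ng}) = \coz g = x$. Thus $x$ is complemented in $L$, and $\chi_x = \ol{ng}$ is a unital component of $G$, which is exactly the assertion.

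This argument is essentially bookkeeping, so I do not expect a real obstacle; the one point that genuinely needs attention is that Proposition \ref{Prop:6} produces its complemented element only abstractly, so one must check that it is $\coz g$ and not some unrelated complemented element --- which is precisely the role of the cozero computation above. (Alternatively, one could quote the internal content of the proof of Proposition \ref{Prop:6}, which already shows that a unital component $u$ equals $\chi_{\coz u}$ with $\coz u$ complemented, its complement being $\con u$.)
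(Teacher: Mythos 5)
Your argument is correct and is exactly the derivation the paper intends: the corollary is stated without proof as an immediate consequence of Proposition \ref{Prop:12}(1) and Proposition \ref{Prop:6}, and the one point you rightly flag --- that the complemented element produced by Proposition \ref{Prop:6} is in fact $\coz g$ --- is already verified inside the paper's proof of Proposition \ref{Prop:12}, where the computation $g(1/(2n),\infty) = \ol{ng}(1/2,\infty) = x = \coz g$ appears. Your cozero bookkeeping via Lemma \ref{Lem:8} and the scaling formula is a sound way to make that identification explicit.
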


When checking whether a trunc is bounded away from $0$, it is enough to verify that the elements of $\ol{G}$ are bounded away from $0$

\begin{lemma}\label{Lem:16}
	An element $0 < g \in G$ is bounded away from $0$ iff $\ol{g}$ is bounded away from $0$. 
\end{lemma}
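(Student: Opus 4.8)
The plan is to reduce both sides to condition (5) of Proposition~\ref{Prop:12}, and to observe that at thresholds at most $1$ truncation changes nothing relevant. Throughout, $G$ is identified with its Madden representation inside $\mcal{R}_0 L$ (the argument transcribes verbatim to $\widetilde G \subseteq \mcal{D}_0 X$ via condition (3) of Proposition~\ref{Prop:12}). Note first that $g>0$ forces $\ol g>0$ by axiom~($\mfrak{T}2$), so it makes sense to ask whether $\ol g$ is bounded away from $0$. The computational core is a pair of identities drawn from Lemma~\ref{Lem:8}: for every $0<\varepsilon\leq 1$ one has $\ol g(-\infty,\varepsilon)=g(-\infty,\varepsilon)$ and $\ol g(0,\infty)=g(0,\infty)=\coz g$ (the latter because $0<1$), and hence, since $(0,\varepsilon)=(-\infty,\varepsilon)\cap(0,\infty)$ and frame homomorphisms preserve finite meets,
\[
\ol g(0,\varepsilon)=\ol g(-\infty,\varepsilon)\wedge\ol g(0,\infty)=g(-\infty,\varepsilon)\wedge g(0,\infty)=g(0,\varepsilon),\qquad 0<\varepsilon\leq 1.
\]

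Next I would spell out the reduction. By Proposition~\ref{Prop:12}(5), $0<g$ is bounded away from $0$ iff $g(0,\varepsilon)=\bot$ for some $\varepsilon>0$, and likewise $\ol g$ is bounded away from $0$ iff $\ol g(0,\varepsilon)=\bot$ for some $\varepsilon>0$. In either case the witnessing $\varepsilon$ may be shrunk into $(0,1]$: if $g(0,\varepsilon)=\bot$ and $0<\varepsilon'\leq\varepsilon$ then $g(0,\varepsilon')\leq g(0,\varepsilon)=\bot$ because $(0,\varepsilon')\subseteq(0,\varepsilon)$ and frame homomorphisms are monotone, and the same applies to $\ol g$. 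Thus $g$ is bounded away from $0$ iff $g(0,\varepsilon)=\bot$ for some $0<\varepsilon\leq 1$, and $\ol g$ is bounded away from $0$ iff $\ol g(0,\varepsilon)=\bot$ for some $0<\varepsilon\leq 1$; by the displayed identity these two conditions are literally the same. (In the same breath one reads off $\clr{\ol g}=\clr g\wedge 1$, should that be wanted.)

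I expect no genuine obstacle here: the content is just that $\ol g$ and $g$ agree below level $1$, so the only point deserving a word of care is the harmless normalization $\varepsilon\leq 1$ — needed precisely because that agreement is confined to thresholds below $1$. If one prefers to argue in $\widetilde G$, this normalization resurfaces as the step $\ol{\tilde g}(x)=\tilde g(x)\wedge 1\geq\varepsilon\wedge 1$, valid whenever $\ol{\tilde g}(x)>0$, with the converse using $\delta\leq 1$ to pass from $\tilde g(x)\wedge 1\geq\delta$ back to $\tilde g(x)\geq\delta$.
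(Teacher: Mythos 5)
Your proposal is correct and follows essentially the same route as the paper: both reduce to condition (5) of Proposition \ref{Prop:12} and use Lemma \ref{Lem:8} to see that $g$ and $\ol{g}$ agree on the relevant opens below level $1$, so that $\ol{g}(0,\varepsilon)=g(0,\varepsilon)$ for $0<\varepsilon\leq 1$. Your version merely makes explicit the meet decomposition $(0,\varepsilon)=(-\infty,\varepsilon)\cap(0,\infty)$ and the harmless normalization $\varepsilon\leq 1$, which the paper leaves tacit.
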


\begin{proof}
	According to Lemma \ref{Lem:8}, $\ol{g}(r, \infty) = g(r, \infty)$ for $r < 1$. It follows that there is a real number $\varepsilon > 0$ for which $g(0, \varepsilon) = \bot$ iff there is a real number $\varepsilon > 0$ for which $\ol{g}(0, \varepsilon) = \bot$.
\end{proof}

The simple part of any trunc is bounded away from $0$. 

\begin{proposition}\label{Prop:13}
	A strictly positive simple element is bounded away from $0$. Thus a simple trunc is bounded away from $0$.
\end{proposition}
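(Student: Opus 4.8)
The plan is to reduce the first assertion to condition (3) of Proposition \ref{Prop:12}, working inside the Yosida representation. Given a strictly positive simple element $0 < g \in G$, I would realize $G$ as the trunc $\widetilde{G} \subseteq \mcal{D}_0 X$ of Theorem \ref{Thm:16}. Since $\nu_G$ is a trunc isomorphism and the order on $\mcal{D}_0 X$ is computed pointwise, the relation $g > 0$ translates to $\tilde{g} \geq 0$ everywhere on $X$ together with $\tilde{g} \not\equiv 0$, so $\tilde{g}$ assumes at least one strictly positive value. By Proposition \ref{Prop:19}(1) the function $\tilde{g}$ has finite range; hence the set of its strictly positive values is a finite nonempty subset of $(0,\infty)$, and I let $\varepsilon$ be its least element. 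Then $\tilde{g}(x) \geq \varepsilon$ whenever $\tilde{g}(x) > 0$, which is exactly condition (3) of Proposition \ref{Prop:12}, so $g$ is bounded away from $0$.

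The second assertion would then follow immediately: if $G$ is simple, every element of $G$ is simple, so in particular every strictly positive element of $G$ is a strictly positive simple element, hence bounded away from $0$ by the first part; and that is precisely the definition of the trunc $G$ being bounded away from $0$.

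I do not expect a genuine obstacle; the only points deserving a word of care are routine. One is that positivity of $g$ in $G$ transfers to pointwise positivity of $\tilde{g}$, which is immediate from Theorem \ref{Thm:16}. The other is that $\tilde{g}$ actually takes a positive value, which holds because $\nu_G$ is injective — so $\tilde{g} \neq 0$ as a function — while $\tilde{g} \geq 0$. As an alternative I could argue purely algebraically from the normal form $g = \sum_U r(u)u$ of Proposition \ref{Prop:19}(4): pairwise disjointness of $U$ forces every coefficient $r(u)$ to be positive once $g > 0$, and then $\varepsilon = \min_{u \in U} r(u)$ does the job; but the representation-theoretic route is shorter and avoids appealing to the normal form.
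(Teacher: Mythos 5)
Your argument is correct and is essentially the paper's own proof: both pass to the Yosida representation, invoke Proposition \ref{Prop:19}(1) to see that $\tilde{g}$ has finite range, take $\varepsilon$ to be the least positive value, and conclude via Proposition \ref{Prop:12}. The extra care you take about why $\tilde{g}$ actually assumes a positive value is a reasonable elaboration of a step the paper leaves implicit.
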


\begin{proof}
	If $g$ is a simple element then $\tilde{g}$ has finite range by Proposition \ref{Prop:19}(1). If the element is positive then the range has a least positive element, and the element is bounded away from $0$ by Proposition \ref{Prop:12}.  
\end{proof}

Proposition \ref{Prop:13} has a converse in Theorem \ref{Thm:12}. What follows is a sequence of lemmas which together constitute a proof of that theorem. In these lemmas we fix our attention on an element $0 < g \in \ol{G}$ of a trunc $G$ which is bounded away from $0$. We abbreviate $\coz g$ to $x$ and $\clr{g}$ to $\delta$; by Corollary \ref{Cor:1}, $x$ is complemented in $L$ and $\chi_x =u$ for a unique $u \in \UC(G)$. Note that $x > \bot$ and $u \geq g > 0$. 

\begin{lemma}\label{Lem:11}
	For real numbers $s$ and $r > 0$, 
	\[
	g \ominus r(s, \infty)
	= \begin{cases}
	\top     & \text{if $s < 0$}\\
	g(s + r, \infty)     & \text{if $s \geq 0$}.
	\end{cases}
	\] 
\end{lemma}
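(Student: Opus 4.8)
\textbf{Proof proposal for Lemma \ref{Lem:11}.}

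The plan is to compute $g \ominus r(s,\infty)$ directly from the formulas already in hand, using the definition $g \ominus r = r(g/r \ominus 1)$ together with Lemma \ref{Lem:8}. First I would reduce to the case $r = 1$ by a scaling argument: since $g \ominus r = r\big((g/r) \ominus 1\big)$, and scalar multiplication by $r > 0$ transforms a frame homomorphism $f \in \mcal{R}_0 L$ according to $(rf)(s,\infty) = f(s/r, \infty)$ (this is the instance of Theorem \ref{Thm:4} for the map $x \mapsto rx$, or equivalently one can read it off the formulas for scalar multiplication in $\mcal{R}_0 L$), we get
\[
g \ominus r(s,\infty) = r\big((g/r)\ominus 1\big)(s,\infty) = \big((g/r)\ominus 1\big)(s/r, \infty).
\]
Then I would apply the third formula of Lemma \ref{Lem:8}, namely $f \ominus 1(t,\infty) = \top$ if $t < 0$ and $f \ominus 1(t,\infty) = f(t+1,\infty)$ if $t \geq 0$, with $f = g/r$ and $t = s/r$. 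This yields $\top$ when $s/r < 0$, i.e. when $s < 0$ (as $r>0$), and otherwise $(g/r)(s/r + 1, \infty) = g\big(r(s/r+1), \infty\big) = g(s + r, \infty)$, again using the scalar-multiplication formula $(g/r)(v,\infty) = g(rv,\infty)$. This is exactly the claimed formula.

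An alternative, if one prefers to avoid invoking the scaling formula for scalar multiplication, is to observe that the statement is an identity between $\mbf{T}$-terms interpreted in $\mcal{R}_0 L$: the function $(s,\infty) \mapsto$ the right-hand side is precisely the frame homomorphism associated, via Theorem \ref{Thm:4}, with the continuous map $\mbb{R} \to \mbb{R}$ given by $x \mapsto (x - r)^+ = x - (x \wedge r)$, which is the interpretation in $\mbb{R}$ of the term $g \ominus r$. One then checks the real-variable identity $(x-r)^+ > s \iff (s < 0)$ or ($s \geq 0$ and $x > s + r$) and translates it through the Corollary following Lemma \ref{Lem:8}. Either route is routine.

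I do not anticipate a genuine obstacle here; the only mild care needed is bookkeeping with the formal symbol $r$ (the abbreviation $g \ominus r \equiv r(g/r \ominus 1)$ and $g \wedge r \equiv r\,\ol{g/r}$ are purely formal, $r$ being a positive real and not an element of $G$) and making sure the scaling of the open intervals $(s,\infty)$ is handled consistently. Note that the hypothesis that $g$ is bounded away from $0$, and the standing notation $x = \coz g$, $\delta = \clr{g}$, $u = \chi_x$, are not needed for this particular lemma — it holds for any $g \in G^+$ — so I would simply remark that and proceed with the computation above.
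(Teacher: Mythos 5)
Your primary computation — writing $g \ominus r = r\big((g/r)\ominus 1\big)$, applying the scaling rule $(rf)(s,\infty) = f(s/r,\infty)$, and then invoking the $f\ominus 1$ formula from Lemma \ref{Lem:8} with $t = s/r$ — is exactly the paper's proof, line for line. The alternative route via Theorem \ref{Thm:4} and your remark that the standing hypotheses on $g$ are not needed are both correct but not part of the paper's argument.
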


\begin{proof}
	Making use of Lemma \ref{Lem:8}, we get
	\begin{align*}
	g \ominus r(s, \infty)
	&= r((g/r) \ominus 1)(s, \infty)
	=  (g/r) \ominus 1 (s/r, \infty)\\
	&= \begin{cases}
	\top     & \text{if $s/r < 0$}\\
	(g/r)(s/r + 1, \infty)   & \text{if $s/r \geq 0$}
	\end{cases} 
	= \begin{cases}
	\top    & \text{if $s < 0$} \\
	g(s + r, \infty)  & \text{if $s \geq 0$}
	\end{cases}.\qedhere
	\end{align*}
\end{proof}

\begin{lemma}\label{Lem:2}
	\begin{enumerate}
		\item 
		$\coz g \ominus \delta < x$.
		
		\item 
		$g = \delta u$ iff $g \ominus \delta = 0$.		
	\end{enumerate}
\end{lemma}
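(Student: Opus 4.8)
The plan is to treat the two parts separately, working with $G$ in its Madden representation inside $\mcal{R}_0 L$ and reducing everything to the explicit frame formulas of Lemmas~\ref{Lem:8} and~\ref{Lem:11}, using that $\delta=\clr g$ is strictly positive and, crucially, is a \emph{supremum}.

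For part~(1), first observe that $\coz(g\ominus\delta)=(g\ominus\delta)(0,\infty)=g(\delta,\infty)$ by Lemma~\ref{Lem:11}, and that $g(\delta,\infty)\le g(0,\infty)=\coz g=x$ since $g\ominus\delta\le g$; it remains only to rule out equality. Suppose $g(\delta,\infty)=x$. As $x>\bot$ the cozero of $g\ominus\delta$ is nonzero, so $g\ominus\delta\ne 0$ is a strictly positive element of $G$ and hence is bounded away from $0$; by Proposition~\ref{Prop:12}(5) there is an $\varepsilon>0$ with $(g\ominus\delta)(0,\varepsilon)=\bot$. Computing with Lemmas~\ref{Lem:8} and~\ref{Lem:11}, $(g\ominus\delta)(0,\varepsilon)=(g\ominus\delta)(0,\infty)\wedge(g\ominus\delta)(-\infty,\varepsilon)=g(\delta,\infty)\wedge g(-\infty,\delta+\varepsilon)=g(\delta,\delta+\varepsilon)$, so $g(\delta,\delta+\varepsilon)=\bot$. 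Then $g(\delta,\infty)=g(\delta,\delta+\varepsilon)\vee g(\delta+\varepsilon/2,\infty)=g(\delta+\varepsilon/2,\infty)$, and if this equalled $x=\coz g$ then $\delta+\varepsilon/2$ would lie in the set over which $\clr g$ is the supremum, forcing $\delta+\varepsilon/2\le\delta$, a contradiction. Hence $\coz(g\ominus\delta)=g(\delta,\infty)<x$.

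For part~(2), the crux is the unconditional identity $g\wedge\delta=\delta u$ (where $u=\chi_x$), valid under the standing hypotheses. Granting it, the real-scalar version of Lemma~\ref{Lem:28}(1), $g\wedge\delta+g\ominus\delta=g$, gives $g\ominus\delta=g-\delta u$, so $g\ominus\delta=0$ iff $g=\delta u$, which is the claim. To prove $g\wedge\delta=\delta u$ I would compare the two frame maps on the generating rays. By Lemma~\ref{Lem:8}, $(g\wedge\delta)(r,\infty)$ equals $\bot$ for $r\ge\delta$ and $g(r,\infty)$ for $r<\delta$; here $g(r,\infty)=\top$ for $r<0$ (as $g\ge 0$) and $g(r,\infty)=x$ for $0\le r<\delta$ (for $r=0$ by definition of $\coz g$, and for $0<r<\delta$ because $r$ lies below $\clr g$, which is a supremum over a downward closed set of $\varepsilon$ with $g(\varepsilon,\infty)=\coz g$), and these match the values $\chi_x(r/\delta,\infty)$ of $\delta u$. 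Dually $(g\wedge\delta)(-\infty,s)$ equals $\top$ for $s>\delta$ and $g(-\infty,s)$ for $s\le\delta$; here $g(-\infty,s)=\bot$ for $s\le 0$, while for $0<s\le\delta$ one has $g(-\infty,s)\wedge x=g(0,s)=\bot$ and $g(-\infty,s)\vee x=\top$, so $g(-\infty,s)$ is the unique complement of $x$, which is exactly the value $\chi_x(-\infty,s/\delta)$ of $\delta u$. Agreement on all rays gives $g\wedge\delta=\delta u$.

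The step I expect to be the main obstacle is the strict inequality in part~(1). The tempting shortcut --- read off $\coz(\widetilde{g\ominus\delta})\lneq\coz\tilde g$ in the Yosida frame $M$, where $\tilde g$ attains its minimum value $\delta$ on the clopen (hence compact) set $\coz\tilde g$, and then transport this to $L$ --- does not work, because the suitable compactification $q\colon M\to L$ can identify distinct cozero elements of $M$. The argument sketched above avoids this by extracting the contradiction entirely inside $L$ from the maximality built into the definition of the clearance.
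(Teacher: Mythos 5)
Your proof is correct and follows essentially the same route as the paper: part (1) is the identical contradiction with the supremum defining $\clr{g}$ (you invoke condition (5) of Proposition \ref{Prop:12} where the paper invokes condition (4), but the computation is the same), and part (2) rests on the same ray-by-ray evaluations via Lemmas \ref{Lem:8} and \ref{Lem:11} together with the facts that $g(s,\infty)=x$ for $0\le s<\delta$ and $g(0,s)=\bot$ for $s\le\delta$. Your packaging of part (2) --- proving the unconditional identity $g\wedge\delta=\delta u$ and then reading off both directions from the decomposition $g=g\wedge\delta+g\ominus\delta$ --- is a tidy reorganization of the paper's two separate directional computations (and correctly uses the real-scalar form of Lemma \ref{Lem:28}(1), which holds verbatim for $0<r\in\mbb{R}$), but it is not a substantively different argument.
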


\begin{proof}
	(1) Suppose for the sake of argument that $\coz g \ominus \delta = x$. Since $g \ominus \delta > 0$ is bounded away from $0$ and strictly positive, there exists $\varepsilon > 0$ for which $g \ominus \delta(\varepsilon, \infty) = g \ominus \delta(0, \infty) = x$. But since $g \ominus \delta(\varepsilon, \infty) = g(\varepsilon + \delta, \infty)$ by Lemma \ref{Lem:11}, we arrive at the contradiction $\varepsilon + \delta \leq \delta$.   
	
	(2) If $g = \delta u = \delta\chi_x$ then for all $s \in \mbb{R}$,
	\begin{align*}
	g(s, \infty)
	&= \delta u(s, \infty)
	= u(s/\delta, \infty)
	= \begin{cases}
	\top    & \text{if $s < 0$} \\
	x       & \text{if $0 \leq s/\delta < 1$}\\
	\bot    & \text{if $s/\delta \geq 1$}
	\end{cases}
	=  \begin{cases}
	\top    & \text{if $s < 0$} \\
	x       & \text{if $0 \leq s < \delta$}\\
	\bot    & \text{if $s \geq \delta$}
	\end{cases}.
	\end{align*}
	Combining this with Lemma \ref{Lem:11} yields that $g \ominus \delta (s, \infty) = \top$ if $s < 0$ and $g \ominus \delta(s, \infty) = \bot$ if $s \geq 0$, which is to say that $g \ominus \delta = 0$ in $G$.
	
	If $g \ominus \delta = 0$ then Lemma \ref{Lem:11} tells us that 
	\[
	g \ominus \delta(s, \infty)
	= \begin{cases}
	\top    & \text{if $s < 0$}\\
	g(s + \delta, \infty) & \text{if $s \geq 0$}
	\end{cases}
	= 0(s, \infty)
	= \begin{cases}
	\top   & \text{if $s < 0$}\\
	\bot   &  \text{if $s \geq 0$}
	\end{cases}.
	\]
	In light of the fact that $g(s, \infty) = x$ for $0 \leq s < \delta$, we get 
	\begin{align*}
	g(s, \infty)
	&= \begin{cases}
	\top     & \text{if $s < 0$}\\
	x        & \text{if $0 \leq s < \delta$}\\
	\bot     & \text{if $s \geq \delta$}
	\end{cases}\
	= \delta u. \qedhere
	\end{align*}
\end{proof}

Note that $g \ominus \delta$ is bounded away from $0$, and if it is strictly positive then by Corollary \ref{Cor:1} we know that $y \equiv \coz g \ominus \delta$ is complemented in $L$ and $\chi_y \equiv w \in \UC(G)$. Now $w \leq u$ so $v \equiv u - w$ is a unital component such that $w \vee v = u$ and $w \wedge v = 0$. Since $g \leq u$, we can express $g$ in the form $g = g_w + g_v$ for $g_w \equiv g \wedge w$ and $g_v = g \wedge v$. 

\begin{lemma}
	$g_v = \delta v$.
\end{lemma}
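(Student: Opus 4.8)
The plan is to prove the identity by comparing ray values, exactly as in the proof of Lemma~\ref{Lem:2}(2): a positive element of $\mcal{R}_0 L$ is determined by the values $h(s,\infty)$, $s\in\mbb{R}$, so it suffices to check that $(g_v)(s,\infty)=(\delta v)(s,\infty)$ for every $s$. Recall the standing data: $x\equiv\coz g$ is complemented with $u=\chi_x$; $\delta\equiv\clr g>0$; $y\equiv\coz(g\ominus\delta)$ is complemented with $w=\chi_y$; and $v=u-w$ is the unital component with $z\equiv\coz v$, so that $v=\chi_z$ by Proposition~\ref{Prop:6}, $z\leq x$, and $y\wedge z=\coz(w\wedge v)=\coz 0=\bot$. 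Note also $\delta\leq 1$, since $g\in\ol{G}$ forces $g(1,\infty)=\bot$ by Lemma~\ref{Lem:8}.

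First I would assemble the facts about $g$ that are needed. (a)~From the definition of clearance, $\{\varepsilon:g(\varepsilon,\infty)=x\}$ is a downward interval with supremum $\delta$, whence $g(s,\infty)=x$ for $0\leq s<\delta$, while $g(s,\infty)=\top$ for $s<0$. (b)~Applying Lemma~\ref{Lem:11} with $r=\delta$ gives $g(s,\infty)=(g\ominus\delta)(s-\delta,\infty)$ for $s\geq\delta$; and since $(g\ominus\delta)(t,\infty)\leq(g\ominus\delta)(0,\infty)=y$ whenever $t\geq 0$, this yields $g(s,\infty)\leq y$ for all $s\geq\delta$. (c)~From the defining formula for $\chi_z$, the value $v(s,\infty)$ is $\top$ for $s<0$, $z$ for $0\leq s<1$, and $\bot$ for $s\geq 1$; consequently $(\delta v)(s,\infty)=v(s/\delta,\infty)$ is $\top$ for $s<0$, $z$ for $0\leq s<\delta$, and $\bot$ for $s\geq\delta$.

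Then I would compute $(g_v)(s,\infty)=(g\wedge v)(s,\infty)=g(s,\infty)\wedge v(s,\infty)$ (since $(f\wedge h)(s,\infty)=f(s,\infty)\wedge h(s,\infty)$ in $\mcal{R}_0 L$) in four cases. For $s<0$ it is $\top$. For $0\leq s<\delta$ it is $x\wedge z=z$ by (a), (c) and $z\leq x$. For $s\geq 1$ it is $g(s,\infty)\wedge\bot=\bot$. For $\delta\leq s<1$ it is $g(s,\infty)\wedge z\leq y\wedge z=\bot$ by (b). Comparing with (c) line by line gives $(g_v)(s,\infty)=(\delta v)(s,\infty)$ for all $s$, hence $g_v=\delta v$.

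The only step with any content is the range $\delta\leq s<1$: one must read Lemma~\ref{Lem:11} \emph{backwards}, using it not to compute $g\ominus\delta$ but to bound $g(s,\infty)$ above by $\coz(g\ominus\delta)=y$ for $s\geq\delta$, and then exploit $y\wedge z=\bot$, which is exactly the disjointness $w\wedge v=0$. Everything else is bookkeeping with the $\chi$-formulas and the inequality $\delta\leq 1$. (In the Yosida picture the statement is immediate: $\tilde g\geq\delta$ wherever $\tilde g>0$, $\widetilde v$ is the characteristic function of the clopen set $\{\tilde g=\delta\}$, and $\tilde g\wedge\widetilde v$ equals $\delta$ there and $0$ elsewhere because $\delta\leq1$; we give the pointfree argument only to stay in step with the surrounding lemmas.)
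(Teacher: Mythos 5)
Your proof is correct and follows essentially the same route as the paper's: both compute $g_v(s,\infty)=g(s,\infty)\wedge v(s,\infty)$ ray by ray, using the fact that $g(s,\infty)=x$ for $0\leq s<\delta$ together with Lemma \ref{Lem:11} and the disjointness $y\wedge z=\bot$ to kill the range $s\geq\delta$, and then match against $(\delta v)(s,\infty)$. The only (harmless) difference is that you make explicit the bound $g(s,\infty)\leq y$ for all $s\geq\delta$ and the inequality $\delta\leq 1$, where the paper checks only $g(\delta,\infty)\wedge z=\bot$ and leaves the rest to monotonicity.
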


\begin{proof}
	Let $z \equiv \coz v$, so that $y \vee z = x$ and $y \wedge z = \bot$. First note that $g(s, \infty) = g(0,\infty) = x$ for $0 \leq s < \delta$, and that by Lemma \ref{Lem:11},
	\[
	g(\delta, \infty) \wedge z 
	= g \ominus \delta(0, \infty) \wedge z 
	= y \wedge z 
	= \bot.
	\] 
	It then follows that for any $s \in \mbb{R}$, 
	\begin{align*}
	g_v(s, \infty) 
	&= g \wedge v)(s, \infty)
	= g(s, \infty) \wedge v(s, \infty)\\
	&= g(s, \infty) \wedge \begin{cases}
	\top     & \text{if $s < 0$}\\
	z        & \text{if $0 \leq s < 1$}\\
	\bot     & \text{if $s \geq 1$}	\end{cases}
	= \begin{cases}
	\top    & \text{if $s < 0$}\\
	z       & \text{if $0 \leq s < \delta$} \\
	\bot    & \text{if $s \geq \delta$} \end{cases}
	= (\delta v)(s, \infty). \qedhere
	\end{align*}
\end{proof}

\begin{lemma}
	$\clr{g_w} = \delta + \clr{g \ominus \delta} > \delta$.
\end{lemma}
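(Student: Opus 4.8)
The plan is to work inside the Madden representation $G\subseteq\mcal{R}_0 L$ and compute the frame values $g_w(r,\infty)$ outright; once these are in hand, both $\coz g_w$ and $\clr{g_w}$ are read off directly from the definition of the clearance. We are in the case $g\ominus\delta>0$, so $w=\chi_y$ for the complemented element $y\equiv\coz(g\ominus\delta)$, and $g_w\equiv g\wedge w$. Since meets in $\mcal{R}_0 L$ satisfy $(g\wedge w)(r,\infty)=g(r,\infty)\wedge w(r,\infty)$ (the special case of Theorem \ref{Thm:4} used already in the computation of $g_v(s,\infty)$ in the preceding lemma), the argument splits into describing the profiles of $g$ and of $w$ on the opens $(r,\infty)$ and then meeting them.

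First I would record $\chi_y(r,\infty)$, which is $\top$, $y$, $\bot$ according as $r<0$, $0\le r<1$, $r\ge1$, straight from the definition of $\chi_y$. For $g$ I would use two facts: (i) $g(r,\infty)=\coz g$ for $0\le r<\delta$, immediate from the definition of $\delta$ by squeezing $g(r,\infty)$ between $g(0,\infty)=\coz g$ and $g(r',\infty)=\coz g$ for some $r'>r$ in the defining set of $\delta$; and (ii) $g(r,\infty)=(g\ominus\delta)(r-\delta,\infty)$ for $r\ge\delta$, which is Lemma \ref{Lem:11}. Since $g\in\ol G$, Lemma \ref{Lem:8} gives $g(r,\infty)=\bot$ for $r\ge1$; fed into (ii), this yields the key inequality $\clr{g\ominus\delta}\le1-\delta$, since no $\eta$ in the defining set of $\clr{g\ominus\delta}$ can have $\eta\ge1-\delta$ (the value $(g\ominus\delta)(\eta,\infty)=g(\eta+\delta,\infty)$ would be $\bot$, whereas $\coz(g\ominus\delta)=y>\bot$). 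Meeting the two profiles then gives
\[
g_w(r,\infty)=
\begin{cases}
\top & r<0,\\
y    & 0\le r<\delta,\\
(g\ominus\delta)(r-\delta,\infty) & \delta\le r<1,\\
\bot & r\ge1,
\end{cases}
\]
the third line using $(g\ominus\delta)(r-\delta,\infty)\le(g\ominus\delta)(0,\infty)=y$; in particular $\coz g_w=g_w(0,\infty)=y$.

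Finally I would evaluate $\clr{g_w}=\bigvee\setof{r}{g_w(r,\infty)=\coz g_w}=\bigvee\setof{r}{g_w(r,\infty)=y}$ from the displayed profile. The negative $r$ are irrelevant (the supremum is at least $\delta>0$) and $r\ge1$ never contributes (as $y>\bot$); the whole block $0\le r<\delta$ lies in the set, contributing partial supremum $\delta$; and on $\delta\le r<1$ one has $g_w(r,\infty)=y$ exactly when $(g\ominus\delta)(r-\delta,\infty)=\coz(g\ominus\delta)$, so by $\clr{g\ominus\delta}\le1-\delta$ the admissible $r$ in this block are precisely the numbers $\delta+\eta$ with $\eta$ in the defining set of $\clr{g\ominus\delta}$, contributing partial supremum $\delta+\clr{g\ominus\delta}$. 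Hence $\clr{g_w}=\delta+\clr{g\ominus\delta}$, and this exceeds $\delta$ because $g\ominus\delta$ is a strictly positive element bounded away from $0$ and therefore has positive clearance. The only step that is more than bookkeeping is the inequality $\clr{g\ominus\delta}\le1-\delta$: it is what prevents the cutoff at $r=1$ built into $w=\chi_y$ from truncating the supremum, and it is forced precisely by $g$ being already truncated, $g\in\ol G$.
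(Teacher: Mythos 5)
Your proof is correct, and it reaches the profile of $g_w$ by a genuinely different route than the paper. The paper never meets $g$ with $\chi_y$ at all: it uses the disjoint decomposition $g = g_w \vee g_v$ together with the immediately preceding lemma $g_v = \delta v$ (whence $g_v(s,\infty) = \bot$ for $s \geq \delta$) to conclude $g_w(s,\infty) = g(s,\infty) = (g\ominus\delta)(s-\delta,\infty)$ for all $s \geq \delta$, and then reads off the clearance exactly as you do. Your computation of $g_w = g \wedge \chi_y$ from Theorem \ref{Thm:4} is self-contained --- it does not depend on the lemma $g_v = \delta v$ --- but it pays for that independence with the extra bookkeeping around the cutoff at $r = 1$ in the profile of $\chi_y$, which you correctly neutralize via the inequality $\clr{g\ominus\delta} \leq 1 - \delta$ forced by $g \in \ol{G}$. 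That observation is a nice by-product absent from the paper's argument: it makes explicit that the truncatedness of $g$ is what keeps the unital component $w$ from clipping the supremum defining $\clr{g_w}$. The paper's version is shorter given the prior lemma; yours localizes all the needed facts about $g$, $\delta$, and $y$ in one frame-value table and would survive even if the statement $g_v = \delta v$ were not already available.
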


\begin{proof}
	For $s \geq \delta$ we have $g_v(s, \infty) = \delta v(s, \infty) = v(s/\delta, \infty) = \bot$, hence 
	\[
	g_w(s, \infty)
	= g_w(s, \infty) \vee g_v(s, \infty)
	= (g_w \vee g_v)(s, \infty)
	= g(s, \infty)
	= g \ominus \delta(s - \delta, \infty)
	\]
	by Lemma \ref{Lem:11}. Since $\coz g_w = \coz g \ominus \delta = y$, it follows from the equation above that $g_w(s, \infty) = \coz g_w$ iff $s - \delta < \delta(g \ominus \delta)$, which is to say that $\delta(g_w) = \delta + \delta(g \ominus \delta)$. 
\end{proof}

Proposition \ref{Prop:14} summarizes the development to this point.

\begin{proposition}\label{Prop:14}
	Let $G$ be a trunc which is bounded away from $0$. Then for each $0 < g \in \ol{G}$ there exist unique disjoint elements $g_1 \in \ol{G}$ and $0 < u \in \UC(G)$ such that $g = g_1 + \clr{g} u$ and $\clr {g_1} > \clr{g}$ if $g_1 > 0$.
	
\end{proposition}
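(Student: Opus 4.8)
The plan is to obtain existence by assembling the lemmas immediately preceding this proposition, and then to prove uniqueness by recovering both the unital component and $g_1$ from $g$ alone. Throughout I write $\delta \equiv \clr g$ and $x \equiv \coz g$, and recall from Corollary~\ref{Cor:1} ($\coz g$ complemented in $L$, $\chi_{\coz g}\in\UC(G)$) that $u_0 \equiv \chi_x$ is a unital component with $u_0 > 0$ since $x > \bot$.

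For existence I split on whether $g \ominus \delta = 0$. If $g \ominus \delta = 0$, then $g = \delta u_0$ by Lemma~\ref{Lem:2}(2), so I take $g_1 = 0$ with unital component $u_0$; disjointness is automatic and the clearance condition is vacuous. If $g \ominus \delta > 0$, then $g \ominus \delta$ is bounded away from $0$ (as $G$ is), so, setting $w = \chi_{\coz(g\ominus\delta)}$, $v = u_0 - w$, $g_w = g \wedge w$, $g_v = g \wedge v$ exactly as in the discussion above, one has $g = g_w + g_v$, $g_v = \delta v$, and $\clr{g_w} = \delta + \clr{g\ominus\delta} > \delta$ by the two unlabeled lemmas; moreover $\coz v = x \smallsetminus \coz(g\ominus\delta) > \bot$ by Lemma~\ref{Lem:2}(1), so $v > 0$. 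I take $g_1 = g_w$ and unital component $v$, so $g = g_1 + \delta v = g_1 + \clr g\, v$. The remaining checks are short: $g_1 \wedge v \le w \wedge v = 0$; since $g_1 \le g \in \ol G$ and $\ol G$ is downward closed in $G^+$ (immediate from ($\mfrak{T}1$): $\ol g = g$ gives $h = h\wedge g = h\wedge\ol g \le \ol h \le h$ for $0 \le h \le g$), we get $g_1 \in \ol G$; and $g_1 = g_w > 0$ with $\clr{g_1} > \delta = \clr g$ already recorded.

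For uniqueness, let $g = g_1 + \clr g\, u$ with $g_1 \in \ol G$, $0 < u \in \UC(G)$, $g_1 \wedge u = 0$, and $\clr{g_1} > \clr g$ whenever $g_1 > 0$. Disjoint positive elements add as joins, so $g(s,\infty) = g_1(s,\infty) \vee (\delta u)(s,\infty)$ for all $s$; at $s = \delta$ we have $(\delta u)(\delta,\infty) = u(1,\infty) = \bot$, hence $g_1(\delta,\infty) = g(\delta,\infty) = \coz(g \ominus \delta)$ by Lemma~\ref{Lem:11}. If $g_1 = 0$, this forces $\coz(g\ominus\delta) = \bot$, so $g \ominus \delta = 0$ by ($\mfrak{T}2$), and then $g = \delta u$ gives $\coz u = x$, so $u = \chi_x = u_0$ by Proposition~\ref{Prop:6} --- the constructed data. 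If $g_1 > 0$, then $\clr{g_1} > \delta$ yields $g_1(\delta,\infty) = \coz g_1$, so $\coz g_1 = \coz(g\ominus\delta)$; combining this with $\coz g = \coz g_1 \vee \coz u$ and $\coz g_1 \wedge \coz u = \coz(g_1 \wedge u) = \bot$ identifies $\coz u$ with the relative complement of $\coz(g\ominus\delta)$ in $x$, i.e.\ $\coz u = \coz v$, whence $u = \chi_{\coz u} = v$ and $g_1 = g - \delta v = g_w$ --- again the constructed data. Note that $g_1 > 0$ excludes $g\ominus\delta = 0$, so the two case splits are consistent.

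The step demanding the most care is precisely this last piece of bookkeeping --- confirming that the uniqueness case split aligns with the construction's --- together with the supporting facts that a unital component equals the characteristic function of its complemented cozero element (Proposition~\ref{Prop:6}) and that $\coz$ carries disjoint sums to joins and binary meets to meets in $\mcal{R}_0 L$ (via Theorem~\ref{Thm:4}). None of these is deep, so I expect the proof to be short.
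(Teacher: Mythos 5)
Your existence argument is essentially the paper's own proof, which consists of the single sentence ``take $g_1$ to be $g_w$ and $u$ to be $v$'' from the preceding lemmas; your explicit split on whether $g \ominus \clr{g} = 0$, and the checks that $v > 0$, that $g_w \in \ol{G}$ by downward closure, and that the summands are disjoint, are all correct and merely make explicit what the paper leaves implicit. The genuine difference is that the paper offers no argument whatsoever for the uniqueness clause, whereas you supply one, and it is sound: evaluating at $(\delta,\infty)$ kills the $\clr{g}u$ summand and identifies $\coz g_1$ with $\coz(g\ominus\delta)$ via Lemma \ref{Lem:11}, after which $\coz u$ is forced to be the relative complement of $\coz(g\ominus\delta)$ in $\coz g$, and Proposition \ref{Prop:6} recovers $u$ from $\coz u$. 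This is a worthwhile supplement, since the proposition does assert uniqueness and the reader of the paper is left to reconstruct exactly the argument you give.
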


\begin{proof}
	In terms of the preceding lemmas, take $g'$ to be $g_w$ and $u$ to be $v$. 
\end{proof}

\begin{theorem}\label{Thm:12}
	A trunc is simple iff it is bounded and bounded away from $0$.
\end{theorem}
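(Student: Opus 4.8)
\medskip
\noindent\textbf{Proof proposal.}
The plan is to dispose of the easy implication first and then concentrate on the converse. If $G$ is simple then $G=\simple G$ is bounded by Proposition~\ref{Prop:19}(3), and it is bounded away from $0$ since, by Proposition~\ref{Prop:13}, every strictly positive simple element is. So all the work lies in showing that a trunc $G$ which is bounded and bounded away from $0$ satisfies $G\subseteq\simple G$. Because $\simple G$ is the linear span of $\UC(G)$ --- hence a subgroup of $G$ closed under scalar multiplication --- and $G=G^+-G^+$, it suffices to handle a single $g\in G^+$; and for such a $g$, boundedness provides (Lemma~\ref{Lem:30}) an integer $m$ with $g\wedge m=m\,\ol{g/m}=g$, whence $\ol{g/m}=g/m$, i.e.\ $g/m\in\ol G^+$. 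Since $g/m$ is still bounded away from $0$, the problem reduces to showing that every $0<g\in\ol G$ lies in $\simple G$.

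Fix such a $g$ and iterate Proposition~\ref{Prop:14}. Set $g_0=g$; as long as $g_i>0$ the proposition supplies a unital component $u_{i+1}\in\UC(G)$ and an element $g_{i+1}\in\ol G$ with $g_i=g_{i+1}+\clr{g_i}\,u_{i+1}$ and $\clr{g_{i+1}}>\clr{g_i}$. Each $g_{i+1}\le g_i$, so all the $g_i$ sit in $\ol G$ below $g$; in particular $\widetilde{g_i}\le 1$, and since $g_i$ inherits being bounded away from $0$ from $G$, the clearance $\clr{g_i}$ is (for $g_i>0$) the least positive value of $\widetilde{g_i}$, is attained, and --- as one reads off from the construction underlying Proposition~\ref{Prop:14}, where $\widetilde{g_{i+1}}$ agrees with $\widetilde{g_i}$ wherever it is nonzero --- is in fact a value attained by $\tilde g$ itself. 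Thus $\clr{g_0}<\clr{g_1}<\cdots$ is a strictly increasing sequence of values of $\tilde g$ lying in $(0,1]$.

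The crux is that this iteration terminates, i.e.\ $g_n=0$ for some $n$. Suppose not. Then the clearances converge to some $t$ with $0<t\le 1$. Form $g\wedge t\equiv t\,\ol{g/t}\in G^+$, which is bounded away from $0$, has $\coz(g\wedge t)=\coz g$, and satisfies $\widetilde{g\wedge t}=\min(\tilde g,t)$. Because $g$ is bounded away from $0$, $\coz g$ is complemented in $L$ and $\chi_{\coz g}\in\UC(G)\subseteq G$ (Corollary~\ref{Cor:1}), so $h\equiv t\,\chi_{\coz g}-(g\wedge t)$ lies in $G$; pointwise, $\tilde h=(t-\tilde g)^+$ on $\coz g$ and $\tilde h=0$ off $\coz g$, so $h\in G^+$, and $h>0$ because $\tilde h$ assumes the value $t-\clr{g_0}>0$. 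But $\tilde h$ also assumes each of the values $t-\clr{g_i}$, and these decrease to $0$, so $h$ fails condition~(3) of Proposition~\ref{Prop:12}: it is not bounded away from $0$ --- contradicting the hypothesis on $G$. Hence the iteration stops, and telescoping the recursion gives $g=\sum_{i=1}^{n}\clr{g_{i-1}}\,u_i$, a linear combination of unital components of $G$; that is, $g\in\simple G$.

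The only genuinely delicate point is this termination claim; everything else is bookkeeping with Proposition~\ref{Prop:14}, Lemma~\ref{Lem:30}, and the definitions. The element $h$ is the decisive device: it converts a hypothetical ``bounded positive $\tilde g$ with infinitely many distinct positive values'' into ``a positive element of $G$ whose positive values accumulate at $0$'', which a trunc bounded away from $0$ cannot contain.
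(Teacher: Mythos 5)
Your proposal is correct and follows essentially the same route as the paper: reduce to $0 < g \in \ol{G}$ via boundedness, iterate Proposition~\ref{Prop:14}, and rule out an infinite iteration by exhibiting a strictly positive element that is not bounded away from $0$. Your witness $t\chi_{\coz g} - (g\wedge t)$ is pointwise the same element as the paper's $(ru_0 - g_0)^+$ (with $r = t$ and $u_0 = \chi_{\coz g_0}$); the only difference is that you verify its properties pointwise in the Yosida representation (which requires the small extra observation, via compactness of the clopen set $\coz\tilde{g}$, that each clearance $\clr{g_i}$ is actually attained as a value of $\tilde{g}$), whereas the paper computes directly with the frame formulas of Theorem~\ref{Thm:4}.
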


\begin{proof}
	A simple trunc is certainly bounded, and it is bounded away from $0$ by Proposition \ref{Prop:13}. Now suppose that $G$ is bounded and bounded away from $0$. Since $G$ is bounded, each element is a linear combination of finitely many members of $\ol{G}$ (see Subsection \ref{Subsec:GoodSeq} on good sequences), so that to show $G$ simple it is enough to show that each element of $\ol{G}$ is simple. 
	
	For that purpose consider $0 < g_0 \in \ol{G}$, and let $g_1 \in \ol{G}$ and $0 < u_1 \in \UC(G)$ be the disjoint elements given by Proposition \ref{Prop:14} such that $g_0 = g_1 + \clr{g_0} u_1$ and $\clr{g_1} > \clr{g_0}$ if $g_1 > 0$.  Proceed inductively. If $g_n$ and $u_n$ have been defined such that $g_n > 0$ then let $g_{n + 1} \in \ol{G}$ and $0 < u_{n + 1} \in \UC(G)$ be the disjoint elements which satisfy $g_n = g_{n + 1} + \clr{g_n} u_{n + 1}$ and $\clr{g_{n + 1}} > \clr{g_n}$ if $g_{n + 1} > 0$. The induction continues as long as $g_{n + 1} > 0$, and terminates if $g_{n + 1} = 0$. Note that if $g_n$ is defined then   
	\[
	g_0 
	= g_n + \smashoperator{\sum_{0 \leq i < n}}\clr{g_i} u_{i + 1},
	\] 
	$\{\clr{g_i}\}$ is a strictly increasing sequence of positive real numbers bounded above by $1$, and the $u_i$'s are nonzero pairwise disjoint unital components bounded above by the element $u_0 \in \UC(G)$ given by Proposition \ref{Prop:12}(2) such that $u_0/n \leq \ol{g_0} \leq u_0$ for some positive integer $n$.
	
	The proof is completed by showing that the sequence of $g_n$'s is finite. If not, let $r \equiv \bigvee_n \clr{g_n}$, and consider the element $h \equiv (ru_0 - g_0)^+$. We aim to show that $h(0, \varepsilon) > \bot$ for any  $\varepsilon > 0$, thereby showing that $h$ violates condition (5) of Proposition \ref{Prop:12} and thus contradicts the hypothesis that $G$ is bounded away from $0$. For that purpose fix $\varepsilon > 0$, let $n$ be such that $r < \clr{g_n} + \varepsilon$, abbreviate $\clr{g_n}$ to $s$, and let $z = \coz u_{n + 1}$. Since $g_0 \geq su_{n + 1}$, we can compute with the aid of Theorem \ref{Thm:4} 
	\begin{align*}
	h(-\infty, \varepsilon)
	&= (r u_0 - g_0)^+(-\infty, \varepsilon)
	\geq (r u_0 - s u_{n + 1})^+(-\infty, \varepsilon)
	= \sbv{lr}{rU - sV \subseteq (-\infty, \varepsilon)}(\chi_x(U) \wedge \chi_z(V)).
	\end{align*}
	The open sets $U \equiv (1 - \varepsilon/2r, 1 + \varepsilon/2r)$ and $V \equiv (1 - \varepsilon/2s, 1 + \varepsilon/2s)$ clearly satisfy the condition $rU -sV \subseteq (-\infty, \varepsilon)$, so $\bot < z 
	\leq \chi_x(U) \wedge \chi_z(V) \leq h(-\infty, \varepsilon)$. On the other hand, we have 
	\begin{align*}
	h(0, \infty)
	&= (ru_0 - g_0)(0, \infty)
	= \smashoperator{\bigvee_{rU -V \subseteq (0, \infty)}}(\chi_x(U) \wedge g_0(V)),
	\end{align*}
	so that if we put $U \equiv (1 - \rho/r, 1 + \rho/r)$ and $V \equiv (s - \rho, s + \rho)$ for $\rho < (\varepsilon + s - r)/2$ then we can see that $rU -V \subseteq (0, \infty)$, with the result that 
	\[
	h(0, \infty)
	\geq \chi_x(U) \wedge g_0(V)
	\geq \chi_x(U) \wedge su_{n + 1}(V)
	\geq z.
	\]
	To summarize, $h(0, \varepsilon) = h(-\infty, \varepsilon) \wedge h(0, \infty) \geq z > \bot$.
\end{proof}

\subsection{Hyperarchimedean truncs}

Hyperarchimedean vector lattices have been intensively studied; see \cite[pp.\ 380 ff.]{Darnel:1994} for an introduction. Of the many characterizations in the literature, we list here the trunc versions of the three most often mentioned.  

\begin{lemma}\label{Lem:12}
	The following are equivalent for a trunc $G$.
	\begin{enumerate}
		\item 
		Every convex $\ell$-subgroup $K \subseteq G$ is an archimedean kernel, i.e., $G/K$ is an archimedean vector lattice.
		
		\item 
		Every prime convex $\ell$-subgroup is both maximal and minimal.
		
		\item 
		For every $g \in G^+$, the convex $\ell$-subgroup $G(g)$ generated by $g$ is a cardinal summand of $G$. That means that for every $f \in G$ there exist unique elements $f_g \in G(g)$ and $f' \in g^\perp$ such that $f = f_g + f'$.
	\end{enumerate}
	A trunc which satisfies these conditions is called \emph{hyperarchimedean}.  
\end{lemma}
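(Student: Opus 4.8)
The first observation is that none of (1), (2), (3) involves the truncation: each refers only to the underlying archimedean vector lattice. (A convex $\ell$-subgroup $K$ of a vector lattice is automatically an $\mbb{R}$-subspace, since $\left|rh\right|\leq n\left|h\right|\in K$ whenever $0<r\leq n$ and $h\in K$, so $G/K$ is again a vector lattice and ``archimedean kernel'' is meaningful.) Thus \ref{Lem:12} is exactly the classical characterization of hyperarchimedean vector lattices (Conrad; Bigard--Keimel--Wolfenstein; see \cite{Darnel:1994}), and the plan is to reprove it by running the classical argument, organized around Conrad's condition
\[
\text{(C)}\qquad\forall\,a,b\in G^+\ \exists\,n\ \big(a\wedge nb=a\wedge(n+1)b\big),
\]
equivalently $(a-nb)^+\wedge b=0$ for some $n$; I would establish $\text{(1)}\Leftrightarrow\text{(2)}\Leftrightarrow\text{(C)}\Leftrightarrow\text{(3)}$.

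For $\text{(1)}\Leftrightarrow\text{(2)}$ I would invoke two standard facts: every convex $\ell$-subgroup is the intersection of the values lying over it, and every value is prime. If $P$ is prime then $G/P$ is totally ordered, so under (1) it is an archimedean totally ordered vector lattice, hence by Hölder's theorem a nonzero $\mbb{R}$-subspace of $\mbb{R}$, hence $\cong\mbb{R}$ and so $\ell$-simple; thus $P$ is maximal, and being maximal it can have no prime strictly below it, so it is minimal too. Conversely, if every prime is maximal then for any convex $\ell$-subgroup $K$ the quotient $G/K$ embeds in $\prod_i G/P_i$ with each $G/P_i\cong\mbb{R}$, a subdirect product of archimedean lattices, hence archimedean.

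The equivalence $\text{(C)}\Leftrightarrow\text{(3)}$ is bookkeeping with disjoint elements. Given (3) and $0\leq a,b$, write $a=a_1+a_2$ with $a_1\in G(b)$ and $a_2\in b^\perp$, both positive; since $a_1\perp a_2$ one gets $a\wedge nb=(a_1\wedge nb)+(a_2\wedge nb)=a_1\wedge nb$, which equals $a_1$ as soon as $n\geq m$ for an $m$ with $a_1\leq mb$ --- this is (C). Conversely, (C) applied to $(a,b)$ produces $n$ with $(a-nb)^+\in b^\perp$, so $a=(a\wedge nb)+(a-nb)^+$ decomposes a positive $a$ into a part of $G(b)$ and a part of $b^\perp$; a sign-parts argument handles arbitrary $a$, uniqueness follows from $b^{\perp\perp}\cap b^\perp=\{0\}$, and $G(b)=b^{\perp\perp}$ falls out along the way. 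Finally $\text{(C)}\Rightarrow\text{(1)}$: if $0\leq\bar a\leq\bar b/n$ in $G/K$ for all $n$ (with $a,b\geq0$, say), i.e.\ $(na-b)^+\in K$ for all $n$, then (C) applied to $(b,a)$ gives $n_0$ with $b\wedge na=b\wedge n_0a=:e$ for all $n\geq n_0$; since $na-e=(na-b)^+\in K$ we get $na\equiv e\pmod K$ for all such $n$, whence $a=(n_0+1)a-n_0a\in K$ and $G/K$ is archimedean.

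The step needing genuine care --- and the main obstacle --- is closing the loop, for which I would prove $\text{(2)}\Rightarrow\text{(C)}$. Given $0\leq a,b$ with $b>0$, pass to $H\equiv G/b^\perp$; conditions (1) and (2) are inherited by $H$ (its quotients are quotients of $G$, and its primes correspond to primes of $G$ lying over $b^\perp$), and in $H$ the image $\bar b$ is a weak order unit. The crux is that (2) forces a weak unit of $H$ to lie in no proper prime: a weak unit avoids every minimal prime --- a minimal prime $P$ satisfies $0<x\in P\Rightarrow\exists\,0<y\notin P\ (x\wedge y=0)$ --- and under (2) every prime of $H$ is minimal. Hence $H(\bar b)=H$, so every $0\leq\bar a\in H$ satisfies $\bar a\leq n\bar b$ for some $n$, i.e.\ $(a-nb)^+\in b^\perp$, i.e.\ $a\wedge nb=a\wedge(n+1)b$ --- which is (C). Outside this last passage everything is routine $\ell$-group manipulation; indeed, since the truncation plays no role, the lemma could also simply be attributed to the literature cited above.
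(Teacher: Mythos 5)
Your proof is correct. The paper does not actually argue the lemma at all: its proof is the single sentence that these equivalences are part of Theorem 55.1 of Darnel's book, i.e., it defers entirely to the classical theory of hyperarchimedean $\ell$-groups. Your opening observation --- that none of (1)--(3) mentions the truncation, and that convex $\ell$-subgroups of a vector lattice are automatically $\mbb{R}$-subspaces so that ``archimedean kernel'' makes sense --- is precisely what licenses that citation, and your last sentence concedes that the citation would suffice. What you add is a complete, self-contained reconstruction of the classical argument, organized around Conrad's condition (C) ($a\wedge nb$ eventually constant), with the cycle $(1)\Leftrightarrow(2)$, $(2)\Rightarrow(\mathrm{C})\Rightarrow(1)$, $(\mathrm{C})\Leftrightarrow(3)$; I checked the individual steps (H\"older for $(1)\Rightarrow(2)$, the subdirect embedding $G/K\hookrightarrow\prod G/P_i$ for $(2)\Rightarrow(1)$, the identity $a=(a\wedge nb)+(a-nb)^+$ with $(a-nb)^+\wedge b=0$ for $(\mathrm{C})\Leftrightarrow(3)$, the two-consecutive-$n$ trick for $(\mathrm{C})\Rightarrow(1)$, and the passage to $G/b^\perp$ with the characterization of minimal primes for $(2)\Rightarrow(\mathrm{C})$) and they are all sound standard $\ell$-group theory. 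The trade-off is the obvious one: the paper's citation is shorter and accurate, while your version makes the section self-contained at the cost of importing two nontrivial background facts (that every proper convex $\ell$-subgroup is an intersection of values, and the disjointness characterization of minimal primes) which themselves would need citation or proof.
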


\begin{proof}
	A proof of these equivalences in the broader context of archimedean $\ell$-groups is part of Theorem 55.1 in \cite{Darnel:1994}.  
\end{proof}

\begin{proposition}\label{Prop:20}
	A simple trunc is hyperarchimedean.
\end{proposition}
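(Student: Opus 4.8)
The plan is to verify condition~(3) of Lemma~\ref{Lem:12}: for every $g \in G^+$, the convex $\ell$-subgroup $G(g)$ generated by $g$ is a cardinal summand of $G$, i.e.\ every $f \in G$ can be written uniquely as $f_g + f'$ with $f_g \in G(g)$ and $f' \in g^\perp$. The two structural facts that make a simple trunc amenable to this are already available: by Proposition~\ref{Prop:19}(3) a simple trunc is bounded, and by Proposition~\ref{Prop:13} it is bounded away from $0$. I would then pass to the Yosida representation $\widetilde{G} \subseteq \mcal{D}_0 X$ of Theorem~\ref{Thm:16}; by Theorem~\ref{Thm:1} (equivalently by Proposition~\ref{Prop:19}(1),(2)) every element of $\widetilde{G}$ is a locally constant function of finite range, and $X$ is a Boolean pointed space. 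Throughout I identify $G$ with $\widetilde G$.

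Next, fix $0 < g \in G^+$ and put $R \equiv \coz \tilde{g} = \tilde{g}^{-1}(0,\infty)$. Since $\tilde g$ is locally constant with finite range, $R$ is clopen; and since $g$ is bounded away from $0$, Proposition~\ref{Prop:12} gives an $\varepsilon > 0$ with $\tilde g(x) \geq \varepsilon$ for every $x \in R$ (while $\tilde g$ is also bounded above, say by $M$, as $G$ is bounded). The key step is the following stabilization: for $\tilde f \in \widetilde G$ and $N \in \mbb{N}$ large enough that $N\varepsilon$ exceeds $\sup|\tilde f|$, one has $\tilde f^{+} \wedge N\tilde g = \tilde f^{+}$ on $R$ and $= 0$ off $R$ (because $N\tilde g \geq N\varepsilon \geq \tilde f^{+}$ on $R$, and $\tilde g = 0$ off $R$), and similarly for $\tilde f^-$. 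Consequently the function $\tilde f{\restriction}_R \equiv (\tilde f^{+} \wedge N\tilde g) - (\tilde f^{-} \wedge N\tilde g)$ — which agrees with $\tilde f$ on $R$ and vanishes off $R$ — lies in $\widetilde G$, being obtained from $\tilde f$ and $\tilde g$ by meet, scalar multiplication, and subtraction; no appeal to closure under multiplication is needed. Likewise $\tilde f' \equiv \tilde f - \tilde f{\restriction}_R \in \widetilde G$, and it vanishes on $R$.

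Then I would identify the two summands as $G(g) = \setof{\tilde f \in \widetilde G}{\coz \tilde f \subseteq R}$ and $g^\perp = \setof{\tilde f \in \widetilde G}{\coz\tilde f \cap R = \emptyset}$: the inclusion $G(g) \subseteq \{\cdots\}$ is immediate, and conversely if $\coz\tilde f \subseteq R$ then $|\tilde f| \leq (\sup|\tilde f|/\varepsilon)\,\tilde g \leq n\tilde g$ for suitable $n$, so $\tilde f \in G(g)$; the description of $g^\perp$ is the pointwise meaning of $\tilde f \wedge |\tilde g| = 0$. Now for arbitrary $\tilde f \in \widetilde G$ the decomposition $\tilde f = \tilde f{\restriction}_R + \tilde f'$ has $\tilde f{\restriction}_R \in G(g)$ (its cozero lies in $R$) and $\tilde f' \in g^\perp$ (its cozero lies in the clopen set $X \smallsetminus R$, which is disjoint from $R$). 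Uniqueness is forced pointwise: if $\tilde f = a + b$ with $a \in G(g)$ and $b \in g^\perp$, then $a$ vanishes off $R$ and $b$ vanishes on $R$, so $a = \tilde f{\restriction}_R$ and $b = \tilde f'$. This establishes condition~(3) of Lemma~\ref{Lem:12}, so $G$ is hyperarchimedean.

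I do not anticipate a real obstacle; the single point needing care is the stabilization claim $\tilde f^{\pm} \wedge N\tilde g = \tilde f^{\pm}{\restriction}_R$, which is precisely where boundedness of $G$ and bounded-away-from-$0$-ness of $g$ are jointly used, together with the observation that the restriction functions remain in $\widetilde G$ because they are produced by trunc operations. If one prefers to stay pointfree, the same argument runs in the Madden representation $G \subseteq \mcal{R}_0 L$: Proposition~\ref{Prop:12} (together with Proposition~\ref{Prop:6}) shows $\coz g$ is complemented in $L$ with $\chi_{\coz g} \in \UC(G)$, and one replaces ``restriction to $R$'' by meeting with a large multiple of $g$, which Lemma~\ref{Lem:8} identifies with $\chi_{\coz g}\wedge(\cdot)$.
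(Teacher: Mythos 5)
Your proof is correct and takes essentially the same approach as the paper: both verify criterion (3) of Lemma \ref{Lem:12} using that a simple trunc is bounded and bounded away from $0$, and both extract the $G(g)$-summand of $f$ by meeting (the parts of) $f$ with a sufficiently large multiple of $g$. The only difference is presentational --- the paper packages the decomposition abstractly via the unital component $u \equiv \ol{ng}$ from Proposition \ref{Prop:12}(1) and then scales, while you carry out the same stabilization pointwise in the Yosida representation.
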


\begin{proof}
	Consider elements $f,g > 0$ in a simple trunc $G$. Since $G$ is bounded away from $0$ by Theorem \ref{Thm:12}, there is a positive integer $n$ for which $u \equiv \ol{ng}$ is a unital component by Proposition \ref{Prop:12}(1). Hence $\ol{f} = f_u + f'$ for unique elements $f_u \leq u$ and $f' \in u^\perp$. Since $G$ is bounded, again by Theorem \ref{Thm:12}, there is an integer $k$ such that $k\ol{f} = kf_u + kf' \geq f$, and since $f_u \wedge f' = 0$, we have $f = (kf_u \wedge f) + (kf' \wedge f)$. But $kf_u \wedge f \in G(g)$ because 
	\[
	kf_u \wedge f \leq kf_u \leq ku = k\ol{ng} \leq (kn)g,
	\]  
	and $kf' \wedge f \in u^\perp = \ol{ng}^\perp = g^\perp$, so we have shown that $G$ is hyperarchimedean by criterion (3) of Lemma \ref{Lem:12}.
\end{proof}

Example \ref{Ex:1} shows that the converse of Proposition \ref{Prop:20} is false.

\begin{example}\label{Ex:1}
	Let $X$ be the pointed Boolean space $(\omega + 1, \omega)$, and let $\widetilde{G}$ be the family of all functions of $\mcal{D}_0 X$ of the form $\tilde{a} + r\tilde{g}_0$, with $r \in \mbb{R}$ and $\tilde{a}, \tilde{g}_0 \in \mcal{D}_0 X$ such that $\coz \tilde{a}$ is finite and $\tilde{g}_0(n) = 1/n$ for all $n < \omega$. Then it is straightforward to check that $G$ is a hyperarchimedean trunc which is not simple because $\tilde{g}_0$ is not bounded away from $0$.  
\end{example}

In order to identify which hyperarchimedean truncs are simple, we consider two trunc attributes. The first is the property of having enough unital components, and the second is being bounded away from infinity. In Theorem \ref{Thm:13} we show that, in the presence of the hyperarchimedean property, either of these attributes is equivalent to simplicity.

\begin{definition*}[enough unital components]
	A trunc $G$ is said to \emph{have enough unital components} if for all $g \in G^+$ there exists a unital component $u \in \UC(G)$ such that $\ol{g} \leq u$. 
\end{definition*}    

\begin{lemma}\label{Lem:13}
	The following are equivalent for an element $g \geq 0$ of a trunc $G$.
	\begin{enumerate}
		\item 
		There exists an element $h \in G^+$ for which $\ol{g} \leq h \ominus 1$.
		
		\item 
		$\tilde{g}$ vanishes on a neighborhood of $*$, i.e., $* \notin \cl \coz \tilde{g}$.
		
		\item 
		There exists $h \in G^+$ for which $\coz g \wedge \con h = \bot$. (Recall $\con h \equiv h(-\infty, 1)$.)
	\end{enumerate}
\end{lemma}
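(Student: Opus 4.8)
The plan is to work in the Yosida representation $\nu_G\colon G\xrightarrow{\ \sim\ }\widetilde G\subseteq\mcal{D}_0 X$ of Theorem~\ref{Thm:16}, where $X$ is compact Hausdorff and $\widetilde G$ separates the points of $X$. I will use that $\widetilde{\ol f}(x)=\tilde f(x)\wedge 1$ and $\widetilde{f\ominus 1}(x)=(\tilde f(x)-1)^{+}$ for $f\in G^{+}$, and that, since the suitable compactification $q\colon M\to L$ is dense, the cozero element $\coz g\in L$ corresponds to the open set $\{x:\tilde g(x)>0\}\subseteq X$ while $\con h=h(-\infty,1)\in L$ corresponds to $\{x:\tilde h(x)<1\}$; in particular condition~(3) holds for a given $h\in G^{+}$ exactly when $\{x:\tilde g(x)>0\}\cap\{x:\tilde h(x)<1\}=\emptyset$. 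With this dictionary I would prove the cycle $(1)\Rightarrow(3)\Rightarrow(2)\Rightarrow(1)$.

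For $(1)\Rightarrow(3)$: from $\ol g\le h\ominus 1$ and monotonicity of $a\mapsto a(0,\infty)$, Lemma~\ref{Lem:8} gives $\coz g=\ol g(0,\infty)\le(h\ominus 1)(0,\infty)=h(1,\infty)$; since $h(1,\infty)\wedge h(-\infty,1)=h(\emptyset)=\bot$, also $\coz g\wedge\con h=\bot$. For $(3)\Rightarrow(2)$: translating to $X$, the open set $\{x:\tilde h(x)<1\}$ contains $*$ because $\tilde h(*)=0$, and it is disjoint from the cozero set of $\tilde g$; hence $*\notin\cl\coz\tilde g$.

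The substantive step is $(2)\Rightarrow(1)$. Assume $*\notin\cl\coz\tilde g$, so $\cl\coz\tilde g$ is a compact subset of $X$ not containing $*$. For each $x\in\cl\coz\tilde g$ separation provides $\tilde f_x\in\widetilde G$ with $\tilde f_x(x)\ne\tilde f_x(*)=0$, so $x$ lies in the open cozero set of $\tilde f_x$. By compactness finitely many of these cozero sets cover $\cl\coz\tilde g$, whence $\tilde a\equiv\bigvee_i|\tilde f_{x_i}|\in\widetilde G^{+}$ is strictly positive at every point of the compact set $\cl\coz\tilde g$, hence bounded below there, say $\tilde a\ge\varepsilon>0$ on $\cl\coz\tilde g$ for some real $\varepsilon>0$. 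Put $h\equiv(2/\varepsilon)a\in G^{+}$. Then $\tilde h\ge 2$ on $\coz\tilde g$, so $\widetilde{h\ominus 1}(x)=(\tilde h(x)-1)^{+}\ge 1\ge\tilde g(x)\wedge 1=\widetilde{\ol g}(x)$ for $x\in\coz\tilde g$, while $\widetilde{\ol g}(x)=0\le\widetilde{h\ominus 1}(x)$ for $x\notin\coz\tilde g$; therefore $\ol g\le h\ominus 1$ in $\widetilde G$, hence in $G$.

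I expect the only real obstacle to be $(2)\Rightarrow(1)$: one must manufacture an honest element of $G$ whose associated function is at least $2$ on $\coz\tilde g$, and the sole resource is that $\widetilde G$ separates the points of $X$. Compactness of $\cl\coz\tilde g$ is precisely what lets finitely many separating functions be amalgamated into one that is bounded away from $0$ there, after which scaling finishes the job; the remaining steps are routine bookkeeping with Lemma~\ref{Lem:8} and the Yosida/Madden dictionary.
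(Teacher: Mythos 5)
Your proof is correct. The three easy implications ($(1)\Rightarrow(3)$, $(3)\Rightarrow(2)$, and the translation between $\coz g\wedge\con h=\bot$ in $L$ and disjointness of the corresponding open sets of $X$ via density of $q$) match what the paper does, and your handling of $\widetilde{\ol g}$, $\widetilde{h\ominus 1}$ via Lemma \ref{Lem:8} is the same dictionary the paper uses. The one genuine difference is in the substantive direction. The paper leans on the standing fact from the Yosida theory that the sets $\con\tilde h$, $h\in G^{+}$, form a neighborhood base at $*$; given that, $(2)$ immediately yields an $h$ with $\coz\tilde g\cap\con\tilde h=\emptyset$, and the paper then checks that $2h\ominus 1\geq\ol g$. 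You instead re-derive exactly the instance of that fact you need: from $*\notin\cl\coz\tilde g$ you use point-separation of $\widetilde G$ plus compactness of $\cl\coz\tilde g$ to manufacture $a\in G^{+}$ bounded below by $\varepsilon$ on $\cl\coz\tilde g$, and then scale. This makes your argument self-contained where the paper's is not (the neighborhood-base fact is itself proved by the same separation-plus-compactness device), at the cost of a slightly longer construction; it also lands you directly at $(1)$ rather than passing through $(3)$. Both are sound; yours is arguably the better choice if the neighborhood-base lemma is not already available to the reader.
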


\begin{definition*}[bounded away from $\infty$]
	We say that $g$ is \emph{bounded away from $\infty$} if it satisfies the conditions of Lemma \ref{Lem:13}. We say that \emph{$G$ is bounded away from $\infty$} if each $g \in G^+$ is bounded away from $\infty$.
\end{definition*}

\begin{proof}[Proof of Lemma \ref{Lem:13}.]
	In the Yosida representation of $G$, the open subsets of $X$ of the form $\con \tilde{h} = \tilde{h}^{-1}(-\infty, 1)$, $h \in G^+$, form a neighborhood base for the designated point. Thus (2) is equivalent to the existence of an element $h \in G^+$ for which $\coz \tilde{g} \cap \con \tilde{h} = \emptyset$. A routine calculation then shows the latter condition to be equivalent to $\coz g \wedge \con h = \bot$, thereby establishing the equivalence of (2) with (3). 
	
	To show the equivalence of (1) with (2), note that if $\ol{g} \leq h \ominus 1$ then 
	\begin{align*}
	x \in \coz \tilde{g} = \coz \widetilde{\ol{g}}
	&\implies 1 = \ol{g}(x) \leq (\tilde{h}\ominus 1)(x) = (\tilde{h}(x) - 1) \vee 0\\
	&\implies \tilde{h}(x) = 2 \implies x \notin \con \tilde{h}.
	\end{align*}
	On the other hand, if $\tilde{g}$ vanishes on a neighborhood of $*$ then, since the sets of the form $\con \tilde{h}$, $h \in G^+$, form a neighborhood base for $*$, there is some $h \in G^+$ such that $\coz \tilde{g} \wedge \con \tilde{h} = \emptyset$. For such an $h$, it is straightforward to check that $2\tilde{h} \ominus 1 \geq \widetilde{\ol{g}}$. 
\end{proof}

\begin{lemma}\label{Lem:14}
	\begin{enumerate}
		\item 
		If $G$ has enough unital components then $G$ is bounded away from $\infty$.
		
		\item 
		If $G$ is hyperarchimedean and bounded away from $\infty$ then $G$ has enough unital components.
	\end{enumerate}
\end{lemma}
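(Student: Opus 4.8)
The plan is to carry out both implications inside the Yosida representation $\widetilde{G}\subseteq\mcal{D}_0X$. There the conditions of Lemma~\ref{Lem:13} become pointwise statements about $\coz\tilde g$ and the neighborhoods of $*$; Proposition~\ref{Prop:4}(1),(4) identify the unital components as the elements $u$ with $u=\ol{2u}$, equivalently with $\tilde u=\chi_R$ for a clopen $R$ omitting $*$; and the hyperarchimedean hypothesis, via Lemma~\ref{Lem:12}(3), says exactly that for each $g\in G^{+}$ there is a cardinal-sum decomposition $G=G(g)\oplus g^{\perp}$ into $\ell$-ideals.

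For (1), fix $g\in G^{+}$ and, using "enough unital components," pick $u\in\UC(G)$ with $\ol g\le u$. By Proposition~\ref{Prop:4}(1) we have $\ol{2u}=u$, so the element $h\equiv 2u\in G^{+}$ satisfies $h\ominus 1=2u-\ol{2u}=2u-u=u\ge\ol g$. Thus $g$ meets condition~(1) of Lemma~\ref{Lem:13}, i.e.\ $g$ is bounded away from $\infty$; since $g$ was arbitrary, so is $G$.

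For (2), fix $g\in G^{+}$; the goal is to produce $v\in\UC(G)$ with $\ol g\le v$. The first step is to manufacture an element of $\ol G$ that is already the constant $1$ on $\coz\tilde g$. Since $G$ is bounded away from $\infty$, condition~(3) of Lemma~\ref{Lem:13} supplies $h\in G^{+}$ with $\coz g\wedge\con h=\bot$, which in $\widetilde G$ says $\tilde h\ge 1$ on $\coz\tilde g$; hence $\widetilde{\ol h}=\tilde h\wedge 1=1$ on $\coz\tilde g$, while $\widetilde{\ol h}\le 1$ everywhere and $\widetilde{\ol h}(*)=0$. Now apply the hyperarchimedean hypothesis to the positive element $\ol h$: write $\ol h=v+w$ with $v\in G(g)$ and $w\in g^{\perp}$; since $\ol h\ge 0$ and the two summands are disjoint $\ell$-ideals, $v,w\ge 0$ and $v\wedge w=0$ (a standard fact about cardinal summands; see, e.g., \cite{Darnel:1994}). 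The two membership conditions pin $\tilde v$ down: $v\in G(g)$ gives $0\le\tilde v\le n\tilde g$ for some $n$, so $\tilde v=0$ off $\coz\tilde g$; and $w\in g^{\perp}$ gives $\tilde w\wedge\tilde g=0$, so $\tilde w=0$ on $\coz\tilde g$, whence $\tilde v=\widetilde{\ol h}=1$ there. Therefore $\tilde v=\chi_{\coz\tilde g}$, so $\coz\tilde g=\coz\tilde v=\tilde v^{-1}\{1\}$ is clopen and (as $\tilde g(*)=0$) omits $*$. By Proposition~\ref{Prop:4}(4), $v\in\UC(G)$, and $\widetilde{\ol g}=\tilde g\wedge 1\le\chi_{\coz\tilde g}=\tilde v$ shows $\ol g\le v$. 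Hence $G$ has enough unital components.

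The main obstacle is the manufacturing step: one must first pass to an element of $\ol G$ equal to $1$ — not merely positive — on $\coz\tilde g$ before invoking the decomposition, and one must notice that condition~(3) of Lemma~\ref{Lem:13}, rather than condition~(1), is the right incarnation of "bounded away from $\infty$" for this, since condition~(1) only yields $\ol g\le h\ominus 1$, which need not force $\widetilde{\ol h}=1$ on $\coz\tilde g$ (there $\widetilde{\ol g}$ can be arbitrarily small). Everything after the decomposition is routine pointwise verification; the only other point needing a little care is the standard fact that in a cardinal-sum decomposition the components of a positive element are positive and mutually disjoint.
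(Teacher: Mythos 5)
Your proof is correct and follows essentially the same route as the paper: part (2) in particular is the paper's argument --- produce $h$ with $\widetilde{\ol{h}} = 1$ on $\coz\tilde{g}$, split $\ol{h}$ across the cardinal sum $G(g) \oplus g^\perp$, and identify the $G(g)$-component as a characteristic function. In part (1) you verify condition (1) of Lemma \ref{Lem:13} via the explicit witness $h = 2u$ (using $\ol{2u} = u$), whereas the paper verifies condition (3) via $\coz g \leq \coz u$ and $\coz u \wedge \con u = \bot$; both are immediate.

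One aside in your writeup is mistaken, though it does not damage the proof. You claim that condition (1) of Lemma \ref{Lem:13}, i.e.\ $\ol{g} \leq h \ominus 1$, ``need not force $\widetilde{\ol{h}} = 1$ on $\coz\tilde{g}$'' because $\widetilde{\ol{g}}$ can be small there. In fact it does force this: for $x \in \coz\tilde{g}$ one has $\widetilde{\ol{g}}(x) > 0$, hence $(\tilde{h}(x) - 1)^+ > 0$, hence $\tilde{h}(x) > 1$ and $\widetilde{\ol{h}}(x) = 1$. This is exactly the paper's starting point for part (2); your substitution of condition (3) (which gives $\tilde{h} \geq 1$ on $\coz\tilde{g}$, equally sufficient) is harmless but the stated reason for preferring it is not valid.
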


\begin{proof}
	(1) Suppose $g \in G^+$ is such that $\ol{g} \leq u \in \UC(G)$. Then $\coz g = \coz \ol{g} \leq \coz u$, and since $\coz u \wedge \con u = \bot$, it follows that $\coz g \wedge \con u = \bot$. We conclude that $g$ is bounded away from $\infty$ by Lemma \ref{Lem:13}. 
	
	(2) Suppose $\ol{g} \leq h \ominus 1$ for $g,h \in G^+$. This is equivalent to the assertion that $\tilde{h}(x) > 1$ for all $x \in \coz \tilde{g}$, hence $\widetilde{\ol{h}}(x) = 1$ for all $x \in \coz \tilde{g}$. If $G$ is hyperarchimedean then it is the cardinal sum $G(g) \oplus g^\perp$, so that $\ol{h}$ can be uniquely expressed in the form $u + h'$ for $u \in G(g)$ and $h' \in g^\perp$. Since $\tilde{u}(x) = 1$ for $x \in \coz \tilde{g}$ and $\tilde{u}(x) = 0$ for $x \notin \coz \tilde{g}$, we have $g \leq u \in \UC(G)$.   
\end{proof}

\begin{theorem}\label{Thm:13}
	The followiing are equivalent for a trunc $G$.
	\begin{enumerate}
		\item 
		$G$ is hyperarchimedean and has enough unital components.
		
		\item 
		$G$ is hyperarchimedean and bounded away from $\infty$.
		
		\item 
		$G$ is simple.
	\end{enumerate}
\end{theorem}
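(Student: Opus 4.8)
The plan is to prove $(1)\Leftrightarrow(2)$ directly from Lemma~\ref{Lem:14}, then $(3)\Rightarrow(1)$, and finally the substantive implication $(1)\Rightarrow(3)$ by way of Theorem~\ref{Thm:12}. For $(1)\Leftrightarrow(2)$, observe that both conditions carry the hyperarchimedean hypothesis, and under that hypothesis Lemma~\ref{Lem:14}(1) converts ``enough unital components'' into ``bounded away from $\infty$'' while Lemma~\ref{Lem:14}(2) converts ``bounded away from $\infty$'' back to ``enough unital components'', so the two are interchangeable. For $(3)\Rightarrow(1)$: a simple trunc is hyperarchimedean by Proposition~\ref{Prop:20}, and it has enough unital components because for $g\in G^+$ the function $\tilde g$ is locally constant with finite range (Proposition~\ref{Prop:19}(1),(2)), so $\coz\tilde g$ is clopen and omits $*$, making $\chi_{\coz\tilde g}$ a unital component above $\ol g$ by Proposition~\ref{Prop:4}(4); alternatively one could cite Proposition~\ref{Prop:13} together with Proposition~\ref{Prop:12}(1) and the monotonicity $\ol g\le\ol{ng}$.

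The heart of the matter is $(1)\Rightarrow(3)$. Assuming $G$ is hyperarchimedean with enough unital components, I would use Theorem~\ref{Thm:12} and show that $G$ is both bounded and bounded away from $0$. Fix $0<g\in G$ and pick $u\in\UC(G)$ with $\ol g\le u$. Applying the cardinal decomposition $G=G(g)\oplus g^\perp$ of Lemma~\ref{Lem:12}(3) to $u$, write $u=u_1+u'$ with $u_1\in G(g)$ and $u'\in g^\perp$; these are disjoint components of the unital component $u$, so $u=u_1\vee u'$ and both $u_1,u'$ are again unital components (each satisfies $\ol{2(\cdot)}=(\cdot)$, as is transparent in the Yosida picture where they are characteristic functions of clopen pieces of $\coz\tilde u$). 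Since $u'\perp g$ forces $\ol g\wedge u'=0$, distributivity gives $\ol g=\ol g\wedge u=\ol g\wedge u_1$, that is, $\ol g\le u_1$; and $u_1\in G(g)$ gives $u_1\le mg$ for some $m$. Hence $\coz g=\coz\ol g\le\coz u_1\le\coz(mg)=\coz g$, so $\coz g=\coz u_1$, and Proposition~\ref{Prop:4}(4) identifies $u_1$ with $\chi_x$ for the element $x=\coz g$, which is complemented in $L$.

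From here two short arguments finish the job. First, $\chi_x=u_1\le mg$ says $\tilde g\ge 1/m$ throughout $\coz\tilde g$ and $\tilde g=0$ elsewhere, which is precisely condition (3) of Proposition~\ref{Prop:12}, so $g$ is bounded away from $0$. Second, $\coz u_1=\coz g$ gives $u_1^\perp=g^\perp$, hence $g\in g^{\perp\perp}=u_1^{\perp\perp}$, and since the cardinal summand $G(u_1)$ coincides with the polar $u_1^{\perp\perp}$ we get $g\in G(u_1)$, so $g\le nu_1$ for some $n$; as $nu_1$ is a bounded element, $g$ is bounded by Lemma~\ref{Lem:30}. With $G$ now both bounded and bounded away from $0$, Theorem~\ref{Thm:12} concludes that $G$ is simple.

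I expect the main obstacle to be the component bookkeeping inside $(1)\Rightarrow(3)$: checking carefully that $u_1$ and $u'$ are genuine unital components, that $\ol g\le u_1$, and --- for the boundedness step --- extracting from Lemma~\ref{Lem:12}(3) the fact that a cardinal summand is a polar, so that $G(u_1)=u_1^{\perp\perp}$. A self-contained alternative for boundedness is to note that Lemma~\ref{Lem:12}(3) makes the sequence $g\wedge ku_1$ stabilize for large $k$, and that this stabilization forces $\tilde g\le n$ on $\coz\tilde g$, hence $g\le nu_1$. Everything else is routine translation through the Yosida representation and Lemma~\ref{Lem:8}.
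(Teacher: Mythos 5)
Your proposal is correct and follows essentially the same route as the paper: Lemma \ref{Lem:14} for $(1)\Leftrightarrow(2)$, Proposition \ref{Prop:20} plus the locally-constant observation for $(3)\Rightarrow(1)$, and for $(1)\Rightarrow(3)$ the reduction via Theorem \ref{Thm:12} to boundedness and being bounded away from $0$, both extracted from the hyperarchimedean cardinal decomposition $G = G(g)\oplus g^{\perp}$ applied to a unital component $u\geq\ol{g}$. The only divergence is cosmetic: for boundedness the paper simply decomposes over $G(\ol{g})\oplus\ol{g}^{\perp}=G(\ol{g})\oplus g^{\perp}$ to get $g\leq n\ol{g}$ directly, which avoids your detour through the identification $G(u_1)=u_1^{\perp\perp}$ (a true but extra step).
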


\begin{proof}
	The equivalence of (1) and (2) is a consequence of Lemma \ref{Lem:14}. The implication from (3) to (2) follows from Proposition \ref{Prop:20}, together with the observation that each element $g \geq 0$ of a simple trunc $G$ has the feature that $\tilde{g}$ is locally constant. The point is that since $\tilde{g}$ is $0$ at the designated point $*$, it must then be $0$ on a neighborhood of $*$, which is to say that $g$ is bounded away from $\infty$. 
	
	It remains to show that a trunc $G$ which satisfies (1) also satisfies  (3); by Theorem \ref{Thm:12}, it is enough to show that such a trunc is bounded and bounded away from $0$. It is certainly bounded, for given any $g \in G^+$, the hyperarchimedean property means that $G$ is the cardinal sum of $G(\ol{g})$ and $\ol{g}^\perp = g^\perp$, hence $g \leq n\ol{g}$ for some positive integer $n$. In order to show that an arbitrary $g \in G^+$ is bounded away from $0$ we may assume that $g = \ol{g}$ by Lemma \ref{Lem:16}. Since $G$ has enough unital components, there exists $u \in \UC(G)$ such that $g \leq u$. Since $G$ is the cardinal sum of $G(g)$ and $g^\perp$, we can write $u$ in the form $u_g + u'$ for $u_g \in G(g)$ and $u' \in g^\perp$. It follows that $g \leq u_g \in \UC(G)$ and $u_g \leq ng$ for some positive integer $n$. In fact we have $u_g/n \leq g \leq u_g \in \UC(G)$, so that $g$ is bounded away from $0$ by Proposition \ref{Prop:12}(2).   
\end{proof}

We should point out that a truncation homomorphism $\map{\theta}{G}{H}$ takes elements of $G$ bounded away from $\infty$ to elements of $H$ bounded away from $\infty$. For by Theorem \ref{Thm:16}, such a map is realized by a continuous function $\map{f}{Y}{X}$, where $X$ and $Y$ designate the pointed Yosida spaces of $G$ and $H$, respectively, in the sense that $\widetilde{\theta(g)} = \tilde{g}\circ f$. Since $f$ takes the designated point $*_Y$ of $Y$ to the designated point $*_X$ of $X$, it follows that if $\tilde{g}$ vanishes on a neighborhood of $*_X$ then $\widetilde{\theta(g)}$ vanishes on a neighborhood of $*_Y$. This observation explains Proposition \ref{Prop:15}.

\begin{proposition}\label{Prop:15}
	The truncs bounded away from $\infty$ comprise a full monocoreflective subcategory of the category $\mbf{T}$ of archimedean truncs. A coreflector for a trunc $G$ is the insertion of the subtrunc of elements bounded away from $\infty$. 
\end{proposition}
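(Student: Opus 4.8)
The plan is to write $G^\infty$ for the set of elements $g\in G$ such that $\left|g\right|$ is bounded away from $\infty$, to verify that $G^\infty$ is a subtrunc of $G$ which is itself bounded away from $\infty$, and then to check that the insertion $\iota_G\colon G^\infty\hookrightarrow G$ is the coreflection arrow of $G$ and is a monomorphism; monocoreflectivity of the full subcategory is then formal.

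First I would check that $G^\infty$ is a subtrunc of $G$. In the Yosida representation $\widetilde G\subseteq\mcal{D}_0 X$, the equivalence $(1)\Leftrightarrow(2)$ of Lemma \ref{Lem:13} identifies $G^\infty$ with the set of those $g$ for which $\tilde g$ vanishes on some open neighborhood of $*$; since each trunc operation applied to functions vanishing near $*$ again vanishes near $*$ (a finite intersection of such neighborhoods being again one), $G^\infty$ is closed under the trunc operations and so is a subtrunc. Next I would show that $G^\infty$ is, as an abstract trunc, bounded away from $\infty$. Let $0\le g\in G^\infty$. Since $g$ is bounded away from $\infty$ in $G$, Lemma \ref{Lem:13}(1) yields $h_0\in G^+$ with $\ol g\le h_0\ominus 1$. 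Put $h\equiv 2h_0\ominus 1$. Then $h\in G^\infty$, because $\tilde h=(2\tilde h_0-1)^+$ vanishes on the open neighborhood $\{\,2\tilde h_0<1\,\}$ of $*$; moreover $h\ominus 1 = 2h_0\ominus 2 = 2(h_0\ominus 1)\ge h_0\ominus 1\ge\ol g$, so Lemma \ref{Lem:13}(1), applied now in $G^\infty$, shows that $g$ is bounded away from $\infty$ inside $G^\infty$.

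The subcategory is full by construction, so it remains to verify the universal property of $\iota_G$. Let $H$ be a trunc that is bounded away from $\infty$ and let $\theta\colon H\to G$ be any truncation homomorphism. For every $h\in H$, the element $\left|h\right|\in H^+$ is bounded away from $\infty$, so by the observation recorded just before this proposition — a truncation homomorphism is realized via Theorem \ref{Thm:16} by a continuous pointed map of Yosida spaces, which pulls a neighborhood of the distinguished point back to one — the element $\theta(\left|h\right|)=\left|\theta(h)\right|$ is bounded away from $\infty$ in $G$, whence $\theta(h)\in G^\infty$. Thus $\theta$ corestricts to a truncation homomorphism $\ol\theta\colon H\to G^\infty$ with $\iota_G\circ\ol\theta=\theta$, and $\ol\theta$ is the unique such map because $\iota_G$ is injective, hence monic in $\mbf T$. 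This exhibits $\iota_G$ as the coreflection arrow of $G$; as every such arrow is a monomorphism, the subcategory is monocoreflective. I expect the only step demanding a genuine, if short, argument to be showing that $G^\infty$ is again bounded away from $\infty$ — that is, replacing an ambient witness $h_0\in G^+$ by a witness $h\in G^\infty$ — since everything else is either categorically formal or immediate from the Yosida picture together with the already-established observation.
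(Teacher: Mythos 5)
Your proof is correct and follows the same route as the paper, which offers no formal proof of Proposition \ref{Prop:15} beyond the observation recorded just before it, namely that a truncation homomorphism (being realized by a continuous pointed map of Yosida spaces) carries elements bounded away from $\infty$ to elements bounded away from $\infty$. Your additional verification that $G^\infty$ is itself bounded away from $\infty$ as an abstract trunc --- replacing the ambient witness $h_0 \in G^+$ by $h \equiv 2h_0 \ominus 1 \in G^\infty$ with $h \ominus 1 = 2(h_0 \ominus 1) \geq \ol{g}$ --- supplies a step the paper leaves implicit, and it is genuinely needed for the coreflector to land in the subcategory.
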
 

	We remark that in any trunc $G$, the elements bounded away from $\infty$ form a convex subtrunc
	\[
	K \equiv \setof{g}{\text{$\left|g\right|$ is bounded away from $\infty$}},
	\] 
	and if $G$ is hyperarchimedean then $K$ is also an archimedean kernel, i.e., $G/K$ is archimedean. However, $K$ is not always a truncation kernel in the sense of Section \ref{Sec:TruncKer}, for it fails to satisfy requirement (3) of Lemma \ref{Lem:20} below. Indeed, this is the case in Example \ref{Ex:1}.

\part{Truncation kernels\label{Part:TruncKer}}

We conclude with a brief discussion of truncation kernels. The topic is of intrinsic interest in any study of truncs, of course, but the discussion acquires a degree of urgency by virtue of the necessity of correcting a serious error in \cite{Ball:2014.2}.  

\section{Truncation kernels\label{Sec:TruncKer}}

\begin{definition*}[$\mbf{T}$-kernel]
	A \emph{truncation kernel}, or $\mbf{T}$-kernel, is the set of elements of a trunc sent to $0$ by a $\mbf{T}$-morphism.  
\end{definition*}

Let $\mbf{U}$ be the category of not-necessarily-archimedean vector lattices. The distinction between $\mbf{U}$-kernels and $\mbf{T}$-kernels is an important one, for the former have the feature that every proper kernel is contained in a maximal proper kernel, while the latter lack this feature. In fact, it can be shown that the maximal proper $\mbf{U}$-kernels are in bijective correspondence with the points of the Yosida space of a trunc (Subsection \ref{Subsec:YosRepTruncs}), while the $\mbf{T}$-kernels are in bijective correspondence with the elements of the Madden frame of a trunc (Subsection \ref{Subsec:MaddenRep}).

\subsection{Correcting a basic lemma}

We correct an error in Lemma 2.1.2 of \cite{Ball:2014.2}. That lemma is missing an important hypothesis; the corrected version appears below as Lemma \ref{Lem:20}, in which the missing hypothesis is part (3). Archimedean truncation kernels are further characterized in Proposition \ref{Prop:30}. 

\begin{lemma}\label{Lem:20}
	A convex subtrunc $K \subseteq G$ is a truncation kernel iff it satisfies properties (2) and (3) below; it is an archimedean truncation kernel iff it also satisfies (1).
	\begin{enumerate}
		\item 
		If there exists $h \in G^+$ such that $(ng - h)^+ \in K$ for all $n$ then $g \in K$.
		\item 
		If $\ol{g} \in K$ then $g \in K$.
		
		\item 
		If $g \ominus 1/n \in K$ for all $n$ then $g \in K$. 
	\end{enumerate}
\end{lemma}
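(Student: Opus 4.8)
The plan is to prove both directions of the first equivalence (truncation kernel iff (2) and (3)), and then handle the archimedean refinement by layering on condition (1), reducing wherever possible to the already-available machinery of \cite{Ball:2014.2} rather than redoing it. For the \emph{necessity} direction, I would take a $\mbf{T}$-morphism $\map{\theta}{G}{H}$ with $K=\ker\theta$ and verify (2) and (3) directly. Condition (2) is immediate: if $\ol{g}\in K$ then $\ol{\theta(g)}=\theta(\ol{g})=0$, so $\theta(g)=0$ by axiom ($\mfrak{T}2$) applied in $H$ (first to $\theta(g)^+$ and $\theta(g)^-$ separately, or noting $\ol{|g|}\in K$ as well by convexity). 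For (3): if $g\ominus 1/n\in K$ for all $n$, then in $H$ we have $\theta(g)\ominus 1/n = \theta(g\ominus 1/n)=0$ for all $n$, which says $n\,\theta(g)=\ol{n\,\theta(g)}$ for all $n$ after rescaling (since $g\ominus 1/n = \tfrac1n(ng\ominus 1)$ and $x\ominus 1 = 0$ iff $x=\ol{x}$); hence $\theta(g)=0$ by axiom ($\mfrak{T}3$). So $g\in K$. This is where the missing hypothesis in the old lemma lives: ($\mfrak{T}3$) forces (3), and the erroneous statement presumably omitted it.

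For the \emph{sufficiency} direction — a convex subtrunc $K$ satisfying (2) and (3) is a $\mbf{T}$-kernel — I would follow the structure of the original argument in \cite{Ball:2014.2}, which constructs the quotient $G/K$ as a partially ordered vector space with the natural order ($g+K\geq 0$ iff $g\vee 0 - g \in K$, i.e.\ $g^-\in K$) and shows it is a vector lattice on which $g+K\mapsto \ol{g}+K$ is a well-defined truncation. The lattice and truncation operations descend because $K$ is a convex sublattice subtrunc; the content is checking that the truncation axioms ($\mfrak{T}1$)–($\mfrak{T}3$) survive. ($\mfrak{T}1$) is purely equational/inequational and passes to any quotient. ($\mfrak{T}2$) is exactly condition (2): $\ol{g}+K = 0$ means $\ol{g}\in K$, forcing $g\in K$. ($\mfrak{T}3$) is exactly condition (3): if $n(g+K)=\ol{n(g+K)}$ for all $n$, unwinding gives $ng\ominus 1\in K$, hence $g\ominus 1/n\in K$ for all $n$, hence $g\in K$. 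The one genuinely delicate point, as in the source, is verifying that $G/K$ is \emph{archimedean}-free enough to be a legitimate $\mbf{U}$-object (a general, possibly non-archimedean, vector lattice) — but for a plain $\mbf{T}$-kernel we do not need archimedeanness, so this is just the routine quotient construction; I would cite \cite{Ball:2014.2} for the vector-lattice-quotient bookkeeping and supply only the three truncation-axiom checks in detail.

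For the archimedean refinement, I would show: $K$ is an \emph{archimedean} $\mbf{T}$-kernel iff additionally (1) holds. If $\theta\colon G\to H$ with $H$ archimedean and $K=\ker\theta$, and $(ng-h)^+\in K$ for all $n$, then $(n\,\theta(g)-\theta(h))^+ = 0$ in $H$, i.e.\ $n\,\theta(g)\leq \theta(h)$ for all $n$ with $\theta(h)\geq 0$; archimedeanness of $H$ then forces $\theta(g)\leq 0$, and running the same argument with $\theta(g)^+$ in place of $\theta(g)$ gives $\theta(g)^+=0$; combined with an analogous lower bound (or by first replacing $g$ by $|g|$ using convexity) we get $\theta(g)=0$, so $g\in K$. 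Conversely, if $K$ satisfies (1), (2), (3), the quotient $G/K$ from the construction above is archimedean precisely because (1) says $n(g+K)\leq h+K$ for all $n$ implies $g+K\leq 0$; so $K$ is an archimedean $\mbf{T}$-kernel. The \textbf{main obstacle} I anticipate is not any single step but making the sufficiency direction self-contained without reproducing the entire quotient construction from \cite{Ball:2014.2}: the cleanest route is to invoke that construction verbatim and isolate exactly the three axiom verifications, being careful that the old proof's gap (the absence of (3), equivalently the failure of ($\mfrak{T}3$) in the quotient) is precisely what the new hypothesis repairs — so the write-up must flag that ($\mfrak{T}3$) does \emph{not} follow from ($\mfrak{T}1$), ($\mfrak{T}2$), convexity alone, which is the whole reason the correction is needed.
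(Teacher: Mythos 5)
Your proposal is correct and follows essentially the same route as the paper: the paper's (much terser) proof simply asserts that conditions (1), (2), (3) are respectively equivalent to the archimedean property, axiom ($\mfrak{T}2$), and axiom ($\mfrak{T}3$) holding in the quotient $G/K$, with the key identity $g \ominus 1/n = (ng - \ol{ng})/n$ translating condition (3) into $K + ng = K + \ol{ng}$ for all $n$. You spell out both directions of each equivalence explicitly, which the paper leaves to the reader, but the underlying argument is identical.
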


\begin{proof}
	(1) is well known to be equivalent to the archimedean property of the quotient $G/K$, and (2) is clearly equivalent to truncation property ($\mfrak{T}2$) of the quotient. We claim that (3) is equivalent to truncation property ($\mfrak{T}3$) of the quotient. For $g \ominus 1/n = (ng \ominus 1)/n = (ng - \ol{ng})/n$, so that the condition that $g \ominus 1/n \in K$ for all $n$ is equivalent to the condition that $K + ng = K + \ol{ng}$ for all $n$. 	
\end{proof}

The falsity of \cite[2.1.2]{Ball:2014.2} does not invalidate the subsequent results of \cite{Ball:2014.2}. For example, in the proof of \cite[2.3.4]{Ball:2014.2}, it is straightforward to verify that the missing condition (3) of Lemma \ref{Lem:20} above is satisfied for the set displayed on the right side. Likewise, the internal description of the archimedean truncation kernel $[K]$ generated by a subset $K \subseteq G$ can be readily modified to take condition (3) into account, as follows. 

Every ordinal number $\alpha$ can be expressed in the form $\alpha = \beta + k$ for a unique finite ordinal $k$ and limit ordinal $\beta$. (We take $\beta = 0$ to be a limit ordinal.). We will say that \emph{$\alpha$ is congruent to $i$ mod $3$}, and write $\alpha \equiv i \mod 3$, depending on whether $k$ is congruent to $i$ mod $3$.

For a subset $K \subseteq G$, let $\langle K \rangle$ designate the
convex $\ell$-subtrunc generated by $K$. Now define
\begin{align*}
	K^0  
	&\equiv \langle K\rangle,\\
	K^{\alpha+1}  
	&\equiv \langle g \in G^+ : \forall n\ (g \ominus 1/n \in K^\alpha)\rangle \text{ if $\alpha \equiv 0 \mod 3$,}\\
	K^{\alpha+1}  
	&\equiv \langle g \in G^+ : \exists h \in G^+\ \forall n\ ((n\left|g\right| - h)^+ \in K^\alpha)\rangle \text{ if $\alpha \equiv 1 \mod 3$,}\\
	K^{\alpha+1}  
	&\equiv \langle g \in G^+ : \ol{g} \in K^\alpha\rangle \text{ if $\alpha \equiv 2 \mod 3$,}\\
	K^\beta  
	&\equiv \bigcup_{\alpha<\beta} K^\alpha\text{ if $\beta$ is a limit ordinal,}\\
	K^\infty  
	&\equiv K^\alpha\text{ for some (any) $\alpha$ such that $K^\alpha = K^{\alpha+1} = K^{\alpha + 2}$.}
\end{align*}

\begin{lemma}\label{Lem:24}
	The archimedean truncation kernel generated by a subset $K \subseteq G$, which we shall denote by $[K]$, is equal to $K^{\infty} = K^{\omega_{1}}$ .
\end{lemma}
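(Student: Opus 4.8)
The plan is to prove the two inclusions $[K]\subseteq K^{\omega_{1}}$ and $K^{\omega_{1}}\subseteq[K]$, and then to observe that the construction has already stabilized at stage $\omega_{1}$, so that $K^{\infty}$ is well-defined and equal to $K^{\omega_{1}}=[K]$. I would first record two preliminary facts. Each $K^{\alpha}$ is a convex $\ell$-subgroup of $G$: this holds for $K^{0}=\langle K\rangle$ and for each $K^{\alpha+1}$ by construction, and it is inherited under unions of the non-decreasing chain $\{K^{\alpha}\}$. Since $(\mfrak{T}1)$ gives $0\le\ol g\le g$ for every $g\in G^{+}$, every convex $\ell$-subgroup is automatically closed under truncation, so every $K^{\alpha}$, and in particular $K^{\omega_{1}}$, is a convex subtrunc. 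Secondly, $\omega_{1}$ has uncountable cofinality, so every countable subset of $K^{\omega_{1}}=\bigcup_{\alpha<\omega_{1}}K^{\alpha}$ already lies in a single $K^{\alpha^{*}}$ with $\alpha^{*}<\omega_{1}$.

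For $[K]\subseteq K^{\omega_{1}}$, I would verify that $K^{\omega_{1}}$ satisfies conditions (1)--(3) of Lemma \ref{Lem:20}; being a convex subtrunc containing $K$, it is then an archimedean truncation kernel containing $K$ and hence contains the least such kernel $[K]$. For condition (3): if $g\ominus 1/n\in K^{\omega_{1}}$ for all $n$, the countable set $\{g\ominus 1/n\}_{n}$ lies in some $K^{\alpha^{*}}$ with $\alpha^{*}<\omega_{1}$; letting $\beta$ be the least ordinal $\ge\alpha^{*}$ with $\beta\equiv 0\bmod 3$ (so $\beta\le\alpha^{*}+2<\omega_{1}$), monotonicity gives $\{g\ominus 1/n\}_{n}\subseteq K^{\beta}$, whence $g\in K^{\beta+1}\subseteq K^{\omega_{1}}$. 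Condition (1) is handled in the same way using the countable witness set $\{h\}\cup\{(ng-h)^{+}\}_{n}$ and the residue $\beta\equiv 1\bmod 3$, and condition (2) likewise with the singleton $\{\ol g\}$ and $\beta\equiv 2\bmod 3$.

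For $K^{\omega_{1}}\subseteq[K]$, I would prove $K^{\alpha}\subseteq[K]$ by transfinite induction on $\alpha$. The base case $K^{0}=\langle K\rangle\subseteq[K]$ holds because $[K]$ is a convex $\ell$-subgroup containing $K$. At a successor $\alpha+1$ with $\alpha\equiv 0\bmod 3$, every generator $g$ of $K^{\alpha+1}$ satisfies $g\ominus 1/n\in K^{\alpha}\subseteq[K]$ for all $n$, so $g\in[K]$ by condition (3) for $[K]$ (Lemma \ref{Lem:20}); since $[K]$ is a convex $\ell$-subgroup, it contains the subgroup these $g$ generate, i.e.\ $K^{\alpha+1}\subseteq[K]$. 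The residues $1$ and $2$ invoke conditions (1) and (2) of Lemma \ref{Lem:20} identically, and limit stages are unions. Thus $K^{\omega_{1}}\subseteq[K]$, and combined with the previous step $[K]=K^{\omega_{1}}$. Finally, since $K^{\omega_{1}}=[K]$ is a convex $\ell$-subgroup satisfying (1)--(3), each of $K^{\omega_{1}+1}$, $K^{\omega_{1}+2}$, $K^{\omega_{1}+3}$ is the convex $\ell$-subgroup generated by elements already forced into $K^{\omega_{1}}$ by the pertinent condition, and therefore equals $K^{\omega_{1}}$. In particular $K^{\omega_{1}}=K^{\omega_{1}+1}=K^{\omega_{1}+2}$, so $K^{\infty}$ is well-defined and $K^{\infty}=K^{\omega_{1}}=[K]$.

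I expect the fussiest step to be the bookkeeping in the second paragraph: checking that the data witnessing each of conditions (1)--(3) for $K^{\omega_{1}}$ is countable, that uncountable cofinality then captures it below a single $\alpha^{*}<\omega_{1}$, and that the subsequent adjustment to the correct residue class modulo $3$ stays below $\omega_{1}$. The identification $[K]=K^{\infty}$ itself, once Lemma \ref{Lem:20} is available, is the routine ``least pre-fixed point'' argument.
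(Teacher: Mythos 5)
Your proof is correct and takes the same route the paper intends: the paper's own proof is just the one-line remark that the lemma ``follows directly from Lemma \ref{Lem:20},'' and your argument is precisely the standard elaboration of that remark (both inclusions via Lemma \ref{Lem:20}, with the uncountable cofinality of $\omega_{1}$ absorbing the countably many premises of each closure condition, plus the stabilization check). The details you supply --- monotonicity of the chain, the adjustment to the right residue class mod $3$, and the verification that $K^{\omega_{1}}=K^{\omega_{1}+1}=K^{\omega_{1}+2}$ --- are exactly the ones the paper leaves implicit.
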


\begin{proof}
	This follows directly from Lemma \ref{Lem:20} above.
\end{proof}

With these and similar minor modifications, the proofs given in \cite{Ball:2014.2} become correct. 

\subsection{Pointwise closure and archimedean truncation kernels}
Archimedean truncation kernels are characterized by the property of being pointwise closed. This is not surprising in view of the fact that the same is true of $\mbf{W}$-kernels (\cite[5.3.1]{BallHagerWW:2015}).
 
\begin{definition*}[pointwise closed convex subtrunc]
	A convex subtrunc $K \subseteq G$ is said to be \emph{pointwise closed} if $K_0 \subseteq K^+$ and $\bigvee^\bullet K_0 = g$ imply $g \in K$.   
\end{definition*}

\begin{proposition}[cf.\ \cite{BallHagerWW:2015}, 5.3.1]\label{Prop:30}
	A convex subtrunc $K \subseteq G$ is a truncation kernel iff it is pointwise closed.
\end{proposition}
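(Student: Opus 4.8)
The plan is to prove both implications, using the characterization of truncation kernels in Lemma \ref{Lem:20} and the representation-free description of pointwise suprema in Proposition \ref{Prop:21}. Here ``truncation kernel'' is read in the sense of the definition opening this section, so all three conditions of Lemma \ref{Lem:20} are relevant.

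The ``only if'' direction is immediate: if $K = \ker\theta$ for a $\mbf{T}$-morphism $\theta$ out of $G$ and $K_0 \subseteq K^+$ satisfies $\bigvee^\bullet K_0 = g$, then Proposition \ref{Prop:21} applied to $\theta$ gives $\bigvee\theta(K_0) = \theta(g)$ in the codomain; since $\theta(K_0) = \{0\}$ this forces $\theta(g) = 0$, i.e.\ $g \in K$. Hence $K$ is pointwise closed.

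For the converse, let $K$ be a pointwise closed convex subtrunc. By Lemma \ref{Lem:20} it suffices to verify conditions (1), (2), (3), and in every case the recipe is the same: exhibit a set $K_0 \subseteq K^+$ whose pointwise supremum is the element we want in $K$, then invoke pointwise closure. For (3), given $g \in G^+$ with $g \ominus 1/n \in K$ for all $n$, take $K_0 = \setof{g \ominus 1/n}{n \in \mbb{N}}$; the sequence $\{g \ominus 1/n\}$ is nondecreasing, and Lemma \ref{Lem:8} (applied to $\ominus 1/n$) gives $(g \ominus 1/n)(r,\infty) = g(r + 1/n, \infty)$ for $r \geq 0$ and $\top$ for $r < 0$, so $\bigvee_n (g \ominus 1/n)(r,\infty) = g(r,\infty)$ for every $r$; thus $\bigvee^\bullet K_0 = g$ and $g \in K$. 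For (2), given $g \in G^+$ with $\ol{g} \in K$, the inequality $\ol{g/n} \leq \ol{g}$ (routine, e.g.\ from Lemma \ref{Lem:27}) and convexity of $K$ give $\ol{g/n} \in K$, whence $g \wedge n = n\,\ol{g/n} \in K$ because $K$ is a linear subspace; since $g = \bigvee^\bullet_n (g \wedge n)$ by Corollary \ref{Cor:1}, pointwise closure yields $g \in K$. For (1), given $h \in G^+$ with $(n\left|g\right| - h)^+ \in K$ for all $n$, note $(\left|g\right| - h/n)^+ = (n\left|g\right| - h)^+/n \in K$; since $h/n \searrow 0$ and the trunc operations respect directional pointwise convergence (Proposition \ref{Prop:3}), $(\left|g\right| - h/n)^+ \nearrow \left|g\right|$, so $\bigvee^\bullet_n (\left|g\right| - h/n)^+ = \left|g\right|$ and $\left|g\right| \in K$, hence $g \in K$ by convexity.

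I expect condition (2) to be the only real obstacle. The hypothesis $\ol{g} \in K$ constrains $g$ only below height $1$, so it must first be bootstrapped to ``$g \wedge n \in K$ for all $n$'' before the identity $g = \bigvee^\bullet_n (g \wedge n)$ is available; this is exactly where one needs both the inequality $\ol{g/n} \leq \ol{g}$ and the fact that $K$ is a vector subspace, not merely an order-convex sublattice. A minor bookkeeping point worth checking is that in the definition of pointwise closure one may harmlessly assume $K_0 \neq \emptyset$ (adjoin $0 \in K^+$), so the degenerate empty family causes no difficulty.
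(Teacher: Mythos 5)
Your proposal is correct and follows essentially the same route as the paper: the forward direction via Proposition \ref{Prop:21} applied to the quotient map, and the converse by verifying the three conditions of Lemma \ref{Lem:20} using the pointwise convergences $g \wedge n \nearrow g$, $g \ominus (1/n) \nearrow g$, and $(|g| - h/n)^+ \nearrow |g|$. The only difference is that you supply explicitly (and correctly) the details the paper defers to citations, in particular the bootstrap from $\ol{g} \in K$ to $g \wedge n \in K$ via $\ol{g/n} \leq \ol{g}$, convexity, and linearity.
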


\begin{proof}
	Suppose that $K$ is a truncation kernel, and let $\map{\theta}{G}{G/K}$ be the quotient truncation homomorphism. If $K_0 \subseteq K^+$ and $\bigvee^\bullet K_0 = g$ then $\bigvee \theta(K_0) = \theta(g) = 0$ by Proposition \ref{Prop:21}, hence $g \in K_0$. Thus $K$ is pointwise closed.
	
	Now suppose that $K$ is a pointwise closed convex subtrunc of $G$; we must show that $K$ has the three properties of Lemma \ref{Lem:20}. The proof given in \cite[5.3.1]{BallHagerWW:2015} for (1) in $\mbf{W}$ works without modification in $\mbf{T}$. To check (2) and (3), recall that $n\ol{g/n} \nearrow g$ and $g \ominus(1/n) \nearrow g$ for all $g \in G^+$ by \cite[5.5]{Ball:2018}.
\end{proof}

%
%
%

\end{document}